\newcommand\myshade{85}
\colorlet{mylinkcolor}{NavyBlue}
\colorlet{mycitecolor}{NavyBlue}
\colorlet{myurlcolor}{NavyBlue}
\newcommand{\ra}[1]{\renewcommand{\arraystretch}{#1}}
\newtheorem{prop}{Proposition}[section]
\newtheorem{lemma}[prop]{Lemma}
\newtheorem{cor}[prop]{Corollary}
\newtheorem{theorem}[prop]{Theorem}
\newtheorem{ques}[prop]{Question}
\newtheorem{conj}[prop]{Conjecture}
\theoremstyle{definition}
\newtheorem{definition}[prop]{Definition}
\newtheorem{ex}[prop]{Example}
\newtheorem{rmk}[prop]{Remark}
\newcommand{\Z}{\mathbb{Z}}
\newcommand{\Q}{\mathbb{Q}}
\newcommand{\nc}{\newcommand} 
\nc{\BL}{\mathbb L}
\nc{\on}{\operatorname} 
\nc{\CS}{\mathcal S}
\nc{\wh}{\widehat}
\nc{\im}{\operatorname{im}}
\newcommand{\supp}{\operatorname{supp}}
\newcommand{\braket}[2]{\left\langle #1 , #2 \right\rangle} %Inner product
\newcommand{\abs}[1]{\lvert #1 \rvert}
\author[W. Ballinger]{William Ballinger}
\address {Department of Mathematics, California Institute of Technology, Pasadena, CA 91125}
\email{wballing@caltech.edu}
\author[C. Hsu]{Chloe Ching-Yun Hsu}
\address {Department of Mathematics, California Institute of Technology, Pasadena, CA 91125}
\email{chhsu@caltech.edu}
\author[W. Mackey]{Wyatt Mackey}
\address {Department of Mathematics, Harvard University, Cambridge, MA 02138}
\email{wmackey@college.harvard.edu}
\author[Y. Ni]{Yi Ni}
\address {Department of Mathematics, California Institute of Technology, Pasadena, CA 91125}
\email{yini@caltech.edu}
\author[T. Ochse]{Tynan Ochse}
\address {Department of Mathematics, California Institute of Technology, Pasadena, CA 91125}
\email{tochse@caltech.edu}
\author[F. Vafaee]{Faramarz Vafaee}
\address {Department of Mathematics, California Institute of Technology, Pasadena, CA 91125}
\email{vafaee@caltech.edu}
\begin{document}

\title{The prism manifold realization problem}

\date{}

\maketitle
\begin{abstract}
The spherical manifold realization problem asks which spherical three-manifolds arise from surgeries on knots in $S^3$. In recent years, the realization problem for C, T, O, and I-type spherical manifolds has been solved, leaving the D-type manifolds (also known as the prism manifolds) as the only remaining case. Every prism manifold can be parametrized as $P(p,q)$, for a pair of relatively prime integers $p>1$ and $q$. We determine a complete list of prism manifolds $P(p, q)$ that can be realized by positive integral surgeries on knots in $S^3$ when $q<0$. The methodology undertaken to obtain the classification is similar to that of Greene for lens spaces.
\end{abstract}

\section{Introduction}\label{sec:intro}

There are many notions of simplicity for closed three-manifolds. Perhaps the simplest is that of a manifold with a finite fundamental group. One of the most prominent problems in three-manifold topology is to indicate the list of the simplest closed three-manifolds that can be realized by the simplest three-dimensional topological operations. Given that every closed three-manifold can be obtained by performing surgery on a link in $S^3$~\cite{Lickorish1962, Wallace1960}, the aforementioned realization problem may be stated as follows:
\begin{ques}\label{ques:Surg}
Which closed $3$--manifolds with finite fundamental groups can be realized by surgeries on nontrivial knots in $S^3$? 
\end{ques}

By the work of Thurston~\cite{Thurston1982}, any knot in $S^3$ is precisely one of a torus knot, a satellite knot or a hyperbolic knot. Moser classified all finite surgeries on torus knots~\cite{Moser1971}. Later, Boyer and Zhang showed that if surgery on a satellite knot $K\subset S^3$ results in a manifold with a finite fundamental group, then $K$ must be a cable of a torus knot~\cite[Corollary~1.4]{Boyer1996}. Such surgeries are classified by Bleiler and Hodgson in~\cite[Theorem~7]{Bleiler1996}. 
%So we only need to focus finite surgeries on hyperbolic knots. In this case, 
In regard to the surgery coefficient, Culler--Gordon--Luecke--Shalen \cite{Gordon1987} proved that any cyclic surgery must be integral. As further proved by Boyer and Zhang~\cite[Theorem~1.1]{Boyer1996}, the coefficient of any finite surgery is either $p$ or $p/2$, for some integer $p$. Li and Ni showed that if half integral surgery on a hyperbolic knot results in a manifold $Y$ with a finite fundamental group, then $Y$ is homeomorphic to $p/2$ surgery on either a torus knot or a cable of a torus knot~\cite{LiNi2014}. As a result, we henceforth restrict attention to integral surgeries. 

Using Perelman's Geometrization Theorem, closed three-manifolds with finite fundamental groups can be characterized as those three-manifolds that admit {\it spherical geometry}. A {\it spherical $3$--manifold} (also known as an {\it elliptic $3$--manifold}) has the form
\[
Y=S^3/G,
\]
where $G$ is a finite subgroup of $SO(4)$ that acts freely on $S^3$ by rotations. The center $Z=Z(G)$ of $G\cong\pi_1(Y)$ is necessarily a cyclic group. According to the structure of $G/Z$, spherical manifolds (besides $S^3$) are divided into five types: {\bf C} or cyclic, {\bf D} or dihedral, {\bf T} or tetrahedral, {\bf O} or octahedral, {\bf I} or icosahedral. In particular, if $G/Z$ is the dihedral group 
\[D_{2p}=\langle x,y|x^2=y^2=(xy)^p=1\rangle,\]
for some integer $p>1$, we get the {\bf D}--type manifolds. These manifolds are also known as the {\it prism manifolds}.
%\item[($3$)]{\bf T}-type. $G/Z$ is the  tetrahedral group
%\[T_{12}=\langle x,y|x^2=y^3=(xy)^3=1\rangle.\]
%\item[($4$)]{\bf O}-type. $G/Z$ is the  octahedral group
%\[O_{24}=\langle x,y|x^2=y^3=(xy)^4=1\rangle.\]
%\item[($5$)]{\bf I}-type. $G/Z$ is the icosahedral group
%\[I_{60}=\langle x,y|x^2=y^3=(xy)^5=1\rangle.\]
%\end{itemize}

%The right handed trefoil $T(3,2)$ provides an easy example of a knot on which surgery produces a spherical manifold. The $5$- and $7$-surgeries on $T(3,2)$ will give lens spaces, the $4$- and $8$-surgeries will give prism manifolds, the $3$- and $9$-surgeries will give {\bf T}-type manifolds, the $2$- and $10$-surgeries on $T(3,2)$ will give in {\bf O}-type manifolds, and the $1$- and $11$-surgeries will result in {\bf I}-type manifolds.

Greene~\cite{greene:LSRP} solved the integer surgery realization problem (that is, Question~\ref{ques:Surg} when the surgery coefficient is integral) for lens spaces, namely, the {\bf C}--type manifolds. Later, Gu~\cite{Gu2014} provided the solution for {\bf T}, {\bf O}, and {\bf I}--type manifolds. This leaves the {\bf D}-type manifolds as the only remaining case, and that is the theme of the present work.

There are much more prism manifolds than any other types of spherical manifolds. It is straightforward to check that for each integer $m>0$, there are only finitely many spherical manifolds $Y$ of other types with $|H_1(Y)|=m$. However, for each $m$ divisible by $4$, there are infinitely many prism manifolds with the order of the first singular homology equal to $m$. To justify, let $P(p,q)$ be the oriented prism manifold with Seifert invariants
\begin{equation}\label{eq:PrismSeifert}(-1;(2,1),(2,1),(p, q)),
\end{equation}
where $p>1$ and $q$ are a pair of relatively prime integers. 
These manifolds satisfy
\begin{equation}\label{eq:PrismHomology}
|H_1(P(p, q))|=4|q|.
\end{equation}
Therefore, any integer $p>1$ relatively prime to $q$ will give a prism manifold $P(p, q)$ with the desired order of the first singular homology. In regard to the realization problem, however, we still have a finiteness result. It was first proved by Doig in~\cite{Doig2013} that, for a fixed $|q|$, there are only finitely many $p$ for which $P(p, q)$ may be realized by surgery on a knot $K\subset S^3$. Later, Ni and Zhang \cite{Ni2016} proved an explicit bound for $p$ in terms of $q$:
$$p< 4|q|.$$

All the known examples of integral cyclic surgeries (lens space surgeries) come from Berge's primitive/primitive (or simply P/P) construction \cite{Berge}. There is a generalization of this construction to Seifert-fibered surgeries due to Dean \cite{Dean2003}, called the primitive/Seifert-fibered (or P/SF) construction. See Definition~\ref{def:P/SF}. The {\it  surface slope} Dehn surgery on a hyperbolic P/SF knot results in a Seifert fibered space. Berge and Kang, in~\cite{BergeKang}, classified all P/SF knots in $S^3$. Further, they specified the indices of the singular fibers of the Seifert fibered manifolds resultant from the surface slope surgeries on such knots. Since prism manifolds are Seifert fibered spaces over $S^2$ with three singular fibers of indices $(2,2,p)$, following from the work of Berge and Kang, we obtain a list of prism manifolds that are realizable by knot surgeries. See Table~\ref{BK Prism with slope sign}.

We are now in a position to state the main result of the paper:

\begin{theorem}\label{thm:Classification}
Given a pair of relatively prime integers $p>1$ and $q<0$, if the prism manifold $P(p,q)$ can be obtained by $4|q|$--Dehn surgery on a knot $K$ in $S^3$, then there exists a Berge--Kang knot $K'$ such that the $4|q|$--surgery on $K'$ results in $P(p,q)$, and
$K$ and $K'$ have isomorphic knot Floer homology groups.
Moreover, $P(p,q)$ belongs to one of the six families in Table~\ref{table:Types}.
\end{theorem}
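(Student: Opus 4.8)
The plan is to adapt Greene's lattice-embedding solution of the lens space realization problem to the star-shaped plumbings that bound prism manifolds. First I would note that when $q<0$ the Seifert Euler number of $P(p,q)$ equals $q/p<0$, so $P(p,q)$ bounds a canonical negative-definite plumbed $4$-manifold $X(\Gamma)$, where $\Gamma$ is a star with a central vertex, two pendant vertices of weight $-2$ (from the Seifert pairs $(2,1)$), and one linear leg coming from the continued fraction expansion associated to $(p,q)$. If $P(p,q)=S^3_{4|q|}(K)$, then the trace of $-4|q|$-surgery on the mirror $\overline{K}$ is a negative-definite $4$-manifold $W$ with $b_2(W)=1$ and $\partial W=-P(p,q)$; gluing $X(\Gamma)$ to $W$ along the boundary gives a closed negative-definite $4$-manifold, whose intersection form Donaldson's diagonalization theorem identifies with $-\Z^{|\Gamma|+1}$. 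Chasing generators produces an embedding $\Lambda(\Gamma)\oplus\langle-4|q|\rangle\hookrightarrow-\Z^{|\Gamma|+1}$ of index $4|q|$; writing $\sigma$ for the image of a generator of $\langle-4|q|\rangle$, we get $\Lambda(\Gamma)\cong(\sigma)^{\perp}$ with $\sigma\cdot\sigma=-4|q|$. Comparing the correction terms of $P(p,q)$ (computed from $\Gamma$ by the Ozsv\'ath--Szab\'o plumbing formula) with those of $S^3_{4|q|}(K)$ (computed from the torsion coefficients $t_j(K)$; here $K$ is an $L$-space knot because every spherical manifold is an $L$-space) then forces $\sigma$ to be a \emph{changemaker}: after reordering its coordinates so that $0\le\sigma_1\le\cdots\le\sigma_{|\Gamma|+1}$, one has $\sigma_i\le 1+\sigma_1+\cdots+\sigma_{i-1}$ for every $i$. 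This converts the problem into the lattice question: which prism plumbing lattices $\Lambda(\Gamma)$ occur as $(\sigma)^{\perp}$ for a changemaker $\sigma$?

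The core of the argument, and the step I expect to be the main obstacle, is the classification of these changemaker complements. I would proceed as in Greene's work: first extract the structural consequences of the changemaker condition (a distinguished ``standard'' basis of $(\sigma)^{\perp}$, normal forms for its Gram matrix, and the decomposition of $\sigma$ into ``tight'' and ``gappy'' blocks), and then play these against the rigidity of a prism lattice, which has a single trivalent vertex, two of whose legs must reduce to a single $(-2)$-vertex while the third is linear. This rigidity should bound the length of $\sigma$ and the sizes of its entries, after which one enumerates the finitely many admissible Gram matrices, reads off the parameters, and finds that the solutions fall into the six explicit infinite families in $(p,q)$ recorded in Table~\ref{table:Types}. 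The bookkeeping here is considerably heavier than in the linear (lens space) case: the prism lattices form a genuine two-parameter family with a branch vertex, and several near-solutions will have to be eliminated by finer $d$-invariant inequalities rather than by the changemaker condition alone.

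Finally, for each of the six families I would check directly against the Berge--Kang classification (Table~\ref{BK Prism with slope sign}) that the corresponding $P(p,q)$ is the surface-slope surgery on an explicit P/SF knot $K'\subset S^3$; since Berge--Kang knots are in particular knots, this simultaneously shows that the six families are exactly the prism manifolds realized by integral surgery and that each is realized by some $K'$. The knot Floer homology statement then follows softly: as $P(p,q)$ is an $L$-space, any $K$ with $S^3_{4|q|}(K)=P(p,q)$ is an $L$-space knot, so $\widehat{HFK}(K)$ is determined by $\Delta_K$, hence by the torsion coefficients $t_j(K)$, which in turn are recovered from the correction terms of $P(p,q)$ together with the coefficient $4|q|$. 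Since $K$ and $K'$ produce the same oriented manifold by the same surgery, $t_j(K)=t_j(K')$ for all $j$, so $\Delta_K=\Delta_{K'}$ and $\widehat{HFK}(K)\cong\widehat{HFK}(K')$.
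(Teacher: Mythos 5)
Your overall strategy is the paper's: bound $P(p,q)$ by the negative definite plumbing $X(p,q)$, glue on the reversed surgery trace, apply Donaldson together with Greene's sharpness/correction-term argument (Theorem~\ref{changemakerlatticeembedding}) to get $\Delta(p,q)\cong(\sigma)^\perp$ for a changemaker $\sigma$, classify the D-type changemaker lattices, and then match the resulting families against the Berge--Kang P/SF knots, with the knot Floer statement reduced to equality of Alexander polynomials of L-space knots. Your route to $\Delta_K=\Delta_{K'}$ (recovering the torsion coefficients directly from the correction terms of $P(p,q)$, since both $K$ and $K'$ are L-space knots with the same positive surgery) is a mild and legitimate shortcut compared with the paper, which computes $\Delta_K$ from $\sigma$ via Lemma~\ref{lem:AlexanderComputation} and matches changemakers to explicit knots, taking extra care with manifolds realized by more than one changemaker vector (Table~\ref{Overlap}); that extra care is not needed for the statement as given.

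The genuine gap is in the middle step, which is the actual content of the theorem: you do not carry out, and partly mispredict, the classification of D-type lattices isomorphic to changemaker complements. There is no bound on the length of $\sigma$ or on the number of admissible Gram matrices --- the realizable families contain changemakers of arbitrary length, e.g.\ $(1,1,1,1,2^{[s]})$ and $(1,1,2^{[s+1]},(2s+5)^{[2]},(4s+8)^{[t]})$ --- so ``enumerate the finitely many admissible Gram matrices'' cannot be the mechanism. Moreover, no ``finer $d$-invariant inequalities'' are used or needed beyond the changemaker condition itself: the entire classification in Sections~\ref{sec:DLattice}--\ref{sec:pandq} is lattice-theoretic, resting on the characterization of irreducible and unbreakable elements of $\Delta(p,q)$ (Proposition~\ref{irred}), the involutions $\tau_j$, the normalization of an isomorphism (Lemma~\ref{modify}), and a long case analysis of the standard basis according to whether $v_{f-1}$ is tight, just right, or gappy, controlled by the intersection graph (no claws, no heavy triples, restricted cycles, blocked vectors). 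Until that structural analysis --- or some substitute for it --- is actually done and converted into the pairs $(p,q)$ of Table~\ref{table:Types}, the ``moreover'' clause of the theorem, and hence the matching with Berge--Kang knots (which also requires the orientation/sign analysis of Lemma~\ref{lem:EulerPos} to decide $q<0$ versus $q>0$ in Table~\ref{BK Prism with slope sign}), remains unproved.
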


\begin{rmk}
The six families in Table~\ref{table:Types} are divided so that each changemaker vector (see Definition~\ref{defn:changemaker}) corresponds to a unique family. However, a prism manifold $P(p,q)$ could belong to different families, and not just a unique one. We will address the overlaps between these families in Subsection~\ref{subsect:Overlap}. See Table~\ref{Overlap}.
\end{rmk}

The methodology undertaken to prove Theorem~\ref{thm:Classification} is inspired from that of Greene~\cite{greene:LSRP}. A prism manifold $P(p,q)$ with $q<0$ naturally bounds a negative definite four-manifold $X(p,q)$. See Section~\ref{sec:Background}. Suppose that $P(p,q)$ is realized by $4|q|$--surgery on a knot $K\subset S^3$. In particular, $P(p,q)$ bounds the two-handle cobordism $W_{4|q|}=W_{4|q|}(K)$, obtained by attaching a two-handle to $D^4$ along $K\subset S^3$ with framing $4|q|$. Note that the surgery coefficient is dictated by homology considerations: see Equation~\eqref{eq:PrismHomology}. The four-manifold $Z := X(p,q) \cup -W_{4|q|}$ is a smooth, closed, negative definite 4-manifold with $b_2(Z)=n+4$, where $n+3=b_2(X(p,q))$ for some $n\ge 1$. Donaldson's ``Theorem A" implies that the intersection pairing on $H_2(Z)$ is isomorphic to $-\mathbb Z^{n+4}$, negative of the standard $(n+4)$-dimensional Euclidean integer lattice~\cite{Donaldson1983}. Consequently, negative of the intersection pairing on $X(p,q)$, denoted $\Delta(p,q)$, embeds as a codimension one sub-lattice of $\mathbb Z^{n+4}$. For the prism manifold $P(p,q)$ to arise from a knot surgery, this already gives a restriction on the pair $(p,q)$. We then appeal to the innovative work of Greene that provides even more constraints on $\Delta(p,q)$. To state this essential step, we first need to make a combinatorial definition.  

\begin{definition}\label{defn:changemaker}
A vector $\sigma=(\sigma_0,\sigma_1,\dots,\sigma_{n+3})\in\mathbb Z^{n+4}$ that satisfies $0\le\sigma_0\le\sigma_1\le\cdots\le\sigma_{n+3}$ is a {\it changemaker vector} if for every $k$, with $0\le k\le\sigma_0+\sigma_1+\cdots+\sigma_{n+3}$, there exists a subset $S\subset\{0,1,\dots,n+3\}$
such that $k=\sum_{i\in S}\sigma_i$.
\end{definition}

The lattice embedding theorem of Greene~\cite[Theorem~3.3]{Greene2015} now reads as follows: if $P(p,q)$ with $q<0$ is realized by $4|q|$-surgery on $K\subset S^3$, then $\Delta(p,q)$ embeds into $\mathbb Z^{n+4}$ as the orthogonal complement of a changemaker vector $\sigma\in \mathbb Z^{n+4}$. 

By determining the pairs $(p, q)$ which pass this refined embedding restriction, we get the list of all prism manifolds that could possibly be realized by positive integral surgeries on knots. It only remains to verify that, corresponding to every prism manifold $P(p,q)$ in our list, there is a knot $K\subset S^3$ on which some surgery yields $P(p,q)$. Indeed, this is the case.

\begin{theorem}\label{thm:lattice}
Given a pair of relatively prime integers $p>1$ and $q<0$,
$\Delta(p,q)\cong(\sigma)^{\perp}$ for a changemaker vector $\sigma\in\mathbb Z^{n+4}$ if and only if $P(p,q)$ belongs to one of the six families in Theorem~\ref{thm:Classification}. Moreover, in this case, there exist a knot $K\subset S^3$ and an isomorphism of lattices
\[\phi: (\mathbb Z^{n+4},I)\to (H_2(Z),-Q_Z),\]
satisfying the property that $\phi(\sigma)$ is a generator of $H_2(-W_{4|q|})$. Here $I$ denotes the standard inner product on $\mathbb Z^{n+4}$ and $Q_Z$ is the intersection form of $Z= X(p,q) \cup -W_{4|q|}$.
\end{theorem}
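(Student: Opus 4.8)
The plan is to prove Theorem~\ref{thm:lattice} in two logically separate halves: first the lattice-theoretic classification (that $\Delta(p,q)\cong(\sigma)^\perp$ for some changemaker $\sigma$ if and only if $P(p,q)$ lies in one of the six families), and second the realization statement (that in each of these cases there is an actual knot $K$ and a lattice isomorphism $\phi$ taking $\sigma$ to a generator of $H_2(-W_{4|q|})$).

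\medskip

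For the first half, I would begin by writing down an explicit description of the lattice $\Delta(p,q)$ coming from the plumbing/linear-chain description of the negative definite four-manifold $X(p,q)$ bounded by $P(p,q)$ (the description promised in Section~\ref{sec:Background}): $\Delta(p,q)$ should be presented by a concrete set of generators whose Gram matrix is essentially a weighted tree/chain with a single branch vertex of weight $3$ corresponding to the order-$p$ singular fibre. The key structural input is a normalization/standard-basis result for changemaker complements: an embedding $\Delta(p,q)\hookrightarrow\mathbb Z^{n+4}$ as $(\sigma)^\perp$ forces a ``standard basis'' in which each generator is an integer combination of at most a controlled number of the $e_i$, with signs and multiplicities dictated by the changemaker condition $0\le\sigma_0\le\cdots\le\sigma_{n+3}$ and the subset-sum property. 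I would then perform a careful combinatorial case analysis on the shape of $\sigma$ (how many $\sigma_i$ equal $1$, where the ``large'' entries sit, whether $\sigma$ is ``tight'' in the changemaker sense), extracting from each case the resulting continued-fraction data for $(p,q)$; this is where the six families come from, each corresponding to one admissible shape of $\sigma$. Conversely, for each of the six families I would exhibit the changemaker vector $\sigma$ explicitly and check by direct computation that $(\sigma)^\perp\cong\Delta(p,q)$ for the corresponding $(p,q)$.

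\medskip

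For the second half (realization), the natural route is to use the Berge--Kang primitive/Seifert-fibered construction recalled in the introduction: Table~\ref{BK Prism with slope sign} already lists the prism manifolds arising from surface-slope surgery on P/SF knots, so I would match each of the six families against that table and, for the chosen knot $K=K'$, directly compute the induced map on second homology. Concretely, $W_{4|q|}(K)$ contributes a single generator dual to the cocore of the two-handle, and one must verify that under the Donaldson diagonalization $H_2(Z)\cong(\mathbb Z^{n+4},I)$ this generator is sent to $\pm\sigma$; since the changemaker entries $\sigma_i$ are exactly the coefficients expressing the surgery-handle generator in terms of the standard basis (these are read off from the $d$-invariant / $V_i$ data of $K$, as in Greene's lens space argument), this amounts to checking that the $V_i$-sequence of the Berge--Kang knot matches the one predicted by $\sigma$. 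I would organize this as: (i) fix $K'$ from Table~\ref{BK Prism with slope sign}; (ii) identify $Z=X(p,q)\cup -W_{4|q|}(K')$ and its intersection form; (iii) apply Donaldson to get $(\mathbb Z^{n+4},I)$; (iv) track the generator of $H_2(-W_{4|q|})$ through this isomorphism and identify it with $\sigma$.

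\medskip

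The main obstacle I anticipate is the first half's case analysis: controlling all possible changemaker complements of the specific lattice $\Delta(p,q)$ is delicate because the branch vertex of weight $3$ makes $\Delta(p,q)$ genuinely more complicated than the linear lattices Greene handled for lens spaces, so new ``standard basis'' lemmas tailored to this branched shape will be needed, and the bookkeeping to show that exactly six families survive (with no sporadic extra solutions) is likely to be long and technical. A secondary but real difficulty is ensuring the two halves are compatible — that the changemaker $\sigma$ produced abstractly in the classification is literally the one realized geometrically by the Berge--Kang knot, rather than merely an abstract vector with the same orthogonal complement; handling the possible lattice automorphisms of $\Delta(p,q)$ and the overlaps between families (the subject of the remark after Theorem~\ref{thm:Classification}) is where care is required.
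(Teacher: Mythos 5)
Your plan follows essentially the same route as the paper: a Greene-style standard-basis/changemaker case analysis (the content of Sections~\ref{sec:Changemaker}--\ref{sec:f=3}), conversion of the resulting standard bases to vertex bases and continued fractions to read off $(p,q)$ as in Section~\ref{sec:pandq} (with the uniqueness of $(p,q)$ supplied by Proposition~\ref{pp}), and then realization by Berge--Kang primitive/Seifert-fibered knots from Table~\ref{BK Prism with slope sign}. Your proposed resolution of the ``which $\sigma$ does the knot realize'' ambiguity via the $V_i$/$d$-invariant data is the same device the paper uses, phrased there as matching the Alexander polynomial predicted by $\sigma$ (Lemma~\ref{lem:AlexanderComputation}) against that of the exhibited knot in the overlap cases, so there is no genuinely different approach or gap to report.
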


\subsection{Prism manifolds $P(p,q)$ with $q>0$} 

As discussed, digging up the list of P/SF knots in $S^3$~\cite{BergeKang}, we obtain a family of knots with prism manifold surgeries. See Table~\ref{BK Prism with slope sign}.
The parameters $p$ and $|q|$ of the resulting $P(p,q)$ are explicitly given in \cite{BergeKang}. 
By specifying whether $q>0$ or $q<0$ (Lemma~\ref{lem:EulerPos}), we not only confirm that every manifold in Table~\ref{table:Types} is realizable, but also get a list $\mathcal P^+$ of prism manifolds with $q>0$ arising from surgeries on knots in $S^3$. See Table~\ref{table:Types+}.

In light of Theorem~\ref{thm:Classification}, we make the following conjecture.

\begin{conj}\label{conj1} Given a pair of relatively prime integers $p>1$ and $q>0$, if $P(p, q)$ can be obtained by $4q$--Dehn surgery on a knot $K$ in $S^3$, then $P(p,q)\in \mathcal P^+$.
\end{conj}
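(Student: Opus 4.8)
The plan is to run the argument behind Theorem~\ref{thm:Classification} with the two orientations exchanged. Note first that the conjecture is \emph{not} a formal consequence of Theorem~\ref{thm:Classification}: if $P(p,q)$ with $q>0$ equals $S^3_{4q}(K)$, then $-P(p,q)=P(p,-q)$ equals $S^3_{-4q}(\bar K)$, i.e.\ the $q<0$ prism manifold $P(p,-q)$ is obtained by a \emph{negative} surgery, whereas Theorem~\ref{thm:Classification} treats only positive surgeries; so genuinely new input is needed. Suppose then $P(p,q)=\partial W_{4q}(K)$ with $q>0$, where $W_{4q}(K)$ is the positive definite $2$--handle cobordism. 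In the $q<0$ case one glued the canonical negative definite plumbing $X(p,q)$ to $-W_{4q}(K)$ to obtain a \emph{closed negative definite} $4$--manifold, and then applied Donaldson's Theorem~A together with Greene's changemaker refinement. When $q>0$ this breaks down at the first step: since $-P(p,q)=P(p,-q)$ has negative orbifold Euler number, it is $P(p,-q)$ that bounds the negative definite plumbing $X(p,-q)$, and $P(p,q)$ itself bounds only the positive definite $-X(p,-q)$; gluing either filling of $P(p,q)$ to the surgery cobordism produces a closed $4$--manifold with \emph{indefinite} intersection form, so Donaldson's Theorem~A no longer applies. I expect this to be the main obstacle, and it is presumably why the statement is posed as a conjecture.

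The natural way around it is to replace the Donaldson input by a Heegaard Floer correction term argument. A prism manifold is an L--space, so $S^3_{4q}(K)$ is an L--space; since the surgery coefficient $4q$ is positive, a result of Ozsv\'ath--Szab\'o forces $K$ to be an L--space knot with $2g(K)-1\le 4q$ (compatible with the genus bounds of Ni--Zhang~\cite{Ni2016}). Hence the invariants $V_j(K)$ (equivalently the torsion coefficients of the Alexander polynomial) form a rigid, explicitly constrained sequence: non-negative integers, non-increasing, $V_j=0$ for $j\ge g(K)$, arising from an alternating $\pm1$ Alexander polynomial. On the other side, the correction terms $d(P(p,q),\mathfrak s)$ can be computed directly from the Seifert data \eqref{eq:PrismSeifert}. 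The Ni--Wu surgery formula then reads, for a suitable labelling of $\on{Spin}^c$ structures by $i\in\{0,1,\dots,4q-1\}$,
\[
d\big(P(p,q),i\big)=d\big(S^3_{4q}(U),i\big)-2\,V_{\min(i,\,4q-i)}(K),
\]
and the task becomes to classify the pairs $(p,q)$ for which this system admits a solution $(V_j)$ coming from an honest L--space knot. This should simultaneously reproduce the bound $p<4q$ of Ni--Zhang and pin down the admissible $p$ in terms of $q$.

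Carrying out that classification is the technical core, and the delicate point is sharpness: for lens spaces the correction terms alone did not suffice, and Greene needed the full changemaker lattice embedding. One would solve the $d$--invariant equations to determine the candidate Alexander polynomial, impose the L--space-knot constraints on its coefficients, and verify that the surviving $(p,q)$ are exactly the members of $\mathcal P^+$ in Table~\ref{table:Types+}. If the correction terms turn out not to be sharp, one can feed in further obstructions: the indefinite but unimodular intersection form of the closed manifold $W_{4q}(K)\cup X(p,-q)$, which receives $\langle 4q\rangle\oplus Q_{X(p,-q)}$ as a finite-index sublattice and whose structure is constrained by the classification of indefinite forms, or the Casson--Walker invariant of $P(p,q)$, which is computable from both descriptions.

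For the reverse direction nothing new is required: extracting the P/SF knots with prism-manifold surface-slope surgeries from Berge--Kang's classification (Table~\ref{BK Prism with slope sign}) and determining the sign of the Euler number via Lemma~\ref{lem:EulerPos} exhibits, for each $P(p,q)\in\mathcal P^+$, an explicit knot on which $4q$--surgery yields it. Thus only the ``only if'' half of the conjecture needs the program above.
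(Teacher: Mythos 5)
You are being asked about Conjecture~\ref{conj1}, which the paper deliberately leaves open: the authors only assemble the candidate list $\mathcal P^+$ from the Berge--Kang classification and state the conjecture, so there is no proof in the paper to compare against --- and your text is not a proof either, but a research program. You correctly diagnose why Theorem~\ref{thm:Classification} does not formally imply the statement, and why the Donaldson/changemaker route used for $q<0$ fails as literally stated (gluing the surgery cobordism to either definite filling of $P(p,q)$ produces an indefinite closed $4$--manifold). But the entire mathematical content of the ``only if'' direction --- showing that the Ni--Wu correction-term equations for all $4q$ Spin$^c$ structures, combined with the L-space-knot constraints on the torsion coefficients $V_j$, force $(p,q)$ into $\mathcal P^+$ --- is deferred (``Carrying out that classification is the technical core''), and you yourself concede the central worry: for lens spaces the $d$--invariants alone were not sufficient, which is exactly why Greene needed the changemaker lattice embedding. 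The fallback obstructions you gesture at (the indefinite unimodular form of $W_{4q}(K)\cup X(p,-q)$, Casson--Walker) are not developed far enough to exclude even one pair $(p,q)$. So no step of the forward implication is actually established; the proposal records a plausible strategy, not a proof.

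One substantive omission: the paper itself points out that when $q>p>0$ the manifold $P(p,q)$ is the double branched cover of an alternating Montesinos link and hence bounds a sharp $4$--manifold \cite{OSzBrDoub}, so Greene's lattice-embedding strategy does apply in that regime (with lattices that are no longer of D-type); your blanket assertion that the method ``breaks down at the first step'' for all $q>0$ therefore overstates the obstruction, and a realistic attack would use that sharp filling for $q>p$ and reserve the correction-term (or other) analysis for the genuinely hard range $0<q<p$. Finally, the ``if'' direction you describe --- realizing every member of $\mathcal P^+$ by a Berge--Kang P/SF knot, with the sign of $q$ pinned down via Lemma~\ref{lem:EulerPos} --- is indeed already carried out in the paper (it is how Table~\ref{table:Types+} was produced), but it is not what the conjecture asserts.
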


Theorem~\ref{thm:Classification} leaves open the integer surgery realization problem for manifolds $P(p,q)$ with $q>0$, and Conjecture~\ref{conj1} proposes the solution. A natural direction to pursue is to indicate the list of all knots in $S^3$ that admit surgeries to spherical manifolds. In~\cite{Berge}, Berge proposed a complete list of knots in $S^3$ with lens space surgeries. Indeed, Berge's Conjecture states that the P/P knots form a complete list of knots in $S^3$ that admit lens space surgeries.
All the known examples of spherical manifolds arising from knot surgeries will provide supporting evidence to the following conjecture:

\begin{conj}\label{conj2}
Let $K$ be a knot in $S^3$ that admits a surgery to a spherical manifold. Then $K$ is either a P/SF or a P/P knot.
\end{conj}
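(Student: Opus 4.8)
The plan is to run Thurston's trichotomy, dispose of the non-hyperbolic knots with classical results, and reduce the hyperbolic case to a rigidity statement refining the known manifold-level realization theorems. By Thurston~\cite{Thurston1982}, the knot $K$ is a torus knot, a satellite knot, or hyperbolic. If $K$ is a torus knot, Moser's analysis~\cite{Moser1971} identifies exactly which slopes give finite (spherical) surgeries, and one checks directly that each such torus knot lies in the degenerate case of Dean's primitive/Seifert-fibered construction~\cite{Dean2003}, hence is a P/SF (or, when the quotient is a lens space, a P/P) knot. If $K$ is a satellite, Boyer--Zhang~\cite[Corollary~1.4]{Boyer1996} forces $K$ to be a cable of a torus knot, Bleiler--Hodgson~\cite[Theorem~7]{Bleiler1996} pins down the finitely many such cables and slopes producing spherical manifolds, and one verifies each of these iterated torus knots is P/SF. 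Combined with Li--Ni~\cite{LiNi2014}, which already removes half-integral slopes in the hyperbolic case, this leaves: $K$ hyperbolic, surgery slope an integer $n$, and surgered manifold $Y$ of type C, D, T, O, or I.

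For the remaining case I would invoke the manifold-level realization results: Greene~\cite{greene:LSRP} for the C-type (lens space) quotients, Gu~\cite{Gu2014} for the T/O/I families, and Theorem~\ref{thm:Classification} together with (a resolution of) Conjecture~\ref{conj1} for the prism manifolds. In each case $Y$ is realized by $n$-surgery on an explicit P/SF or P/P knot $K'$, and --- this is literally the conclusion of Theorem~\ref{thm:Classification} in the prism setting, and follows from the corresponding changemaker analyses in the other types --- $K$ and $K'$ have isomorphic knot Floer homology. Since $Y$ is spherical and hence an L-space, $K$ is an L-space knot, so $K$ is fibered of the same Seifert genus as $K'$, and $\widehat{HFK}(K)$, the Alexander polynomial, and the knot signature all agree with those of $K'$.

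The crux is to promote ``$K$ and $K'$ have the same knot Floer homology'' to ``$K$ is isotopic to $K'$'' --- this is precisely the knot-level Berge conjecture and its D/T/O/I analogs, which are open, and is the reason the assertion is stated as a conjecture rather than proved. The strategy I would pursue is to reconstruct a doubly-primitive (resp.\ primitive/Seifert-fibered) presentation of $K$ directly: each spherical $Y$ has Heegaard genus at most two, the dual knot $K^\ast\subset Y$ of the surgery sits on the Berge/Berge--Kang side as a $(1,1)$-knot (or the evident generalization), and one would argue that the Floer package --- which controls the Turaev torsion and, via the surgery formula, the bridge complexity of $K^\ast$ relative to a genus-two splitting of $Y$ --- forces $K^\ast$ into such a position; dualizing back then exhibits $K$ on a genus-two Heegaard surface of $S^3$ as a (doubly) primitive curve. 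The main obstacle, which I do not expect to be routine, is exactly ruling out ``exotic'' hyperbolic knots that share every Floer-theoretic invariant with a Berge or Berge--Kang knot without being isotopic to one: knot Floer homology is known not to detect knots in general, so closing this gap will require either a new rigidity input (e.g.\ from the geometry of the exceptional surgery, in the spirit of the cyclic and finite surgery theorems) or a direct combinatorial analysis of tunnel-number-one knots admitting the prescribed spherical surgery.
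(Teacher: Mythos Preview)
The statement is Conjecture~\ref{conj2}; the paper offers no proof, only the remark that it implies Conjecture~\ref{conj1}. Your proposal is likewise not a proof, and you say so: you correctly isolate the crux as upgrading ``$K$ and $K'$ have isomorphic knot Floer homology'' (which is all the changemaker/lattice arguments in \cite{greene:LSRP}, \cite{Gu2014}, and Theorem~\ref{thm:Classification} deliver) to ``$K$ is isotopic to $K'$,'' and you acknowledge this is the open Berge conjecture and its finite-surgery analogues. So there is nothing to compare against---both the paper and you agree the statement is open, and your diagnosis of where the difficulty lies is accurate.

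One caution on the non-hyperbolic reductions: the assertion that every torus knot and every cable-of-torus-knot with a finite surgery is already P/SF or P/P is not literally contained in \cite{Moser1971} or \cite{Bleiler1996}; those references classify the \emph{surgeries}, not the Heegaard-surface presentations of the knots. Torus knots are indeed doubly primitive, and the relevant cables appear in Berge--Kang's CKM family, but this needs to be checked against \cite{BergeKang} rather than asserted. Also, your appeal to Conjecture~\ref{conj1} for the $q>0$ prism case means your outline already assumes one open statement to attack another; that is fine as a program, but worth flagging.
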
 

We point out that Conjecture~\ref{conj2} implies Conjecture~\ref{conj1}. 

When $q>p$, $P(p,q)$ is the double branched cover of $S^3$ with branching locus being an alternating Montesinos link, thus it is the boundary of a sharp $4$--manifold \cite{OSzBrDoub}. Greene's strategy can still be used to study the realization problem in this case, but the lattices will not be of D-type. We plan to address this case in a future paper.

\subsection{Organization}

In Section~\ref{sec:Background}, we give the basic topological properties of prism manifolds, and explain how to reduce the realization problem to a problem about lattices. In Section~\ref{sec:DLattice}, we study the D-type lattices which are central in our paper. There is a natural {\it vertex basis} for a D-type lattice. Every vector in the vertex basis is {\it irreducible}. A classification of irreducible vectors is given in Proposition~\ref{irred}. In Section~\ref{sec:Changemaker}, we endow a {\it changemaker lattice} $(\sigma)^{\perp}$ with a {\it standard basis} $S$, and study the question when such a lattice is isomorphic to a D-type lattice. From the standard basis elements of a changemaker lattice we can form an intersection graph (see Definition~\ref{defn:InterGraph}). We collect many structural results about this graph. 

Section~\ref{sec:Blocked} addresses some technical lemmas regarding the non-existence of certain edges in the intersection graph associated to a changemaker lattice. The elements of a standard basis $S$, viewed as an ordered set, are of three types: {\it tight}, {\it just right}, and {\it gappy} (Definition~\ref{stbasis}). As it turns out, the classification of changemaker lattices that are isomorphic to D-type lattices relies highly on the properties of one specific element $v_{f-1}$ of the standard basis: whether it is tight, just right or gappy, together with its placement in $S$. Accordingly, we will do a case by case analysis to enumerate the possible standard bases for such a lattice. This occupies Sections~\ref{sec:a0=2}--\ref{sec:f=3}. Section~\ref{sec:pandq} is devoted to converting these standard bases into vertex bases to get a list of pairs $(p,q)$ corresponding to the D-type lattices found in Sections~\ref{sec:a0=2}--\ref{sec:f=3}.

In Section~\ref{sec:BergeKangPrism} we tabulate all Berge--Kang's P/SF knots that admit prism manifold surgeries. See Table~\ref{BK Prism with slope sign}. By comparing the set of prism manifolds $P(p,q)$ resultant from this list when $q<0$, to that obtained from the lattice embedding constraints, we get that the two sets coincide. Thus we finish the proofs of Theorems~\ref{thm:Classification} and~\ref{thm:lattice}.

\subsection*{Acknowledgements}

Y.N. was partially supported by NSF grant number DMS-1252992 and an Alfred P. Sloan Research Fellowship. W.B., C.H., W.M. and T.O. were supported by Caltech's Summer Undergraduate Research Fellowships (SURF) program. W.B. also wishes to thank Samuel P. and Frances Krown for their generous support through the SURF program. We are grateful to John Berge for sending us the preprint \cite{BergeKang} and some useful programs.

%%%%%
%%%%%
%%%%%
%%%%%
%%%%%

\section{Background}\label{sec:Background}
In this section, we start with recalling some basic facts about prism manifolds, then provide a concise strategy to translate the prism manifold realization problem into a lattice theory question. Meanwhile, the necessary background from Heegaard Floer homology will be cited. 

\subsection{Prism manifolds} 

It is well known that every spherical manifold is a Seifert fibered space, that is, a three-manifold with a surgery diagram as depicted in Figure~\ref{fig:SFS}.\footnote{We only consider Seifert fibered spaces whose base orbifold has genus zero.} The data 
\begin{equation}\label{eq:SeifertData}
(e;(p_1,q_1),(p_2,q_2),\dots,(p_r,q_r))
\end{equation}
are called the {\it Seifert invariants},
where $e$ is an integer, and $(p_1,q_1),\dots,(p_r,q_r)$ are pairs of relatively prime integers such that $p_i>1$. The rational number 
$$e_{\mathrm{orb}}:=e+\sum_{i=1}^r\frac{q_i}{p_i}$$
is called the {\it orbifold Euler number}. The oriented homeomorphism type of a Seifert fibered space is determined by the multi-set 
$$\Big\{\frac{q_i}{p_i}-\Big\lfloor\frac{q_i}{p_i}\Big\rfloor\:\Big|\:i=1,\dots,r\Big\},$$
together with $e_{\mathrm{orb}}$. %In light of this fact, we may assume $0<q_i<p_i$. 
It is elementary to verify that if a Seifert fibered space is a rational homology sphere, it must be the case that $e_{\mathrm{orb}}\ne0$, and
\begin{equation}\label{eq:FirstHomologyG}
|H_1(Y)|=p_1p_2\cdots p_r|e_{\mathrm{orb}}|. 
\end{equation}

\begin{figure}[t] 
\begin{picture}(340,100)
\put(90,0){\scalebox{0.7}{\includegraphics*%[0pt,0pt][100pt,100pt]
{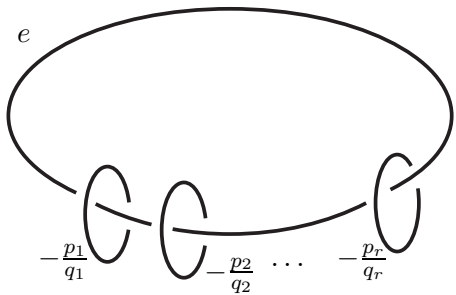}}}

\put(94,90){$e$}

\put(102,5){$-\frac{p_1}{q_1}$}

\put(165,0){$-\frac{p_2}{q_2}$}

\put(190,5){\dots}

\put(215,5){$-\frac{p_r}{q_r}$}

\end{picture}
\caption{\label{fig:SFS}The surgery diagram of a Seifert fibered space}
\end{figure}

%The {\bf D}, {\bf T}, {\bf O} and {\bf I}--type manifolds are Seifert fibered spaces with $r=3$. For {\bf D}-type manifolds, $(p_1,p_2,p_3)=(2,2,p)$ with $p>1$ an integer.

For a pair of relatively prime integers $p>1$ and $q$, a prism manifold $P(p, q)$ is a Seifert fibered space over $S^2$ with three singular fibers of indices $(2,2,p)$, having the Seifert invariants %{\color{red} sign of q to be adjusted throughought}
\[
(-1;(2,1), (2,1), (p, q)).
\]
It is well known that $P(p,q)$ has exactly two Seifert fibrations, and the above one is the only fibration over an orientable base orbifold \cite{Scott}. As a result, the orbifold Euler number of the above Seifert fibration, which is $\frac pq$, is a topological invariant for $P(p,q)$. Hence $P(p_1, q_1) \cong P(p_2,q_2)$ if and only if $(p_1,q_1)=(p_2,q_2)$. Here ``$\cong$" denotes orientation preserving homeomorphism.

Following from their Seifert fibered presentations, prism manifolds enjoy the symmetry 
\[
P(p,-q) \cong -P(p,q),
\]
where $-P(p,q)$ is the manifold $P(p,q)$ with opposite orientation. The fundamental group of $P(p,q)$ has presentation
\begin{equation}\label{eq:pi1}
\pi_1(P(p,q)) = \langle x,y|xyx^{-1}=y^{-1},x^{2|q|}=y^p\rangle.
\end{equation}
The center of this group is a cyclic group of order $2|q|$ generated by $x^2$. 
It follows from (\ref{eq:pi1}) that $H_1(P(p,q))$ is cyclic if and only if $p$ is odd. Hence if $P(p,q)$ is obtained by surgery on a knot in $S^3$, $p$ must be odd. 

\begin{lemma}\label{lem:Nonhyp}
Suppose that $P(p,q)$ is obtained by $4|q|$--surgery on a knot $K\subset S^3$. If $K$ is a torus knot, then $(p,q)=(2k+1,k)$ or $(2k+1,-k-1)$ for some $k>0$, and $K$ is $T(2k+1,2)$. If $K$ is a satellite knot, then either $(p,q)=(2k+1,9k+4)$  for some $k>0$, and $K$ is the $(12k+5,3)$--cable of $T(2k+1,2)$, 
or $(p,q)=(2k+1,-9k-5)$ for some $k>0$, and $K$ is the $(12k+7,3)$--cable of $T(2k+1,2)$.
\end{lemma}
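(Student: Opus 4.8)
The plan is to combine the known classifications of finite surgeries on torus knots and satellite knots with the specific homology constraint $|H_1(P(p,q))| = 4|q|$, which forces the surgery coefficient to be exactly $4|q|$.

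\medskip

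First I would handle the torus knot case. By Moser's classification \cite{Moser1971}, a surgery on $T(a,b)$ yields a Seifert fibered space whose singular fibers have known indices; for the result to be a prism manifold we need three singular fibers of indices $(2,2,p)$, which pins down $T(a,b)$ to have $\{a,b\} \ni 2$, i.e. $K = T(2k+1,2)$ for some $k > 0$ (the trivial knot is excluded since it gives lens spaces, and $T(3,2)$-type small cases can be checked directly). Then I would compute which slopes on $T(2k+1,2)$ produce a prism manifold: the $r$-surgery on $T(2k+1,2)$ is a prism manifold precisely when $r = 2(2k+1) \pm 1 = 4k+2 \pm 1$, and matching $r = 4|q|$ forces $r = 4k$ — wait, more carefully one reads off from Moser that the relevant surgeries give $P(p,q)$ with the stated parameters, and the homology computation $|H_1| = 4|q| = r$ pins down $q = k$ or $q = -k-1$ according to the sign of the resulting orbifold Euler number (using Lemma~\ref{lem:EulerPos} referenced earlier, or a direct Seifert invariant computation). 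This is essentially bookkeeping once Moser's theorem is invoked.

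\medskip

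For the satellite case, I would invoke the Boyer--Zhang result \cite[Corollary~1.4]{Boyer1996} that a satellite knot with a finite surgery must be a cable of a torus knot, together with the Bleiler--Hodgson classification \cite[Theorem~7]{Bleiler1996} of which cabling slopes give finite (in particular Seifert fibered, spherical) surgeries. The key point is that an $(m,n)$-cable of $T(a,b)$ admits a finite surgery only for the cabling slope $mn$, and the resulting manifold is Seifert fibered over $S^2$ with three exceptional fibers of indices $(a, b, n)$ (roughly speaking); requiring these to be $(2,2,p)$ forces the companion to again be $T(2k+1,2)$ and the cabling parameter $n = 3$ (so that two of the three indices are $2$ and the third is $p = 2k+1$). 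The remaining work is to determine $m$: the framing must equal $mn = 3m$ and simultaneously equal $4|q|$, and a homology/Euler-number computation with the Seifert invariants of the cabled surgery yields $m = 4k+2$ (so slope $12k+6$) — but one must be careful, since the precise cases in the statement are the $(12k+5,3)$-cable with $(p,q) = (2k+1, 9k+4)$ and the $(12k+7,3)$-cable with $(p,q) = (2k+1,-9k-5)$. I would extract these exact parameters by writing down the Seifert invariants of $mn$-surgery on the $(m,3)$-cable of $T(2k+1,2)$ (the surgered manifold is the union along a torus of a Seifert piece from the cable space and the surgery solid torus, which is standard) and then reading off $e_{\mathrm{orb}} = p/q$ and $|H_1| = 4|q|$; the two sign possibilities for the orbifold Euler number give the two sub-cases.

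\medskip

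The main obstacle I expect is the careful Seifert-invariant computation in the satellite case: translating ``$mn$-surgery on the $(m,n)$-cable of $T(a,b)$'' into normalized Seifert data $(-1;(2,1),(2,1),(p,q))$ and correctly tracking the sign of $q$ requires care with orientation conventions and with the $\lfloor q_i/p_i \rfloor$ normalization used in Section~\ref{sec:Background}. Everything else — invoking Moser, Boyer--Zhang, Bleiler--Hodgson, and matching the surgery coefficient to $4|q|$ via \eqref{eq:PrismHomology} — is a routine assembly of cited results, and the constraint that exactly two exceptional fibers have index $2$ is what rigidly forces the companion torus knot and the cabling integer in both cases.
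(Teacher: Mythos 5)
Your torus-knot case is essentially the paper's argument: invoke Moser, observe that the exceptional-fiber indices force $K=T(2k+1,2)$ and $p=2k+1$, match the slope $4|q|$ to get $|q|=k$ or $k+1$, and pin down the sign of $q$ via the orbifold Euler number (Lemma~\ref{lem:EulerPos}); your parenthetical slip ``$r=2(2k+1)\pm1$'' (which would make the third fiber index $1$, i.e.\ a lens space, rather than $2$) is self-corrected when you defer to Moser, and the conclusion is right.

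The satellite case, however, contains a genuine error. You assert that an $(m,n)$-cable admits a finite surgery only at the cabling slope $mn$ and that this surgery is Seifert fibered with exceptional fibers of indices $(a,b,n)$. Neither claim is correct: surgery along the cabling slope of a cable knot is \emph{reducible} (a lens space connected-summed with a surgery on the companion), so it is never a prism manifold, and the fiber data you quote is not what would occur in any case (with your own indices $(2k+1,2,3)$ the multiset is $\{2,3,2k+1\}$, not $\{2,2,p\}$, so your argument that ``$n=3$ is forced'' does not actually follow from your setup). The finite surgeries classified by Bleiler--Hodgson occur at distance one from the cabling slope, i.e.\ at $mn\pm1$, and the key identity the paper uses is that $(mn\pm1)$-surgery on the $(m,n)$-cable equals $\frac{mn\pm1}{n^2}$-surgery on the companion: concretely, $(36k+16)$-surgery on the $(12k+5,3)$-cable of $T(2k+1,2)$ is $\frac{36k+16}{9}$-surgery on $T(2k+1,2)$, whose exceptional fibers are $2k+1$, $2$, and $|36k+16-9(4k+2)|=2$. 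This is what produces the parameters $(12k+5,3)$, $|q|=9k+4$ (and likewise $(12k+7,3)$, $|q|=9k+5$), consistent with $4|q|$ from \eqref{eq:PrismHomology}. Your proposed alternative, ``the framing must equal $3m=4|q|$ with $m=4k+2$, slope $12k+6$,'' is incompatible with $4|q|=36k+16$ or $36k+20$ (neither divisible by $3$); you notice the mismatch with the stated cables but never resolve it, so as written the satellite case does not go through. Once the slope is corrected to $mn\pm1$ and the surgery is transferred to the companion, the sign of $q$ is again read off from Lemma~\ref{lem:EulerPos}, exactly as in the torus-knot case.
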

\begin{proof}
When $K$ is a torus knot, it follows from \cite{Moser1971} that $K=T(2k+1,2)$ and $|q|=k$ or $k+1$. Since the indices of the singular fibers are $2,2,2k+1$, $p=2k+1$. 
To determine the sign of $q$, we can use Lemma~\ref{lem:EulerPos}. Note that the slope of the Seifert fiber of the complement of $K$ is $4k+2$. If $|q|=k$, since $0<4k<4k+2$, the orbifold Euler number of the resulting manifold is positive. If $|q|=k+1$, since $0<4k+2<4k+4$, the orbifold Euler number of the resulting manifold is negative.

When $K$ is a satellite knot, $K$ must be a cable of a torus knot~\cite[Corollary~1.4]{Boyer1996}, then we can use the classification of finite surgeries on such knots in~\cite[Theorem~7]{Bleiler1996}. So $K$ is the $(12k+6\mp1,3)$--cable of $T(2k+1,2)$, $|q|=9k+4$ or $9k+5$. When $|q|=9k+4$, the $(36k+16)$--surgery on the $(12k+5,3)$--cable of $T(2k+1,2)$ is the same as the $\frac{36k+16}9$--surgery on $T(2k+1,2)$. 
Since the resulting manifold has a singular fiber of index $2k+1$, $p=2k+1$. To determine the sign of $q$, we can use Lemma~\ref{lem:EulerPos} as in the last paragraph. Similarly, we can deal with the case  $|q|=9k+5$.
\end{proof}

\begin{rmk}
In \cite{Ni2016}, it is proved that if $P(2k+1,k)$ or $P(2k+1,-k-1)$ can be obtained by positive surgery on a knot $K\subset S^3$, then $K$ must be $T(2k+1,2)$.
\end{rmk}

%{\lemma \label{lem:Homeomorphic} For relatively prime pairs of integers $(p_1,q_1)$ and $(p_2, q_2)$ with $p_1, p_2>0$,
%\[
%P(p_1, q_1) \cong P(p_2,q_2),
%\]
%implies that $p_1=p_2$ and $q_1=q_2$. Here ``$\cong$" denotes the orientation preserving homeomorphism.}
%\begin{proof}
%It is immediate that for $|q_1|\ne |q_2|$,
%\[
%P(p,q_1) \ncong P(p,q_2).
%\] 
%Also, following from the fact that the quotient of the fundamental group of $P(p,q)$ by the center is the dihedral group $D_{2p}$ (cf.~Section~\ref{sec:intro}), for $p_1\ne p_2$,
%\[
%P(p_1,q) \ncong P(p_2,q).
%\]
%\end{proof}
%\begin{lemma}\label{podd}
%If a prism manifold $P(p,q)$ arises as surgery on a knot in $S^3$, then $p$ is odd.
%\end{lemma} 
%\begin{proof}
%If $P(p,q)$ arises as a knot surgery, it must be the case that $H_1(P(p,q))$ is a cyclic group. The latter holds if and only if $p$ is odd by~\eqref{eq:pi1}.
%\end{proof}

\begin{figure}[t]
\begin{picture}(340,100)
\put(90,0){\scalebox{0.7}{\includegraphics*%[0pt,0pt][100pt,100pt]
{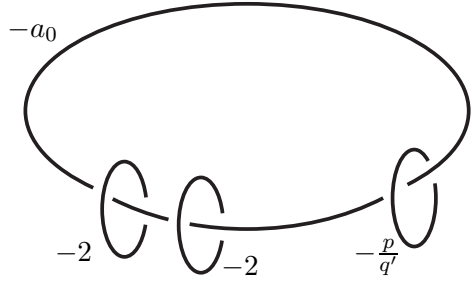}}}

\put(84,90){$-a_0$}

\put(102,5){$-2$}

\put(165,0){$-2$}

\put(215,5){$-\frac{p}{q'}$}

\end{picture}
\caption{\label{fig:SFS2}A surgery diagram of $P(p,q)$}
\end{figure}

One key step in the proof of Theorem~\ref{thm:Classification} is that every prism manifold $P(p,q)$ with $q<0$ bounds a negative definite four-manifold. 
Writing 
\[
k=\left\lfloor\frac{q}p\right\rfloor<0,\quad q'=q-kp>0,
\]
then $P(p,q)$ has an equivalent Seifert fibration with Seifert invariants
\[(-1+k;(2,1),(2,1),(p,q')).\]
Correspondingly, we get a surgery diagram for $P(p,q)$ as in Figure~\ref{fig:SFS2}, where $a_0=1-k\ge2$. Having $p/q'>1$, expand $p/q'$ in a continued fraction 
\begin{equation}\label{eq:ContFrac}
\frac p{q'}=[a_1,\dots,a_n]^-:=a_1-\frac1{a_2-\displaystyle\frac1{a_3-\displaystyle\frac1{\ddots-\displaystyle\frac1{a_n}}}},
\end{equation}
where the $a_i$ are integers satisfying $a_i\ge2$.
Equivalently, we can write
\begin{equation}\label{eq:q/pContFrac}
	\frac{-q}{p} = [a_0 - 1, a_1, \dots, a_n]^-.
\end{equation}

Let $X(p,q)$ be the four-manifold that $P(p,q)$ bounds, obtained by attaching two-handles to $D^4$ instructed by the framed link in Figure~\ref{fig:PlumbingD}. More precisely, each unknot component in Figure~\ref{fig:PlumbingD} denotes a disk bundle over $S^2$ with Euler number specified by its coefficient. The manifold $X(p,q)$ is obtained from plumbing these disk bundles together: two disk bundles are plumbed if the corresponding unknot components are linked. 

Let
\[Q_X: H_2(X) \times H_2(X) \to \mathbb{Z}\] denote the intersection pairing on $X=X(p,q)$. The second homology of $X$ has rank $n+3$, generated by elements $x_*$, $x_{**}$, $x_0$, $\dots$, $x_n$.
Note that $x_*$ and $x_{**}$ correspond to the vertices with weights $-2$ in Figure~\ref{fig:PlumbingD}.

\begin{lemma}\label{lem:QXdefinite}
$X(p,q)$ is a negative definite four-manifold.
\end{lemma}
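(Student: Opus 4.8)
The plan is to show that the plumbing graph defining $X(p,q)$ has a negative definite intersection form by exhibiting it as a standard negative-definite plumbing and invoking the well-known criterion that a plumbing along a tree is negative definite precisely when the associated quadratic form is. Concretely, $X(p,q)$ is built from the star-shaped graph in Figure~\ref{fig:PlumbingD}: a central vertex of weight $-a_0$ with $a_0 \ge 2$, two legs each consisting of a single vertex of weight $-2$ (coming from the two $(2,1)$ Seifert pairs), and one leg which is the linear chain of vertices with weights $-a_1, \dots, -a_n$ arising from the continued fraction expansion $p/q' = [a_1, \dots, a_n]^-$ with all $a_i \ge 2$. The key arithmetic input is precisely that all continued-fraction entries, as well as $a_0$, are at least $2$, which is guaranteed by Equation~\eqref{eq:ContFrac} and the fact that $a_0 = 1 - k \ge 2$ since $k = \lfloor q/p \rfloor < 0$.

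First I would write down the intersection matrix $-Q_X$ explicitly in the vertex basis $\{x_*, x_{**}, x_0, x_1, \dots, x_n\}$: it is the adjacency-plus-diagonal matrix of the tree, with diagonal entries $2, 2, a_0, a_1, \dots, a_n$ and off-diagonal $1$'s recording the edges (central vertex $x_0$ joined to $x_*$, $x_{**}$, and $x_1$; then $x_i$ joined to $x_{i+1}$ along the chain). Then I would prove $-Q_X$ is positive definite. The cleanest route is the standard fact that a symmetric integer matrix arising from a plumbing tree with all weights $\le -2$ is negative definite; equivalently, I would verify positivity of $-Q_X$ directly, for instance by checking that each leading principal minor is positive, using the recursion for continued fractions: the minors along the linear chain $[a_1, \dots, a_j]^-$ are exactly the numerators of the truncated continued fractions, which are positive because every $a_i \ge 2$, and then one incorporates the two weight-$2$ legs and the central vertex, again getting positive determinants because $a_0 \ge 2$.

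An alternative and perhaps more conceptual argument I would keep in reserve: the orbifold Euler number of this Seifert fibration is $p/q < 0$ (since $q < 0$), and there is a classical theorem (Neumann–Raymond, or the plumbing calculus of Neumann) stating that a plumbed $4$-manifold over a genus-zero base bounding a Seifert fibered rational homology sphere is negative definite if and only if the orbifold Euler number is negative. Since we have arranged $e_{\mathrm{orb}} = p/q < 0$ and all plumbing weights $\le -2$, negative definiteness follows immediately. I would likely present the elementary minor computation as the main proof and mention this theorem as context.

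The main obstacle is not conceptual but bookkeeping: one must be careful that the continued fraction expansion in \eqref{eq:ContFrac} genuinely has all entries $\ge 2$ (this is the standard "negative continued fraction" normalization and holds because $p/q' > 1$), and that the determinant computation correctly accounts for the branching at the central vertex rather than treating the graph as a single chain. Expanding $\det(-Q_X)$ along the two pendant $-2$ vertices reduces it to a combination of chain determinants, and I expect the final value to come out to $|H_1(P(p,q))| = 4|q|$ up to sign, which serves as a useful consistency check via \eqref{eq:FirstHomologyG}. Once the determinant and all leading minors are seen to be positive, Sylvester's criterion finishes the proof that $-Q_X > 0$, i.e. $Q_X < 0$.
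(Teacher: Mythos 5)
Your proof is correct, but it takes a genuinely different route from the paper. The paper's proof is shorter and avoids continued-fraction bookkeeping entirely: it notes that $-Q_X(v,v)$ is non-decreasing in each weight $a_i$, so it suffices to prove positive definiteness in the extreme case where every $a_i=2$; there $-Q_X$ is the standard $D_{n+3}$ form, and a quick induction on $b_2(X)=n+3$ gives $\det(-Q_X)=4$ with all principal minors positive. You instead run Sylvester's criterion directly for general weights, using the fact that the chain minors are numerators of truncated continued fractions and then absorbing the central vertex and the two $(-2)$-legs; with the ordering $x_*,x_{**},x_0,x_1,\dots,x_n$ the leading minors $d_k$ obey $d_k=a_kd_{k-1}-d_{k-2}$, so $a_0,a_k\ge2$ forces $d_k\ge d_{k-1}\ge 4$, and positivity follows. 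Your route is more computational but produces $|\det Q_X|=4|q|=|H_1(P(p,q))|$ as a by-product, and your reserve argument via $e_{\mathrm{orb}}=p/q<0$ and Neumann--Raymond is legitimate, at the cost of importing an external theorem where the paper stays elementary. One caution: the ``standard fact'' you cite, that any plumbing tree with all weights $\le -2$ is negative definite, is false in general --- the affine diagram $\widetilde D_4$ (a central $-2$ vertex with four $-2$ leaves) is only negative semidefinite --- so that sentence should be deleted; your proof is unharmed because you never actually use it: the explicit minor computation succeeds here precisely because the tree branches only at the single vertex $x_0$ carrying the two $(-2)$-legs, with all diagonal entries of $-Q_X$ at least $2$.
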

\begin{proof}%[Proof of Lemma~\ref{lem:QXdefinite}]
We will show that $-Q_X$ is positive definite. Given a vector $v\in H_2(X)$, for each $i = 1,\dots, n$, it is easy to check that $-Q_X(v,v)$ is an increasing function of the $a_i$. In particular it suffices to prove the claim when each $a_i$ satisfies $a_i = 2$. Proceeding by induction on $b_2(X) = n+3$ with $n\ge 1$, we get that
		\[\mathrm{det}(-Q_X) = 4.\]
Since all principal minors are positive by induction, $-Q_X$ is positive definite.
\end{proof}

\begin{figure}[t]
\begin{picture}(340,170)
\put(90,0){\scalebox{0.7}{\includegraphics*%[0pt,0pt][100pt,100pt]
{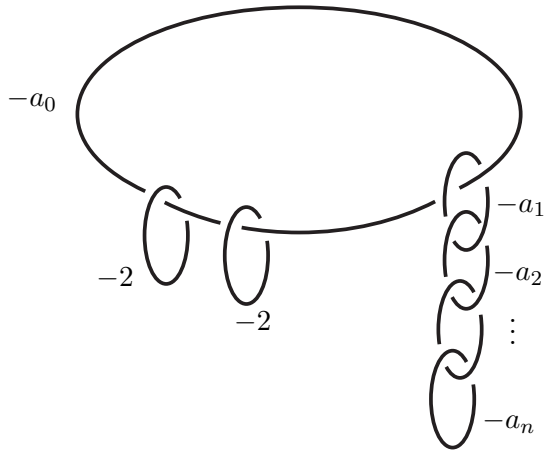}}}

\put(64,130){$-a_0$}

\put(98,62){$-2$}

\put(150,45){$-2$}

\put(249,90){$-a_1$}

\put(248,64){$-a_2$}

\put(254,40){$\vdots$}

\put(244,8){$-a_n$}
\end{picture}
\caption{\label{fig:PlumbingD}An integral surgery diagram of $P(p,q)$}
\end{figure}

\begin{figure}[t]
\begin{center}
\includegraphics[scale=.4]{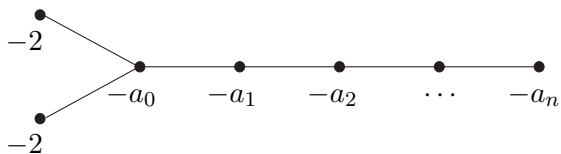}
\put(-204,28){$-2$}
\put(-204,-11){$-2$}
\put(-166,8.5){$-a_0$}
\put(-128,8.5){$-a_1$}
\put(-90,8.5){$-a_2$}
\put(-46,8.5){$\cdots$}
\put(-14.135,8.5){$-a_n$}
\caption{\label{fig:Tree}A negative definite plumbing diagram of $P(p,q)$}
\end{center}
\end{figure}

%
%\begin{figure}[t]
%\begin{center}
%\begin{picture}(180,40)
	% Circles
	
%	\put(80,20){\circle*{4}}	\put(55,18){$-a_0$}
	
%	\put(68,36){\circle*{4}} \put(50,34){$-2$}
%	\put(68,4){\circle*{4}} \put(50,1){$-2$}

%	\put(100,20){\circle*{4}}	\put(90,10){$-a_1$}
%	\put(120,20){\circle*{4}}	\put(110,10){$-a_2$}
%	\put(140,20){\circle*{4}}	\put(135,10){$\dots$}
%	\put(160,20){\circle*{4}}	\put(150,10){$-a_n$}
	
	% Lines
%	\put(80,20){\line(1,0){20}}
%	\put(100,20){\line(1,0){20}}
%	\put(120,20){\line(1,0){20}}
%	\put(140,20){\line(1,0){20}}
	
%	\put(80,20){\line(-3,4){12}}
%	\put(80,20){\line(-3,-4){12}}
%\end{picture}
%\caption{\label{fig:Tree}A negative definite plumbing diagram of $P(p,q)$}
%\end{center}
%\end{figure}

%%%%%%%%
%%%%%%%%
%%%%%%%%
%%%%%%%%
%%%%%%%%

\subsection{The realization problem: From correction terms to lattice theory}

In what follows, we will present the methodology we apply to prove Theorem~\ref{thm:Classification}. One main ingredient is the {\it correction terms} in Heegaard Floer homology.

In \cite{OSzAbGr}, Ozsv\'ath and Szab\'o defined the correction term $d(Y, \mathfrak t)$ that associates a rational number to an oriented rational homology sphere $Y$ equipped with a Spin$^c$ structure $\mathfrak t$. They showed that this invariant obeys the relation 
\[
d(-Y,\mathfrak t) = -d(Y, \mathfrak t),
\]
where $-Y$ is the manifold $Y$ with the reversed orientation. If $Y$ is boundary of a negative definite four-manifold $X$, then
\begin{equation}\label{eq:CorrBound}
 c_1(\mathfrak s)^2 + b_2(X)\le 4d(Y, \mathfrak t),
\end{equation}
for any $\mathfrak s \in \text{Spin}^c(X)$ that extends $\mathfrak t \in \text{Spin}^c(Y)$.

\begin{definition}
A smooth, compact, negative definite $4$--manifold $X$ is {\it sharp} if for every $\mathfrak t \in \text{Spin}^c(Y)$, there exists some $\mathfrak s\in \text{Spin}^c(X)$ extending $\mathfrak t$ such that the equality is realized in Equation (\ref{eq:CorrBound}).
\end{definition}

The manifold $X=X(p,q)$ is an example of a sharp four--manifold. In order to prove this, it will be profitable to view the plumbing diagram of $X$, depicted in Figure~\ref{fig:Tree}, as a {\it weighted graph}, that is a graph equipped with an integer-valued function $m$ on its vertices. 
Given a weighted graph $G$, let $V$ be the abelian group freely generated by the vertices of $G$. Define a quadratic form $$Q_G: V\otimes V\to\mathbb Z$$ as follows. For each vertex $v$, $Q_G(v\otimes v)=m(v)$; for 
each pair of distinct vertices $v$ and $w$, $Q_G(v\otimes w)$ is $1$ if $v$ and $w$ are connected by an edge, and $0$ otherwise.

\begin{definition}
A weighted graph $G$ is said to be a {\it negative definite graph} if: 

$\bullet$ $G$ is a disjoint union of trees, and

$\bullet$ the quadratic form associated to $G$ is negative definite.

The {\it degree} of a vertex $v$, denoted $d(v)$, is the number of edges incident to $v$. A vertex $v$ is said to be a {\it bad vertex} of the weighted graph
if $$m(v)>-d(v).$$
\end{definition}
Given a weighted graph $G$, we can get a four-manifold $X_G$ obtained from the plumbing construction instructed by $G$. In~\cite{OSzPlumbed}, Ozsv\'ath and Szab\'o showed that if $G$ is a negative definite weighted graph with at most two bad vertices, then $X_G$ is sharp. In summary, using Lemma~\ref{lem:QXdefinite},

\begin{lemma}\label{lem:sharp} X(p,q) is a sharp four-manifold.
\end{lemma}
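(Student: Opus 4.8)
The plan is to combine the two facts already assembled in this subsection: that $X(p,q)$ arises as the plumbing $X_G$ on the weighted tree $G$ of Figure~\ref{fig:Tree}, and the Ozsv\'ath--Szab\'o criterion that a negative definite plumbing tree with at most two bad vertices bounds a sharp four-manifold. So the proof reduces to two verifications. First, $G$ is a negative definite tree: it is manifestly a tree by inspection of Figure~\ref{fig:Tree}, and its associated quadratic form $Q_G$ agrees with $Q_X$ up to sign conventions, which is negative definite by Lemma~\ref{lem:QXdefinite}. Second, $G$ has at most two bad vertices. Here one simply walks through the vertices: the linear chain $-a_1, \dots, -a_n$ consists of vertices of degree $\le 2$ with weights $-a_i \le -2$, so $m(v) = -a_i \le -2 \le -d(v)$ and none of these is bad; the two weight $-2$ leaves $x_*, x_{**}$ have degree $1$ and weight $-2 < -1$, hence are not bad; and the central vertex with weight $-a_0$ has degree $3$ (it meets $x_*$, $x_{**}$, and $x_1$), so it is bad only when $a_0 < 3$, i.e.\ $a_0 = 2$. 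Thus $G$ has at most one bad vertex.

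Having established these two points, Lemma~\ref{lem:sharp} follows immediately: $X(p,q) = X_G$ for a negative definite weighted tree $G$ with at most (one, hence) two bad vertices, so the theorem of Ozsv\'ath and Szab\'o in~\cite{OSzPlumbed} applies to give that $X_G$ is sharp, which is exactly the assertion.

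I do not anticipate a genuine obstacle here; the statement is essentially a bookkeeping corollary of Lemma~\ref{lem:QXdefinite} and the cited plumbing theorem. The only point requiring a little care is confirming that the plumbing graph attached to the surgery diagram of Figure~\ref{fig:PlumbingD} really is the tree of Figure~\ref{fig:Tree} with the stated weights $-a_0, -2, -2, -a_1, \dots, -a_n$ and the stated incidences --- in particular that the node of weight $-a_0$ is the unique vertex of degree $3$ and everything else has degree at most $2$ --- and that this graph is connected (so that "at most two bad vertices" is checked on the whole of $G$, not on components). That is precisely the structure encoded by the continued fraction expansion~\eqref{eq:q/pContFrac}, and it is exactly the content already displayed in Figures~\ref{fig:PlumbingD} and~\ref{fig:Tree}.
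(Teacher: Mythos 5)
Your proposal is correct and follows exactly the paper's argument: the paper also deduces sharpness by combining Lemma~\ref{lem:QXdefinite} with the Ozsv\'ath--Szab\'o result that a negative definite plumbing tree with at most two bad vertices yields a sharp four-manifold, applied to the tree of Figure~\ref{fig:Tree}. Your explicit count showing there is in fact at most one bad vertex (the weight $-a_0$ vertex, and only when $a_0=2$) is just a slightly more detailed spelling-out of the same verification.
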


We end this subsection by presenting how the integer surgery realization problem for prism manifolds translates to a lattice theory question.

\begin{definition}
A {\it lattice}
is a finitely generated free abelian group $L$ together with a positive definite quadratic form 
$$\langle\cdot,\cdot\rangle: L\times L\to \mathbb R.$$
The lattice is {\it integral} if the value of the quadratic form is in $\mathbb Z$. 
\end{definition}

Throughout this paper, we will only consider integral lattices.

\begin{definition}\label{def:DType}
Suppose $p>1$ and $q<0$ are a pair of relatively prime integers.
The {\it D-type lattice} $\Delta(p,q)$ is the lattice freely generated by elements 
\begin{equation}\label{eq:VertexBasis}
x_*, x_{**}, x_0, x_1, \cdots , x_n    
\end{equation}
 with inner product given by 
\begin{equation}\label{eq:InnerProduct}
    \langle x_i, x_j\rangle = \begin{cases}
    -1, & \{i,j\}\text{ is either } \{*, 0\}, \text{ or } \{**, 0\},\\
    -1, & |i-j| = 1, 0 \le i, j \le n, \\
    2, & i = j \in \{*, **\}, \\
    a_i, & 0 \le i = j \le n,\\
    0, &\text{other cases,}
    \end{cases}
\end{equation}
where the coefficients $a_0, \dots, a_n$ satisfy $a_j \ge 2$ and also (\ref{eq:q/pContFrac}).
We then call (\ref{eq:VertexBasis}) a {\it vertex basis} of $\Delta(p,q)$.
\end{definition} 

The inner product space $(H_2(X), -Q_X)$ equals $\Delta(p,q)$, where $X=X(p,q)$ is the four-manifold with Kirby diagram as in Figure~\ref{fig:PlumbingD}. See also Figure~\ref{fig:Tree}. Now suppose that $4|q|$--surgery along a knot $K\subset S^3$ produces $P(p,q)$ with $q<0$. Let $W_{4|q|}$ denote the associated two-handle cobordism from $S^3$ to $P(p,q)$, capped off with $D^4$. Form the closed, oriented, smooth, four-manifold $Z = X(p, q) \cup -W_{4|q|}$. Since $b_1(S^3_{4|q|}(K))=0$, it follows that $$b_2(W)=b_2(X)+b_2(-W_{4|q|})=n+4.$$ The $4$--manifold $-W_{4|q|}$ is negative definite. Using this together with Lemma~\ref{lem:QXdefinite} and the fact that $H_2(X)\oplus H_2(-W_{4|q|})\hookrightarrow H_2(Z)$, it follows at once that $Z$ is negative definite. Using Lemma~\ref{lem:sharp}, the following is immediate from~\cite[Theorem~3.3]{Greene2015}.

\begin{theorem}\label{changemakerlatticeembedding}
Suppose $P(p, q)$ with $q<0$ arises from positive integer surgery on a knot in $S^3$. The~D-type lattice $\Delta(p, q)$ is isomorphic to the orthogonal complement $(\sigma)^\perp$ of some changemaker vector $\sigma \in \Z^{n+4}$.
\end{theorem}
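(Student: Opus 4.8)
The plan is to assemble the statement directly from the pieces already in place, since Theorem~\ref{changemakerlatticeembedding} is precisely an application of Greene's lattice embedding theorem (\cite[Theorem~3.3]{Greene2015}) to the specific four-manifolds at hand. First I would set up the closed four-manifold $Z = X(p,q)\cup -W_{4|q|}$, obtained by gluing $X(p,q)$ to the orientation-reversed two-handle cobordism along their common boundary $P(p,q) = S^3_{4|q|}(K)$; I would record that $b_2(Z) = b_2(X(p,q)) + b_2(-W_{4|q|}) = (n+3) + 1 = n+4$, using $b_1(S^3_{4|q|}(K)) = 0$ together with the Mayer--Vietoris sequence, which also gives the primitive embedding $H_2(X(p,q))\oplus H_2(-W_{4|q|})\hookrightarrow H_2(Z)$.

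Next I would verify the hypotheses of Greene's theorem. The key inputs are: (i) $P(p,q)$ with $q<0$ bounds the \emph{negative definite} four-manifold $X(p,q)$ (Lemma~\ref{lem:QXdefinite}); (ii) $X(p,q)$ is \emph{sharp} (Lemma~\ref{lem:sharp}, which follows from the Ozsv\'ath--Szab\'o result on negative definite plumbing graphs with at most two bad vertices applied to the graph of Figure~\ref{fig:Tree}); and (iii) the surgery coefficient $4|q|$ is forced by the homology relation $|H_1(P(p,q))| = 4|q|$ of Equation~\eqref{eq:PrismHomology}, so the two-handle cobordism really is $W_{4|q|}$. I would also note that $-W_{4|q|}$ is negative definite because its intersection form is $(-4|q|)$, and hence $Z$ is negative definite since both pieces are and the embedding above identifies $-Q_Z$ restricted to $H_2(X(p,q))$ with $\Delta(p,q)$, which is positive definite. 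With these facts in hand, Greene's theorem applies verbatim: the intersection lattice $-Q_Z$ on $H_2(Z)$ is isomorphic to the standard Euclidean lattice $\mathbb Z^{n+4}$ (via Donaldson's Theorem A), and under this identification the class generating $H_2(-W_{4|q|})$ maps to a changemaker vector $\sigma$, whose orthogonal complement is exactly the image of $H_2(X(p,q))$, i.e.\ $\Delta(p,q)$.

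The only thing requiring genuine care — and what I would call the main (though modest) obstacle — is the bookkeeping that identifies $\left(H_2(X(p,q)), -Q_X\right)$ with the abstractly defined D-type lattice $\Delta(p,q)$ of Definition~\ref{def:DType}, and that confirms the continued-fraction coefficients $a_0,\dots,a_n$ coming from the plumbing diagram of Figure~\ref{fig:Tree} are the ones satisfying Equation~\eqref{eq:q/pContFrac}. This is essentially the content of the sentence immediately preceding the theorem statement, so I would simply cite the construction of $X(p,q)$ in this subsection together with the continued-fraction expansions~\eqref{eq:ContFrac} and~\eqref{eq:q/pContFrac}, and observe that the intersection form of a linear/star-shaped plumbing is read off directly from the weighted graph. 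Everything else is a direct quotation of~\cite[Theorem~3.3]{Greene2015}, so the proof is short: state $Z$, check definiteness and sharpness, invoke Donaldson and Greene, and conclude $\Delta(p,q)\cong(\sigma)^\perp$.
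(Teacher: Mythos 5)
Your proposal is correct and follows essentially the same route as the paper: form $Z = X(p,q)\cup -W_{4|q|}$, note $b_2(Z)=n+4$ and negative definiteness via Lemma~\ref{lem:QXdefinite}, use sharpness from Lemma~\ref{lem:sharp}, and conclude by direct appeal to Greene's theorem (\cite[Theorem~3.3]{Greene2015}), with the identification $(H_2(X),-Q_X)\cong\Delta(p,q)$ read off from the plumbing. There is no substantive difference from the paper's argument.
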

Using techniques that will be developed in the next sections in tandem with Theorem~\ref{changemakerlatticeembedding}, we will find a classification of all D-type lattices $\Delta(p,q)$ isomorphic to $(\sigma)^{\perp}$ for some changemaker vector $\sigma$ in $\mathbb Z^{n+4}$. If the corresponding prism manifold $P(p,q)$ is indeed arising from surgery on a knot $K \subset S^3$, we are able to compute the Alexander polynomial of $K$ from the values of the components of $\sigma$. Giving an algorithmic method to compute the coefficients of the polynomial occupies the rest of this section.

Let $\Sigma$ be the closed surface obtained from capping off a Seifert surface for $K$ in $W_{4|q|}$. It is straightforward to check that the class $[\Sigma]$ generates $H_2(W_{4|q|})$. It follows from Theorem~\ref{changemakerlatticeembedding} that, under the embedding $H_2(X)\oplus H_2(-W_{4|q|})\hookrightarrow H_2(Z)$, $[\Sigma]$ gets mapped to a changemaker vector $\sigma$. Let $\{e_0, e_1, \cdots, e_{n+3}\}$ be the standard orthonormal basis  for $\mathbb Z^{n+4}$, and write
\[
\sigma = \sum_{i=0}^{n+3} \sigma_i e_i.
\] 
Also, define the \textit{characteristic covectors} of $\mathbb{Z}^{n+4}$ to be 
\[
\text{Char}(\mathbb Z^{n+4})=\left \{ \left.\sum_{i=0}^{n+3}\mathfrak c_i e_i \right| \mathfrak c_i\text{ odd for all } i\right \}.
\]
We remind the reader that, writing the Alexander polynomial of $K$ as 
\begin{equation}\label{eq:AlexanderPolynomial}
\Delta_K(T)= b_0 + \sum_{i>0}b_i(T^i+T^{-i}),
\end{equation}
the $k$-th {\it torsion coefficient} of $K$ is
\[
t_k(K)= \sum_{j\ge 1}jb_{k+j},
\]
where $k\ge 0$. With the preceding notation in place, the following lemma is immediate from \cite[Lemma~2.5]{Greene2015}.
\begin{lemma}\label{lem:AlexanderComputation}
The torsion coefficients satisfy
\[
t_i(K)=
\left\{
\begin{array}{cl}
\displaystyle\min_{\mathfrak c}\frac{\mathfrak c^2-n-4}8, &\text{for each $i\in\{0,1,\dots,2|q|\}$,}\\
&\\
0,&\text{for $i>2|q|$.}
\end{array}
\right.
\]
where $\mathfrak c$ is subject to 
\[
\mathfrak c\in\mathrm{Char}(\mathbb Z^{n+4}),\quad\langle\mathfrak c,\sigma\rangle+4|q|\equiv2i\pmod{8|q|}.
\]
For $i>0$, 
\[
b_i=t_{i-1}-2t_i+t_{i+1},\quad\text{for }i>0,
\]
and
\[b_0=1-2\sum_{i>0}b_i,\]
where the $b_i$ are as in~\eqref{eq:AlexanderPolynomial}.
\end{lemma}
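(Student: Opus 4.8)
The statement to prove is Lemma~\ref{lem:AlexanderComputation}, which expresses the torsion coefficients $t_i(K)$ in terms of a minimization over characteristic covectors, and then derives the Alexander polynomial coefficients $b_i$.

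The plan is to reduce everything to \cite[Lemma~2.5]{Greene2015}, which is the general statement relating correction terms of surgeries, knot Floer homology, and lattice embeddings. First I would recall that for a knot $K \subset S^3$ with $S^3_{N}(K) \cong Y$ a rational homology sphere realized as the boundary of the sharp $4$-manifold $X$, the correction terms of $Y$ are computed in two ways: on one hand, via the lattice embedding, $d(Y, \mathfrak{t})$ equals $\max_{\mathfrak c} \frac{\mathfrak c^2 + b_2(X) }{4}$ over characteristic covectors $\mathfrak c$ of the ambient lattice $\mathbb Z^{n+4}$ restricting appropriately to $\mathfrak t$ (using sharpness of $X(p,q)$, Lemma~\ref{lem:sharp}); on the other hand, by the surgery formula of Ozsv\'ath--Szab\'o, $d(S^3_N(K), \mathfrak t)$ is expressed via the torsion coefficients $t_k(K)$ and the correction terms of the lens space $L(N,1)$. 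Setting $N = 4|q|$, equating these two expressions, and tracking the Spin$^c$ labelling through the identification $\text{Spin}^c(Y) \leftrightarrow \mathbb Z/N$, yields precisely the displayed formula

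\[
t_i(K) = \min_{\mathfrak c} \frac{\mathfrak c^2 - n - 4}{8},
\]

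where the minimization is over $\mathfrak c \in \text{Char}(\mathbb Z^{n+4})$ with $\langle \mathfrak c, \sigma \rangle + 4|q| \equiv 2i \pmod{8|q|}$; here the sign flip (min versus max, and the shift by $-n-4$ rather than $+b_2$) comes from the orientation reversal in $Z = X(p,q) \cup -W_{4|q|}$ and the normalization $b_2(\mathbb Z^{n+4}) = n+4$. The vanishing $t_i(K) = 0$ for $i > 2|q|$ is immediate from the fact that $2t_{2|q|}$ already forces the Alexander polynomial to have breadth at most $2|q|$, or more directly from the genus bound $g(K) \le 2|q|$ coming from the adjunction-type inequality / the fact that a knot with an $N$-surgery to a manifold with $|H_1| = N$ has $2g(K) - 1 \le N$ — but the cleanest route is that \cite[Lemma~2.5]{Greene2015} already records this.

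For the second half, the formulas $b_i = t_{i-1} - 2t_i + t_{i+1}$ for $i > 0$ and $b_0 = 1 - 2\sum_{i>0} b_i$ are a purely formal inversion of the definition $t_k(K) = \sum_{j \ge 1} j\, b_{k+j}$. I would verify this by a direct computation: from the definition, $t_{k-1} - t_k = \sum_{j \ge 1} b_{k+j-1}\cdot j - \sum_{j\ge 1} b_{k+j}\cdot j = \sum_{\ell \ge k} b_\ell$ (reindexing), so the first difference of $-t_k$ recovers the tail sum $\sum_{\ell \ge k} b_\ell$, and taking the difference once more gives $(t_{k-1} - t_k) - (t_k - t_{k+1}) = b_k$, i.e. $b_k = t_{k-1} - 2t_k + t_{k+1}$, valid for $k \ge 1$ (the reindexing is legitimate because all sums are finite, $\Delta_K$ being a Laurent polynomial). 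Finally $b_0$ is pinned down by the normalization $\Delta_K(1) = 1$, which reads $b_0 + 2\sum_{i > 0} b_i = 1$, giving $b_0 = 1 - 2\sum_{i>0} b_i$.

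The main obstacle, and the only place requiring genuine care rather than bookkeeping, is matching the Spin$^c$-structure conventions so that the congruence condition $\langle \mathfrak c, \sigma\rangle + 4|q| \equiv 2i \pmod{8|q|}$ comes out with the correct shifts and the correct modulus. This requires being careful about: (i) the affine identification of $\text{Spin}^c(S^3_N(K))$ with $\mathbb Z/N\mathbb Z$ used in the large-surgery/mapping-cone formula versus the one induced by $X(p,q)$; (ii) the factor of $2$ relating a Spin$^c$ structure to its first Chern class; and (iii) absorbing the half-integer grading shifts $d(L(N,1), \cdot)$ into the statement. Since Theorem~\ref{changemakerlatticeembedding} already provides the isomorphism $\Delta(p,q) \cong (\sigma)^\perp$ and Lemma~\ref{lem:sharp} provides sharpness of $X(p,q)$, and since $[\Sigma]$ generates $H_2(W_{4|q|})$ and maps to $\sigma$ under the embedding $H_2(X) \oplus H_2(-W_{4|q|}) \hookrightarrow H_2(Z)$ (as noted just before the lemma), all the hypotheses of \cite[Lemma~2.5]{Greene2015} are in place, so this last step is a direct citation once the conventions are aligned — and I would simply quote that lemma, noting that our $N = 4|q|$ and $b_2(\mathbb Z^{n+4}) = n+4$ specialize its statement to the one displayed here.
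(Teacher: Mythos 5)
Your proposal is correct and takes essentially the same route as the paper: the paper simply records that the statement is immediate from \cite[Lemma~2.5]{Greene2015}, given Theorem~\ref{changemakerlatticeembedding}, Lemma~\ref{lem:sharp}, and the fact that $[\Sigma]$ maps to the changemaker $\sigma$. Your additional material (the sketch of the sharpness/surgery-formula argument behind Greene's lemma, the second-difference inversion giving $b_i=t_{i-1}-2t_i+t_{i+1}$, and the normalization $\Delta_K(1)=1$ for $b_0$) is accurate but goes beyond what the paper spells out.
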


%%%%%
%%%%%
%%%%%
%%%%%
%%%%%

\section{D-type lattices}\label{sec:DLattice}

Let $L$ be a lattice.
Given $v\in L$, let $|v|=\langle v, v\rangle$ be the {\it norm} of $v$.
Following Greene \cite{greene:LSRP}, say that an element $\ell \in L$ is {\it reducible} if $\ell = x + y$ for some nonzero $x, y \in L$ with $\langle x, y \rangle \ge 0$, and {\it irreducible} otherwise. We say $\ell$ is {\it breakable} if $\ell = x + y$ with $|x|, |y| \ge 3$ and $\langle x, y \rangle = -1$, and {\it unbreakable} otherwise. 

The main goal of this section will be to characterize the irreducible and unbreakable elements of a D-type lattice $\Delta(p,q)$. Since any isomorphism of lattices must send irreducible elements to irreducible elements, and similarly unbreakables to unbreakables, this will let us constrain the form of an isomorphism between $\Delta(p,q)$ and a changemaker lattice.

The pairing graph, also introduced in \cite{greene:LSRP}, will be one of the main tools we use to study lattices:

\begin{definition}
Given a lattice $L$ and a subset $V\subset L$, the {\it pairing graph} is $\wh{G}(V) = (V, E)$, where $e = (v_i, v_j) \in E$ if $\langle v_i, v_j \rangle \neq 0$.
\end{definition}

\begin{prop} \label{basesirred}
For each $i\in\{*, **, 0, ..., n\}$, $x_i$ is an irreducible element of $\Delta(p,q)$, .
\end{prop}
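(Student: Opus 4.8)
The plan is to argue directly from the definition of reducibility. Suppose toward a contradiction that some $x_i = y + z$ with $y, z \in \Delta(p,q)$ nonzero and $\langle y, z\rangle \ge 0$. Write everything in the vertex basis: $y = \sum_j \beta_j x_j$ and $z = \sum_j \gamma_j x_j$, so that $\beta_j + \gamma_j = \delta_{ij}$ for each index $j \in \{*, **, 0, 1, \dots, n\}$. The idea is to exploit the two standard facts that constrain such a splitting. First, the norm identity $|x_i| = |y| + |z| + 2\langle y, z\rangle$; since $\langle y, z\rangle \ge 0$ and $|y|, |z| \ge 1$ (the form is positive definite and $y, z \ne 0$), this forces $|x_i| \ge 2 + 2\langle y,z\rangle$, and moreover $|x_i| = 2$ already forces $|y| = |z| = 1$ and $\langle y, z\rangle = 0$. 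So the cases $i \in \{*, **\}$ and the case $i = 0$ with $a_0 = 2$ (and more generally whichever $x_i$ have small norm) must be treated by hand, while for the remaining $x_i$ we get a usable numerical inequality.

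The main step is to extract the combinatorial restriction from the plumbing structure. Because the vertex basis is associated to a tree (Figure~\ref{fig:Tree}), the relevant sublattice spanned by the basis vectors that actually appear in $y$ or $z$ has a very rigid form: off-diagonal entries are $0$ or $-1$, and the "endpoint/leaf" geometry of the D-type tree is what matters. I would localize to the support. If $x_i$ is reducible, then for every basis vector $x_j$ with $j \ne i$ the coefficients satisfy $\beta_j = -\gamma_j$, so $x_j$ appears in $y$ and $z$ with opposite signs (or in neither). The standard trick (exactly as in Greene's lens-space argument, \cite{greene:LSRP}) is: look at a basis vector $x_j$ of maximal "distance" in the tree from the support of $y$ that still has $\beta_j \ne 0$; its unique neighbor toward the rest of the tree contributes, and pairing $y$ against $x_j$ or against a carefully chosen vertex basis element produces a sign contradiction with $\langle y, z\rangle \ge 0$. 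Concretely, one shows $\langle y, z\rangle = \sum_j \beta_j\gamma_j |x_j| + (\text{cross terms along edges})$ and that the cross terms along tree edges cannot compensate the negative diagonal contributions unless $y$ or $z$ is a single basis vector — but a single basis vector cannot be a summand of a different basis vector $x_i$ with nonnegative pairing, because $\langle x_j, x_i - x_j\rangle = \langle x_j, x_i\rangle - |x_j| \le 1 - 2 < 0$.

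For the low-norm exceptional cases I would just enumerate. For $i \in \{*, **\}$: $|x_*| = 2$, so a splitting would need $|y| = |z| = 1$, $\langle y,z\rangle = 0$; but the only norm-$1$ vectors available are none (every basis vector has norm $\ge 2$, and one checks $\Delta(p,q)$ has no norm-$1$ vectors at all since it embeds in a way that makes $P(p,q)$ a rational homology sphere with $|H_1| = 4|q| > 1$ — alternatively, a direct check that no integer combination has norm $1$). Hence $x_*, x_{**}$ are irreducible. For $i = 0$ with $a_0 = 2$: same argument, $x_0$ has norm $2$ and we need two orthogonal norm-$1$ vectors, impossible. For $i$ with $a_i \ge 3$, the numerical inequality from the first paragraph combined with the tree-support argument of the second paragraph closes the case.

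The hard part will be making the "tree-support / maximal-distance vertex" argument fully rigorous for the D-type tree specifically, since the branching at $x_0$ (two norm-$2$ legs $x_*, x_{**}$ attached) is a genuine feature not present in the linear lens-space plumbings; one must check that a summand $y$ cannot "hide" in the branch in a way that makes the cross terms work out. I expect this to reduce, after a short case analysis on how $\mathrm{supp}(y)$ meets $\{x_*, x_{**}, x_0\}$, to the same leaf-peeling contradiction, but it is the step that needs care rather than the routine norm bookkeeping.
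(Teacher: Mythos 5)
Your proposal is a sketch rather than a proof: the decisive step --- the ``tree-support / maximal-distance vertex'' argument showing that the cross terms along edges cannot compensate the diagonal contributions unless $y$ or $z$ is a single basis vector --- is exactly the content of the proposition, and you leave it unproven, explicitly flagging the branching at $x_0$ as the part that ``needs care.'' The paper avoids this combinatorics entirely with a short reduction you do not use: writing $w=\sum w_jx_j$, the quantity $\braket{w}{x_i-w}$ equals $a_iw_i(1-w_i)-\sum_{j\ne i}a_jw_j^2$ plus terms not involving the $a_j$, hence is non-increasing in every $a_j$; so it suffices to prove negativity when all $a_j=2$. In that case $\Delta(p,q)$ is the standard $D_{n+3}$ lattice inside $\Z^{n+3}$, each $x_i$ maps to a vector of the form $\pm e_k\pm e_l$, and the only way to write such a vector as a sum of two vectors of $\Z^{n+3}$ with nonnegative pairing is as $\pm e_k$ plus $\pm e_l$, neither of which lies in $D_{n+3}$. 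Without some reduction of this kind, your leaf-peeling claim for the D-type tree is not obviously correct and would require a genuine case analysis around the branch vertex $x_0$, which you have not supplied.

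A secondary gap: your treatment of the norm-$2$ generators rests on the nonexistence of norm-$1$ vectors, and the justification you offer via $|H_1(P(p,q))|=4|q|>1$ does not work --- a norm-$1$ vector merely splits off a unimodular $\Z$ summand, which leaves the discriminant group unchanged, so there is no contradiction. The fact itself is true (for instance by the same monotonicity reduction, since $D_{n+3}$ has minimal norm $2$), but as written this case is also unestablished. Note that in the paper's argument no separate low-norm cases are needed: the one reduction handles every $i\in\{*,**,0,\dots,n\}$ uniformly.
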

\begin{proof}
Take some $w \in \Delta(p,q)$ with $w \neq 0, x_i$. Write
\begin{equation*}
    w = w_*x_* + w_{**}x_{**} + \sum_{j=0}^n w_j x_j,
\end{equation*}
and consider $\braket{w}{x_i - w}$. This is
\begin{equation*}
    a_i w_i(1 - w_i) - \sum_{j \neq i}a_j w_j^2 + (\text{terms not involving the }a_j).
\end{equation*}
Here the $a_j$ are defined in (\ref{eq:q/pContFrac}) when $j\ge0$, and $\braket{x_j}{x_j} = a_j$. We also let $a_{*}=a_{**}=2$. The above expression is non-increasing with respect to each $a_j$. Therefore, to show that $\braket{w}{x_i - w}$ is negative, it suffices to show this when all of the $a_j$ are 2. In this case, $\Delta(p,q)$ is isomorphic to the standard $D_{n+3}$ lattice, the lattice of elements of $\Z^{n+3} = \langle e_{-2}, e_{-1}, e_0, \dots, e_n \rangle$ the sum of whose coefficients is even. This isomorphism sends
\begin{align}
     x_* &\mapsto e_{-2} + e_{-1}\nonumber \\
     x_{**} &\mapsto -e_{-2} + e_{-1}\nonumber \\
     x_j &\mapsto -e_{j-1} + e_j & j\ge 0.\label{eq:Embedding}
\end{align}
If $\pm e_i \pm e_j \in \Z^{n+3}$ is written as a sum of two other elements of $\Z^{n+3}$ with nonnegative pairing, these must be $\pm e_i$ and $\pm e_j$. However, these are not in $D_{n+3}$. Therefore, the image of $x_i$ is an irreducible element of $D_{n+3}$, so $x_i$ is an irreducible element of $\Delta(p,q)$.
\end{proof}

\begin{cor}\label{indecomposable}
The lattice $\Delta(p,q)$ is indecomposable, namely, $\Delta(p,q)$ is not the direct sum of two nontrivial lattices.
\end{cor}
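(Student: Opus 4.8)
The plan is to exploit the fact (Proposition~\ref{basesirred}) that every vertex basis element $x_i$, $i\in\{*,**,0,\dots,n\}$, is irreducible, together with the observation that the pairing graph on the vertex basis is connected. Suppose for contradiction that $\Delta(p,q)=L_1\oplus L_2$ as an orthogonal direct sum with both $L_1,L_2$ nontrivial. For each $i$ write $x_i=y_i+z_i$ with $y_i\in L_1$ and $z_i\in L_2$. Since $L_1\perp L_2$ we have $\langle y_i,z_i\rangle=0\ge 0$, so irreducibility of $x_i$ forces $y_i=0$ or $z_i=0$; that is, each $x_i$ lies entirely in $L_1$ or entirely in $L_2$. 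This yields a partition of the vertex basis $B=\{x_*,x_{**},x_0,\dots,x_n\}$ into $B_k=B\cap L_k$, $k=1,2$.

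Next I would note that if $x_i\in B_1$ and $x_j\in B_2$ then $\langle x_i,x_j\rangle=0$, again because $L_1\perp L_2$. But the pairing graph $\wh G(B)$ is connected: from the inner product in~\eqref{eq:InnerProduct} one reads off the edges $x_*\sim x_0$, $x_{**}\sim x_0$, and $x_j\sim x_{j+1}$ for $0\le j\le n-1$, which already connect all of $B$. Hence it is impossible for both $B_1$ and $B_2$ to be nonempty. Since $B$ generates $\Delta(p,q)$, the nonempty one, say $B_1$, spans all of $\Delta(p,q)$, forcing $L_2=0$, contradicting nontriviality.

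There is essentially no serious obstacle here; the only point requiring a (trivial) check is the connectedness of $\wh G(B)$, which is immediate from Definition~\ref{def:DType}, and the reduction that an irreducible element cannot split nontrivially across an orthogonal decomposition, which uses $\langle y_i,z_i\rangle=0\ge 0$ directly in the definition of reducibility. I would present the argument in exactly the order above: (i) any orthogonal splitting sends each $x_i$ to a single summand, (ii) the vertex basis cannot be split because its pairing graph is connected, (iii) conclude one summand is everything and the other is zero.
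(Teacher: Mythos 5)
Your proposal is correct and follows exactly the paper's argument: irreducibility of the vertex basis elements (Proposition~\ref{basesirred}) forces each $x_i$ into a single orthogonal summand, and the connectedness of the pairing graph on the vertex basis then forces all of them into the same summand, leaving the other trivial. The only difference is that you spell out explicitly why irreducibility prevents an $x_i$ from splitting across the summands, a step the paper leaves implicit.
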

\begin{proof}
Suppose that $\Delta(p,q) \cong L_1 \oplus L_2$. Then each $x_i$, being irreducible, must be in either $L_1$ or $L_2$. However, any element of $L_1$ has zero pairing with any element of $L_2$. Since $x_i \sim x_{i+1}$, $\hat{G}(\{*, **, 0, ..., n\})$ is connected. This means that all of the $x_i$ are in the same part of the decomposition, and the other is trivial.
\end{proof}

This gives us some information about irreducibility in the D-type lattices, but we want something more complete. Another important class of irreducible elements is the intervals:
\begin{definition}
%An interval in $\Delta(p,q)$ is an either of the form $\sum_{j \in A}x_j$, where $A \subset \{*, 0, \dots, n\}$ is either a run of consecutive integers, or $*$ or $**$ followed by a run of consecutive integers starting at $0$.
For $A \subset \{*, **, 0, ..., n\}$, let $[A] := \sum_{i \in A} x_i$. We say $x \in \Delta(p,q)$ is an \emph{interval} if $x = [A]$ for some $A$ where $\wh{G}(A)$ is connected and $A$ does not contain both $*$ and $**$. 
\end{definition}

\begin{prop}
Intervals are irreducible.
\end{prop}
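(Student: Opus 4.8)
The plan is to imitate the argument used for the vertex basis elements $x_i$ in Proposition~\ref{basesirred}: reduce to the case where all the weights $a_j$ equal $2$, and then analyze the image of an interval under the standard embedding of $D_{n+3}$ into $\mathbb{Z}^{n+3}$. First I would fix an interval $x = [A] = \sum_{i\in A} x_i$ with $\widehat{G}(A)$ connected and $A$ not containing both $*$ and $**$, and take an arbitrary decomposition $x = w + (x - w)$ with $w \neq 0, x$. Writing $w = w_* x_* + w_{**} x_{**} + \sum_j w_j x_j$, I would expand $\langle w, x - w\rangle$ and observe, exactly as in Proposition~\ref{basesirred}, that the coefficient of each $a_j$ (with $j \in A$, so the $x_j$-component of $x$ is $1$) is $w_j(1 - w_j) \le 0$, while the coefficient of each $a_j$ with $j \notin A$ is $-w_j^2 \le 0$; hence $\langle w, x - w\rangle$ is non-increasing in every $a_j$, and it suffices to prove $\langle w, x - w\rangle < 0$ when all $a_j = 2$.

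In the all-$2$ case, $\Delta(p,q) \cong D_{n+3} \subset \mathbb{Z}^{n+3}$ via the map in \eqref{eq:Embedding}. Under this identification, an interval $[A]$ with $\widehat{G}(A)$ connected maps to a telescoping sum: if $A \subseteq \{0,1,\dots,n\}$ is an interval $\{a, a+1, \dots, b\}$ then $[A] \mapsto -e_{a-1} + e_b$; if $A$ involves exactly one of $*, **$ — say $*$ — together with a connected run $\{0,\dots,b\}$ of the $x_j$'s, then the $-e_{-2}$ and $+e_{-2}$ (from $x_*$ and $x_0$) do not both appear unless $0 \in A$, and one checks the image is again of the form $\pm e_i \pm e_j$ with $i \neq j$ (the connectedness of $\widehat{G}(A)$ together with the ``not both $*$ and $**$'' hypothesis is exactly what guarantees the cancellation leaves a vector of norm $2$ supported on two coordinates). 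So every interval maps to some $\pm e_i \pm e_j$ with $i \neq j$. Then I would invoke the key observation already recorded in the proof of Proposition~\ref{basesirred}: if $\pm e_i \pm e_j \in \mathbb{Z}^{n+3}$ is written as a sum of two nonzero elements of $\mathbb{Z}^{n+3}$ with nonnegative pairing, those summands must be $\pm e_i$ and $\pm e_j$, which lie outside $D_{n+3}$. Hence the image of $[A]$ is irreducible in $D_{n+3}$, so $[A]$ is irreducible in $\Delta(p,q)$.

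The one step that requires genuine care — and which I expect to be the main obstacle — is verifying that the image of an arbitrary connected interval $[A]$ really is of the form $\pm e_i \pm e_j$ with distinct $i, j$, rather than something of smaller support (like $\pm 2 e_i$, which cannot happen here) or a vector that fails to be primitive in the relevant sense. This is a finite case check organized by whether $A$ meets $\{*, **\}$ in zero or one element and by which end(s) of the run $\{0,\dots,n\}$ it reaches; connectedness of $\widehat{G}(A)$ forces $A \cap \{0,\dots,n\}$ to be a genuine interval of consecutive indices (possibly empty only if $A \subseteq \{*,**\}$, but then $|A| = 1$ since $A$ can't contain both, and $x_*$ or $x_{**}$ is itself a basis vector already handled). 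Once the bookkeeping is done the telescoping is immediate, and the rest of the argument is the same monotonicity-plus-$D_{n+3}$ trick as before.
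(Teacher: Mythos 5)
Your proposal is correct and follows essentially the same route as the paper: reduce to the case $a_i=2$ via the monotonicity of $\langle w,x-w\rangle$ in the $a_j$ (valid precisely because the coefficients of an interval are $0$ or $1$), observe that under the embedding \eqref{eq:Embedding} an interval telescopes to $\pm e_i\pm e_j$, and reuse the $D_{n+3}$ argument from Proposition~\ref{basesirred}. The case check you flag as the delicate step is exactly the short verification implicit in the paper's proof, and your sketch of it is accurate.
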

\begin{proof}
As in the proof of Proposition~\ref{basesirred} we may assume that all of the $a_i=2$, and consider the image of an interval $x$ in $\Z^{n+3}$ under the embedding (\ref{eq:Embedding}). (Note that the reduction to the case $a_i = 2$ only works because all the coefficients $c_i$ of $x= \sum c_i x_i$ are $0$ or $\pm 1$). Since $x$ is an interval, the image of $x$ is again of the form $\pm e_i \pm e_j$, which is irreducible as in Proposition~\ref{basesirred}.
\end{proof}

For linear lattices, this is essentially the end of the story - Greene proves that every irreducible is either an interval or the negation of an interval. The situation here is somewhat richer, but is similar in spirit: every irreducible is either an interval, or can be obtained from one by applying some involution of the lattice. To prove this, we first need to know a few involutions of $\Delta(p,q)$.

\begin{definition}\label{defn:Reflection}
Let $m$ be the smallest index for which $a_m \ge 3$. For any $0 \le j \le m$, let $y_j = x_* + x_{**} + 2\sum_{0 \le i < j} x_i$, and let $\tau_j: \Delta(p,q) \to \Delta(p,q)$ be the reflection in the subspace spanned by $y_0, \dots, y_j$. More explicitly, $\tau_j$ acts as follows:
\begin{equation*}
    \tau_j(x_i) = \begin{cases}
    x_{**} & i = * \\
    x_* & i = ** \\
    x_i & 0 \le i < j \\
    -x_*-x_{**} - 2x_0 - \cdots - 2x_{j-1} - x_j & i = j \\
    -x_i & i > j
    \end{cases}
\end{equation*}
\end{definition}

\begin{lemma} \label{taubasic}
Each $\tau_j$ satisfies $\braket{\tau_j(x)}{\tau_j(y)} = \braket{x}{y}$ and $\tau_j^2(x) = x$. Furthermore, for any $v, w \in \Delta(p,q)$, $\braket{v}{w} \equiv \braket{\tau_j(v)}{w} \pmod2$.
\end{lemma}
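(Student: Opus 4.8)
The plan is to recognize $\tau_j$ as the orthogonal reflection of $\Delta(p,q)\otimes\mathbb{Q}$ in the subspace $V=\operatorname{span}_{\mathbb{Q}}(y_0,\dots,y_j)$. Since $\Delta(p,q)$ is positive definite — it equals $(H_2(X),-Q_X)$, which is positive definite by Lemma~\ref{lem:QXdefinite} — the form is nondegenerate, so $\Delta(p,q)\otimes\mathbb{Q}=V\oplus V^{\perp}$ as an orthogonal direct sum; any linear map acting as $+1$ on $V$ and $-1$ on $V^{\perp}$ is then automatically an isometric involution. Thus it suffices to check (i) that $\tau_j$ fixes $V$ pointwise, and (ii) that $\tau_j$ acts as $-1$ on $V^{\perp}$.

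For (i), one computes directly from Definition~\ref{defn:Reflection} that $\tau_j(y_i)=y_i$ for $0\le i\le j$: writing $y_i=x_*+x_{**}+2\sum_{\ell<i}x_\ell$ and applying $\tau_j$, the swap $x_*\leftrightarrow x_{**}$ fixes $x_*+x_{**}$, while each $x_\ell$ with $\ell<i\le j$ satisfies $\tau_j(x_\ell)=x_\ell$; since $y_0,\dots,y_j$ span $V$ this gives $\tau_j|_V=\operatorname{Id}$. For (ii), I would exhibit the explicit basis
\[
x_*-x_{**},\qquad 2x_j+y_j,\qquad x_{j+1},\ \dots,\ x_n
\]
of $V^{\perp}$: it consists of $n+2-j=\operatorname{rank}\Delta(p,q)-(j+1)=\dim V^{\perp}$ manifestly independent vectors, so it is enough to check that each is orthogonal to $V$ and is negated by $\tau_j$. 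Orthogonality to $V$ follows from $\langle\,\cdot\,,y_0\rangle$ and $\langle\,\cdot\,,x_0\rangle,\dots,\langle\,\cdot\,,x_{j-1}\rangle$ all vanishing, a short computation from \eqref{eq:InnerProduct} that uses $a_0=\dots=a_{j-1}=2$ (valid since $j\le m$, with $m$ the least index for which $a_m\ge3$). That $\tau_j$ negates these is immediate: $\tau_j(x_*-x_{**})=x_{**}-x_*$, $\tau_j(x_a)=-x_a$ for $a>j$, and, using $\tau_j(x_j)=-y_j-x_j$ together with (i), $\tau_j(2x_j+y_j)=2(-y_j-x_j)+y_j=-(2x_j+y_j)$. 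This identifies $\tau_j$ with the reflection in $V$, proving $\braket{\tau_j(x)}{\tau_j(y)}=\braket{x}{y}$ and $\tau_j^2=\operatorname{Id}$. (Alternatively, one may simply verify $\braket{\tau_j(x_a)}{\tau_j(x_b)}=\braket{x_a}{x_b}$ and $\tau_j^2(x_a)=x_a$ one vertex-basis element at a time; every case not involving an index in $\{*,**,j-1,j,j+1\}$ reads $0=0$.)

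For the congruence $\braket{v}{w}\equiv\braket{\tau_j(v)}{w}\pmod2$, bilinearity reduces it to showing that $x_a-\tau_j(x_a)$ has even pairing with every vertex-basis element, for each $a\in\{*,**,0,\dots,n\}$. From Definition~\ref{defn:Reflection}, $x_a-\tau_j(x_a)$ is $\pm(x_*-x_{**})$ for $a\in\{*,**\}$, is $0$ for $0\le a<j$, equals $(x_*+x_{**})+2(x_0+\dots+x_{j-1}+x_j)$ for $a=j$, and equals $2x_a$ for $a>j$. Every summand carrying an explicit factor $2$ pairs evenly with everything, so the claim collapses to the observation that $x_*+x_{**}$ and $x_*-x_{**}$ each pair evenly with every $x_i$: indeed $\braket{x_*\pm x_{**}}{x_*}=2$, $\braket{x_*\pm x_{**}}{x_{**}}=\pm2$, $\braket{x_*\pm x_{**}}{x_0}\in\{0,-2\}$, and $\braket{x_*\pm x_{**}}{x_i}=0$ for $i\ge1$. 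The argument is entirely routine; the one place that genuinely needs care — the "main obstacle," such as it is — is the bookkeeping for small $j$ (in particular $j=0$, where $V=\operatorname{span}(y_0)$ and the sums $\sum_{\ell<j}$ are empty) when verifying that the displayed set is indeed a basis of $V^{\perp}$.
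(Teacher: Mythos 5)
Your proof is correct and takes essentially the same route as the paper: the paper's own proof simply observes that $\tau_j$ is by construction a reflection, hence an isometric involution, and then checks the mod-$2$ congruence on basis elements, which is exactly your bilinearity reduction. Your explicit verification that the formulas in Definition~\ref{defn:Reflection} really do give the orthogonal reflection in $\operatorname{span}(y_0,\dots,y_j)$ (fixing that subspace, negating its orthogonal complement, using $a_0=\cdots=a_{j-1}=2$) is just a careful elaboration of what the paper treats as immediate from the definition.
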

\begin{proof}
Each $\tau_j$ is a reflection, which gives the first two properties. For the last property, it suffices to check that $\braket{\tau_j(v_i)}{v_k} \equiv \braket{v_i}{v_k}$ for all $i, j, k$, which is entirely straightforward.
\end{proof}

The following special case of Lemma~\ref{taubasic} will be used very heavily in what follows:

\begin{lemma} \label{parity}
If $x \in \Delta(p,q)$, then $\braket{x}{x_*} \equiv \braket{x}{x_{**}} \pmod2$.
\end{lemma}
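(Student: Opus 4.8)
The plan is to derive Lemma~\ref{parity} as an immediate consequence of Lemma~\ref{taubasic} applied to the reflection $\tau_0$. Recall from Definition~\ref{defn:Reflection} that $\tau_0$ is the reflection in the line spanned by $y_0 = x_* + x_{**}$; explicitly, $\tau_0$ swaps $x_*$ and $x_{**}$ and fixes $x_0,\dots,x_n$ (here $j=0$, so the ``$0 \le i < j$'' and ``$i > j$'' cases interact with the fact that $a_0$ may or may not be $\ge 3$, but in any case $\tau_0(x_*) = x_{**}$). The key identity is the last clause of Lemma~\ref{taubasic}: for any $v, w \in \Delta(p,q)$, $\braket{v}{w} \equiv \braket{\tau_0(v)}{w} \pmod 2$.

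First I would take $v = x$ (the arbitrary element of the statement) and $w = x_{**}$ in that congruence, giving $\braket{x}{x_{**}} \equiv \braket{\tau_0(x)}{x_{**}} \pmod 2$. Next I would use that $\tau_0$ is an isometry (first property of Lemma~\ref{taubasic}), so $\braket{\tau_0(x)}{x_{**}} = \braket{x}{\tau_0(x_{**})}$, since $\tau_0^2 = \id$. Finally, $\tau_0(x_{**}) = x_*$ by the explicit description of $\tau_j$ with $j=0$. Chaining these equalities gives $\braket{x}{x_{**}} \equiv \braket{x}{x_*} \pmod 2$, which is exactly the claim.

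The argument is essentially a one-line deduction, so there is no real obstacle; the only thing to be careful about is making sure the $j = 0$ case of the piecewise formula in Definition~\ref{defn:Reflection} genuinely gives $\tau_0(x_*) = x_{**}$ and $\tau_0(x_{**}) = x_*$ and fixes the remaining generators — this is visibly the case, since the ``$i = j$'' row with $j = 0$ reads $-x_* - x_{**} - x_0$ but the ``$i = *$'' and ``$i = **$'' rows take precedence for the indices $*$ and $**$. (Alternatively, one can sidestep this bookkeeping entirely and simply invoke Lemma~\ref{taubasic} for $\tau_0$ with the description ``reflection in $\mathbb{R}\langle x_* + x_{**}\rangle$'', for which the swap $x_* \leftrightarrow x_{**}$ is immediate.) One could also give a direct computational proof: writing $x = w_* x_* + w_{**} x_{**} + \sum_j w_j x_j$, one computes $\braket{x}{x_*} = 2w_* - w_0$ and $\braket{x}{x_{**}} = 2w_{**} - w_0$ using \eqref{eq:InnerProduct}, and these differ by $2(w_* - w_{**}) \in 2\mathbb{Z}$; but the reflection argument is cleaner and fits the surrounding narrative about involutions of $\Delta(p,q)$.
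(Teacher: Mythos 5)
Your proposal is correct and is essentially the paper's own argument: the paper proves the lemma in one line by invoking Lemma~\ref{taubasic} together with $x_{**} = \tau_0(x_*)$, which is exactly your chain $\braket{x}{x_{**}} \equiv \braket{\tau_0(x)}{x_{**}} = \braket{x}{\tau_0(x_{**})} = \braket{x}{x_*} \pmod 2$. The direct computation you mention at the end ($2w_* - w_0$ versus $2w_{**} - w_0$) is a fine alternative but not needed.
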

\begin{proof}
This follows from 
Lemma~\ref{taubasic} since $x_{**}=\tau_0(x_*)$.
\end{proof}

Since the $\tau_j$ preserves the pairings on $\Delta(p,q)$, $\tau_j(x)$ is irreducible whenever $x$ is. This gives some non-interval examples of irreducible elements of $\Delta(p,q)$. For example,
\begin{equation*}
    x_* + x_{**} + x_0 + \dots + x_i = -\tau_0(x_0 + \dots + x_i)
\end{equation*}
and (as long as $m \ge 2$), 
\begin{equation*}
    x_* + x_{**} + 2x_0 + 2x_1 + x_2 + \dots + x_i = -\tau_2(x_2 + \dots + x_i)
\end{equation*}
are both irreducible. However, these are essentially the only examples:

\begin{prop} \label{irred}
If $x \in \Delta(p,q)$ is any irreducible element, either $x$ or $-x$ is an interval, or there is some $j$ so that either $\tau_j(x)$ or $-\tau_j(x)$ is an interval.
\end{prop}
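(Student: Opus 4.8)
The plan is to reduce everything to the case where all $a_i = 2$, just as in the proofs of Propositions~\ref{basesirred} and the irreducibility of intervals, but now with more care because an arbitrary irreducible $x$ can have coefficients of absolute value larger than $1$. Write $x = w_* x_* + w_{**} x_{**} + \sum_{j=0}^n w_j x_j$. First I would establish a normalization lemma: by replacing $x$ with $\tau_j(x)$ for a suitable $j$ (and possibly with $-x$), one may assume that the coefficients $w_*, w_{**}$ and the leading block $w_0, \dots, w_{m-1}$ (where $m$ is the smallest index with $a_m \ge 3$, as in Definition~\ref{defn:Reflection}) are constrained to a small set of values. Concretely, $\tau_j$ sends $w_*$ to $-w_* + (\text{stuff})$ and doubles/reflects the entries before index $j$; the point is that applying the right $\tau_j$ lets us force $|w_*| \le 1$ and then, since $x$ is irreducible, pin down the relationship between $w_*, w_{**}$, and the $w_i$ for $i < m$. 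Here Lemma~\ref{parity} is crucial: $\langle x, x_*\rangle \equiv \langle x, x_{**}\rangle \pmod 2$ restricts which combinations can occur.

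Next, having normalized, I would argue that $x$ must in fact be supported on $\{*, **, 0, \dots, n\}$ with all coefficients in $\{0, 1\}$ (or, after applying $-1$, all in $\{0, -1\}$), i.e. $x$ is of the form $[A]$. The mechanism is the same ``$-Q_X$ is monotone in the $a_j$'' observation used in Proposition~\ref{basesirred}: if some coefficient $w_j$ has $|w_j| \ge 2$, or if the support fails to be connected, or if it contains both $*$ and $**$, then I want to exhibit a decomposition $x = u + v$ with $\langle u, v\rangle \ge 0$, contradicting irreducibility. To find such a decomposition it suffices to find one when all $a_i = 2$, because increasing the $a_i$ only decreases $\langle u, v \rangle$ — wait, that is the wrong direction, so the reduction has to be done the other way: one checks that $x$ irreducible in $\Delta(p,q)$ forces $x$ irreducible in the $a_i = 2$ lattice $D_{n+3}$ only after the normalization guarantees all coefficients are $0, \pm 1$, exactly as in the parenthetical remark in the proof that intervals are irreducible. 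So the real content is: the normalization step must already force the coefficients to be $0, \pm 1$; then the reduction to $D_{n+3}$ is legitimate, and in $D_{n+3}$ the irreducible elements are precisely $\pm e_i \pm e_j$, whose preimages under the embedding \eqref{eq:Embedding} are exactly intervals and their images under the $\tau_j$.

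Carrying this out, the classification of which $\pm e_i \pm e_j$ in $D_{n+3}$ pull back to intervals versus $\tau_j$-images of intervals is a bookkeeping exercise: $-e_{i-1} + e_j$ (for $i \le j$) is the interval $[\{i, \dots, j\}]$; $e_{i-1} + e_j$ with $i \le j$ arises from applying $\tau_{?}$, matching the two displayed examples before the proposition ($x_* + x_{**} + x_0 + \dots$ and $x_* + x_{**} + 2x_0 + 2x_1 + x_2 + \dots$); and signs/the two cases involving $e_{-2}, e_{-1}$ (which encode $x_*, x_{**}$) correspond to swapping $*$ and $**$, also realized by $\tau_0$. So every irreducible, once normalized, is $\pm$ an interval, hence the original $x$ is $\pm \tau_j(\text{interval})$ for some $j$ (with $\tau_j = \tau_0$ or even the identity allowed).

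I expect the main obstacle to be the normalization step: proving that by composing with some single $\tau_j$ (and possibly $-1$) one can always bring an arbitrary irreducible element into the $0, \pm 1$-coefficient range. The subtlety is that $\tau_j$ only reflects across the span of $y_0, \dots, y_j$ and these are only defined for $j \le m$, so if the ``large'' coefficients of $x$ occur at indices $\ge m$ one cannot fix them with a $\tau_j$ — one has to show instead that irreducibility directly forbids large coefficients there, presumably by the explicit decomposition argument. Balancing the two sub-cases (large coefficients in the $*/**/[0,m)$ block versus beyond it) and checking that $\langle u, v \rangle \ge 0$ in the constructed decompositions is where the bulk of the case analysis lives; everything after that is the routine $D_{n+3}$ dictionary above.
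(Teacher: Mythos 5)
Your outline handles only the easy half of the statement, and the hard half is exactly where your argument is circular. When all coefficients of $x$ are $0,\pm1$ the paper's proof (and yours) is short: up to sign the coefficients are all in $\{0,1\}$, the support is connected, and $x$ is either an interval or $-\tau_0$ of one; no $D_{n+3}$ dictionary is really needed for this, and in any case the monotonicity-in-$a_i$ trick transfers irreducibility only from the $a_i=2$ lattice to $\Delta(p,q)$, not in the direction you need (you noticed this yourself, but the fix you offer—``the normalization step must already force the coefficients to be $0,\pm1$''—is precisely the content of the proposition, not a prior reduction). The genuine gap is the case where some coefficient has absolute value at least $2$, e.g.\ elements like $x_*+x_{**}+2x_0+\cdots+2x_{j-1}+x_j+\cdots$. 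You propose to ``normalize by applying the right $\tau_j$'', but the $\tau_j$ exist only for $j\le m$ (Definition~\ref{defn:Reflection}), and whether the large coefficients sit below $m$ is not an assumption you may make: one must \emph{prove} from irreducibility that the large coefficients occupy an initial block on which every $a_i=2$ (hence $j<m$), that the largest such coefficient equals $2$, that $c_0=c_*+c_{**}$, and that the next coefficient is $1$—only then is $\tau_{j+1}$ available and only then does $-\tau_{j+1}(x)$ become an interval.

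The paper establishes exactly these facts by testing irreducibility against explicit vectors: the negative part of $x$ (to get all $c_i\ge 0$), $z=[B]$ and $w=x_{k+1}+\cdots+x_j$ (to force $0\in B$ and the coefficients to be non-increasing down to the last index $j$ with $c_j>1$), and finally the pair $w_0=x_0+\cdots+x_j$, $w_1=x_*+x_{**}+w_0$, whose pairings $\braket{w_0}{x-w_0}$ and $\braket{w_1}{x-w_1}$ cannot both be negative unless $a_i=2$ for $i\le j$, $c_j=2$, $c_0=c_*+c_{**}$, and $c_{j+1}=1$. None of this appears in your proposal; you explicitly defer it (``where the bulk of the case analysis lives''), so as written the argument does not prove the proposition. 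If you carry out that case analysis you will have essentially reproduced the paper's proof, and the $D_{n+3}$ reduction becomes unnecessary.
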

\begin{proof}
Suppose $x = \sum c_i x_i$ is irreducible. Replacing $x$ by $-x$ if necessary, we can assume that $c_0 \ge 0$ and there is at least one $c_i>0$. Let $A\subset \{*,**,0,\dots, n\}$ be the set of indices $i$ with $c_i < 0$, and let $y = \sum_{i \in A} c_i x_i$. Clearly, $\braket{y}{x-y}$ is a sum of terms of the form $-c_ic_{i'}$ with $c_i<0$, $c_{i'}\ge0$, so is nonnegative. Since $x$ is irreducible, then, $y = 0$, so actually all of the $c_i$ are nonnegative.

Now, let $B \subset \{*,**,0,\dots, n\}$ be the set of indices $i$ with $c_i \neq 0$. The pairing graph $\wh{G}(B)$ must be connected, since otherwise $x$ could be written as a sum of two nonzero elements that pair to zero.

If $c_i \in \{0,1\}$ for all $i$, $x = [B]$. Since $\wh{G}(B)$ is connected, either $x$ is  already an interval, or $B = \{*,**,0,\dots,j\}$ for some $j$, but then $-\tau_0(x) = x_0+\cdots+x_j$ is an interval.

If $c_i>1$ for some $i$, let $z = [B]$, and consider the pairing $\braket{z}{x-z}$. If $0 \not \in B$, then let $j_0 = \min B$ and $j_1 = \max B$, so
\begin{equation*}
    \braket{z}{x-z} = \left(\sum_{j_0 \le i \le j_1} (a_i - 2)(c_i - 1)\right) + c_{j_0} + c_{j_1} - 2.
\end{equation*}
Since $c_i \ge 1$ for each $i \in B$, and $a_i \ge 2$ for each $i$, this is nonnegative, contradicting the irreducibility of $x$. Therefore, $0 \in B$, so $c_0 \ge 1$. Since $\wh{G}(B)$ is connected, then, $c_i \ge 1$ for all $0 \le i \le \max B$. 

Now, let $j$ be the largest index with $c_j>1$. Suppose there were some largest $k$, $0 \le k < j$, with $c_k < c_j$, and let $w = x_{k+1} + \cdots + x_j$. Then
\begin{equation*}
    \braket{w}{x-w} = \left(\sum_{k+1 \le i \le j} (a_i - 2)(c_i - 1)\right) + \left(c_{k+1} - c_k\right) + \left(c_j - c_{j+1}\right)-2.
\end{equation*}
Since all of the $c_i$ are at least $1$, each of these terms is nonnegative, so this cannot happen. Therefore, $c_i\ge c_j > 1$ for any $0\le i<j$.

Finally, let $w_0 = x_0 + \cdots + x_j$, and let $w_1 = x_* + x_{**} + w_0$. Consider:
\begin{align*}
    \braket{w_0}{x-w_0} &= \left(\sum_{0 \le i \le j}(a_i - 2)(c_i - 1)\right) - \left(c_* + c_{**} - c_0\right) + \left(c_j  - 2\right) - c_{j+1}, \\
    \braket{w_1}{x-w_1} &= \left(\sum_{0 \le i \le j}(a_i - 2)(c_i - 1)\right) + \left(c_* + c_{**} - c_0\right) + \left(c_j - 2\right) - c_{j+1}.
\end{align*}
The first term of each of these sums is nonnegative and, for at least one of these sums, the second term is nonnegative. Moreover, $c_j \ge 2$, so the third term is also nonnegative. The last term $c_{j+1}$ is $0$ or $1$. Therefore, for both of these be negative, the first three terms must be zero, and the last one must be negative. Since $c_i > 1$ for $i \le j$, the first term can only be zero when $a_i = 2$ for $0 \le i \le j$, so $j < m$, where $m$ is as in Definition~\ref{defn:Reflection}. For the remaining terms to be zero or negative, $c_j = 2$, $c_0 = c_* + c_{**}$, and $c_{j+1} = 1$.

Since $j < m$, we can consider $-\tau_{j+1}(x) = \sum c_i' x_i$. For $i > j$, $c_i' = c_i \le 1$, and $c_j' = 2c_{j+1} - c_j = 0$. Note that $c'_{j+1}=c_{j+1}=1\ne0$. So $c_i'$ vanishes for $i < j$ as well, since $-\tau_{j+1}(x)$ is still irreducible. Therefore, $-\tau_{j+1}(x) = \sum_{i > j} c_i x_i$, which is an interval.
\end{proof}

%\begin{cor}\label{cor:SmallCoeff}
%Suppose that $x=\sum c_ix_i\in \Delta(p,q)$ is irreducible. If $|x_i|\ge3$, then $|c_i|\le1$.
%\end{cor}

We will also want to know which irreducible elements of $\Delta(p,q)$ are breakable. Since negation and the $\tau_j$ preserve breakability, it will suffice to know this for intervals:
\begin{lemma}\label{lem:TwoIndBr}
    An interval $x = [A]$ is breakable if there are at least two indices $i,k \in A$ with $a_i,a_k \ge 3$.
\end{lemma}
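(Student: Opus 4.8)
The plan is to exhibit an explicit decomposition $x = u + v$ with $|u|, |v| \ge 3$ and $\langle u, v\rangle = -1$ witnessing breakability. Let $i < k$ be the two indices in $A$ with $a_i, a_k \ge 3$; we may assume $i$ is the smallest such and $k$ the largest such, so that $a_j = 2$ for all $j \in A$ with $i < j < k$. The natural attempt is to split the interval $[A]$ at some point between $i$ and $k$: choose an index $\ell$ with $i \le \ell < k$ such that both $\ell$ and $\ell + 1$ lie in $A$ (such an $\ell$ exists because $\wh{G}(A)$ is connected, so all integer indices between the extremes of $A \cap \{0,\dots,n\}$ are present, and $i, k$ are both such indices), and set $u = \sum_{j \in A,\ j \le \ell} x_j$ and $v = \sum_{j \in A,\ j > \ell} x_j$. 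Then $x = u + v$, and since $x_\ell$ and $x_{\ell+1}$ are the only basis vectors appearing on opposite sides that pair nontrivially, $\langle u, v \rangle = \langle x_\ell, x_{\ell+1}\rangle = -1$.

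It remains to arrange that $|u| \ge 3$ and $|v| \ge 3$; this is where one must use the hypothesis that $a_i, a_k \ge 3$ rather than just having two indices. First I would handle the generic case where $A \subseteq \{0,\dots,n\}$ (so $*, ** \notin A$). Here, for any sub-interval $B$ of consecutive indices, $|[B]| = \sum_{j\in B} a_j - 2(|B| - 1) = 2 + \sum_{j \in B}(a_j - 2)$; in particular $|[B]| \ge 3$ as soon as $B$ contains at least one index with $a_j \ge 3$. So if I can choose the split point $\ell$ with $i \le \ell < k$ so that $i$ lies in the left piece (automatic) and $k$ lies in the right piece (also automatic since $\ell < k$), then $u$ contains the index $i$ and $v$ contains the index $k$, giving $|u| \ge 3$ and $|v| \ge 3$. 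This settles the case $*, ** \notin A$.

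For the remaining case, $A$ contains exactly one of $*, **$ (by definition of interval it cannot contain both), say $* \in A$; since $\wh{G}(A)$ is connected and $x_*$ only pairs with $x_0$, we have $0 \in A$ and $A = \{*\} \cup \{0, 1, \dots, r\}$ for some $r$, with $i, k \in \{0,\dots,r\}$. If $i \ge 1$, I can split at any $\ell$ with $i \le \ell < k$ among the indices $\ge 0$; the right piece $v$ contains $k$, hence $|v|\ge 3$, and the left piece $u$ contains $\{*, 0, \dots, \ell\}$ with $\ell \ge i \ge 1$, and a direct computation (using $a_* = 2$ and $\langle x_*, x_0\rangle = -1$) gives $|u| = a_* + \sum_{j=0}^{\ell} a_j - 2\ell = 2 + \sum_{j=0}^{\ell}(a_j - 2) \ge 3$ since $a_i \ge 3$ and $i \le \ell$. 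If instead $i = 0$, then since $a_0 \ge 3$ the piece $u = x_* + x_0 + \cdots + x_\ell$ already has $|u| = 2 + (a_0 - 2) + \sum_{j=1}^\ell (a_j-2) \ge 3$ for any choice $0 \le \ell < k$, and again $v \ni k$ forces $|v| \ge 3$; the only subtlety is ensuring $k \ge 1$ so that such an $\ell$ exists, which holds because $i = 0 \ne k$. The main obstacle, such as it is, is the bookkeeping in this last case to confirm that the split point can always be chosen strictly between the two heavy indices while keeping both $*$ (or $**$) on the correct side; but this is forced by connectivity of $\wh{G}(A)$, so no genuine difficulty arises.
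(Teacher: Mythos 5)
Your proposal is correct and is essentially the paper's own argument: split $A$ at an index $\ell$ (the paper's $J$) with $i \le \ell < k$, so each piece contains one of the two heavy indices and hence has norm at least $3$ by the formula $|[B]| = 2 + \sum_{j\in B}(a_j-2)$, while the pieces pair to $\langle x_\ell, x_{\ell+1}\rangle = -1$. (Your aside that $a_j = 2$ for all $j$ strictly between $i$ and $k$ is not justified when $A$ has three or more heavy indices, but it is never used, so the proof stands.)
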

\begin{proof}
Suppose that $i, k \in A$ with $a_i, a_k \ge 3$. Fix some index $J$ with $i \le J < k$, and let $T_0 = \{j \in A | j \le J\}$ and $T_1 = \{j \in A | J < j\}$. Since $i \in T_0$ and $k \in T_1$, $|[T_0]| \ge 3$ and $|[T_1]| \ge 3$. Also, since $x$ is an interval, $j \in A$ for all $i \le j \le k$, so $J \in T_0$ and $J+1 \in T_1$. Therefore, $\braket{[T_0]}{[T_1]} = -1$, and $x$ is breakable.
\end{proof}

\begin{definition}\label{defn:z_j}
When $[A_j]$ is unbreakable and has norm at least $3$, let $z_j \in A_j$ be the unique element with $|z_j| \ge 3$.
\end{definition}

Finally, let us determine when two D-type lattices are isomorphic. %We again follow the methods of \cite[Proposition~3.6]{greene:LSRP}  to prove the following proposition. The discussion before Proposition~3.6 in \cite{greene:LSRP} may help the reader better understand the definition of $I(y)$ in the proof.
\begin{prop}\label{pp}
If $\Delta(p, q) \cong \Delta(p', q')$, then $p = p'$ and $q = q'$. 
\end{prop}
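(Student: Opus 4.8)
The plan is to prove the stronger statement that a vertex basis of $\Delta(p,q)$ is unique up to a lattice automorphism and relabelling within $\{x_*,x_{**}\}$; equivalently, the isomorphism type of $\Delta(p,q)$ determines the tuple $(a_0,\dots,a_n)$. Granting this, if $f\colon\Delta(p,q)\to\Delta(p',q')$ is an isomorphism then $f^{-1}$ carries the vertex basis of $\Delta(p',q')$ to a vertex basis of $\Delta(p,q)$ with coefficients $a'_0,\dots,a'_{n'}$, forcing $n=n'$ and $a_i=a'_i$ for all $i$. Since by~\eqref{eq:q/pContFrac} the tuple $(a_0-1,a_1,\dots,a_n)$ is the unique negative continued fraction expansion with all entries $\ge 2$ of the non-integral positive rational $-q/p$, and this rational is already in lowest terms (as $\gcd(p,q)=1$ and $p>1$), it determines $p$ and $q$, so $(p,q)=(p',q')$. (A quick orienting remark: $f$ also preserves rank and the determinant of a Gram matrix, and $\det\Delta(p,q)=|\det Q_{X(p,q)}|=|H_1(P(p,q))|=4|q|$, which on its own already gives $n=n'$ and $q=q'$; thus the real work is recovering $p$.)

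To recover $(a_0,\dots,a_n)$ I would use that any isomorphism of lattices preserves irreducibility and breakability, together with the classification of irreducibles in Proposition~\ref{irred}, the criterion for breakability of intervals in Lemma~\ref{lem:TwoIndBr}, and indecomposability (Corollary~\ref{indecomposable}). The heart of the argument is to locate the pair $\{x_*,x_{**}\}$ intrinsically: these are a pair of orthogonal norm‑$2$ elements (hence automatically irreducible, since a lattice of minimal norm $2$ has no norm‑$1$ vectors), and one wants a characterization visible to every isomorphism — for instance singling them out among norm‑$2$ irreducibles via the property of Lemma~\ref{parity} (every element has congruent pairings with $x_*$ and with $x_{**}$) together with the fact that $\{x_*,x_{**}\}$ is an orbit of the distinguished involution $\tau_0$. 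Once $\{x_*,x_{**}\}$ is pinned down, $a_0$ equals the minimal norm of an irreducible pairing nontrivially with both $x_*$ and $x_{**}$ (one checks from Proposition~\ref{irred} that the intervals $[\{0,1,\dots,j\}]$ and their $\tau$‑images realize this minimum and nothing smaller does). Continuing in the same spirit, once $x_0,\dots,x_i$ have been identified, $a_{i+1}$ is the minimal norm of an irreducible orthogonal to $x_*,x_{**},x_0,\dots,x_{i-1}$ that pairs nontrivially with $x_i$; the minimizing vector need not literally be $x_{i+1}$ (when $a_{i+2}=2$ the interval $x_{i+1}+x_{i+2}$ has the same norm), but the minimal norm is $a_{i+1}$ regardless, which is all that is needed.

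The main obstacle is exactly this: giving the intrinsic characterization of $\{x_*,x_{**}\}$ and then controlling the abundant supply of competing irreducibles — all the intervals $[A]$ and their images under the involutions $\tau_j$ — tightly enough that the inductive norm–minimization provably tracks the chain $x_0,\dots,x_n$ and cannot be realized by some other connected sub‑configuration of irreducibles. I expect this to come down to a somewhat intricate bookkeeping (organized around where the degree‑$3$ vertex of the relevant intersection graph can go, and around which irreducibles are breakable) rather than a single hard idea, and to occupy the bulk of the proof.
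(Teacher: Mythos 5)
Your overall reduction is the same as the paper's: recover the ordered tuple of vertex norms $(a_0,\dots,a_n)$ (equivalently the weighted tree) from the isomorphism type of the lattice, then read off $(p,q)$ from the continued fraction; the determinant remark correctly recovers $4|q|$ and the rank recovers $n$. The genuine gap is exactly where you place it, and your proposed fix does not work as stated. First, the parity property of Lemma~\ref{parity} does \emph{not} single out $\{x_*,x_{**}\}$ among orthogonal pairs of norm-$2$ vectors: for example, when $a_0=a_1=2$ and $a_2\ge 3$, the pair $u=x_1$, $v=x_*+x_{**}+2x_0+x_1=-\tau_1(x_1)$ consists of two orthogonal norm-$2$ vectors with $\langle z,u\rangle\equiv\langle z,v\rangle\pmod 2$ for every $z$ (one checks $u-v=-x_*-x_{**}-2x_0$ pairs evenly with every vertex basis element), and likewise $\{x_*+x_0,\,x_{**}+x_0\}$ is such a pair when $a_0=2$. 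Second, appealing to ``the orbit of the distinguished involution $\tau_0$'' is circular: $\tau_0$ is defined in Definition~\ref{defn:Reflection} in terms of the vertex basis, so it is not available to an abstract isomorphism until after the basis has been recovered. Consequently the inductive norm-minimization is a procedure depending on choices (of the initial pair and of each minimizer), and the well-definedness of its output — that every admissible run produces the same norm sequence, or equivalently that all choices are related by lattice automorphisms — is precisely the content of the proposition; deferring it as ``intricate bookkeeping'' leaves the proof unwritten.

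For contrast, the paper's proof never locates $x_*$ and $x_{**}$ at all. It passes to the sublattice $R$ generated by the norm-$2$ vectors, identifies the vertices of weight $\ge 3$ canonically as unbreakable irreducibles modulo $R$ (using Proposition~\ref{irred}, Lemma~\ref{lem:TwoIndBr} and Corollary~\ref{uniqueheavy}), identifies each block of weight-$2$ vertices as an indecomposable summand of $R$ whose isomorphism type ($A_{n_w}$ versus $D_{n_w}$) records whether the block contains $x_*,x_{**}$, and then rebuilds the tree from a bipartite incidence graph between these two kinds of data, with a ``special edge'' device to handle the configuration $a_0=2$, $a_1\ge 3$ (where the $A_3$-block spanned by $x_*,x_{**},x_0$ attaches to $x_1$ at its middle vertex rather than at an end) — a case any reconstruction scheme, including yours, must treat explicitly. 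If you want to salvage your chain-walking approach, you would need an honest intrinsic characterization replacing the parity/$\tau_0$ criterion and a proof that the greedy recovery is choice-independent; the paper's quotient-by-$R$ argument is the cleaner route to exactly that invariance.
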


\begin{proof}
Suppose $L$ is a lattice isomorphic to $\Delta(p,q)$ for some $p,q$. To recover $p$ and $q$ from $L$, it suffices to recover the ordered sequence of norms $(|x_0|,|x_1|,\dots,|x_n|)$ of the vertex basis, or equivalently to recover the weighted tree $\mathcal T$ corresponding to $\Delta(p,q)$. As in the proof of \cite[Proposition~3.6]{greene:LSRP}, we will first recover the vertices with weight at least $3$, and then fill in the vertices of weight $2$.  Let $R \subset L$ be the sublattice generated by the vectors of norm $2$ in $L$. It follows from Proposition~\ref{irred} that $R$ is generated by vertices with weight $2$.

Let $V_0$ be the set of irreducible, unbreakable elements of $L$ of norm at least $3$, and let $V$ be the quotient of $V_0$ by the equivalence $v \sim u$ whenever either $v - u \in R$ or $v + u \in R$. By Proposition~\ref{irred} and Lemma~\ref{lem:TwoIndBr}, each irreducible, unbreakable element of $L$ corresponds (up to sign and applications of the $\tau_j$) to an interval containing a unique vertex of weight at least $3$. We claim that the set $V$ is in bijection with the vertices in $\mathcal T$ of weight at least $3$. In fact,
if $v-u$ or $v+u$ can be written as a sum of vectors of norm $2$, the corresponding high weight vertices must be the same: if we write $v = \sum v_ix_i$ and $\pm \tau_j(v) = \sum v_i'x_i$ (where $\pm\tau_j(v)$ is an interval), then the coefficient $v'_i$ is nonzero for exactly one $i$ with $|x_i| \ge 3$. Since applying any $\tau_j$ does not change the coefficient $v_i$ when $|x_i| \ge 3$ except by a sign, the original coefficient $v_i$ is also nonzero (in fact, $\pm 1$) for exactly one $i$ with $|x_i| \ge 3$. Similarly, there is exactly one $j$ with $|x_j| \ge 3$ with $u_j$ nonzero and equal to $\pm 1$, where $u = \sum u_jx_j$. Therefore, one of $v-u$ or $v+u$ can be written as a sum of vertices of weight $2$ if and only if the high weight vertices are the same. This finishes the proof of the claim.
Also, if $v \in L$ is a representative of some class in $V$, then $|v|$ is the weight of the corresponding vertex.

Let $W$ be the set of indecomposable sublattices of $R$. Each element of $W$ corresponds to a connected subgraph of $\mathcal T$, all of whose vertices have weight $2$. As a lattice, each element $w \in W$ is isomorphic to either the root lattice $A_{n_w}$ for some $n_w$, or to $D_{n_w}$ if the corresponding subgraph contains all of $x_*,x_{**},x_0,$ and $x_1$. This last case happens for at most one $w \in W$. Now, form a bipartite graph $\mathcal B$ as follows: the vertex set is $V \cup W$, and there is an edge between $v \in V$ and $w \in W$ if, for some representative $\tilde{v} \in L$ of $v$ and some element $\tilde{w} \in w$, $\braket{\tilde{v}}{\tilde{w}} \neq 0$. This happens if and only if there is an edge in $\mathcal T$ between the vertex corresponding to $v$ and some vertex of the subgraph corresponding to $w$. In $\mathcal B$, each $w \in W$ neighbors at most two vertices $v \in V$, or at most one if the corresponding sublattice is isomorphic to $D_{n_w}$. 

We say an edge connecting $v\in V$ and $w\in W$ is special, if $w$ is isomorphic to $A_3$, and there exists a representative $\tilde{v}$ of $v$ such that there exist exactly $4$ vectors $\tilde w\in w$ satisfying $|\tilde w|=2$ and $\braket{\tilde v}{\tilde w}=-1$. It is easy to check that an edge is special if and only if $a_0=2$, $a_1>2$, $w$ is generated by $x_*,x_{**},x_0$, and $v$ corresponds to $x_1$.

The graph $\mathcal B$ contains all of the information about how the blocks of vertices of weight $2$ fit together with the other vertices in $\mathcal T$. Since the part of $\mathcal T$ corresponding to one of those blocks must be the Dynkin diagram of the corresponding sublattice, reconstructing $\mathcal T$ is straightforward. The vertex set of $\mathcal T$ is a copy of $V$, together with a copy of $V_w$ for each $w \in W$, where $V_w$ is a set containing $\operatorname{rank}(w)$ vertices. The edges in $\mathcal T$ are constructed as follows.
If the sublattice $w$ is isomorphic to $A_{n_w}$, connect the vertices in $V_w$ together in a path $P_w$. For each non-special edge in $\mathcal B$ connecting $v$ and $w$, connect $v$ to one of the ends of $P_w$. For each $w$, at most two vertices in $V$ need to be connected in this way, so they can always be connected so that no end of $P_w$ has valency $3$ in $\mathcal T$. If there is a special edge between $v$ and $w$, connect the middle vertex of $P_w$ to $v$.
If the sublattice $w$ is isomorphic to $D_{n_w}$, connect $n_w - 2$ of the vertices in $V_w$ into a path, connect the remaining two to one end of the path, and if there is a $v \in V$ neighboring $w$ in $\mathcal B$, connect it to the other end. 
Finally, for $v,v' \in V$, put an edge between them if there are representative $\tilde{v}, \tilde{v}'$ with $\braket{\tilde{v}}{\tilde{v}'} \neq 0$ and $v,v'$ do not have a common neighbor in $\mathcal B$. For the weights, put a $2$ on any vertex in $V_w$, and $|\tilde{v}|$ on a vertex $v \in V$, where $\tilde{v}$ is a representative of the class $v$.
\end{proof}

\section{Changemaker lattices}\label{sec:Changemaker}

According to Theorem~\ref{changemakerlatticeembedding}, whenever $P(p, q)$ comes from positive integer surgery on a knot, $\Delta(p,q)$ is isomorphic to the orthogonal complement $(\sigma)^\perp$ for some changemaker vector $\sigma \in \Z^{n+4}$. A lattice is called a {\it changemaker lattice} if it is isomorphic to the orthogonal complement of a changemaker vector.
In this section, we will prove some basic structural results about D-type lattices which are also changemaker lattices.

Write $(e_0,e_1, \dots, e_{n+3})$ for the orthonormal basis of $\Z^{n+4}$, and write $\sigma = \sum_i \sigma_i e_i$. 
Since $\Delta(p,q)$ is indecomposable (Corollary~\ref{indecomposable}), $\sigma_0\ne0$, otherwise $(\sigma)^\perp$ would have a direct summand $\mathbb Z$. So $\sigma_0=1$.

We will need several results from \cite[Section~3]{greene:LSRP} about changermaker lattices:

\begin{definition}[Definition~3.11 in \cite{greene:LSRP}] \label{stbasis}
The {\it standard basis} of $(\sigma)^\perp$ is the collection $S = \{v_1, \dots, v_{n+3}\}$, where
\begin{equation*}
    v_j = \left(2e_0 + \sum_{i = 1}^{j - 1} e_i\right) - e_j
\end{equation*}
whenever $\sigma_j = 1 + \sigma_0 + \cdots + \sigma_{j-1}$, and
\begin{equation*}
    v_j = \left(\sum_{i \in A} e_i\right) - e_j
\end{equation*}
whenever $\sigma_j = \sum_{i \in A} \sigma_i$, with $A \subset \{0, \dots, j-1\}$ chosen to maximize the quantity $\sum_{i \in A} 2^i$.  A vector $v_j \in S$ is called \emph{tight} in the first case, \emph{just right} in the second case as long as $i+1 \in A$ whenever $i < j-1$ and $i \in A$, and \emph{gappy} if there is some index $i$ with $i \in A$, $i < j-1$, and $i+1 \not \in A$. In this situation, call $i$ a \emph{gappy~index} for $v_j$.
\end{definition}

\begin{definition}
For $v \in \Z^{n+4}$, $\supp v = \{i | \braket{e_i}{v} \neq 0\}$ and $\supp^+ v = \{i | \braket{e_i}{v} > 0\}$.
\end{definition}

\begin{lemma}[Lemma~3.12~(3) in \cite{greene:LSRP}] \label{gappy3}
If $|v_{k+1}|=2$, then $k$ is not a gappy index for any $v_j \in S$.
\end{lemma}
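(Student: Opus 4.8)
The plan is to reason about what the hypothesis $|v_{k+1}| = 2$ forces about the coordinate $\sigma_{k+1}$ and then derive a contradiction from the definition of "gappy". First I would note that since $v_{k+1} = \left(\sum_{i\in A}e_i\right) - e_{k+1}$ with $A\subset\{0,\dots,k\}$, having norm $|v_{k+1}| = |A| + 1 = 2$ forces $|A| = 1$, say $A = \{m\}$ for a single index $m \le k$. By the standard-basis construction this means $\sigma_{k+1} = \sigma_m$, and the maximality of $\sum_{i\in A}2^i$ picks out the largest index $m$ with $\sigma_m = \sigma_{k+1}$ among $\{0,\dots,k\}$. (Also $v_{k+1}$ cannot be tight here, since a tight vector has norm $\ge 3$ when $k+1 \ge 2$, and the $k=0$ case is handled separately or vacuously since there is no gappy index below $0$.)

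Next, suppose toward a contradiction that $k$ is a gappy index for some $v_j\in S$, so $j > k+1$, $k\in B$ (where $v_j = \left(\sum_{i\in B}e_i\right) - e_j$), and $k+1\notin B$. I would then use that $\sigma_j = \sum_{i\in B}\sigma_i$, and exploit the maximality defining $B$: since $k\in B$ but $k+1\notin B$, replacing the contribution is not possible, but here is the key point — because $\sigma_{k+1} = \sigma_m$ with $m\le k$ and $m$ maximal with this property, and because $\sigma$ is nondecreasing, we have $\sigma_m \le \sigma_{k+1}$, and in fact the subset-sum $\sum_{i\in A}\sigma_i$ realizing $\sigma_j$ could be modified. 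Concretely, since $k\in B$ and (by connectedness / the structure of just-right-vs-gappy) the indices of $B$ below $j$ interact with $k+1$, I would show that swapping $\{m, k\}$-type contributions (or simply: since $\sigma_{k+1}=\sigma_m$ and $m\le k < k+1$, we may replace whatever role $m$ or the gap plays) produces a subset $B'\subset\{0,\dots,j-1\}$ with $\sum_{i\in B'}\sigma_i = \sigma_j$ and strictly larger $\sum_{i\in B'}2^i$, contradicting the maximality in the definition of $v_j$. The cleanest incarnation: from $k\in B$, $k+1\notin B$, consider whether $m\in B$; using $\sigma_{k+1}=\sigma_m$ one can trade $m$ (or the pair forcing the gap) for $k+1$, raising the binary weight.

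The main obstacle I anticipate is handling the bookkeeping of the subset-sum swap carefully enough to guarantee the binary weight $\sum 2^i$ strictly increases while the $\sigma$-sum is preserved — in particular making sure the modified set still lies in $\{0,\dots,j-1\}$ and still sums correctly when $m$ might already be in $B$, or when multiple indices are involved. A secondary subtlety is the edge behavior at small indices (e.g. $k=0$, or $k+1 = 1$), and confirming $v_{k+1}$ is genuinely just-right or gappy rather than tight; these should be quick but need to be stated. I expect the whole argument to be short once the single-element structure of $A$ (hence $\sigma_{k+1} = \sigma_m$ with $m$ maximal) is extracted, since that immediately gives the "room" to improve any $B$ using $k$ as a gappy index.
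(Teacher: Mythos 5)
Your overall strategy is the right one and matches the paper's: a gappy index at $k$ would violate the maximality of the set $A$ in the definition of $v_j$, because one can exchange something for $k+1$ and strictly increase $\sum_{i\in A}2^i$ while preserving $\sum_{i\in A}\sigma_i$. However, as written your argument does not close, and the hole is exactly the one you flag yourself. From $|v_{k+1}|=2$ you extract only that $\sigma_{k+1}=\sigma_m$ for a single $m\le k$, and the swap you actually describe is ``trade $m$ for $k+1$,'' which is only legitimate when $m$ lies in the set $B$ defining $v_j$; you leave the case $m\notin B$ unresolved, and no amount of bookkeeping with $m$ alone will settle it.

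The missing (one-line) observation is that monotonicity of the changemaker coordinates upgrades $\sigma_m=\sigma_{k+1}$ to $\sigma_k=\sigma_{k+1}$: since $m\le k\le k+1$ and $\sigma_m\le\sigma_k\le\sigma_{k+1}=\sigma_m$, all three are equal (equivalently, by the maximality you invoke, or by Lemma~\ref{lem:j-1}, in fact $m=k$). With $\sigma_k=\sigma_{k+1}$ in hand, the correct swap is forced on you by the gappy hypothesis itself: $k\in B$ and $k+1\notin B$, so set $B'=(B\setminus\{k\})\cup\{k+1\}$. This preserves $\sum_{i\in B}\sigma_i=\sigma_j$, strictly increases $\sum_{i\in B}2^i$, and $B'\subset\{0,\dots,j-1\}$ because $k<j-1$ gives $k+1\le j-1$. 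That contradicts the maximality in Definition~\ref{stbasis}, with no case analysis on whether $m\in B$ and no concern about tight vectors (gappy indices are only defined for vectors of the second form). This is precisely the paper's argument; your proposal needs the $\sigma_k=\sigma_{k+1}$ step to be a proof.
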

\begin{proof}
This follows from the maximality of the set $A$: if this did not hold, we could remove $k$ from $A$ and add $k+1$, increasing the sum $\sum_{i \in A} 2^i$ but leaving the sum $\sum_{i \in A} \sigma_i$ unchanged.
\end{proof}

\begin{lemma}[Lemma~3.13 in \cite{greene:LSRP}] \label{lem:irred}
Each $v_j \in S$ is irreducible.
\end{lemma}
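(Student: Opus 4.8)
Proof proposal.

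The plan is to argue directly from the explicit coordinate formula for $v_j$ in Definition~\ref{stbasis}. Suppose towards a contradiction that some $v_j$ is reducible, so $v_j = x + y$ with $x, y \in (\sigma)^\perp$ nonzero and $\braket{x}{y} \ge 0$. Writing $x = \sum_i x_i e_i$ and $y = \sum_i y_i e_i$, we have $x_i + y_i = (v_j)_i$ for every $i$, and expanding the pairing coordinatewise gives $\braket{x}{y} = \sum_i x_i\bigl((v_j)_i - x_i\bigr)$. The first step is to bound each summand: for integers $c$ and $t$, the value $t(c-t)$ is $\le 0$ when $c \in \{0,\pm 1\}$ (with equality only for $t \in \{0,c\}$), and when $c = 2$ it is $\le 1$, equal to $1$ only for $t = 1$ and equal to $0$ only for $t \in \{0,2\}$. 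Every coordinate of $v_j$ lies in $\{0,\pm 1\}$ except possibly the $e_0$-coordinate, which equals $2$ in the tight case.

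Applying this coordinatewise yields $\braket{x}{y} \le 1$, hence $\braket{x}{y} \in \{0,1\}$; and tracking the equality conditions forces $x$ to agree with $v_j$ or to vanish in every coordinate other than $e_0$, while $x_0 \in \{0,1,2\}$. In other words $x$ is a ``sub-vector'' of $v_j$, up to the single extra possibility (only in the tight case, and only when $\braket{x}{y}=1$) of replacing the $e_0$-entry $2$ by a $1$; in particular $y = v_j - x$ has the same shape.

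It then remains to invoke $\braket{x}{\sigma} = 0$ together with the defining relation for $v_j$ and the fact that $\sigma_i \ge \sigma_0 = 1$ for all $i$. Write $v_j = \bigl(\sum_{i \in A} e_i\bigr) - e_j$ with $\sigma_j = \sum_{i \in A}\sigma_i$ in the just-right and gappy cases. Then $\braket{x}{\sigma} = 0$ says that a sub-sum of the positive integers $\{\sigma_i : i \in A\}$ equals either $0$ or the full sum $\sigma_j$, forcing $x = 0$ or $x = v_j$ respectively, hence $y = 0$ --- a contradiction. In the tight case $v_j = 2e_0 + e_1 + \cdots + e_{j-1} - e_j$ with $\sigma_j = 1 + \sigma_0 + \cdots + \sigma_{j-1}$; the subcases $x_0 \in \{0,2\}$ run exactly as above (using that $\sigma_j$ now strictly exceeds $\sigma_1 + \cdots + \sigma_{j-1}$), and the remaining possibility $x_0 = 1$ is ruled out because $\braket{x}{\sigma}=0$ would then require a sub-sum of $\{\sigma_1,\dots,\sigma_{j-1}\}$ to equal $\sigma_j - 1 = 1 + \sigma_1 + \cdots + \sigma_{j-1}$, which exceeds $\sigma_1 + \cdots + \sigma_{j-1}$.

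The argument is essentially bookkeeping; the one place I would be most careful is precisely the tight case, since there $\braket{x}{y}$ genuinely can equal $1$, so one cannot just say ``all coordinatewise terms vanish,'' and the sharper relation $\sigma_j = 1 + \sigma_0 + \cdots + \sigma_{j-1}$ (rather than merely $\sigma_j = \sum_{i\in A}\sigma_i$) is what closes the gap. Everything else reduces to the coordinatewise inequality above together with positivity of the $\sigma_i$.
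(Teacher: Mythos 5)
The paper gives no proof of this lemma at all (it is quoted from \cite{greene:LSRP}), so the only real question is whether your direct coordinatewise argument is airtight; the strategy itself -- bound $\braket{x}{y}$ coordinate by coordinate, then use $\braket{x}{\sigma}=0$ and positivity of the $\sigma_i$ -- is sound and is exactly the style the paper uses in Lemma~\ref{newirreducibility}. Your non-tight case is complete. The gap is in the tight case, precisely where you said one should be careful, but at a different spot than the one you guarded. Your bookkeeping concludes that $x_i\in\{0,(v_j)_i\}$ for all $i\ne 0$ with $x_0\in\{0,1,2\}$; that is not forced. When $x_0=1$ the $e_0$-term contributes $+1$, and this can be cancelled by a single $-1$ coming from a coordinate \emph{outside} the support of $v_j$: for $k>j$ (note $\supp v_j=\{0,\dots,j\}$ for the tight vector) the term $-x_k^2$ equals $-1$ when $x_k=\pm 1$. (Coordinates $1\le i\le j$ cannot produce a $-1$, since $t(\pm 1-t)$ only takes the values $0,-2,-6,\dots$.) So besides the shapes you list there are candidate decompositions with $\braket{x}{y}=0$ and $x=e_0+\sum_{i\in B}e_i-\delta e_j\pm e_k$, where $B\subset\{1,\dots,j-1\}$, $\delta\in\{0,1\}$, $k>j$, and your closing subset-sum argument never addresses them.

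The omission is fillable in one line, which is why I would call the proof repairable rather than wrong: for such an $x$, $\braket{x}{\sigma}=0$ reads $1+\sum_{i\in B}\sigma_i-\delta\sigma_j\pm\sigma_k=0$, and since $k>j$ gives $\sigma_k\ge\sigma_j=1+\sigma_0+\cdots+\sigma_{j-1}=2+\sigma_1+\cdots+\sigma_{j-1}$ while $\sum_{i\in B}\sigma_i\le\sigma_1+\cdots+\sigma_{j-1}$, all four sign choices $(\delta,\pm)$ are impossible. Two smaller points to tidy while you are at it: in your $x_0=1$ analysis you implicitly assumed $x_j=-1$ (the subcase $x_j=0$ dies immediately because $1+\sum_{i\in B}\sigma_i>0$), and the hypothesis $\sigma_i\ge\sigma_0=1$ that powers the whole argument is the paper's standing assumption established just before the lemma via indecomposability, so it is worth citing explicitly. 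With these additions the argument is correct.
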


\begin{lemma}[Lemma~3.15 in \cite{greene:LSRP}]\label{lem:BrIsTight}
If $v_j \in S$ is breakable, it is tight.
\end{lemma}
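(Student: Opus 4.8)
The plan is to argue by contradiction: suppose $v_j \in S$ is breakable but not tight. Since $v_j$ is not tight, it is either just right or gappy, so by Definition~\ref{stbasis} we have $v_j = \bigl(\sum_{i\in A} e_i\bigr) - e_j$ with $A\subset\{0,\dots,j-1\}$, and in particular every positive entry of $v_j$ in the $e_0,\dots,e_{j-1}$ directions is exactly $1$, while $\braket{v_j}{e_j}=-1$. Being breakable, we can write $v_j = x + y$ with $|x|,|y|\ge 3$ and $\braket{x}{y}=-1$. First I would record the basic constraint that $\braket{v_j}{v_j} = |A| + 1 = |x|^2 + 2\braket{x}{y} + |y|^2 = |x|^2 + |y|^2 - 2$, so $|A| = |x|^2 + |y|^2 - 3 \ge 3 + 3 - 3 = 3$; more importantly, since the nonzero entries of $v_j$ are all $\pm 1$ except possibly none (they are all $\pm1$ here because $A$ is a set and the $e_j$-coefficient is $-1$), the same must be true coordinatewise of $x$ and $y$: if $\braket{v_j}{e_i}\in\{0,\pm1\}$ for every $i$ and $v_j=x+y$ with $\braket{x}{y}=-1<0$, then on each coordinate $i$ the entries $\braket{x}{e_i}$ and $\braket{y}{e_i}$ cannot both be nonzero unless they cancel — but a single cancellation already contributes $-1$ to $\braket{x}{y}$, and if $|x|,|y|\ge3$ this forces the supports of $x$ and $y$ to overlap in exactly one coordinate, with opposite signs $\pm1$ there, and to be $\pm1$-valued everywhere. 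So up to swapping and negating, $\supp x$ and $\supp y$ partition $\supp v_j$ except for one shared index $t$ where $x$ has $+1$ and $y$ has $-1$ (or the reverse).

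Next I would locate the index $j$ relative to this splitting. Since $\braket{v_j}{e_j}=-1$ and $j\notin A$, exactly one of $x,y$ — say $y$ — has $\braket{y}{e_j}=-1$, and then every other entry of $y$ lies in directions $e_i$ with $i\in A\cup\{t\}\subset\{0,\dots,j-1\}\cup\{t\}$; in fact since all of $x,y$ entries are supported on $\supp v_j \cup\{t\} = A\cup\{j\}$, and $t\in A$, we get $\supp y \subset A\cup\{j\}$ with the $e_j$-entry equal to $-1$. The key point: both $x$ and $y$ must be orthogonal to $\sigma$, i.e. $\braket{x}{\sigma}=\braket{y}{\sigma}=0$. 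Writing these out, $\sigma_j = \braket{y}{\sum_{i<j}(\text{positive part of }y)_i e_i}$ expresses $\sigma_j$ as a sum of a \emph{proper, nonempty} subset of $\{\sigma_i : i\in A\}$ (proper because $x$ also has some positive entry among $i<j$, as $|x|\ge 3$ and $x$ has at most one entry, namely at $t$, that could be "shared" — one needs $x$ to have at least two positive entries in the $<j$ range or else $|x|\le 2$, contradiction, after accounting for signs). This will contradict the maximality clause in the definition of the standard basis: $A$ was chosen as the subset of $\{0,\dots,j-1\}$ with $\sum_{i\in A}\sigma_i = \sigma_j$ maximizing $\sum_{i\in A}2^i$, so no proper sub-sum of the $\sigma_i$, $i\in A$, can again equal $\sigma_j$ — since $\sigma_j = \sum_{i\in A}\sigma_i$ and a proper subsum equalling $\sigma_j$ would force the complementary $\sigma_i$'s to sum to $0$, impossible as $\sigma_i\ge \sigma_0 = 1 > 0$.

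Thus I expect the argument to come down to a careful bookkeeping of which entries of $v_j$ go to $x$ versus $y$, using (i) that all entries of $v_j$ are $0$ or $\pm1$, (ii) $\braket{x}{y}=-1$ with $|x|,|y|\ge3$ forces exactly one overlap coordinate, and (iii) orthogonality to $\sigma$ together with positivity of all $\sigma_i$. The main obstacle, and the step to be most careful about, is handling the overlap index $t$ and the index $j$ correctly: I need to rule out degenerate placements (e.g. $t=j$, or one of $x,y$ having its unique positive contributions land outside $\{0,\dots,j-1\}$) and confirm that stripping off $x$ really does leave a \emph{proper nonempty} subset $A'\subsetneq A$ with $\sum_{i\in A'}\sigma_i = \sigma_j$, which is exactly what contradicts the maximality (equivalently, the just-right/gappy = non-tight structure) of $v_j$. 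Once that proper-subsum is produced, the contradiction with Definition~\ref{stbasis} is immediate since all $\sigma_i>0$.
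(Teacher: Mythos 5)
The paper itself gives no argument for this lemma: it is quoted verbatim as Lemma~3.15 of Greene's paper, so your attempt has to be measured against a correct proof rather than an in-paper one. Your opening steps are fine: if $v_j$ is not tight then every entry of $v_j$ lies in $\{0,\pm 1\}$, and writing $v_j=x+y$ with $\braket{x}{y}=-1$ forces $x_iy_i\le 0$ in every coordinate, hence exactly one index $t$ with $\{x_t,y_t\}=\{+1,-1\}$, with $x,y$ being $\{0,\pm1\}$-valued and their supports partitioning $\supp v_j$ away from $t$.

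The genuine gap is in the bookkeeping that your final contradiction rests on. Since $x_t+y_t=0$, the overlap index satisfies $t\notin\supp v_j=A\cup\{j\}$; your assertion that $\supp v_j\cup\{t\}=A\cup\{j\}$ (so that $t\in A$) is false, and it propagates into the conclusion. Writing $A=A_x\sqcup A_y$ and taking, say, $y_j=-1$, orthogonality to $\sigma$ together with $\sigma_i\ge 1$ forces $x=\sum_{i\in A_x}e_i-e_t$ and $y=\sum_{i\in A_y}e_i+e_t-e_j$, with $\sigma_t=\sum_{i\in A_x}\sigma_i$ and $\sigma_j=\sigma_t+\sum_{i\in A_y}\sigma_i$. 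Thus $\sigma_j$ is \emph{not} exhibited as a proper subsum of $\{\sigma_i:i\in A\}$; it is exhibited as a subset sum using the new index $t\notin A$, which is perfectly consistent with positivity, so the contradiction you planned (``the complementary $\sigma_i$ would sum to $0$''), which never invokes the maximality of $\sum_{i\in A}2^i$, does not arise. What actually closes the argument is the extra structure you left out: $|x|\ge 3$ gives $|A_x|\ge 2$, so $\sigma_t=\sum_{i\in A_x}\sigma_i>\sigma_{\max A_x}$ and monotonicity of $\sigma$ forces $t>\max A_x$. If $t>j$, then $\sigma_t\ge\sigma_j=\sigma_t+\sum_{i\in A_y}\sigma_i>\sigma_t$ (using $A_y\ne\emptyset$, i.e.\ $|y|\ge 3$), a contradiction; if $t<j$, then $A_y\cup\{t\}\subset\{0,\dots,j-1\}$ has $\sigma$-sum $\sigma_j$ and $2^t>\sum_{i\in A_x}2^i$, so it beats $A$ in the quantity $\sum_i 2^i$, contradicting the maximality clause of Definition~\ref{stbasis}. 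So your skeleton is salvageable, but the step you yourself flagged as delicate is exactly where the proposal currently fails, and the essential uses of the $2$-adic maximality and of the monotonicity of $\sigma$ are missing.
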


\begin{lemma}[Lemma~3.14~(2) in \cite{greene:LSRP}]\label{lem:SumIrr}
If $v_t \in S$ is tight, $j > t$, and $v_j = e_t + e_{j-1} - e_j$, then $v_t + v_j$ is irreducible.
\end{lemma}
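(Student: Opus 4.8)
The plan is a proof by contradiction. Suppose $w:=v_t+v_j$ splits as $w=x+y$ with $x,y\in(\sigma)^\perp$ nonzero and $\braket{x}{y}\ge0$.

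First I would expand everything in the orthonormal basis. Tightness of $v_t$ gives $v_t=2e_0+e_1+\dots+e_{t-1}-e_t$, and the assumed form of $v_j$ gives $\sigma_j=\sigma_t+\sigma_{j-1}$ together with $\sigma_t=1+\sigma_0+\dots+\sigma_{t-1}$; recall also $\sigma_0=1$ and $\sigma$ is nondecreasing, so $\sigma_i\ge1$ for all $i$. Hence $w=2e_0+e_1+\dots+e_{t-1}+e_{j-1}-e_j$. Writing $x=\sum x_ie_i$, $y=\sum y_ie_i$ with $x_i+y_i=w_i$, the pairing $\braket{x}{y}=\sum_i x_iy_i$ decomposes coordinatewise: the index-$0$ contribution is $x_0(2-x_0)\le1$, with equality iff $x_0=y_0=1$; a coordinate $i$ with $w_i=1$ contributes $0$ if $x_i\in\{0,1\}$ and $\le-2$ otherwise; the $i=j$ coordinate contributes $0$ if $x_j\in\{0,-1\}$ and $\le-2$ otherwise; and a coordinate $i$ with $w_i=0$ contributes $-x_i^2$. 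Thus $\braket{x}{y}\le1$, so $\braket{x}{y}\in\{0,1\}$, and this coordinatewise bookkeeping drives the rest of the argument.

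I would then dispatch the cases. If $\braket{x}{y}=1$, every coordinate must split ``cleanly'' and $x_0=y_0=1$; since $\sigma_0=1$ and all $\sigma_i\ge1$, the constraint $\braket{x}{\sigma}=0$ forces $e_j$ to appear in $x$ with coefficient $-1$, and symmetrically in $y$, contradicting $w_j=-1$. If $\braket{x}{y}=0$ and $x_0\in\{0,2\}$ (WLOG $x_0=0$), then again all coordinates split cleanly, so $x=\sum_{i\in I'}e_i-\varepsilon e_j$ for some $I'\subseteq\{1,\dots,t-1,j-1\}$ and $\varepsilon\in\{0,1\}$; imposing $\braket{x}{\sigma}=0$ and using $\sigma_j=\sigma_t+\sigma_{j-1}$, $\sigma_t=1+\sigma_0+\dots+\sigma_{t-1}$, and the monotonicity of $\sigma$ yields a contradiction regardless of whether $j-1\in I'$ (both cases bound $\sigma_t$ above by $\sigma_1+\dots+\sigma_{t-1}<\sigma_t$). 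The remaining case $\braket{x}{y}=0$ with $x_0=y_0=1$ is the crux: the index-$0$ contribution $+1$ must be cancelled by a total of $-1$ from the other coordinates, and since any non-clean coordinate among $\{i:w_i=1\}\cup\{j\}$ contributes $\le-2$, the only possibility is that there is a single index $m$ with $w_m=0$ and $x_m=-y_m=\pm1$, all other coordinates clean. Assuming $x_m=1$ by the $x\leftrightarrow y$ symmetry, the relations $\braket{x}{\sigma}=\braket{y}{\sigma}=0$ force $e_j$ to occur in $x$ and not $y$ and give $\sigma_m=1+\sum_{i\in I\setminus I'}\sigma_i$ (with $\sigma_j=\sigma_t+\sigma_{j-1}$ still consistent, so no contradiction yet). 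I would finish by splitting on the position of $m$: since $w_m=0$ either $t\le m\le j-2$ or $m\ge j+1$, and in each case monotonicity of $\sigma$ ($\sigma_m\ge\sigma_t$, resp.\ $\sigma_m\ge\sigma_j$), combined with $\sigma_m\le1+\sigma_1+\dots+\sigma_{t-1}+\sigma_{j-1}$ (after first deducing $j-1\in I'$ in the first case), yields $2\le1$.

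The main obstacle is precisely this last case: it is the place where a decomposition of $w$ nearly exists — all the coordinatewise positivity and orthogonality constraints are individually satisfiable — so ruling it out genuinely requires the interplay of the monotonicity of $\sigma$, the tightness relation for $v_t$, and the identity $\sigma_j=\sigma_t+\sigma_{j-1}$, organized by the auxiliary case split on the stray index $m$. The earlier cases are routine sign and positivity checks.
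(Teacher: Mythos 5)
Your argument is correct: the coordinatewise bound $\langle x,y\rangle\le 1$, the clean-splitting analysis in each case, and the final contradictions obtained from $\sigma_t=2+\sigma_1+\cdots+\sigma_{t-1}$, $\sigma_j=\sigma_t+\sigma_{j-1}$ and the monotonicity of $\sigma$ (including the step that forces $j-1\in I'$ when $t\le m\le j-2$, after which $\sigma_m\le 1+\sigma_1+\cdots+\sigma_{t-1}<\sigma_t\le\sigma_m$) all check out, and the steps you compress (e.g.\ the $\varepsilon=0$ possibility, which just forces $x=0$) are trivial to fill in. Note that the paper itself gives no proof of this statement but quotes Greene's Lemma~3.14(2); your proof is the same kind of changemaker bookkeeping used there and in the paper's own proof of Lemma~\ref{newirreducibility}, so there is nothing further to flag.
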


\begin{lemma}\label{newirreducibility}
If $v_t \in S$ is tight, $j > t$, and $v_j = e_0+...+e_{t-1}+(\sum_{i\in A}e_i) - e_j$ for some $A \subseteq \{t+1,...,j-1\}$, then $v_j - v_t$ is irreducible.
\end{lemma}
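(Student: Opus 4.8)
The plan is to mimic the support-combinatorics argument behind Lemma~\ref{lem:SumIrr}, now applied to $w := v_j - v_t$. Since $v_t$ is tight, $v_t = 2e_0 + e_1 + \cdots + e_{t-1} - e_t$, and subtracting gives
\[
w = -e_0 + e_t + \sum_{i\in A} e_i - e_j .
\]
Here $1 \le t < j$ and $A \subseteq \{t+1,\dots,j-1\}$, so $0$, $t$, the elements of $A$, and $j$ are pairwise distinct; hence $\langle w, e_i\rangle \in \{0,\pm 1\}$ for all $i$, $\supp w = \{0,t\}\cup A\cup\{j\}$, and $|w| = |A|+3 = |\supp w|$. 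Two facts will drive the argument. First, $\sigma_i \ge 2$ for every $i \in \{t\}\cup A$: indeed $\sigma_t = 1 + \sigma_0 + \cdots + \sigma_{t-1} \ge 1 + \sigma_0 = 2$ (as $t \ge 1$ and $\sigma_0 = 1$), and $\sigma_i \ge \sigma_t$ when $i \in A$ since $i > t$. Second, $\langle v_j, \sigma\rangle = 0$ reads $\sigma_j = (\sigma_0 + \cdots + \sigma_{t-1}) + \sum_{i\in A}\sigma_i$; combined with the tightness relation this gives $\sigma_t + \sum_{i\in A}\sigma_i = \sigma_j + 1$.

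Next I would show that any decomposition $w = x+y$ witnessing reducibility must have a rigid form. If $x, y \ne 0$ and $\langle x, y\rangle \ge 0$, then $|x| + |y| \le |w|$. Writing $x = \sum x_i e_i$, $y = \sum y_i e_i$: for $i \notin \supp w$ we have $x_i = -y_i$, so $x_i^2 + y_i^2 \ge 0$; for $i \in \supp w$ the integers $x_i, y_i$ sum to $\pm 1$, so $x_i^2 + y_i^2 \ge 1$, with equality iff $\{x_i, y_i\} = \{0, \langle w, e_i\rangle\}$. Summing yields $|x| + |y| \ge |\supp w| = |w|$, so all inequalities are equalities: $\supp x$ and $\supp y$ partition $\supp w$, and $x = \sum_{i \in \supp x}\langle w, e_i\rangle e_i$, $y = \sum_{i \in \supp y}\langle w, e_i\rangle e_i$.

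It then remains to rule out such a proper partition with both pieces in $(\sigma)^\perp$. Since $\langle x, \sigma\rangle + \langle y, \sigma\rangle = \langle w, \sigma\rangle = 0$, it suffices to show no proper nonempty $T \subsetneq \supp w$ satisfies $\sum_{i\in T}\langle w, e_i\rangle\sigma_i = 0$, that is, $\sum_{i \in T\cap(\{t\}\cup A)}\sigma_i = \sigma_0\,[0\in T] + \sigma_j\,[j\in T]$, whose right side lies in $\{0, 1, \sigma_j, \sigma_j + 1\}$. The value $1$ is impossible, since the left side is $0$ or at least $2$; the value $0$ forces $T\cap\{0,j\}=\emptyset$ and $T\cap(\{t\}\cup A)=\emptyset$, hence $T=\emptyset$; the value $\sigma_j + 1 = \sigma_t + \sum_{i\in A}\sigma_i$ makes the left side maximal, forcing $T \supseteq \{t\}\cup A\cup\{0,j\} = \supp w$; and for the value $\sigma_j = (\sigma_0 + \cdots + \sigma_{t-1}) + \sum_{i\in A}\sigma_i$ I would split on $t$: if $t \notin T$ then the left side is at most $\sum_{i\in A}\sigma_i < \sigma_j$, while if $t \in T$ then matching the two sides forces $\sum_{i\in A\setminus T}\sigma_i = 1$, impossible since that sum is $0$ or at least $2$. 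Hence no such $T$ exists, so $w = v_j - v_t$ is irreducible.

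There is no serious obstacle here; the one delicate point is the equality analysis that pins down the support of a hypothetical decomposition, after which the bound $\sigma_i \ge 2$ for $i \in \{t\}\cup A$ — itself immediate from the tightness of $v_t$ together with $\sigma_0 = 1$ — makes everything collapse.
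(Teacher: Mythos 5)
Your proof is correct and follows essentially the same route as the paper's: force any decomposition $v_j-v_t=x+y$ with $\braket{x}{y}\ge 0$ to split the support of $-e_0+e_t+\sum_{i\in A}e_i-e_j$ coordinatewise, then use $\braket{\cdot}{\sigma}=0$ together with $\sigma_0=1$ and $\sigma_i\ge 2$ for $i\in\{t\}\cup A$ (from tightness of $v_t$) to rule out any nontrivial splitting. The only cosmetic difference is that you obtain the coordinatewise rigidity by a norm count while the paper deduces $x_iy_i=0$ from $|x_i+y_i|\le 1$, and your subset-sum case analysis replaces the paper's argument that a putative summand would have to be $e_s-e_0$ with $\sigma_s=1$, $s<t$.
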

\begin{proof}
For contradiction, suppose there exists $x,y$ with
\[v_j - v_t  = -e_0+e_t+(\sum_{i\in A}e_i) - e_j= x + y\] such that $\braket{x}{y} \geq 0$. Write $x = \sum x_ie_i$ and $y = \sum y_ie_i$. Since $\forall i$ $|x_i+y_i|\leq 1$, $x_iy_i \leq 0$ for all $i$. Since $\braket{x}{y}=\sum x_iy_i\geq0$, we must have $x_iy_i=0$ for each $i$. Observe only $e_0$ and $e_j$ have negative coefficients in $v_j-v_t$. If $x_0 = x_j = 0$ (resp. $y_0 = y_j = 0$), then since $x$ (resp. $y$) is in $\langle \sigma\rangle ^\perp$ and has non-negative coefficients, $x = 0$ (resp. $y = 0$). If $x_0=0$ and $x_j = -1$, since $\sum \sigma_iy_i = 0$, $y = e_s-e_0$ for some $0 < s < t$. Since $y=v_j-v_t-x$, $x_s=-1$ and $x_sy_s \ne 0$, a contradiction. If $x_0=-1$ and $x_j = 0$, we get a contradiction just as in the previous case.
\end{proof}

%Overall, much of this section is just an adaptation of \cite[Section~4]{greene:LSRP} to our setting.

If we have a lattice $L$ isomorphic to both a D-type lattice $\Delta(p,q)$ and a changemaker lattice $(\sigma)^\perp$, it has both the vertex basis $x_*, x_{**}, x_0, \dots, x_n$ and the standard basis $v_1, \dots, v_{n+3}$. Since each of the $v_i$ is irreducible, we can use Proposition~\ref{irred} to constrain its expression in terms of the $x_i$. In some cases, we can say more:

\begin{lemma} \label{modify}
If $\Delta(p,q)$ is isomorphic to $(\sigma)^\perp$ for some changemaker $\sigma \in \Z^{n+4}$, then there is an isomorphism $\phi: \Delta(p,q) \xrightarrow{\sim} (\sigma)^\perp$ that sends $x_{*}$ to $v_1$ and $x_{**}$ to $v_f$, for some index $f$. Furthermore, if $a_0 = 2$, then $f = 3$ and we can also choose $\phi$ to send $x_0$ to $v_2$.
\end{lemma}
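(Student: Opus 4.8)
The plan is to use the norm-$2$ vectors to pin down where $x_*$ and $x_{**}$ must go. First I would observe that $x_*$ and $x_{**}$ are irreducible of norm $2$, so by Proposition~\ref{irred} together with the structure of norm-$2$ elements, each of $\phi(x_*)$ and $\phi(x_{**})$ must be $\pm$ an interval of norm $2$ in $(\sigma)^\perp$; but an interval of norm $2$ is precisely a single standard basis vector $v_j$ with $|v_j| = 2$, or its negation, or its image under some $\tau_j$ (which, for a norm-$2$ interval, still lands among the $\pm v_j$ of norm $2$ or a short sum of them). The key point from Lemma~\ref{parity} is that $\braket{x}{x_*} \equiv \braket{x}{x_{**}} \pmod 2$ for all $x$, and from the vertex basis relations $\braket{x_*}{x_{**}} = 0$, $\braket{x_*}{x_0} = \braket{x_{**}}{x_0} = -1$. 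So $\phi(x_*)$ and $\phi(x_{**})$ are two orthogonal norm-$2$ irreducibles, each pairing to an odd number with the (unique, by irreducibility and Lemma~\ref{lem:TwoIndBr}) neighbor vertex $\phi(x_0)$. I would then note that among the standard basis the tight vector $v_1 = 2e_0 - e_1$ always has norm $2$ (since $\sigma_0 = \sigma_1 = 1$ forces $v_1$ tight), and argue that after possibly composing $\phi$ with lattice automorphisms we may assume $\phi(x_*) = v_1$.

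Next I would identify $\phi(x_{**})$. Having fixed $\phi(x_*) = v_1$, the vector $\phi(x_{**})$ is a norm-$2$ irreducible orthogonal to $v_1$, and—crucially—paired oddly with whatever $\phi(x_0)$ is. Since the only norm-$2$ standard basis vectors are $v_1$ and those $v_j$ of the form $e_{j-1} - e_j$, and since $v_1$ and $v_j = e_{j-1}-e_j$ being orthogonal forces $j \ge 3$, I would conclude $\phi(x_{**}) = v_f$ for the appropriate $f \ge 3$ — or, if $\phi(x_{**})$ is not itself a standard basis vector but a sum/reflection, use Lemma~\ref{gappy3} and the parity constraint to rule this out or convert it into standard form by a further automorphism. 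This gives the first assertion.

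For the $a_0 = 2$ refinement: when $a_0 = 2$, the lattice has $x_*, x_{**}, x_0$ all of norm $2$ with $x_*, x_{**}$ adjacent only to $x_0$, which is adjacent to $x_1$ (with $a_1$ possibly $> 2$). So $x_*, x_{**}, x_0$ span a copy of the $D_3 = A_3$ root sublattice; the "special edge" analysis in the proof of Proposition~\ref{pp} already isolates exactly this configuration. I would use that $v_1$, together with two further norm-$2$ vectors, must realize this $A_3$ inside the standard basis, and that the standard-basis structure forces these to be $v_1, v_2, v_3$ with $v_2 = e_1 - e_2$ and $v_3 = e_2 - e_3$ (consecutive norm-$2$ vectors), so $f = 3$ and $\phi(x_0) = v_2$ after the automorphisms already used. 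The main obstacle I anticipate is the bookkeeping in the second paragraph: ruling out that $\phi(x_{**})$ (or $\phi(x_0)$) is a non-standard-basis irreducible — e.g. something of the form $v_t + v_j$ or a $\tau$-image — and showing that one can always absorb the discrepancy into an automorphism of $(\sigma)^\perp$ that fixes $v_1$; this is where the parity lemma (Lemma~\ref{parity}), Lemma~\ref{gappy3}, and the maximality built into the standard basis all have to be combined carefully.
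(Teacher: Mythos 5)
Your overall strategy---locate the images of $x_*$ and $x_{**}$ among norm-$2$ vectors using orthogonality and the parity constraint, then adjust by automorphisms---is the same in spirit as the paper's, but two of your key assertions are incorrect and the central mechanism is missing. First, $v_1$ is not ``tight of norm $2$'': a tight $v_1$ would be $2e_0-e_1$, of norm $5$, and the equality $\sigma_1=\sigma_0=1$ (which is what makes $v_1=e_0-e_1$ just right of norm $2$) is deduced in the paper only as a consequence of this very lemma, so you cannot assume it at the outset. Second, the norm-$2$ elements of $(\sigma)^\perp$ are not ``standard basis vectors, their negations, or reflections'': they are exactly the vectors $e_i-e_j$ with $\sigma_i=\sigma_j$, and when $|i-j|>1$ such a vector is a sum of consecutive norm-$2$ standard basis vectors; this case genuinely occurs and cannot be ``ruled out.'' The heart of the proof is precisely to handle it: since $\sigma$ is nondecreasing, $\sigma_i=\sigma_j$ forces all intermediate entries to be equal, so there is a coordinate permutation of $\Z^{n+4}$ fixing $\sigma$, which restricts to an automorphism of $(\sigma)^\perp$ carrying $e_i-e_j$ to the standard basis vector $e_m-e_{m+1}$ with $m=\min(i,j)$. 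Your sketch never produces this automorphism (Proposition~\ref{irred} and the interval machinery concern the vertex basis of $\Delta(p,q)$ and do not describe the norm-$2$ vectors of the changemaker lattice), nor the argument that the resulting index for $x_*$ must be $1$, which comes from applying Lemma~\ref{parity} to $v_{e-1}$: if $x_*$ were sent to $v_e$ with $e>1$, then $\braket{v_{e-1}}{x_*}=-1$ while $\braket{v_{e-1}}{x_{**}}=0$.

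For the $a_0=2$ refinement, the appeal to the special-edge analysis of Proposition~\ref{pp} does not yield the statement. The paper's route is concrete: $x_0$ is a norm-$2$ vector pairing $-1$ with both $v_1$ and $v_f$, hence $x_0=e_1-e_{f-1}$ or $-e_0+e_f$, which forces $\sigma_0=\sigma_1=\cdots=\sigma_f$; then $v_2=e_1-e_2$ pairs $-1$ with $v_1$, and Lemma~\ref{parity} forces $\braket{v_2}{v_f}$ to be odd, which is only possible if $f=3$; finally, when $x_0=-e_0+e_3$ one needs a further automorphism (the coordinate permutation $(02)(13)$, precomposed with the swap of $x_*$ and $x_{**}$) to arrange $x_0\mapsto v_2$. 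None of these steps appears in your plan, so as written it does not constitute a proof.
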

\begin{proof}
Let $\phi_0:\Delta(p,q) \to (\sigma)^\perp$ be an isomorphism.
Both $x_*$ and $x_{**}$ have norm $2$, and $\braket{x_*}{x_{**}} = 0$. The only elements of $(\sigma)^\perp$ of norm $2$ have the form $e_i - e_j$, for indices $i$ and $j$ with $\sigma_i = \sigma_j$. Since the entries of $\sigma$ are nondecreasing, actually $\sigma_i = \sigma_k = \sigma_j$ for any $\min(i,j) \le k \le\max(i,j)$, so in particular $\sigma_i = \sigma_j = \sigma_m = \sigma_{m+1}$, for $m = \min (i,j)$. We define an automorphism $\psi$ of $\Z^{n+4}$ as follows. If $|j-i|=1$, let 
\[
\psi(e_i)=e_m,\quad \psi(e_j)=e_{m+1},\quad \psi(e_l)=e_l \text{ when }l\notin\{i,j\}.
\]
If $|j-i|>1$, $\psi(e_i)$ and $\psi(e_j)$ are as above, and
\[
\psi(e_{m+1})=e_{\max(i,j)},\quad \psi(e_l)=e_l \text{ when }l\notin\{i,j,m+1\}.
\]
Since $\psi$ fixes $\sigma$, it restricts to an automorphism $\overline{\psi}$ of $(\sigma)^\perp$. Consider $\overline{\psi}\circ\phi_0$, which sends $\phi_0^{-1}(e_i - e_j)$ to the standard basis vector $v_{m+1}$. Since $\braket{x_*}{x_{**}} = 0$, this process can be done separately for $x_*$ and $x_{**}$, making them both sent to standard basis vectors $v_e, v_f$. By precomposing with the automorphism of $\Delta(p,q)$ that switches $x_*$ and $x_{**}$, we can assume $e < f$, and then Lemma~\ref{parity} ensures that $e = 1$, because otherwise $\braket{v_{e-1}}{x_*} = -1$ while $\braket{v_{e-1}}{x_{**}} = 0$.

When $a_0 = 2$, $x_0$ has norm $2$ and pairing $-1$ with both $x_*$ and $x_{**}$, so $x_0=e_1-e_{f-1}$ or $-e_0+e_f$. It follows that $\sigma_0 = \sigma_1 = \cdots = \sigma_{f-1} = \sigma_f$. Therefore, $v_2 = e_1 - e_2$, so $\braket{v_2}{v_1} = -1$. By Lemma~\ref{parity}, $\braket{v_2}{v_f}$ is odd. This can happen only if $f = 3$, so $v_f = e_3 - e_2$, and $x_0=e_1-e_{2}$ or $-e_0+e_3$. Since $\sigma_0 = \sigma_1 = \sigma_2 = \sigma_3$, any permutation of $\{0,1,2,3\}$ induces a permutation of $\{e_0,e_1,e_2,e_3\}$, hence gives an automorphism of $(\sigma)^\perp$. If $x_0=e_1-e_{2}$, we are done. If $x_0=-e_0+e_3$, we can precompose $\phi_0$ with the automorphism exchanging $x_*$ and $x_{**}$, and postcompose $\phi_0$ with the automorphism given by the permutation $(02)(13)$.
\end{proof}

From now on, let $L = (\sigma)^\perp$ be a changemaker lattice isomorphic to some D-type lattice, and identify $x_*, x_{**}, x_0, \dots, x_n$ with elements of $L$ according to an isomorphism chosen as in Lemma~\ref{modify}.
Since $\sigma_0=1$, by Lemma~\ref{modify}, we see that 
\begin{equation}\label{eq:sigma1}
\sigma_1=\sigma_0=1.    
\end{equation}

\begin{lemma}\label{lem:j-1}
For any $v_j\in S$, we have $j-1\in\supp v_j$.
\end{lemma}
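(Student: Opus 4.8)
\emph{Proof proposal.} The plan is to argue directly from the definition of the standard basis (Definition~\ref{stbasis}), treating the tight and non-tight cases separately. If $v_j$ is tight then $v_j = 2e_0 + \sum_{i=1}^{j-1} e_i - e_j$; here $j\ge 2$ because $v_1$ cannot be tight (we have $\sigma_1 = 1 \ne 2 = 1+\sigma_0$ by \eqref{eq:sigma1}), so the coefficient of $e_{j-1}$ is $1\ne 0$ and $j-1\in\supp v_j$ immediately. So the real content is the case where $v_j$ is just right or gappy, i.e.\ $v_j = \sum_{i\in A} e_i - e_j$ where $A\subseteq\{0,\dots,j-1\}$ is the subset with $\sum_{i\in A}\sigma_i = \sigma_j$ that maximizes $\sum_{i\in A} 2^i$.

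For this case I would exhibit a competing subset $A'\subseteq\{0,\dots,j-1\}$ with $j-1\in A'$ and $\sum_{i\in A'}\sigma_i = \sigma_j$. Once such an $A'$ is found we are done: any subset of $\{0,\dots,j-1\}$ containing $j-1$ has $2^i$-sum at least $2^{j-1}$, which already exceeds $\sum_{i=0}^{j-2}2^i = 2^{j-1}-1$, so the maximality defining $A$ forces $j-1\in A$, whence $j-1\in\supp v_j$. To build $A'$: since $v_j$ is not tight, the changemaker inequality $\sigma_j \le 1+\sigma_0+\cdots+\sigma_{j-1}$ sharpens to $\sigma_j \le \sigma_0+\cdots+\sigma_{j-1}$, and because $\sigma$ is nondecreasing we get $0 \le \sigma_j - \sigma_{j-1} \le \sigma_0+\cdots+\sigma_{j-2}$. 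Now I invoke the standard refinement of the changemaker condition: for each $m$, every integer $k$ with $0\le k \le \sigma_0+\cdots+\sigma_m$ has the form $\sum_{i\in B}\sigma_i$ for some $B\subseteq\{0,\dots,m\}$ (this is part of Lemma~3.12 in \cite{greene:LSRP}, and in any case follows by an easy induction on $m$ using $\sigma_m \le 1+\sigma_0+\cdots+\sigma_{m-1}$). Applying it with $k = \sigma_j-\sigma_{j-1}$ and $m = j-2$ produces $B\subseteq\{0,\dots,j-2\}$ with $\sum_{i\in B}\sigma_i = \sigma_j-\sigma_{j-1}$, and then $A' := B\cup\{j-1\}$ is the desired set (with the convention that the $j=1$ instance is the trivial one, $B=\emptyset$, $A'=\{0\}$).

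The only step that requires any care is the refined changemaker property used to make the remainder $\sigma_j-\sigma_{j-1}$; everything else is bookkeeping with Definition~\ref{stbasis} and the monotonicity of $\sigma$. I do not expect to need the irreducibility results (Lemma~\ref{lem:irred}) or the D-type structure for this lemma.
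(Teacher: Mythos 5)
Your proof is correct, but it takes a different route from the paper: the paper's entire proof is a citation, namely that the statement is the second part of \cite[Lemma~3.12~(1)]{greene:LSRP} together with the observation that $v_1=e_0-e_1$, whereas you reprove the fact from scratch. Your argument is sound at every step: the tight case is immediate from the explicit form $2e_0+e_1+\cdots+e_{j-1}-e_j$ (and in this paper $v_1$ is indeed never tight, by \eqref{eq:sigma1}, though even a tight $v_1$ would trivially satisfy the claim since $0\in\supp(2e_0-e_1)$); in the non-tight case, the inequality $\sigma_j\le 1+\sigma_0+\cdots+\sigma_{j-1}$ is a standard consequence of the changemaker condition, its sharpening to $\sigma_j\le\sigma_0+\cdots+\sigma_{j-1}$ for non-tight $v_j$ is right, the refined subset-sum property (every $k$ with $0\le k\le\sigma_0+\cdots+\sigma_m$ is a subset sum of $\sigma_0,\dots,\sigma_m$) does follow by the induction you sketch, and the binary-weight comparison $2^{j-1}>\sum_{i=0}^{j-2}2^i$ correctly forces $j-1$ into the maximizing set $A$, with no cancellation possible against $-e_j$. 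What the two approaches buy is straightforward: the paper's citation is shorter and leans on Greene's already-established toolkit (whose proof of Lemma~3.12 uses exactly this kind of exchange/maximality reasoning, as the paper's proof of Lemma~\ref{gappy3} illustrates), while your version is self-contained and makes explicit which features of the definition — monotonicity of $\sigma$, the Brown-type criterion, and the $\sum 2^i$-maximization — are actually doing the work.
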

\begin{proof}
This is just the second part of \cite[Lemma~3.12~(1)]{greene:LSRP}, plus the fact that $v_1=e_0-e_1$.
\end{proof}

%The goal of the rest of this section is to combine Lemma~\ref{parity} with the characterization of irreducibles in $\Delta(p,q)$ to further restrict the forms of the other standard basis vectors.

\begin{lemma}\label{a1whena0=2}
If $a_0 = 2$, $a_1 \ge 3$.
\end{lemma}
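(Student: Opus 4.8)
The plan is a short argument by contradiction, carried out entirely in the orthonormal coordinates made available by Lemma~\ref{modify}. First I invoke $a_0=2$: Lemma~\ref{modify} then gives $f=3$ and an isomorphism $\Delta(p,q)\cong(\sigma)^\perp$ carrying $x_*\mapsto v_1$, $x_0\mapsto v_2$, $x_{**}\mapsto v_3$. As noted in the proof of that lemma, this forces $\sigma_0=\sigma_1=\sigma_2=\sigma_3=1$, so in the standard basis $(e_0,\dots,e_{n+3})$ we have the explicit expressions $x_*=e_0-e_1$, $x_0=e_1-e_2$, and $x_{**}=e_2-e_3$.

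Now suppose, for contradiction, that $a_1=2$. Then $|x_1|=2$, and since every norm-$2$ element of a changemaker lattice has the form $e_a-e_b$ with $a\neq b$ and $\sigma_a=\sigma_b$ (as recalled in the proof of Lemma~\ref{modify}), I may write $x_1=e_a-e_b$. On the other hand, the D-type inner product \eqref{eq:InnerProduct} forces $\langle x_1,x_0\rangle=-1$ and $\langle x_1,x_*\rangle=\langle x_1,x_{**}\rangle=0$. Pairing $e_a-e_b$ against $x_0=e_1-e_2$ and demanding the value $-1$ leaves exactly two cases: either $a=2$ with $b\notin\{1,2\}$, or $b=1$ with $a\notin\{1,2\}$. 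In the first case $\langle x_1,x_{**}\rangle=\langle e_2-e_b,\,e_2-e_3\rangle$ equals $1$ or $2$; in the second case $\langle x_1,x_*\rangle=\langle e_a-e_1,\,e_0-e_1\rangle$ equals $1$ or $2$. Either way the required orthogonality fails, which is the contradiction, so $a_1\ge 3$.

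I do not anticipate any genuine obstacle here beyond bookkeeping: the two points to be careful about are that Lemma~\ref{modify} really pins $x_*,x_0,x_{**}$ down in coordinates (which relies on $\sigma_0=\cdots=\sigma_3=1$) and that the norm-$2$ classification for changemaker lattices is in force; once those are in hand the pairing constraints close the argument in a couple of lines. A cosmetically cleaner variant avoids the case split altogether: the vector $x_*+x_{**}+2x_0=e_0+e_1-e_2-e_3$ has norm $4$ and, inside $\Delta(p,q)$, pairs to $-2$ with $x_1$; but a norm-$2$ vector $e_a-e_b$ pairing to $-2$ with $e_0+e_1-e_2-e_3$ must satisfy $a\in\{2,3\}$ and $b\in\{0,1\}$, and no such vector is orthogonal to $x_*=e_0-e_1$.
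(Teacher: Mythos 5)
Your proof is correct and follows essentially the same route as the paper: both arguments rest on Lemma~\ref{modify} pinning down $x_*=e_0-e_1$, $x_0=e_1-e_2$, $x_{**}=\pm(e_2-e_3)$ (with $\sigma_0=\cdots=\sigma_3=1$) and then exploiting the pairing constraints $\langle x_1,x_0\rangle=-1$, $\langle x_1,x_*\rangle=\langle x_1,x_{**}\rangle=0$. The only cosmetic difference is organizational: the paper argues directly that $x_1$ must have at least three nonzero coordinates (using that $x_1$ is not in the span of $x_*,x_{**},x_0$), whereas you assume $a_1=2$ and rule out the norm-two form $e_a-e_b$ by the same coordinate bookkeeping.
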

\begin{proof}
Suppose $a_0 = 2$. Note that $x_1$ satisfies $\braket{x_1}{x_0} = -1$ and $\braket{x_1}{x_*} = \braket{x_1}{x_{**}} = 0$. Therefore, if we write $x_1 = \sum_{i = 0}^{n+3} c_ie_i$, $c_0 = c_1 = c_2 - 1 = c_3 - 1$. Also, $x_1$ is not in the span of $x_0, x_*$, and $x_{**}$, so there is some $j > 3$ with $c_j \neq 0$. Therefore, at least three of the $c_i$ are nonzero, so $a_1 = |x_1| \ge 3$.
\end{proof}

In particular, this means that $m \le 1$ in the notation of Definition~\ref{defn:Reflection}. So
we only ever need to use the involution $\tau_j$ for $j \in \{0,1\}$ to make irreducible elements into intervals.

\begin{lemma} \label{pairequal}
If $v_j$ is a standard basis vector for $j \neq 1, f$ and $v_j$ is not tight, then $\braket{v_j}{v_1} = \braket{v_j}{v_f} \in \{-1,0\}$. If $v_j$ is tight, $j = f-1$ and $\braket{v_j}{v_1} = -\braket{v_j}{v_f}=1$.
\end{lemma}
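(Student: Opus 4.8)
The plan is to reduce the statement to explicit coordinate computations in $\Z^{n+4}$, using the description of the standard basis from Definition~\ref{stbasis}, the no-gappy-index constraint of Lemma~\ref{gappy3}, and the parity relation $\braket{x}{x_*}\equiv\braket{x}{x_{**}}\pmod 2$ of Lemma~\ref{parity}.

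First I would pin down $v_1$ and $v_f$ as explicit vectors. Since $\sigma_0=\sigma_1=1$, we have $v_1=e_0-e_1$. Because $v_f=x_{**}$ has norm $2$ it cannot be tight (a tight $v_f$ would have norm $f+4\ge 6$, as $f\ge 2$), so $v_f=\big(\sum_{i\in A'}e_i\big)-e_f$ with $|A'|=1$; combined with $f-1\in\supp v_f$ from Lemma~\ref{lem:j-1}, this forces $v_f=e_{f-1}-e_f$. The key consequence is that both $v_1=v_{0+1}$ and $v_f=v_{(f-1)+1}$ have norm $2$, so Lemma~\ref{gappy3} guarantees that neither $0$ nor $f-1$ is a gappy index for any standard basis vector.

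Next, for $v_j$ with $j\ne 1,f$ (hence $j\ge 2$) that is \emph{not} tight, I would write $v_j=\big(\sum_{i\in A}e_i\big)-e_j$ with $A\subseteq\{0,\dots,j-1\}$ and read off $\braket{v_j}{v_1}=[0\in A]-[1\in A]$, and likewise $\braket{v_j}{v_f}$ from $v_f=e_{f-1}-e_f$, splitting into the cases $j<f$ and $j>f$. Non-gappiness of $0$ (together with $0<j-1$) forces $0\in A\Rightarrow 1\in A$, bounding the first pairing into $\{-1,0\}$; when $j<f$ the set $A\subseteq\{0,\dots,j-1\}$ omits $f-1$ and $f$, giving $\braket{v_j}{v_f}=-[j=f-1]$, and when $j>f$ non-gappiness of $f-1$ (together with $f-1<j-1$) forces $f-1\in A\Rightarrow f\in A$, so in either case $\braket{v_j}{v_f}\in\{-1,0\}$. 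Lemma~\ref{parity} then forces the two pairings to have equal parity, and two elements of $\{-1,0\}$ of equal parity must coincide, proving the first assertion. For \emph{tight} $v_j$ (which automatically satisfies $j\ne 1$, since $\sigma_1=1\ne 2$, and $j\ne f$, since $v_f$ is not tight), I would compute directly from $v_j=2e_0+\sum_{i=1}^{j-1}e_i-e_j$ that $\braket{v_j}{v_1}=2-1=1$; then both $j<f-1$ and $j>f$ lead to $\braket{v_j}{v_f}=0$, contradicting the parity relation with $\braket{v_j}{v_1}=1$. This leaves $j=f-1$, for which $\braket{v_j}{v_f}=-1$, giving $\braket{v_j}{v_1}=-\braket{v_j}{v_f}=1$.

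The one step requiring genuine care is the identification $v_f=e_{f-1}-e_f$ in Step~1, together with the bookkeeping of which index ($k=0$ or $k=f-1$) Lemma~\ref{gappy3} is being applied at, and checking that the small-$f$ edge cases are handled uniformly; once the coordinate descriptions of $v_1$ and $v_f$ are secured, the remainder is a routine finite case analysis on the position of $j$ relative to $f$.
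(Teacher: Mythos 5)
Your proof is correct and follows essentially the same route as the paper: pin down $v_1=e_0-e_1$ and $v_f=e_{f-1}-e_f$, use Lemma~\ref{gappy3} (no gappy index at $0$ or $f-1$ since $|v_1|=|v_f|=2$) to force each pairing into $\{0,-1\}$ in the non-tight case, and then invoke the parity relation of Lemma~\ref{parity} to equate them, with the tight case handled by the explicit value $\braket{v_j}{v_1}=1$ forcing $j=f-1$. The extra coordinate bookkeeping you supply (including the derivation of $v_f=e_{f-1}-e_f$ via Lemma~\ref{lem:j-1}) is sound and only makes explicit what the paper uses implicitly.
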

\begin{proof}
Using Lemma~\ref{gappy3}, $\supp v_j \cap \{0,1\}\ne\{0\}$, so $\braket{v_j}{v_1}\in\{0,-1\}$ unless $v_j$ is tight. Similarly, either $j \neq f-1$ and $\supp v_j \cap \{f-1,f\}\ne\{f-1\}$, or $j = f-1$ and $\supp v_j \cap \{f-1,f\} = \{f-1\}$. In either case, $\braket{v_j}{v_f}\in\{0,-1\}$. By Lemma~\ref{parity}, these two values have the same parity, so they must be equal if $v_j$ is not tight. If $v_j$ is tight, $\braket{v_j}{v_1}$ is $1$, so $\braket{v_j}{v_f}$ is $-1$. Therefore, $j = f-1$.
\end{proof}

\begin{cor}\label{cor:OneTight}
There is at most one tight vector in $S$. If $a_0 = 2$, there is no tight vector.
\end{cor}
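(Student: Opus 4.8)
The plan is to obtain Corollary~\ref{cor:OneTight} as a quick consequence of Lemma~\ref{pairequal}, after first ruling out the boundary indices $j=1$ and $j=f$ by a norm count.

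First I would record that a tight vector is large: if $v_j\in S$ is tight then $v_j = 2e_0 + e_1 + \cdots + e_{j-1} - e_j$, so $|v_j| = 4 + (j-1) + 1 = j+4 \ge 5$. Since under the chosen isomorphism $x_* = v_1$ and $x_{**} = v_f$ both have norm $2$, neither $v_1$ nor $v_f$ is tight. Hence any tight vector $v_j \in S$ has $j \notin\{1,f\}$, so the hypotheses of Lemma~\ref{pairequal} are met and that lemma forces $j = f-1$. As $f-1$ is a single index, $S$ contains at most one tight vector, which proves the first assertion.

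For the second assertion, suppose $a_0 = 2$. By Lemma~\ref{modify} we then have $f = 3$, and the isomorphism may be chosen so that $x_0 \mapsto v_2$; in particular $|v_{f-1}| = |v_2| = |x_0| = a_0 = 2$, so $v_{f-1}$ is not tight (a tight vector has norm $\ge 5$). Since the first part shows $v_{f-1}$ is the only candidate for a tight vector, there is no tight vector in $S$.

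I do not anticipate a genuine obstacle here: all of the real work sits in Lemma~\ref{pairequal} and Lemma~\ref{modify}. The only point needing a little care is verifying that the restriction $j\ne 1,f$ in Lemma~\ref{pairequal} is automatic for a tight $v_j$, which the inequality $|v_j|\ge 5$ settles immediately; the rest is bookkeeping.
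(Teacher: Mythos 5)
Your proof is correct and follows the same route the paper intends: the corollary is stated without proof precisely because Lemma~\ref{pairequal} already forces any tight vector to be $v_{f-1}$, and Lemma~\ref{modify} gives $f=3$ with $|v_2|=a_0=2$ when $a_0=2$, ruling out tightness there. Your preliminary norm count showing $v_1$ and $v_f$ cannot be tight is exactly the small implicit check the paper leaves to the reader.
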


%Using this constraint, we can finish characterizing the forms of the standard basis:

\begin{prop}\label{prop:OnlyTau}
If $a_0 \ge 3$, for any standard basis vector $v_j$, either $v_j$ is an interval or $\tau_0(v_j)$ is. If $a_0 = 2$, the same conclusion holds with $\tau_1$ instead of $\tau_0$. Furthermore, this interval contains $*$ or $**$ if and only if $j$ is $1$ or $f$ or $v_j$ is tight. If $a_0 = 2$, then this interval contains $0$ if and only if $j = 2$.
\end{prop}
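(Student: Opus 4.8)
The plan is to derive Proposition~\ref{prop:OnlyTau} from Proposition~\ref{irred} applied to each standard basis vector, together with careful bookkeeping of which reflection occurs and with which sign. By Lemma~\ref{lem:irred} each $v_j\in S$ is irreducible, so Proposition~\ref{irred} tells us that, up to an overall sign, either $v_j$ is an interval or $\tau_k(v_j)$ is an interval for some $0\le k\le m$, with $m$ as in Definition~\ref{defn:Reflection}. By Lemma~\ref{a1whena0=2} and the remark after it, $m=0$ when $a_0\ge 3$ and $m=1$ when $a_0=2$, so only $\tau_0$ is available in the first case and only $\tau_0$ or $\tau_1$ in the second. Reading off the proof of Proposition~\ref{irred}: the branch producing $\pm\tau_{k+1}$ of an interval needs $k<m$, hence never occurs when $a_0\ge 3$ and produces exactly $\tau_1$ when $a_0=2$; and in the complementary branch the normalized vector is either a genuine interval $[B]$ or equals $x_*+x_{**}+x_0+\cdots+x_\ell$, which is $\pm\tau_0$ of the interval $x_0+\cdots+x_\ell$.

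Next I would resolve the sign. Since $\tau_0$ acts by $x_i\mapsto -x_i$ on the $x_i$ with $i>0$ and fixes the span of $x_*+x_{**}$, one checks that whenever $-v_j$ (rather than $v_j$) is an interval $[B]$ with $0\notin B$, the vector $\tau_0(v_j)$ is again an interval, so the conclusion of the proposition still holds; the cases $0\in B$ or $*\in B$ are then pinned down by computing $\braket{v_j}{x_*}=\braket{v_j}{v_1}$ and applying Lemma~\ref{pairequal}, which forces $v_j$ to be tight whenever that pairing is positive. The genuinely exceptional inputs---$j=1$, $j=f$, and the unique tight vector---are treated separately: $v_1=x_*$ and $v_f=x_{**}$ are intervals by Lemma~\ref{modify}, and a tight $v_j$ satisfies $\braket{v_j}{x_*}=1$ and $\braket{v_j}{x_{**}}=-1$, which one uses to locate the interval. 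When $a_0=2$ the same scheme is carried out with $\tau_1$ in place of $\tau_0$; here $f=3$, $x_0=v_2$, $\sigma_0=\cdots=\sigma_3=1$, and there is no tight vector by Corollary~\ref{cor:OneTight}, which one combines with the maximality built into the standard basis (Definition~\ref{stbasis}) and Lemma~\ref{lem:j-1} to exclude standard basis vectors of the form $\pm(x_*+x_{**}+x_0+\cdots+x_\ell)$, so that only the interval case and the $\tau_1$-branch can occur.

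For the ``furthermore'' statement I would establish the intermediate fact that the interval $I$ attached to $v_j$ contains $*$ or $**$ if and only if $\braket{v_j}{x_*}\ne\braket{v_j}{x_{**}}$. Note $I$ cannot contain both $*$ and $**$. If $*\in I$ then, chasing through a possible $\tau_0$, one of the two pairings equals $2-[0\in I]\in\{1,2\}$ while the other equals $-[0\in I]\in\{0,-1\}$, so they differ; symmetrically for $**\in I$. Conversely if $I$ omits both $*$ and $**$, the only contribution to either pairing is the edge from $x_0$, so both equal $-[0\in I]$ and hence agree. Now Lemma~\ref{parity} and Lemma~\ref{pairequal} say that $\braket{v_j}{x_*}=\braket{v_j}{x_{**}}$ exactly when $v_j$ is not tight and $j\notin\{1,f\}$, which yields the claimed equivalence. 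The last clause, that $I$ contains $0$ if and only if $j=2$ when $a_0=2$, is the analogue of this with $x_0$ in place of $x_*$: for $j=2$ one has $v_2=x_0$ so $I=\{0\}$, while for $j\ne 2$ one shows $\braket{v_j}{v_2}\le 0$ directly from the maximality of $\sum_{i\in A}2^i$ in Definition~\ref{stbasis} (as $\sigma_1=\sigma_2$, a generating set containing $1$ but not $2$ would not be maximal), which forces $0\notin I$.

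I expect the main obstacle to be the sign and reflection bookkeeping in the $a_0=2$ case---specifically, the assertion that $\tau_1$ can always be used in place of $\tau_0$, which amounts to showing no standard basis vector has the shape $\pm(x_*+x_{**}+x_0+\cdots+x_\ell)$. This should follow from the explicit small-index values $\sigma_0=\cdots=\sigma_3=1$, the maximality in Definition~\ref{stbasis}, Lemma~\ref{lem:j-1}, and the absence of tight vectors, but it is the least mechanical part of the argument and needs care.
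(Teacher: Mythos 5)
Your proposal is correct and follows essentially the same route as the paper: apply Proposition~\ref{irred} with $m\le 1$ (via Lemma~\ref{a1whena0=2}), then eliminate the unwanted negations and the wrong reflection using the pairings with $x_*$, $x_{**}$ (and, when $a_0=2$, with $x_0=v_2$) through Lemmas~\ref{parity}, \ref{pairequal}, and \ref{gappy3}. The only difference is organizational---the paper collapses the sign ambiguity by noting $\tau(v_j)=-v_j$ whenever $v_j$ is orthogonal to the fixed subspace, while you case-analyze the shape of the putative interval and apply $\tau$ to it, and you make the ``furthermore'' clause explicit via the equivalence with $\braket{v_j}{x_*}\ne\braket{v_j}{x_{**}}$, which the paper leaves implicit.
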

\begin{proof}
First, suppose that $a_0 \ge 3$, so $m = 0$ and Proposition~\ref{irred} says that one of $v_j, -v_j, \tau_0(v_j)$, or $-\tau_0(v_j)$ is an interval. It remains only to show that we do not ever need to use a negation. If $j \in \{1,f\}$, $v_j \in \{x_*, x_{**}\}$ is an interval. Therefore, suppose $j \neq 1,f$. If $v_j$ is tight, by Lemma~\ref{pairequal} $\braket{v_j}{y_0}=0$, where $y_0=x_*+x_{**}$ is defined in Definition~\ref{defn:Reflection}. Since $\tau_0$ is the reflection in the line spanned by $y_0$, $\tau_0(v_j) = -v_j$ and $-\tau_0(v_j) = v_j$, and we are done because we do not need to use the negation or $-\tau_0$. 
If $v_j$ is not tight, Lemma~\ref{pairequal} says that $\braket{v_j}{x_*} =\braket{v_j}{x_{**}} \in\{ 0,-1\}$. If $\braket{v_j}{x_*} = 0$, we get $\braket{v_j}{y_0} = 0$. We are done by the same argument as before. If $\braket{v_j}{x_*} = \braket{v_j}{x_{**}} = -1$, we have
\begin{equation*}
    \braket{\tau_0(v_j)}{x_*} = \braket{\tau_0(v_j)}{\tau_0(x_{**})} = \braket{v_j}{x_{**}} = -1
\end{equation*}
and similarly $\braket{\tau_0(v_j)}{x_{**}} = -1$. Therefore, if $-v_j$ or $-\tau_0(v_j)$ were an interval $[A]$, it would satisfy
\begin{equation*}
    \braket{[A]}{x_*} = \braket{[A]}{x_{**}} = 1.
\end{equation*}
This can only happen if $*,**,$ and $0$ are all in $A$, so $[A]$ is not an interval.

If $a_0 = 2$, $(v_1,v_2,v_3) = (x_*,x_0,x_{**})$, all of which are intervals, so we can assume $j > 3$. By Corollary~\ref{cor:OneTight}, $v_j$ is not tight, so Lemma~\ref{pairequal} implies that $\braket{v_j}{v_1} = \braket{v_j}{v_3}$. Since $|v_1| = |v_2| = |v_3| = 2$, Lemma~\ref{gappy3} implies that $\supp v_j \cap \{0,1,2,3\}$ is either $\{0,1,2,3\}$, $\{2,3\}$, or $\emptyset$. In any case, $\braket{v_j}{v_1} = \braket{v_j}{v_3} = 0$. Therefore, $\tau_0(v_j) = -v_j$. This means that, for $j > 3$, one of $v_j, -v_j, \tau_1(v_j),$ or $-\tau_1(v_j)$ must be an interval. We can assert that 0 is never in this interval, since $v_j$ has zero pairing with $v_1 = x_*$ and $v_3 = x_{**}$. If 1 is not in this interval, then also $\braket{v_j}{x_0} = 0$, so $\braket{v_j}{y_1} = 0$. Since $\tau_1$ is the reflection in the plane spanned by $y_0,y_1$, $\tau_1(v_j) = -v_j$, and we are done. It remains only to show that if $[A]$ is an interval containing $1$ but not $0$, then neither $-[A]$ nor $-\tau_1([A])$ can be a standard basis vector, which holds because in this case
\begin{equation*}
    \braket{-[A]}{x_0} = \braket{-\tau_1([A])}{x_0} = 1,
\end{equation*}
and by Lemma~\ref{gappy3} for any non-tight standard basis vector this pairing is either 0 or $-1$.
\end{proof}

From now on, let $\tau$ denote $\tau_0$ if $a_0 \ge 3$, or $\tau_1$ if $a_0 = 2$. For each standard basis vector $v_j$, let $[A_j]$ be an interval for which either $v_j = [A_j]$ or $v_j = \tau([A_j])$, and let $\epsilon_j = 1$ if $v_j = [A_j]$, and $-1$ otherwise. When $a_0\ge3$, for all $j$ except $1$, $f$, $\epsilon_j$ is uniquely determined; when $a_0 = 2$, for $j\ne1,2,3$, $\epsilon_j$ is uniquely determined. However, $\epsilon_1$ and $\epsilon_f$ can be freely chosen since $\tau$ exchanges $v_1$ and $v_f$. To resolve this ambiguity, we will choose $[A_1]$ and $[A_f]$ to make $\epsilon_1 = -\epsilon_f = \epsilon_{f-1}$, and, when $a_0 = 2$, chose $\epsilon_2 = 1$. This makes $A_1 = A_f$, and, when $v_{f-1}$ is tight, $A_1 = A_f = \{*\}$ whenever $* \in A_{f-1}$, and $\{**\}$ otherwise. By Proposition~\ref{prop:OnlyTau}, this means that either $*$ or $**$ is in none of the $A_j$, which gives the following fact:

\begin{lemma} \label{lem:IntervalProd}
If $A_j$ and $A_k$ are disjoint, $\braket{[A_j]}{[A_k]}$ is either $0$ or $-1$. Otherwise, $\braket{[A_j]}{[A_k]}$ is either $|[A_j\cap A_k]| - 1$ or $|[A_j \cap A_k]| - 2$, where $|[A_j \cap A_k]|$ is either $|[A_j]|$ or $2$ if $[A_j]$ is unbreakable.
\end{lemma}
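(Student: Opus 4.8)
The plan is to compute $\braket{[A_j]}{[A_k]}$ directly from the formula $[A]=\sum_{i\in A}x_i$ and the inner product \eqref{eq:InnerProduct}, keeping careful track of how the involution $\tau$ was used to normalize the $[A_j]$ (so that, as noted just before the statement, at least one of $*,**$ lies in no $A_j$, and also — in the $a_0=2$ case — whatever normalization was made for $0$). First I would expand
\[
\braket{[A_j]}{[A_k]}=\sum_{i\in A_j}\sum_{i'\in A_k}\braket{x_i}{x_{i'}}
=\sum_{i\in A_j\cap A_k}|x_i|+\sum_{\substack{i\in A_j,\,i'\in A_k\\ i\ne i'}}\braket{x_i}{x_{i'}},
\]
and observe that the off-diagonal terms contribute $-1$ exactly once for each edge of the plumbing tree $\mathcal T$ with one endpoint in $A_j$ and the other in $A_k$, and $0$ otherwise. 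So the whole computation reduces to counting edges of $\mathcal T$ across the pair $(A_j,A_k)$, together with the self-norms on $A_j\cap A_k$.

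\textbf{The disjoint case.} When $A_j\cap A_k=\varnothing$, the first sum vanishes, and we must bound the number of tree-edges joining $A_j$ to $A_k$. Here I would use that each $[A_i]$ is an \emph{interval}: $\wh{G}(A_i)$ is connected and $A_i$ omits at least one of $*,**$, so the induced subgraph of $\mathcal T$ on $A_i$ is a connected subtree (recall $\mathcal T$ is the tree of Figure~\ref{fig:Tree}, which becomes a path once at most one of $*,**$ is used). Two disjoint subpaths of a path are joined by at most one edge of $\mathcal T$; hence $\braket{[A_j]}{[A_k]}\in\{0,-1\}$. The only subtlety is that $A_j$ or $A_k$ could a priori contain the branch vertex together with both leaves $*,**$ — but the normalization before the statement rules exactly this out, so the relevant subgraphs really are subpaths.

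\textbf{The overlapping case.} When $A_j\cap A_k\ne\varnothing$, write $\braket{[A_j]}{[A_k]}=\sum_{i\in A_j\cap A_k}|x_i| + (\text{edge count})$. Now $A_j\cap A_k$ is a set of vertices of the subpath $A_j$; removing it from $A_j$ leaves at most two residual subpaths, and similarly inside $A_k$. Counting edges of $\mathcal T$ internal to $A_j\cap A_k$ (there are $|A_j\cap A_k|-1$ of them, by connectivity of the intersection inside a path) and edges from $A_j\cap A_k$ out to $(A_j\cup A_k)\setminus(A_j\cap A_k)$, a short bookkeeping argument shows the total pairing equals $\braket{[A_j\cap A_k]}{[A_j\cap A_k]}=|[A_j\cap A_k]|$ minus a correction of $1$ or $2$ depending on whether the "outer" neighbors on each side attach to the same end of the common subpath. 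Finally, to identify $|[A_j\cap A_k]|$ when $[A_j]$ is unbreakable: by Lemma~\ref{lem:TwoIndBr} an unbreakable interval $A_j$ contains at most one index $i$ with $a_i\ge 3$, so $[A_j\cap A_k]$, being a sub-interval, is either all of $[A_j]$ — with norm $|[A_j]|$ — or a sub-path on which all $a_i=2$, for which $|[\cdot]|=2$ exactly when it is nonempty (as it is here); a degenerate count gives the value $2$ claimed.

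\textbf{Main obstacle.} The genuine work is the overlapping case: making the edge-counting argument airtight requires carefully cataloguing how $A_j\cap A_k$ sits inside the two subpaths $A_j$ and $A_k$ (is it an initial segment of one, a middle segment of the other, etc.) and which endpoints pick up the $-1$ contributions. I expect this to be the one place where a genuine case split is unavoidable, though each case is an elementary norm computation of the type already used repeatedly in Section~\ref{sec:DLattice}. The disjoint case and the identification of $|[A_j\cap A_k]|$ for unbreakable intervals should both be routine given Proposition~\ref{irred} and Lemma~\ref{lem:TwoIndBr}.
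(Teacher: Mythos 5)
Your proposal is correct and is essentially the paper's own proof: both arguments use the $*/**$ normalization to view the $A_i$ as intervals in a single linear order, compute $\braket{[A_j]}{[A_k]}$ by counting adjacencies (equivalently, by cases on shared endpoints, which is how the paper organizes the same bookkeeping), and identify $|[A_j\cap A_k]|$ from the norm formula $|[A]|=2+\sum_{i\in A}(a_i-2)$ together with Lemma~\ref{lem:TwoIndBr}. The only small imprecision is in your last step: the intersection need not be all of $A_j$ to have norm $|[A_j]|$ --- it suffices that it contain the unique index of $A_j$ with $a_i\ge 3$, which is exactly how the paper states it, and your own norm formula gives this immediately.
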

\begin{proof}
For simplicity in this proof, call whichever of $*$ or $**$ occurs in some of the $A_j$ $-1$. If $A_j$ and $A_k$ are disjoint, either there is no index $i$ with $x_i \in A_j$ and $x_{i \pm 1} \in A_k$, in which case they pair to $0$, or there is exactly one, in which case they pair to $-1$. If $A_j$ and $A_k$ do have nonempty intersection, either $\min A_j = \min A_k$ or $\max A_j = \max A_k$, in which $\braket{[A_j]}{[A_k]}$ is $|[A_j\cap A_k]| - 1$, or this does not hold, in which case $\braket{[A_j]}{[A_k]} = |[A_j\cap A_k]| - 2$. To compute $|[A_j \cap A_k]|$, note that for any $A \subset \{-1,0,\dots,n\}$ with $\hat{G}(A)$ connected,
\begin{equation}\label{eq:IntervalNorm}
|[A]| = 2 + \sum_{i \in A} (a_i - 2),
\end{equation}
so removing indices with $a_i = 2$ from $A$ does not change $|[A]|$. If $[A_j]$ is unbreakable, by Lemma~\ref{lem:TwoIndBr} $A_j$ has at most one element $i$ with $a_i \ge 3$, so $|[A_j\cap A_k]|$ is either $|[A_j]|$, if $A_k$ contains the index for which $a_i \ge 3$, or $2$, if it does not.
\end{proof}

\begin{definition}
Following Greene \cite{greene:LSRP}, say that two intervals $[A_j]$ and $[A_k]$ are {\it distant} if they do not intersect and $\braket{[A_j]}{[A_k]} = 0$, that they are {\it consecutive} and write $A_j\dagger A_k$ if they do not intersect and $\braket{[A_j]}{[A_k]} = -1$, and that they share a {\it common endpoint} if they intersect and $\braket{[A_j]}{[A_k]} = |[A_j\cap A_k]| - 1$. If $A_j$ and $A_k$ share a common endpoint and $A_j\subset A_k$, write $A_j\prec A_k$. Say that two intervals {\it abut} if they are either consecutive or share a common endpoint.
Write $A_j\pitchfork A_k$ if $A_j\cap A_k\ne\emptyset$ and they do not share a common end point.
\end{definition}

\begin{definition}\label{defn:InterGraph}
Let the {\it intersection graph} $G(T)$, where $T \subset S$ is a subset of the standard basis, be the graph with vertex set $T$, and an edge between $v_i$ and $v_j$ (write $v_i\sim v_j$) whenever $[A_i]$ and $[A_j]$ abut. If $v_i\sim v_j$ and $i<j$, we say $v_i$ is a {\it smaller neighbor} of $v_j$.
\end{definition}

For the intersection graph to be a useful concept, we need to somehow relate abutment of intervals to pairings in the lattice. First, we need to know how the pairings between intervals relate to the pairings between standard basis vectors:

\begin{lemma} \label{tausign}
For any two standard basis vectors $v_i, v_j$ with $\{i,j\} \neq \{1,f\}$, $\braket{v_i}{v_j} = \pm \braket{[A_i]}{[A_j]}$. Furthermore, if both $v_i$ and $v_j$ have norm at least $3$, then $\braket{v_i}{v_j} = \epsilon_i\epsilon_j\braket{[A_i]}{[A_j]}$.
\end{lemma}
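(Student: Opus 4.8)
The plan is to unwind the relation between a standard basis vector $v_j$ and its associated interval $[A_j]$ through the involution $\tau$, and then use the fact that $\tau$ preserves all pairings. Recall that by construction either $v_j = [A_j]$ (so $\epsilon_j = 1$) or $v_j = \tau([A_j])$ (so $\epsilon_j = -1$); equivalently $v_j = \epsilon_j\,\tau^{(1-\epsilon_j)/2}([A_j])$, or more simply, applying $\tau$ at most once, $[A_j] = \tau^{(1-\epsilon_j)/2}(v_j)$ up to the sign $\epsilon_j$. So the first step is just to record that $\tau([A_j])$ equals $v_j$ when $\epsilon_j = -1$ and equals $\tau(v_j)$ when $\epsilon_j=1$, and in the latter case $\tau(v_j) = \pm v_j$ on the nose when $v_j$ has norm $\geq 3$ — indeed, by Proposition~\ref{prop:OnlyTau} the interval $[A_j]$ for such a $v_j$ (with $j\neq 1,f$, which is automatic once $|v_j|\geq 3$ by Corollary~\ref{cor:OneTight} and the fact that $x_*,x_{**}$ have norm $2$) does not contain $*$ or $**$, hence lies in the span of $x_0,\dots,x_n$, on which $\tau$ acts by $-1$ or by another explicit formula.

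Next I would compute $\braket{v_i}{v_j}$ directly. Since $\tau$ is an isometry (Lemma~\ref{taubasic}), $\braket{v_i}{v_j} = \braket{\tau(v_i)}{\tau(v_j)}$, and applying $\tau$ to one or both of $v_i,v_j$ converts them into $\pm [A_i]$, $\pm[A_j]$ as appropriate; the point is that the condition $\{i,j\}\neq\{1,f\}$ guarantees we never need to apply $\tau$ to exactly one of two indices in $\{1,f\}$ in an incompatible way, so after at most one global application of $\tau$ we land on $\pm\braket{[A_i]}{[A_j]}$, giving the first (weaker) claim. For the sharper claim when $|v_i|,|v_j|\geq 3$: here both $[A_i]$ and $[A_j]$ avoid $*$ and $**$, so $\tau_0$ acts on each as $-\mathrm{Id}$ (and $\tau_1$ acts as $-\mathrm{Id}$ on the span of $x_1,\dots,x_n$, while on $x_0$ — but $a_0=2$ forces $a_1\geq 3$ by Lemma~\ref{a1whena0=2}, so an unbreakable $[A_i]$ of norm $\geq 3$ cannot contain $0$ either by Lemma~\ref{lem:TwoIndBr}; one should double-check the breakable/tight case, but by Corollary~\ref{cor:OneTight} $a_0=2$ kills tight vectors so breakables don't arise then). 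Hence $v_i = \epsilon_i[A_i]$ and $v_j=\epsilon_j[A_j]$ as genuine equalities, and bilinearity gives $\braket{v_i}{v_j} = \epsilon_i\epsilon_j\braket{[A_i]}{[A_j]}$.

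The step I expect to be the main obstacle — really the only thing requiring care — is the bookkeeping in the case $a_0 = 2$, where $\tau = \tau_1$ rather than $\tau_0$, and where one must confirm that for every $v_j$ with $|v_j|\geq 3$ the associated interval $A_j$ contains neither $0$ nor $1$ (so that $\tau_1$ is genuinely $-\mathrm{Id}$ on $[A_j]$, not the more complicated formula from Definition~\ref{defn:Reflection}). This follows from Proposition~\ref{prop:OnlyTau}, which says such an interval contains $0$ only if $j=2$ (and $v_2$ has norm $2$), together with Lemma~\ref{gappy3} applied to the norm-$2$ vectors $v_1,v_2,v_3$ to rule out $1\in A_j$ as well; I would spell this out but not belabor it. A secondary caution is keeping the exceptional pair $\{1,f\}$ explicitly excluded throughout, since $\epsilon_1$ and $\epsilon_f$ were only fixed by a convention and $\tau$ swaps $v_1\leftrightarrow v_f$, so $\braket{v_1}{v_f}$ genuinely does not determine a sign via intervals — but this is exactly the hypothesis, so no work is needed beyond noting it.
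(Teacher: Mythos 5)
Your proposal hinges on the claim that if $|v_j|\ge 3$ then $[A_j]$ avoids $*$ and $**$, and that $\tau$ therefore acts on $[A_j]$ as $-\operatorname{Id}$. The second half of this is false: by Definition~\ref{defn:Reflection}, $\tau_0(x_0)=-x_*-x_{**}-x_0$, so an interval with left endpoint $0$ satisfies $\tau_0([A])=-[A]-x_*-x_{**}$, not $-[A]$, even though it contains neither $*$ nor $**$. Such intervals have norm at least $a_0\ge 3$ and genuinely occur among the standard basis vectors (e.g.\ $v_2=e_0+e_1-e_2$ in the $f>3$ analysis pairs $-1$ with both $v_1$ and $v_f$, so its interval starts at $0$); for these, $\braket{v_j}{y_0}\ne 0$ and $\tau(v_j)\ne -v_j$. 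The same issue arises when $a_0=2$ with intervals whose left endpoint is $1$ (norm $\ge a_1\ge 3$ by Lemma~\ref{a1whena0=2}); these exist as well — the unique neighbor of $v_2=x_0$ corresponds to the interval containing $x_1$ — and your attempt to exclude $1\in A_j$ via Lemma~\ref{gappy3} conflates the vertex indices of the plumbing graph with the coordinate indices of $\Z^{n+4}$ to which that lemma applies. (Also, the tight vector has norm $\ge 3$ and its interval \emph{does} contain $*$, so even the first half of your claim needs the separate observation, made in Proposition~\ref{prop:OnlyTau}, that $\braket{v_t}{y_0}=0$ forces $\tau(v_t)=-v_t$.)

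Because of this, both assertions of the lemma — the weaker $\pm$ statement and the signed statement — require an argument you have omitted, and it is exactly where the paper spends its effort: when $\epsilon_i\ne\epsilon_j$ one needs either $\tau(v_i)=-v_i$ or $\tau(v_j)=-v_j$, so one must rule out that \emph{two} standard basis vectors correspond to non-negated intervals. The paper does this by computing that if both intervals had left end $0$ (resp.\ $1$ when $a_0=2$) then unbreakability forces $|v_i|=|v_j|=a_0$ (resp.\ $a_1$) and $\braket{v_i}{v_j}\in\{3-a_0,2-a_0\}$, and then a support and parity analysis using Lemmas~\ref{pairequal} and~\ref{lem:j-1} pins the vectors down to $e_1+e_{f-2}-e_{f-1}$ and $e_1+e_f-e_{f+1}$, whose pairing is $+1$ — a contradiction; the mixed case where one vector is $x_*$ or $x_{**}$ is handled via $\tau(x_*)=x_{**}$ and Lemma~\ref{pairequal}. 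Without some version of this uniqueness argument your proof does not go through.
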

\begin{proof}
If $\epsilon_i = \epsilon_j$, $\braket{v_i}{v_j} = \braket{[A_i]}{[A_j]}$ since $\tau$ is a reflection. Otherwise,
\begin{equation*}
\braket{[A_i]}{[A_j]} = \braket{\tau(v_i)}{v_j} = \braket{v_i}{\tau(v_j)},
\end{equation*}
so the desired results will hold as long as either $\tau(v_i) = -v_i$ or $\tau(v_j) = - v_j$. Using Proposition~\ref{prop:OnlyTau}, the only intervals (corresponding to standard basis vectors) that are not simply negated by $\tau$ are $x_*$, $x_{**}$, intervals with left endpoint $0$ and, if $a_0 = 2$, intervals with left endpoint 1. Note that any interval with left endpoint $0$ is not tight since the tight vector pairs differently with $x_*$ and $x_{**}$.

First, consider the case $a_0 \ge 3$. If one of $[A_i]$ or $[A_j]$ is either $x_*$ or $x_{**}$ and the other is an interval with left end $0$ (which in particular is neither tight nor $x_*$ or $x_{**}$), then the result follows from the facts that $\tau(x_*) = x_{**}$ and Lemma~\ref{pairequal}. If both $[A_i]$ and $[A_j]$ are intervals starting at $0$, then neither one is tight, by Lemma~\ref{lem:BrIsTight} both are unbreakable. Therefore, since $a_0 \ge 3$ and $0$ is in both $A_i$ and $A_j$, every other $k \in A_i \cup A_j$ has $a_k = 2$. Assume $\braket{v_i}{v_j} \ne \pm \braket{[A_i]}{[A_j]}$, it follows from (\ref{eq:IntervalNorm}) that $|v_i| = |v_j| = a_0$. Note that
\[
\braket{v_i}{v_j} = \braket{\tau([A_i]}{[A_j]} = \braket{[A_i]}{\tau([A_j])},
\]
and $\tau([A_i])=-[A_i]-x_*-x_{**}$,
we can compute that $\braket{v_i}{v_j}$ is either $3 - a_0$, if $A_i \neq A_j$, or $2 - a_0$ if $A_i = A_j$. We can check that neither of these can occur: for any two standard basis vectors, $\braket{v_i}{v_j} \ge -1$, so $a_0$ is either 3 or 4, and $\braket{v_i}{v_j}$ is either $0$ or $-1$. By Lemma~\ref{pairequal}, $\braket{v_i}{v_1} = \braket{v_j}{v_1} = \braket{v_i}{v_f} = \braket{v_j}{v_f} = -1$, so $1 \in \supp^+ v_i \cap \supp^+ v_j$. Therefore,  $\braket{v_i}{v_j} = 0$, and $a_0 = 3$. However, using Lemma~\ref{lem:j-1}, the only possible standard basis vectors of norm 3 that have pairing $-1$ with both $v_1$ and $v_f$ are $e_1 + e_{f-2} - e_{f-1}$ and $e_1 + e_f - e_{f+1}$, but these have pairing $1$ with each other. This contradiction shows that there cannot be two standard basis vectors corresponding to intervals with left endpoint $0$. Therefore, given any two standard basis vectors of norm at least $3$ (so neither one is $x_*$ or $x_{**}$), one of them must be negated by $\tau$, which proves the last statement in this case.

%Here we need to use the fact that $\sigma_1=1$ to conclude that a standard basis vector cannot have the form of $e_1 + e_{s} - e_{t}$ with $1<s<t-1$.

When $a_0 = 2$, the situation is similar. Using Proposition~\ref{prop:OnlyTau}, the only standard basis intervals that are not negated by $\tau$ are $x_*$, $x_{**}$, $x_0$, and intervals that start at $1$. The vector $x_0$ is fixed by $\tau$. For any interval $[A]$ with left end $1$, $\tau([A])=-[A]-x_*-x_{**}-2x_0$, so
$\braket{x_*}{[A]} = \braket{x_*}{\tau([A])} = 0$, and this also holds for $x_{**}$. It only remains to rule out the case in which $[A_i]$ and $[A_j]$ are both intervals with left end $1$. In this case, again $v_i$ and $v_j$ are unbreakable, so $1$ is the only index $k \in A_i \cup A_j$ with $a_k \ge 3$, $|v_i| = |v_j| = a_1$, and $\braket{v_i}{v_j}$ is either $3 - a_1$ or $2 - a_1$, always $\le0$. However,
\begin{equation*}
\braket{v_i}{x_*} = \braket{v_i}{x_{**}} = \braket{v_j}{x_*} = \braket{v_j}{x_{**}}=0
\end{equation*}
and
\begin{equation*}
\braket{v_i}{x_0} = \braket{v_j}{x_0} = -1,
\end{equation*}
so $\supp^+(v_i) \cap \{0,1,2,3\} = \supp^+(v_j) \cap \{0,1,2,3\} = \{2,3\}$. This forces $\braket{v_i}{v_j} \ge 1$, a contradiction. Again, since $x_*$, $x_{**}$, and $x_0$ all have norm $2$, at least one of any pair of standard basis vectors of norm at least $3$ is negated by $\tau$, and the conclusion follows.
\end{proof}
%The last statement follows from the fact that there is always at most one standard basis vector $v_j$ with $|v_j| \ge 3$ and $\tau(v_j) \neq \pm v_j$  - the vector whose interval has left end $0$ if $a_0 \ge 3$, and left end $1$ if $a_0 = 2$.

Because all of the $A_j$ are subintervals of one interval and the pairings between the $[A_j]$ are, up to sign, the same as the pairings between standard basis vectors, many of the results from \cite{greene:LSRP} about the intersection graph carry over unchanged to this situation. Most importantly,  \cite[Lemma 4.4]{greene:LSRP} holds unchanged:

\begin{lemma}[Lemma~4.4 in \cite{greene:LSRP}] \label{smallpair}
If $v_i$ and $v_j$ are unbreakable standard basis vectors of norm at least $3$, then $|\braket{v_i}{v_j}| \le 1$, with equality if and only if $[A_i]$ and $[A_j]$ are consecutive and $\braket{v_i}{v_j} = -\epsilon_i\epsilon_j$.
\end{lemma}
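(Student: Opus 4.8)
The plan is to transfer the statement to the intervals $[A_i],[A_j]$ and then read the answer off from Lemma~\ref{lem:IntervalProd}. Since $|v_i|,|v_j|\ge 3$ while $|v_1|=|v_f|=2$, we have $\{i,j\}\cap\{1,f\}=\emptyset$, so the ``furthermore'' clause of Lemma~\ref{tausign} applies and $\braket{v_i}{v_j}=\epsilon_i\epsilon_j\braket{[A_i]}{[A_j]}$. Because $\tau$ and negation preserve breakability, $[A_i]$ and $[A_j]$ are unbreakable intervals of norm at least $3$; by Lemma~\ref{lem:TwoIndBr} together with \eqref{eq:IntervalNorm}, each of $A_i,A_j$ contains a unique index, say $z_i$ resp.\ $z_j$, with weight $\ge 3$, and $|[A_i]|=a_{z_i}$, $|[A_j]|=a_{z_j}$ (so $z_i,z_j$ are the indices of Definition~\ref{defn:z_j}). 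Thus everything reduces to bounding $\braket{[A_i]}{[A_j]}$, and we split into cases according to how $A_i$ and $A_j$ meet.

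First I would dispatch the disjoint case: if $A_i\cap A_j=\emptyset$, Lemma~\ref{lem:IntervalProd} gives $\braket{[A_i]}{[A_j]}\in\{0,-1\}$, with value $-1$ exactly when $[A_i]$ and $[A_j]$ are consecutive; combined with the identity above this yields the bound and both directions of the equality statement. Next suppose $A_i\cap A_j\ne\emptyset$ but $z_i\ne z_j$. Then $z_i\notin A_j$ and $z_j\notin A_i$ (otherwise $A_i$ or $A_j$ would contain two indices of weight $\ge 3$, contradicting Lemma~\ref{lem:TwoIndBr}); in particular neither of $A_i,A_j$ contains the other. After the normalization fixed just before Lemma~\ref{lem:IntervalProd}, one of $*,**$ lies in no $A_k$, so the $A_k$ are honest intervals inside a linearly ordered set; hence two intersecting intervals neither of which contains the other share no common endpoint, and $A_i\cap A_j$ contains no weight-$\ge 3$ index, so $|[A_i\cap A_j]|=2$ and $\braket{[A_i]}{[A_j]}=|[A_i\cap A_j]|-2=0$. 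So the claim holds in this case too.

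The remaining and genuinely hard case is $z_i=z_j=:z$, where $|[A_i\cap A_j]|=a_z\ge 3$ and Lemma~\ref{lem:IntervalProd} would only give $\braket{[A_i]}{[A_j]}\in\{a_z-1,a_z-2\}$, potentially of magnitude $2$ or more. The point to establish is that this case cannot occur for distinct $i,j$: no weight-$\ge 3$ vertex $x_z$ lies in the interval of two different unbreakable standard basis vectors of norm $\ge 3$. I expect to prove this in the $\Z^{n+4}$ model rather than through intervals. Writing $a=\min(i,j)$, one has $\braket{v_i}{v_j}=|\supp^+ v_i\cap\supp^+ v_j|-[\,a\in\supp^+ v_j\,]$, so $\braket{v_i}{v_j}\ge -1$ is automatic, and the task is to show $\braket{v_i}{v_j}\ge 2$ is impossible. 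This is exactly the content of \cite[Lemma~4.4]{greene:LSRP}, and I would reuse Greene's argument essentially verbatim: assuming $\braket{v_i}{v_j}\ge 2$, analyze the forced shapes of $\supp^+ v_i$ and $\supp^+ v_j$ and obtain a contradiction with the maximality built into Definition~\ref{stbasis}, with Lemma~\ref{gappy3}, and with the parity relations against $v_1$ and $v_f$ as in the proof of Lemma~\ref{tausign}. This last step is the main obstacle; everything else is bookkeeping with Lemmas~\ref{tausign} and~\ref{lem:IntervalProd}. Once overlapping intervals are excluded, the case $z_i=z_j$ with $i\ne j$ disappears, and assembling the three cases gives $|\braket{v_i}{v_j}|\le 1$ together with the stated equality criterion.
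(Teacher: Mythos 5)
Your proposal follows the paper's route: the paper does not reprove this lemma either, observing that the argument is identical to Greene's, with the only substantive point being that a shared heavy vertex ($z_i=z_j$) would make the pairing too large given the form of the standard basis vectors --- precisely the step you defer to Greene, while your surrounding reduction via Lemma~\ref{tausign} and Lemma~\ref{lem:IntervalProd} matches the paper's framework. One small caution: excluding $\braket{v_i}{v_j}\ge 2$ alone does not dispose of the subcase $z_i=z_j$, $a_{z_i}=3$, $A_i\pitchfork A_j$ (which gives pairing $+\epsilon_i\epsilon_j$ of absolute value $1$ but violates the equality criterion), so what you really need is the full statement you formulate --- that $z_i\ne z_j$ for distinct unbreakable high-norm vectors (Corollary~\ref{uniqueheavy}) --- which is exactly what Greene's argument delivers.
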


The proof of Lemma~\ref{smallpair}, which will not be repeated here, is identical to the one Greene gives. The overall strategy is similar to the one used in Lemma~\ref{tausign}: showing that, for unbreakable standard basis vectors $v_i, v_j$ of norm at least $3$, if $A_i \cap A_j$ contains an index $k$ with $a_k \ge 3$ then the pairing $\braket{v_i}{v_j}$ will be too large given the form of the standard basis vectors. The proof actually shows 
\begin{cor}\label{uniqueheavy}
If $v_i, v_j$ are unbreakable standard basis vectors of norm at least $3$, then $z_i \neq z_j$, where $z_i$ is defined in Definition~\ref{defn:z_j}.
\end{cor}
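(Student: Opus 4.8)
The plan is to argue by contradiction. Suppose $v_i$ and $v_j$ are unbreakable standard basis vectors with $|v_i|,|v_j|\ge 3$ and $z_i=z_j=:z$, where $z_i,z_j$ are as in Definition~\ref{defn:z_j}. Since $v_1=x_*$ and $v_f=x_{**}$ both have norm $2$, neither $v_i$ nor $v_j$ is $v_1$ or $v_f$; in particular $\{i,j\}\ne\{1,f\}$, so both Lemma~\ref{tausign} and Lemma~\ref{smallpair} apply to the pair $v_i,v_j$. Because $v_i$ and $v_j$ are unbreakable of norm at least $3$, Lemma~\ref{lem:TwoIndBr} tells us that $z$ is the unique index of $A_i$ with $a_z\ge 3$, and likewise the unique such index of $A_j$. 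In particular $z\in A_i\cap A_j$, so $A_i$ and $A_j$ are not disjoint; consequently $[A_i]$ and $[A_j]$ cannot be consecutive.

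Next I would compute $\braket{[A_i]}{[A_j]}$ using Lemma~\ref{lem:IntervalProd}. Since $z\in A_i\cap A_j$ and $a_z\ge 3$, the set $A_j$ contains the unique heavy index of the unbreakable interval $[A_i]$, so the lemma gives $|[A_i\cap A_j]|=|[A_i]|$. Moreover $[A_i]$ equals $v_i$ or $\tau(v_i)$, and $\tau$ is an isometry, so $|[A_i]|=|v_i|\ge 3$. Hence Lemma~\ref{lem:IntervalProd} yields $\braket{[A_i]}{[A_j]}\in\{|[A_i]|-1,\,|[A_i]|-2\}$, so $\braket{[A_i]}{[A_j]}\ge 1$. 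Applying the second assertion of Lemma~\ref{tausign} (valid because $|v_i|,|v_j|\ge 3$) gives $\braket{v_i}{v_j}=\epsilon_i\epsilon_j\braket{[A_i]}{[A_j]}$, hence $|\braket{v_i}{v_j}|=\braket{[A_i]}{[A_j]}\ge 1$.

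Finally I would invoke Lemma~\ref{smallpair}, which forces $|\braket{v_i}{v_j}|\le 1$ with equality only when $[A_i]$ and $[A_j]$ are consecutive. Combined with the previous paragraph this means $|\braket{v_i}{v_j}|=1$ and $[A_i],[A_j]$ are consecutive, contradicting $z\in A_i\cap A_j\ne\emptyset$. Therefore $z_i\ne z_j$. The argument is short, and the only place needing care is the middle step: one must read off from the precise statement of Lemma~\ref{lem:IntervalProd} that the \emph{shared} heavy index $z$ forces $|[A_i\cap A_j]|=|[A_i]|$ rather than the alternative value $2$, and one should note that Lemma~\ref{smallpair} is not being used circularly here, since it is established independently (following Greene) with Corollary~\ref{uniqueheavy} being a by-product of, not an input to, its proof. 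If one prefers not to cite Lemma~\ref{smallpair} at all, the same contradiction can be reached directly from the explicit form of the standard basis vectors in Definition~\ref{stbasis} and the norm formula~\eqref{eq:IntervalNorm}: with $v_i=\bigl(\sum_{k\in A}e_k\bigr)-e_i$ and $v_j=\bigl(\sum_{k\in B}e_k\bigr)-e_j$ and $i<j$, one has $\braket{v_i}{v_j}\in\{|A\cap B|-1,\,|A\cap B|\}$, so $\braket{v_i}{v_j}\ge -1$, whereas $z_i=z_j$ together with unbreakability forces $\braket{v_i}{v_j}$ to be strictly larger than this lower bound; carrying out that case analysis cleanly would be the main obstacle in this variant.
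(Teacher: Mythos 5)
Your proof is correct, and it reaches the corollary by a slightly different route than the paper does. The paper does not deduce Corollary~\ref{uniqueheavy} from the \emph{statement} of Lemma~\ref{smallpair}; it simply observes that Greene's (unrepeated) proof of that lemma, which bounds $\braket{v_i}{v_j}$ directly from the coordinate form of standard basis vectors when $A_i\cap A_j$ contains a heavy index, establishes the corollary as a by-product. You instead treat Lemma~\ref{smallpair} as a black box: a shared heavy index $z$ puts $z\in A_i\cap A_j$, Lemma~\ref{lem:IntervalProd} (with unbreakability, via Lemma~\ref{lem:TwoIndBr} and Definition~\ref{defn:z_j}) forces $\braket{[A_i]}{[A_j]}\ge |[A_i]|-2\ge 1$, Lemma~\ref{tausign} transfers this to $|\braket{v_i}{v_j}|\ge 1$, and then the ``equality only for consecutive (hence disjoint) intervals'' clause of Lemma~\ref{smallpair} gives the contradiction. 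This is a clean, self-contained deduction from the quoted statements and avoids re-opening Greene's argument; its only cost is leaning on the equality clause of Lemma~\ref{smallpair}, which is exactly the content imported from Greene's proof, so the two approaches are close in substance. You are also right that there is no circularity: Lemmas~\ref{lem:IntervalProd}, \ref{tausign}, and \ref{smallpair} are all established before and independently of the corollary. Your closing sketch of a direct coordinate-form argument is essentially the paper's (Greene's) route, and you correctly identify the case analysis there as the real work.
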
 

\begin{cor}\label{cor:AdjacencyNon0}
If $\{i,j\}\ne\{1,f\}$ and $v_i\sim v_j$, then $\braket{v_i}{v_j}\ne0$. If $v_i\not\sim v_j$ and $v_i,v_j$ are both unbreakable, then $\braket{v_i}{v_j}=0$.
\end{cor}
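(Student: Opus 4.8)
The plan is to prove \Cref{cor:AdjacencyNon0} as a short corollary of \Cref{tausign} and \Cref{smallpair}, handling the two implications separately and being careful about the exceptional pair $\{1,f\}$.

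\medskip

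\textbf{First implication.} Suppose $\{i,j\}\neq\{1,f\}$ and $v_i\sim v_j$, i.e.\ $[A_i]$ and $[A_j]$ abut. By \Cref{lem:IntervalProd}, abutment means $\braket{[A_i]}{[A_j]}$ is either $-1$ (consecutive case) or $|[A_i\cap A_j]|-1$ (common endpoint case). In the consecutive case $\braket{[A_i]}{[A_j]}=-1\neq0$. In the common endpoint case, I would argue $|[A_i\cap A_j]|\geq 3$, since any nonempty $A$ with $\wh{G}(A)$ connected has $|[A]|=2+\sum_{i\in A}(a_i-2)\geq 2$, with equality only when every index in $A$ has $a_i=2$; and if both $[A_i]$ and $[A_j]$ consisted entirely of weight-$2$ indices around a shared endpoint, then... actually the cleaner route is: whenever $[A_i]$ and $[A_j]$ share a common endpoint, the overlap $[A_i\cap A_j]$ is an interval of norm at least $2$, so $\braket{[A_i]}{[A_j]}=|[A_i\cap A_j]|-1\geq1\neq 0$. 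Either way $\braket{[A_i]}{[A_j]}\neq 0$, and then \Cref{tausign} (the first sentence, applicable since $\{i,j\}\neq\{1,f\}$) gives $\braket{v_i}{v_j}=\pm\braket{[A_i]}{[A_j]}\neq 0$.

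\medskip

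\textbf{Second implication.} Suppose $v_i,v_j$ are both unbreakable and $v_i\not\sim v_j$, so $[A_i]$ and $[A_j]$ do not abut. By the classification of interval pairings in \Cref{lem:IntervalProd}, the only possibilities that remain when they do not abut are: they are distant (disjoint with $\braket{[A_i]}{[A_j]}=0$), or they intersect without sharing a common endpoint, i.e.\ $A_i\pitchfork A_j$, in which case $\braket{[A_i]}{[A_j]}=|[A_i\cap A_j]|-2$. In the distant case $\braket{[A_i]}{[A_j]}=0$ immediately. In the $\pitchfork$ case, unbreakability is the key input: since $[A_i]$ is unbreakable, by \Cref{lem:IntervalProd} the overlap norm $|[A_i\cap A_j]|$ is either $|[A_i]|$ or $2$; if it were $|[A_i]|$ then $A_i\subseteq A_j$, which would make them share a common endpoint, contradicting $A_i\pitchfork A_j$; so $|[A_i\cap A_j]|=2$ and hence $\braket{[A_i]}{[A_j]}=0$. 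Thus in all non-abutting cases $\braket{[A_i]}{[A_j]}=0$. Now if $\{i,j\}\neq\{1,f\}$, \Cref{tausign} gives $\braket{v_i}{v_j}=\pm\braket{[A_i]}{[A_j]}=0$. If $\{i,j\}=\{1,f\}$ we do not even need \Cref{tausign}: $v_1=x_*$, $v_f=x_{**}$ and $\braket{x_*}{x_{**}}=0$ by definition of the vertex basis.

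\medskip

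\textbf{Main obstacle.} I expect the only real subtlety is the bookkeeping in the common-endpoint subcase of the first implication — confirming that ``abut via a shared endpoint'' forces $\braket{[A_i]}{[A_j]}\neq 0$ rather than possibly $0$. This is where I need the fact that a shared endpoint means the overlap interval $[A_i\cap A_j]$ is nonempty with norm $\geq 2$, so that $|[A_i\cap A_j]|-1\geq 1$; this follows directly from \eqref{eq:IntervalNorm} and the definition of ``share a common endpoint'' in \Cref{defn:InterGraph} (where $\braket{[A_j]}{[A_k]}=|[A_j\cap A_k]|-1$ is built into the definition, so it is really just unwinding definitions). Everything else is a direct application of \Cref{tausign} and \Cref{lem:IntervalProd}, together with the observation that the exceptional pair $\{1,f\}$ is handled by hand because those two vectors are literally $x_*$ and $x_{**}$.
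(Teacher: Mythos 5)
Your first implication is fine and is essentially the paper's argument: abutment gives $\braket{[A_i]}{[A_j]}$ equal to $-1$ or to $|[A_i\cap A_j]|-1\ge 1$, and Lemma~\ref{tausign} transfers this to $\braket{v_i}{v_j}\ne 0$. The problem is in the second implication, in the case $A_i\pitchfork A_j$. There you invoke the dichotomy of Lemma~\ref{lem:IntervalProd}, that $|[A_i\cap A_j]|$ is either $|[A_i]|$ or $2$, and you dismiss the first alternative by claiming that $|[A_i\cap A_j]|=|[A_i]|$ forces $A_i\subseteq A_j$ and hence a common endpoint. Both inferences are wrong. In the proof of Lemma~\ref{lem:IntervalProd}, the alternative $|[A_i\cap A_j]|=|[A_i]|$ occurs exactly when $A_j$ contains the unique index $z_i\in A_i$ with weight at least $3$ (Definition~\ref{defn:z_j}); since the remaining indices of $A_i$ have weight $2$ and contribute nothing to the norm, equality of norms says nothing about containment of the sets. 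And even if one did have $A_i\subsetneq A_j$, that does not give a common endpoint: an interval lying strictly in the interior of another intersects it without sharing an endpoint, which is precisely the relation $\pitchfork$. So your argument does not exclude the scenario $z_i\in A_i\cap A_j$ with $|v_i|\ge 3$, in which case the pairing would be $|[A_i]|-2\ne 0$ and the conclusion would fail.

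The missing ingredient is Corollary~\ref{uniqueheavy}. If $z_i$ lay in $A_j$, then, $[A_j]$ being unbreakable, $z_i$ would be its unique vertex of norm at least $3$, i.e.\ $z_i=z_j$, contradicting Corollary~\ref{uniqueheavy}; and if $|v_i|=2$ or $|v_j|=2$ the intersection trivially contains no heavy vertex. Hence $A_i\cap A_j$ contains only weight-$2$ vertices, $|[A_i\cap A_j]|=2$, and the pairing vanishes — this is exactly how the paper argues. (Your separate treatment of $\{i,j\}=\{1,f\}$ in the second implication is harmless but unnecessary: $[A_1]=[A_f]$, so $v_1\sim v_f$ and that pair never satisfies the hypothesis $v_i\not\sim v_j$.)
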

\begin{proof}
If $v_i\sim v_j$, then $\braket{[A_i]}{[A_j]}$ is equal to either $-1$ or $|[A_i\cap A_j]| - 1\ne0$. It follows from Lemma~\ref{tausign} that $\braket{v_i}{v_j}\ne0$.

If $v_i\not\sim v_j$, by Lemma~\ref{tausign} we only need to show $\braket{[A_i]}{[A_j]}=0$. If $A_i,A_j$ are distant, we are done. If $A_i\pitchfork A_j$, by Lemma~\ref{lem:IntervalProd} 
\[\braket{[A_i]}{[A_j]}=[A_i\cap A_j]-2.\]
By Corollary~\ref{uniqueheavy}, $A_i\cap A_j$ does not contain any vertex with norm $\ge3$, so $[A_i\cap A_j]=2$ and our conclusion holds.
\end{proof}

In particular, when $S$ contains no breakable vectors, $G(S)$ is almost the same as $\hat{G}(S)$, with the only differences being that $G(S)$ has an edge between $v_1$ and $v_f$ while $\hat{G}(S)$ does not.

\begin{cor} \label{1fblock}
At most one unbreakable vector other than $v_1$ or $v_f$ neighbors $v_1$ in $G(S)$. The same holds for $v_f$, and, when $a_0 = 2$, $v_2$.
\end{cor}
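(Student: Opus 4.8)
The plan is to establish the statement for $v_1$ in detail and then deduce the cases of $v_f$ and (when $a_0 = 2$) $v_2$ by essentially the same reasoning. Work with the identifications of Lemma~\ref{modify} and the normalization of the intervals $[A_j]$ and signs $\epsilon_j$ fixed just before Lemma~\ref{lem:IntervalProd}. By Proposition~\ref{prop:OnlyTau} and that normalization, $A_1 = A_f$ consists of a single vertex, one of $x_*, x_{**}$, which is a leaf of the plumbing tree of $\Delta(p,q)$ adjacent only to $x_0$; relabeling $*\leftrightarrow **$ if necessary, we may assume $A_1 = A_f = \{*\}$, so $[A_1]$ is the single vertex $x_*$. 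Now by Definition~\ref{defn:InterGraph}, an unbreakable $v_j$ with $j \ne 1,f$ neighbors $v_1$ in $G(S)$ exactly when $[A_j]$ abuts $[A_1] = [\{*\}]$. Since $x_*$ is adjacent only to $x_0$, this can happen in only two ways: $[A_j]$ is consecutive to $[\{*\}]$, which puts $0$ into $A_j$; or $[A_j]$ shares the endpoint $*$, so $* \in A_j$ --- and then $A_j \ne \{*\}$ (else $[A_j] = x_*$, forcing $j \in\{1,f\}$), while $** \notin A_j$ by our normalization, so connectedness of $\widehat{G}(A_j)$ and the fact that $*$ has only the neighbor $0$ again forces $0 \in A_j$. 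Hence every unbreakable neighbor $v_j$ of $v_1$ with $j \ne 1,f$ has $0 \in A_j$, and therefore $|v_j| = |[A_j]| \ge a_0$ by \eqref{eq:IntervalNorm}.

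The contradiction then comes from Corollary~\ref{uniqueheavy}. Suppose $v_i$ and $v_j$ are two distinct such neighbors. If $a_0 \ge 3$, then $v_i, v_j$ are unbreakable of norm at least $3$ and the common vertex $0 \in A_i \cap A_j$ has weight $a_0 \ge 3$; since an unbreakable interval has a unique heavy vertex (Lemma~\ref{lem:TwoIndBr}), we get $z_i = z_j = 0$ in the notation of Definition~\ref{defn:z_j}, contradicting Corollary~\ref{uniqueheavy}. If instead $a_0 = 2$, then $f = 3$ by Lemma~\ref{modify}, and Proposition~\ref{prop:OnlyTau} says $[A_j]$ contains $0$ only when $j = 2$; combined with the previous paragraph this forces $j = 2$. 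In either case at most one unbreakable vector other than $v_1$ and $v_f$ neighbors $v_1$, which is the assertion for $v_1$.

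It remains to transfer the result. For $v_f$: since $A_f = A_1$, an unbreakable $v_j$ with $j \ne 1,f$ abuts $[A_f]$ if and only if it abuts $[A_1]$, so $v_1$ and $v_f$ have precisely the same unbreakable neighbors outside $\{v_1,v_f\}$, and the claim for $v_f$ follows. For $v_2$ when $a_0 = 2$ (so $v_2 = x_0$ and $A_2 = \{0\}$): there is no tight vector by Corollary~\ref{cor:OneTight}, so Proposition~\ref{prop:OnlyTau} shows $A_j$ contains none of $*$, $**$, $0$ for every $j \ge 4$; such an interval can abut $[\{0\}]$ only if it contains the index $1$, and, being connected and avoiding $0$, this forces $A_j = \{1,2,\dots,k\}$, whence $|v_j| \ge a_1 \ge 3$ by Lemma~\ref{a1whena0=2}. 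Two such neighbors would then each have $1$ as their unique heavy vertex, again contradicting Corollary~\ref{uniqueheavy}, so at most one exists.

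The step I expect to need the most care is the interval bookkeeping in the first paragraph --- in particular verifying that the ``shared endpoint $*$'' possibility (which is exactly the tight-vector case, $j = f-1$) still forces $0 \in A_j$, and keeping the normalization of the $\epsilon_j$ and the conventional edge between $v_1$ and $v_f$ straight throughout. Once that is settled, the two applications of Corollary~\ref{uniqueheavy} dispatch every case uniformly.
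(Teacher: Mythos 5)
Your proposal is correct and follows essentially the same route as the paper: you show that any unbreakable neighbor $v_j$ of $v_1$ (or $v_f$, or $v_2$ when $a_0=2$) with $j\notin\{1,f\}$ must have $0$ (respectively $1$) in $A_j$, hence $z_j=0$ (respectively $z_j=1$), and then invoke Corollary~\ref{uniqueheavy}. The only difference is that you spell out the abutment case analysis directly where the paper cites Proposition~\ref{prop:OnlyTau} and Corollary~\ref{cor:AdjacencyNon0}.
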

\begin{proof}
If $v_j \sim v_1$, or $v_j \sim v_f$, then using Proposition~\ref{prop:OnlyTau} and Corollary~\ref{cor:AdjacencyNon0} we get either $j \in \{1,f\}$ or $0 \in A_j$. If $a_0 \ge 3$ and $v_j$ is unbreakable, then $z_j = 0$, so no other unbreakable standard basis vector corresponds to an interval containing $0$. When $a_0 = 2$, by Proposition~\ref{prop:OnlyTau} and Corollary~\ref{cor:AdjacencyNon0}, $v_2$ is the only neighbor of $v_1$ or $v_3$, and when $v_j \sim v_2$ for some $j \not \in \{1,f\}$ and $v_j$ is unbreakable, $z_j = 1$. 
\end{proof}

Once we have Lemma~\ref{smallpair}, almost all of the remaining results of \cite[Section~4]{greene:LSRP} carry over.
In particular, we have versions of \cite[Lemmas 4.8 and 4.10]{greene:LSRP}, with identical proofs:

\begin{definition}
A {\it claw} in a graph $G$ is a set of four vertices $(v;x,y,z)$ with $v$ adjacent to $x$, $y$, and $z$, and no two of $x, y,$ and $z$ adjacent to each other.
\end{definition}

\begin{lemma}[Lemma~4.8 in \cite{greene:LSRP}]
$G(S)$ has no claws.
\end{lemma}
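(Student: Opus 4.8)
The plan is to follow Greene's proof of \cite[Lemma~4.8]{greene:LSRP} essentially verbatim. The point is that the edges of $G(S)$ are, by Definition~\ref{defn:InterGraph}, precisely the abutments among the intervals $[A_j]$, and by Proposition~\ref{prop:OnlyTau} together with the normalization made just before Lemma~\ref{lem:IntervalProd}, all of the $[A_j]$ are connected subintervals of a single path (the one of $x_*,x_{**}$ that occurs in no $A_j$ is deleted). A connected subinterval of a path has exactly two ends, and this two--ends phenomenon is what obstructs a claw. The only wrinkle is the edge $v_1\sim v_f$ of $G(S)$, which does not come from a genuine abutment of distinct intervals (we have $[A_1]=[A_f]=x_*$), so $v_1,v_f$ — and, when $a_0=2$, $v_2$ — must be handled separately.

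First I would dispose of a claw whose centre is $v_1$, $v_f$, or (when $a_0=2$) $v_2$. By Lemma~\ref{lem:BrIsTight} and Corollary~\ref{cor:OneTight}, at most one standard basis vector is breakable, and none is when $a_0=2$. By Corollary~\ref{1fblock}, $v_1$ has at most one unbreakable neighbour besides $v_f$, so its neighbours are at most $v_f$, one unbreakable vector $u$, and one breakable vector $b$; three pairwise non-adjacent neighbours would therefore have to be exactly $\{v_f,u,b\}$. But $u\sim v_1$ gives $\braket{v_u}{v_1}\neq0$ by Corollary~\ref{cor:AdjacencyNon0}, so Lemma~\ref{pairequal} forces $\braket{v_u}{v_f}\neq0$, and since $u$ and $v_f$ are both unbreakable, Corollary~\ref{cor:AdjacencyNon0} gives $u\sim v_f$, contradicting that $\{v_f,u\}$ are non-adjacent leaves. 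The case of $v_f$ is symmetric, and for $v_2$ there are no breakable vectors while $v_1\sim v_f$, leaving at most two candidate leaves. Note also that $v_1,v_f,v_2$ cannot all occur \emph{as leaves} of one claw, since they are pairwise adjacent in $G(S)$; and when the centre is any other vector, a leaf equal to one of them causes no trouble because its interval is then genuinely distinct from the centre's.

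So assume the centre $v$ is none of $v_1,v_f,v_2$, and write $[A_v]=[p,q]$. Each of the three leaves has an interval abutting $[A_v]$, while by Corollary~\ref{cor:AdjacencyNon0} the three leaf-intervals are pairwise distant or crossing ($\pitchfork$), never abutting. An interval abutting $[A_v]$ realizes exactly one of four types: consecutive with $[A_v]$ on the left ($r=p-1$), consecutive on the right ($l=q+1$), sharing the left end ($l=p$), or sharing the right end ($r=q$); the interval $[A_v]$ itself is excluded. Two intervals of the same type share an endpoint with each other, hence abut; and checking the six pairs, two intervals can be realized simultaneously by non-abutting intervals only for the pairs (left-consecutive, share-right-end), (left-consecutive, right-consecutive), (right-consecutive, share-left-end), (share-left-end, share-right-end) — that is, the compatibility relation on the four types is a $4$-cycle, which is triangle-free. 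Hence no three pairwise non-abutting intervals can all abut $[A_v]$, so the claw cannot exist.

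The main obstacle I anticipate is the bookkeeping in the second paragraph — getting the neighbour counts from Corollary~\ref{1fblock} exactly right and invoking Lemma~\ref{pairequal} correctly in the tight versus non-tight subcases — together with the routine but fiddly verification of the type-compatibility claim in the third paragraph, where one must also check the boundary situation in which $p$ is the first vertex of the path (so that no left-consecutive interval exists, which only strengthens the conclusion). Once these are in place, the argument is purely formal and parallels Greene's.
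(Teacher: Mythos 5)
Your argument is correct and is essentially the proof the paper itself relies on: the paper gives no independent argument, deferring to Greene's Lemma~4.8, whose interval--abutment pigeonhole you reconstruct (all $[A_j]$ lie in one path, an interval abutting $[A_v]$ has one of four types, the always-abutting pairs of types form the complement of a $4$-cycle, so three pairwise non-abutting leaves are impossible), with the extra care at $v_1,v_f$ and, for $a_0=2$, $v_2$ that the paper leaves implicit. One bookkeeping slip: in your generic case you exclude the centre $v_2$ outright while paragraph two only treats $v_2$ when $a_0=2$, but since nothing in the generic argument uses $v\neq v_2$, the case $a_0\geq 3$ with centre $v_2$ is covered by the same reasoning and the slip is harmless.
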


Let $\bar{S}$ be the set of unbreakable elements of $S$, and let $S_g=\{v_1,v_2,\dots,v_g\}$.

\begin{definition}
A triple $(v_i,v_j,v_k) \in (\bar{S})^3$ is a {\it heavy triple} if each of $v_i, v_j, $ and $v_k$ have norm at least 3, and any two of them are connected by a path in $G(\bar{S})$ that does not pass through the third. If the heavy triple $(v_i,v_j,v_k)$ spans a triangle, we say it is a {\it heavy triangle}.
\end{definition}

\begin{lemma}[Lemma~4.10 in \cite{greene:LSRP}]
No triple is heavy.
\end{lemma}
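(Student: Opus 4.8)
The plan is to adapt Greene's proof of \cite[Lemma~4.10]{greene:LSRP}, exploiting that in our setting every unbreakable standard basis vector of norm at least $3$ is, up to $\tau$, an honest interval living on a \emph{path}. Recall that after the normalization of the signs $\epsilon_j$ made following Lemma~\ref{tausign}, one of $*,**$ — say $**$ — lies in no $A_j$, so each $A_j$ is a connected subinterval of the path $P$ underlying $\mathcal{T}$, namely $* - 0 - 1 - \cdots - n$. Moreover, when $v_j$ is unbreakable of norm at least $3$, Lemma~\ref{lem:TwoIndBr} says $A_j$ contains at most one index with weight $\ge 3$, hence (by the norm formula~\eqref{eq:IntervalNorm}) exactly one, which is $z_j$ (Definition~\ref{defn:z_j}). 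Two unbreakable vectors $v_i,v_j$ satisfy $v_i\sim v_j$ in $G(\bar S)$ precisely when $A_i$ and $A_j$ abut, and in either form of abutment — consecutive, or sharing a common endpoint — the union $A_i\cup A_j$ is again a connected subinterval of $P$. Consequently, for any path $v_i=w_0\sim w_1\sim\cdots\sim w_m$ in $G(\bar S)$, the union $\bigcup_{\ell=0}^m A_{w_\ell}$ is a connected subinterval of $P$.

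The argument would then run as follows. Assume toward a contradiction that $(v_i,v_j,v_k)$ is a heavy triple. These vectors are distinct, unbreakable, and of norm at least $3$, so Corollary~\ref{uniqueheavy} forces $z_i,z_j,z_k$ to be three \emph{distinct} indices; each lies in $\{0,1,\dots,n\}$ since $|x_*|=|x_{**}|=2$. After relabelling we may assume $z_j$ lies strictly between $z_i$ and $z_k$ in the linear order on $\{0,\dots,n\}$, so $z_j$ lies on the unique subpath of $P$ joining $z_i$ to $z_k$. By the definition of a heavy triple there is a path $v_i=w_0\sim w_1\sim\cdots\sim w_m=v_k$ in $G(\bar S)$ with $w_\ell\ne v_j$ for every $\ell$. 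By the first paragraph, $U:=\bigcup_{\ell} A_{w_\ell}$ is a connected subinterval of $P$ containing $z_i\in A_{w_0}$ and $z_k\in A_{w_m}$; being connected, $U$ contains the subpath of $P$ between them, and in particular $z_j\in U$.

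Hence $z_j\in A_{w_\ell}$ for some $\ell$. Since $a_{z_j}\ge 3$, the norm formula~\eqref{eq:IntervalNorm} gives $|[A_{w_\ell}]|=2+\sum_{t\in A_{w_\ell}}(a_t-2)\ge a_{z_j}\ge 3$, so $v_{w_\ell}$ has norm at least $3$; as it is unbreakable, Lemma~\ref{lem:TwoIndBr} shows $z_j$ is its only index of weight $\ge 3$, i.e.\ $z_{w_\ell}=z_j$. But $w_\ell\ne j$, so $v_{w_\ell}$ and $v_j$ are two distinct unbreakable standard basis vectors of norm at least $3$ with the same heavy vertex, contradicting Corollary~\ref{uniqueheavy}. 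This shows no triple is heavy, and in particular rules out heavy triangles.

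The only place that needs care is the bookkeeping in the first paragraph: one has to check that abutment in each of its two forms really does produce a connected union, and that the fork at $*,**$ in $\mathcal{T}$ causes no difficulty. The latter is exactly why the earlier normalization, which expels one of $*,**$ from all the $A_j$, is invoked — it guarantees that $P$ is a genuine path rather than a tree with a branch vertex, so that ``the subpath between $z_i$ and $z_k$'' is unambiguous and automatically passes through the median $z_j$. Beyond that, everything is an immediate consequence of Corollary~\ref{uniqueheavy}; in particular, unlike the neighbouring ``no claws'' lemma, this argument does not appear to require claw-freeness of $G(S)$.
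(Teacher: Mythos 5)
Your argument is correct and is essentially the paper's own proof: the paper's one-line argument (order $z_i<z_j<z_k$; any path in $G(\bar S)$ from $v_i$ to $v_k$ must contain an unbreakable interval containing $z_j$, contradicting Corollary~\ref{uniqueheavy}) is exactly what you prove, with the "connected union of abutting intervals on the path $P$" step spelled out explicitly. No further comment needed.
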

\begin{proof}
Suppose that $(v_i,v_j,v_k)$ is heavy, with $z_i < z_j < z_k$. Then any path from $v_i$ to $v_k$ in $G(\bar{S})$ would contain an unbreakable interval containing $z_j$, contradicting Corollary \ref{uniqueheavy}.
\end{proof}

The last of Greene's results that we will need to characterize the D-type lattices that are isomorphic to changemaker lattices is a description of the cycles that can occur in $G(S)$. However, this will require more modification since Greene's proof relies on the the intervals $[A_i]$ being linearly independent, which fails in this situation since $[A_1] = [A_f]$. Luckily, this is the only problem:

\begin{lemma} \label{indepintervals}
$[A_2], [A_3], \dots, [A_{n+2}]$, and $[A_{n+3}]$ are linearly independent.
\end{lemma}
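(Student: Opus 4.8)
The plan is to exploit the one place where linear dependence can enter: the standard basis $\{v_1,\dots,v_{n+3}\}$ is a genuine basis of the rank-$(n+3)$ lattice $(\sigma)^\perp$, so the vectors $v_1,\dots,v_{n+3}$ are linearly independent. The intervals $[A_j]$ differ from the $v_j$ only by the involution $\tau$, and $\tau$ is a (linear) isomorphism; the subtlety is that $\tau$ is applied to some $v_j$ and not to others, so $[A_j]$ is $v_j$ or $\tau(v_j)$ according to the sign $\epsilon_j$. First I would record that the assignment $v_j\mapsto [A_j]$ is not a linear map on all of $(\sigma)^\perp$, but that a linear relation among the $[A_j]$ does translate into a relation among the $v_j$ after splitting the index set according to $\epsilon_j$ and moving the $\tau$'s to one side, using $\tau=\tau^{-1}$.

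The key steps, in order: (1) Suppose $\sum_{j=2}^{n+3} c_j [A_j]=0$ with not all $c_j$ zero. (2) Split the sum as $\sum_{\epsilon_j=1} c_j v_j + \sum_{\epsilon_j=-1} c_j \tau(v_j) = 0$, i.e. $\sum_{\epsilon_j=1} c_j v_j = -\tau\!\left(\sum_{\epsilon_j=-1} c_j v_j\right)$, using that $\tau$ is an involution. (3) Now recall from Proposition~\ref{prop:OnlyTau} and the choices made just before Lemma~\ref{lem:IntervalProd} that $\tau$ negates all standard basis vectors except $v_1$, $v_f$, and (when $a_0=2$) $v_2$, and the tight vector; and that among $v_2,\dots,v_{n+3}$ the only vectors fixed-up-to-the-special-behaviour by $\tau$ are $v_f$ (plus $v_2$ when $a_0=2$, but there $\epsilon_2=1$ by our normalization) and possibly the tight vector $v_{f-1}$. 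For every index $j$ in the range $2,\dots,n+3$ with $j\ne f$ and $v_j$ not tight, $\tau(v_j)=-v_j$, so such a $v_j$ with $\epsilon_j=-1$ contributes $-c_j v_j$ to the right-hand side after applying $\tau$, i.e. it just appears with coefficient $\mp c_j$; the only genuinely new vector that $\tau$ can produce is $\tau(v_f)$, and since $\tau$ swaps $v_1$ and $v_f$ we have $\tau(v_f)=v_1$, which does not appear in the index range $j\ge 2$. (4) Therefore the relation becomes a linear relation among $v_2,\dots,v_{n+3}$ together with possibly a multiple of $v_1$ (coming from the $v_f$ term) and a controlled contribution from the tight vector; but $v_1,\dots,v_{n+3}$ are linearly independent, forcing all $c_j=0$, a contradiction.

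The main obstacle I anticipate is bookkeeping the exceptional vectors precisely: one must verify that the only way $\tau$ introduces a vector outside the span of $v_2,\dots,v_{n+3}$ is via $v_f\mapsto v_1$, and that the coefficient of $v_1$ so produced must then vanish (which is immediate, since $v_1$ appears on only one side), and separately handle the tight vector $v_{f-1}$ using Lemma~\ref{pairequal} and Proposition~\ref{prop:OnlyTau} — there $\tau(v_{f-1})=-v_{f-1}$ as well (when $a_0\ge 3$, since the tight vector has zero pairing with $y_0$), so it causes no new trouble. Once that case analysis is pinned down, linear independence of the standard basis does the rest. A cleaner alternative, which I would mention, is simply to observe that since $\tau$ is an isomorphism, $\{[A_j]\}$ linearly independent is equivalent to $\{\tau^{\delta_j}(v_j)\}$ linearly independent for the appropriate exponents $\delta_j\in\{0,1\}$, and the only obstruction to this being a basis is the coincidence $[A_1]=[A_f]$; discarding $v_1$ (equivalently $[A_1]$) leaves exactly the set $[A_2],\dots,[A_{n+3}]$, which therefore spans a rank-$(n+2)$ space and is independent.
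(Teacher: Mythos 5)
Your overall strategy---expanding each $[A_j]$ in the standard basis via the explicit action of $\tau$ and comparing coefficients---is genuinely different from the paper's proof (which applies the projection $\pi(x)=x-\tau(x)$, notes $\pi([A_i])=\pm\pi(v_i)$, and counts dimensions of $\operatorname{im}\pi$), and it could be made to work; but as written it hinges on a false claim. You assert that $\tau(v_j)=-v_j$ for every $j\in\{2,\dots,n+3\}$ with $j\neq f$ and $v_j$ not tight, so that the only ``new'' vector $\tau$ can introduce is $\tau(v_f)=v_1$. This is not true. By Lemma~\ref{pairequal} a non-tight $v_j$ may satisfy $\braket{v_j}{v_1}=\braket{v_j}{v_f}=-1$ (for instance a just-right $v_{f-1}=e_1+\cdots+e_{f-2}-e_{f-1}$), and then $\braket{v_j}{y_0}=-2$, so for $a_0\ge3$ the reflection $\tau=\tau_0$ gives $\tau(v_j)=-v_j-x_*-x_{**}=-v_j-v_1-v_f$, not $-v_j$; these are exactly the intervals with left endpoint $x_0$ treated in the proof of Lemma~\ref{tausign}. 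Worse, when $a_0=2$ and the interval of $v_j$ has left endpoint $x_1$, one has $\tau(v_j)=-v_j-v_1-2v_2-v_3$, so the vectors introduced by $\tau$ include $v_2$ and $v_3$, which \emph{do} lie in the index range $j\ge2$---directly contradicting the sentence on which your step (4) relies.

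The gap is repairable, but it requires the bookkeeping you skipped: in every case $\tau(v_j)+v_j$ lies in the span of $\{v_1,v_f\}$ (respectively $\{v_1,v_2,v_3\}$ when $a_0=2$), so in a hypothetical relation $\sum_{j\ge2}c_j[A_j]=0$ the coefficient of $v_j$ is $\pm c_j$ for every $j\ge 2$ with $j\neq f$ (respectively every $j\ge4$); those $c_j$ vanish first, and the residual relation, $c_f[A_f]=0$ with $[A_f]\in\{v_1,v_f\}$ (respectively $c_2v_2+c_3[A_3]=0$ with $[A_3]\in\{v_1,v_3\}$), dies by inspection. Your closing ``cleaner alternative'' does not supply this: applying $\tau$ to some members of a basis and not others need not preserve linear independence, and asserting that the only obstruction is the coincidence $[A_1]=[A_f]$ is precisely the statement to be proved. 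By contrast, the paper's projection argument avoids the case analysis altogether, since $\pi$ annihilates the $v_1,v_f$ (and $v_2$) contamination, at the cost of a short separate check that $[A_2]=v_2$ is not in the span of the remaining intervals when $a_0=2$.
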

\begin{proof}
Consider the map $\pi: L \otimes \Q \to L \otimes \Q$ given by $\pi(x) = x - \tau(x)$. For any $x\in L \otimes \Q$, $\pi(\tau(x)) = -\pi(x)$, so in particular $\pi([A_i]) = \pm \pi(v_i)$ for each standard basis vector $v_i$.

When $a_0 \ge 3$, $\tau$ is the reflection in a line, so $\im \pi$ has dimension $n+2$. Since $(v_1,v_2, \dots, v_{n+3})$ spans $L \otimes \Q$, $(\pi(v_1), \pi(v_2), \dots, \pi(v_{n+3}))$ spans $\im \pi$. However, $\pi(v_1) = -\pi(v_f)$, so actually $\pi(v_2), \dots, \pi(v_{n+3})$ suffice. Since $\im \pi$ has dimension $n+2$, this means that $\pi(v_2), \dots, \pi(v_{n+3})$ are linearly independent. Since $\pi([A_i]) = \pm \pi(v_i)$ for each $i$, a linear dependence among $[A_2], \dots, [A_{n+3}]$ would induce one among $\pi(v_2), \dots, \pi(v_{n+3})$, and the conclusion follows.

When $a_0 = 2$, $\tau$ is the reflection in a plane, so $\im(\pi)$ has dimension $n+1$, and a basis is $(\pi(v_3), \pi(v_4), \dots, \pi(v_{n+3}))$. The same argument as before gives that $[A_3],[A_4], \dots, [A_{n+3}]$ are linearly independent. It remains only to be seen that $[A_2] = v_2$ is not in the span of $([A_3], \dots, [A_{n+3}])$. Note that $[A_3] = v_1$ and, for $j > 3$, $\tau(v_j) =-v_j$ or $-v_1 - 2v_2 - v_3 - v_j$, so
\begin{equation*}
    \operatorname{span}([A_3], \dots, [A_{n+3}]) \subset \operatorname{span}(v_1,v_1+2v_2+v_3, v_4, \dots, v_{n+3}),
\end{equation*}
and the right side does not contain $v_2$. 
\end{proof}

Once we have this, a proof identical to that of Greene's Lemma 3.8 gives

\begin{lemma}
Any simple cycle in $G(\{v_2,v_3, \dots, v_{n+3}\})$ induces a complete subgraph.
\end{lemma}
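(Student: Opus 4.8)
The plan is to reduce the statement directly to Greene's corresponding result (\cite[Lemma~3.8]{greene:LSRP}) by checking that every hypothesis of that argument is satisfied once we restrict to the subset $\{v_2,v_3,\dots,v_{n+3}\}$ of the standard basis. Greene's proof that a simple cycle induces a complete subgraph has two ingredients: first, that the intervals indexing the standard basis vectors form a linearly independent set, so that no nonzero rational combination of them vanishes; and second, the purely combinatorial fact that the pairings between the $[A_j]$ are (up to sign) the lattice pairings between the $v_j$, together with Lemma~\ref{smallpair} controlling those pairings for unbreakable vectors of norm at least $3$. The only place where the D-type situation differs from the lens-space situation is that $[A_1]=[A_f]$, so the full family $[A_1],\dots,[A_{n+3}]$ is \emph{not} linearly independent. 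This is exactly the defect that Lemma~\ref{indepintervals} was stated to repair: $[A_2],[A_3],\dots,[A_{n+3}]$ \emph{are} linearly independent. Since the proposed cycle lives in $G(\{v_2,\dots,v_{n+3}\})$, it only ever involves the intervals $[A_2],\dots,[A_{n+3}]$, and hence the independence hypothesis Greene uses is available verbatim.

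Concretely, I would proceed as follows. Suppose $(v_{i_1},v_{i_2},\dots,v_{i_\ell},v_{i_1})$ is a simple cycle in $G(\{v_2,\dots,v_{n+3}\})$ with $\ell\ge 4$ (for $\ell=3$ there is nothing to prove), and suppose for contradiction that it is not complete, say $v_{i_1}\not\sim v_{i_3}$. Following Greene, consider the vector $w=\sum_{t=1}^\ell \epsilon_t [A_{i_t}]$ for an appropriate choice of signs $\epsilon_t\in\{\pm1\}$ chosen so that consecutive terms along the cycle ``cancel'' at their shared endpoints; one then computes $|w|$ by expanding $\braket{w}{w}=\sum_t |[A_{i_t}]| + 2\sum_{s<t}\epsilon_s\epsilon_t\braket{[A_{i_s}]}{[A_{i_t}]}$ and uses Lemma~\ref{lem:IntervalProd} (or Lemma~\ref{smallpair} in the unbreakable norm-$\ge 3$ case) to see that the cross terms along the cycle edges each contribute $-1$ while the chords (non-edges) contribute $0$ by Corollary~\ref{cor:AdjacencyNon0}. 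Tracking the combinatorics of how the subintervals $A_{i_t}$ can be arranged inside a single interval, the existence of the cycle forces $w$ to have norm $\le 0$ unless every chord is in fact an edge, contradicting positive-definiteness; but $w\ne 0$ precisely because $[A_2],\dots,[A_{n+3}]$ are linearly independent by Lemma~\ref{indepintervals}, so $w$ being a nonzero vector of nonpositive norm is the desired contradiction.

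The main obstacle — or rather the main thing to be careful about — is not conceptual but bookkeeping: one must verify that the sign choices $\epsilon_t$ and the endpoint-cancellation argument in Greene's proof never implicitly invoke $[A_1]$, and that the step where Greene concludes $w\neq 0$ (which in his setting is immediate from linear independence of \emph{all} the intervals) goes through using only the restricted independence statement of Lemma~\ref{indepintervals}. Since any cycle in $G(\{v_2,\dots,v_{n+3}\})$ uses only indices in $\{2,\dots,n+3\}$, this is automatic, and likewise the replacement of Greene's pairing lemmas by Lemma~\ref{tausign}, Lemma~\ref{lem:IntervalProd}, and Lemma~\ref{smallpair} is exactly the dictionary already established in this section. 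So the proof is genuinely ``identical to that of Greene's Lemma~3.8'' once the linear-independence input is supplied by Lemma~\ref{indepintervals}, and I would present it as such, pointing to \cite[Lemma~3.8]{greene:LSRP} for the detailed combinatorial estimate and indicating only the substitutions above.
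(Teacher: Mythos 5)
Your proposal is correct and takes essentially the same route as the paper, which likewise just observes that once Lemma~\ref{indepintervals} supplies the linear independence of $[A_2],\dots,[A_{n+3}]$, Greene's proof of his Lemma~3.8 applies verbatim to cycles avoiding $v_1$. One minor caveat: the contradiction in Greene's argument is an exact linear dependence obtained by telescoping signed intervals around a chordless cycle (using that each interval is a difference of two ``endpoint'' vectors), rather than a nonzero vector of nonpositive norm as in your sketch -- the norm estimate alone would not suffice when intervals have norm greater than $2$ -- but since you defer the details to Greene, this does not affect the reduction.
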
 %\todo{Should we say a bit about this proof?}

Actually, since $[A_1] = [A_f]$, for any $j \not \in \{1,f\}$, $v_j \sim v_1$ if and only if $v_j \sim v_f$, so the same statement holds for $G(\{v_1,v_2,\dots,v_{f-1},v_{f+1},\dots, v_{n+3}\})$.

\begin{lemma}\label{lem:CycleTwoCases}
For any simple cycle $C \subset G(S)$, one of the following holds:
\begin{itemize}
\item The vertex set $V(C)$ induces a complete subgraph of $G(S)$
\item $C$ has exactly four vertices, of which three are $v_1, v_f,$ and $v_{f-1}$, and $v_{f-1}$ is breakable
\end{itemize}
\end{lemma}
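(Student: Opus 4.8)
The plan is to reduce the claim to the fact, just established, that every simple cycle in $G(S\setminus\{v_1\})$ and in $G(S\setminus\{v_f\})$ induces a complete subgraph, exploiting that the only obstruction to linear independence of the intervals is $[A_1]=[A_f]$. Graph-theoretically this has two consequences I would use repeatedly: $v_1\sim v_f$, and for every $j\notin\{1,f\}$ one has $v_j\sim v_1$ if and only if $v_j\sim v_f$. The first reduction is immediate: if $V(C)$ omits either $v_1$ or $v_f$, then $C$ is a simple cycle in $G(S\setminus\{v_1\})$ or in $G(S\setminus\{v_f\})$, hence $V(C)$ is complete. So I would henceforth assume $v_1,v_f\in V(C)$; since $v_1\sim v_f$, the edge $v_1v_f$ is then either traversed by $C$ or is a chord of $C$.

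Next I would handle these two sub-cases. If $v_1v_f$ is an edge of $C$, write $C=(v_1,v_f,w_1,\dots,w_k,v_1)$; since $w_1\sim v_f$ and $w_1\notin\{v_1,v_f\}$ force $w_1\sim v_1$, the closed walk $(v_1,w_1,\dots,w_k,v_1)$ is a simple cycle in $G(S\setminus\{v_f\})$ when $k\ge2$ (and $C$ is a triangle when $k=1$), so $\{v_1,w_1,\dots,w_k\}$ is complete, and since each $w_i\sim v_1$ forces $w_i\sim v_f$, $V(C)$ is complete. If instead $v_1v_f$ is a chord, write $C$ as the union of two internally disjoint paths $P$ and $P'$ from $v_1$ to $v_f$, and write $\mathrm{int}(P)$ for the set of internal vertices of $P$. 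Adjoining the chord to $P$, respectively to $P'$, yields a cycle containing the edge $v_1v_f$, to which the previous sub-case applies; hence $\{v_1,v_f\}\cup\mathrm{int}(P)$ and $\{v_1,v_f\}\cup\mathrm{int}(P')$ each induce a complete subgraph. Therefore $V(C)$ is complete unless there exist $u\in\mathrm{int}(P)$ and $t\in\mathrm{int}(P')$ with $u\not\sim t$, and this is the only remaining scenario.

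I expect this last scenario to be the main obstacle. Here $u$ and $t$ are distinct, lie outside $\{v_1,v_f\}$, and are both adjacent to $v_1$ (as the two sets above are complete). By Corollary~\ref{1fblock} at most one unbreakable vector other than $v_1$ or $v_f$ is adjacent to $v_1$, while Corollary~\ref{cor:OneTight} together with Lemmas~\ref{lem:BrIsTight} and~\ref{pairequal} shows that $v_{f-1}$ is the only vector of $S$ that can be breakable. Hence the set of vectors of $S\setminus\{v_1,v_f\}$ adjacent to $v_1$ has at most two elements; since it contains the distinct vectors $u$ and $t$, it consists of exactly the unique unbreakable neighbour $w$ of $v_1$ and the vector $v_{f-1}$, which is therefore breakable. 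As $\mathrm{int}(P)$ and $\mathrm{int}(P')$ are disjoint nonempty subsets of this two-element set, one of them is $\{v_{f-1}\}$ and the other is $\{w\}$, so $C=(v_1,v_{f-1},v_f,w,v_1)$ has exactly the four vertices $v_1,v_f,v_{f-1},w$ with $v_{f-1}$ breakable, which is the second alternative. The delicate bookkeeping lies in tracking exactly which neighbours $v_1$ and $v_f$ really share; once that is controlled via Corollaries~\ref{1fblock} and~\ref{cor:OneTight}, the rigidity forcing a four-cycle drops out.
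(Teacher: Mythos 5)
Your proof is correct and follows essentially the same route as the paper's: both rest on the completeness of simple cycles avoiding $v_1$ or $v_f$, the coincidence of adjacency to $v_1$ and to $v_f$ away from $\{v_1,v_f\}$, and Corollary~\ref{1fblock} together with Lemmas~\ref{lem:BrIsTight} and~\ref{pairequal} to force one of the two common neighbours to be the breakable vector $v_{f-1}$. The only difference is organizational: you split on whether $v_1v_f$ is a cycle edge or a chord and then bound the cycle by counting the neighbours of $v_1$, whereas the paper empties the residual path $\gamma$ by forming an auxiliary cycle through $v_f$ avoiding $v_1$ --- the same lemmas doing the same work.
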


\begin{proof}
If $C$ contains exactly $3$ vertices, $C$ is a complete subgroph. So we assume $C$ contains at least $4$ vertices.
If either $v_1$ or $v_f$ is not in $C$, the previous lemma says that $C$ induces a complete subgraph. Otherwise, $v_1,v_f \in V(C)$. Since any neighbor (other than $v_1$) of $v_f$ also neighbors $v_1$, $v_1$ must have two neighbors $v_j,v_k$ also in $C$, which will also neighbor $v_f$. By Corollary~\ref{1fblock}, one of these, say $v_k$, is breakable. By Lemma~\ref{lem:BrIsTight} and Lemma~\ref{pairequal}, $k = f-1$, and neither $v_1$ nor $v_f$ has any other neighbors. This means that the vertices of $C$ other than $v_1, v_f, v_j,$ and $v_{f-1}$ all lie on a path $\gamma$, for which one end neighbors $v_j$ and the other neighbors $v_{f-1}$. It remains to see only that $\gamma$ is empty. Since $v_f$ neighbors both $v_j$ and $v_{f-1}$, if $\gamma$ were nonempty we could form a new simple cycle going from $v_j$ to $v_{f-1}$ along $\gamma$, then to $v_f$ and back to $v_j$. This does not contain $v_1$, so induces a complete subgraph. This means that any vertex of $\gamma$ neighbors $v_f$, so $\gamma$ is empty. 
\end{proof}

\begin{lemma}\label{lem:CycleLength}
Any cycle in $G(S)$ has three vertices, unless it contains a breakable vector in which case it can contain $4$.
\end{lemma}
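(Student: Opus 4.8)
The plan is to derive the statement from Lemma~\ref{lem:CycleTwoCases} by a short clique-size count. Let $C$ be a simple cycle in $G(S)$ and put $m=|V(C)|$. By Lemma~\ref{lem:CycleTwoCases}, either $C$ is the four-cycle described in the second bullet there, in which case $m=4$ and the breakable vector $v_{f-1}$ lies on $C$, so we are done; or $V(C)$ induces a complete subgraph $K_m$ of $G(S)$, and it remains to show in that case that $m\le 4$, with a breakable vertex on $C$ whenever $m=4$. Throughout I will use that $K_m$ contains at most one breakable vertex (it would be tight by Lemma~\ref{lem:BrIsTight}, and tight vectors are unique by Corollary~\ref{cor:OneTight}), and that by Corollary~\ref{cor:AdjacencyNon0} two unbreakable vertices whose index set is not $\{1,f\}$ are adjacent in $G(S)$ precisely when they pair nonzero.

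First I would treat the case that $V(C)$ contains one of the distinguished vectors $v_1$, $v_f$, or, when $a_0=2$, $v_2$. Every other vertex of $K_m$ is adjacent to it, so Corollary~\ref{1fblock} forces all but at most one vertex of $V(C)\setminus\{v_1,v_f\}$ to be breakable when that vertex is $v_1$ or $v_f$, and (recalling that $a_0=2$ precludes breakable vectors by Corollary~\ref{cor:OneTight}, so that $v_2$ then has $v_1,v_3$ as its only neighbours) a similarly tight bound when it is $v_2$. Since there is at most one breakable vertex and only two vectors $v_1,v_f$, this yields $m\le 4$ in every sub-case, and a check of the possibilities shows $m=4$ can occur only with $a_0\ge 3$, $v_1,v_f\in V(C)$, and the remaining two vertices one breakable and one unbreakable, so a breakable vertex is then present on $C$.

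It remains to handle the case that $V(C)$ contains none of $v_1$, $v_f$, $v_2$. Let $U\subseteq V(C)$ be the set of unbreakable vertices, so $|U|\ge m-1$, and since no index set occurring is $\{1,f\}$, the vertices of $U$ pairwise pair nonzero. Because no triple is heavy (Lemma~4.10 of~\cite{greene:LSRP}), at most two vertices of $U$ have norm $\ge 3$. For the norm-$2$ members of $U$ I would use that a norm-$2$ standard basis vector $v_j$ with $j\ne 1$ is neither tight (tight vectors have norm $\ge 5$) nor $v_f$, so from Definition~\ref{stbasis} its defining set $A$ is a singleton pinned by maximality of $\sum_{i\in A}2^i$ to $\{j-1\}$; thus $v_j=e_{j-1}-e_j$, and two such vectors pair nonzero only when their indices are consecutive. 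Hence $U$ contains at most two norm-$2$ vectors, and if it contains two they are $v_t$ and $v_{t+1}$ for some $t$. The essential point is that having both $v_t$ and $v_{t+1}$ in the clique excludes every norm-$\ge 3$ unbreakable vector from $U$: if a standard basis vector $v_a=\bigl(\sum_{i\in A}e_i\bigr)-e_a$ of norm $\ge 3$ paired nonzero with both $e_{t-1}-e_t$ and $e_t-e_{t+1}$, one first checks $a>t+1$, and then inspecting the coefficients of $v_a$ at $t-1,t,t+1$ forces either $t$ or $t-1$ to be a gappy index of $v_a$, contradicting Lemma~\ref{gappy3} since $|v_t|=|v_{t+1}|=2$. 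Combining: if $U$ has no norm-$2$ vector then $|U|\le 2$; if it has exactly one, $|U|\le 3$; if it has two, then it has no norm-$\ge 3$ vector and $|U|=2$. In all cases $|U|\le 3$, so $m\le 4$, and $m=4$ forces $|U|=3$ and hence a breakable vertex on $C$.

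The step I expect to be the main obstacle is the last case, and within it the exclusion argument via Lemma~\ref{gappy3} that forbids a norm-$\ge 3$ unbreakable vector from abutting two consecutive norm-$2$ vectors, together with the bookkeeping needed to combine the three caps (one breakable, two norm-$\ge 3$ unbreakable, two norm-$2$ unbreakable). Verifying the second-paragraph casework, namely that the only way to reach $m=4$ through a distinguished vertex is the configuration above, also requires some care, drawing on Corollary~\ref{1fblock}, Corollary~\ref{cor:OneTight}, and, to see that $v_2$ is the relevant distinguished vector when $a_0=2$, Lemma~\ref{a1whena0=2} and Lemma~\ref{modify}.
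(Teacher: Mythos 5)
Your argument is correct and takes essentially the same route as the paper's proof: reduce via Lemma~\ref{lem:CycleTwoCases} to the case where $V(C)$ induces a complete subgraph, dispose of the vertices $v_1,v_f$ (and $v_2$ when $a_0=2$) using Corollary~\ref{1fblock} together with the uniqueness of the breakable (tight) vector, and then bound the remaining clique by combining the no-heavy-triple lemma with the Lemma~\ref{gappy3} obstruction to a standard basis vector abutting two consecutive norm-$2$ vectors $v_t,v_{t+1}$. Two harmless slips worth noting: when $a_0=2$ the vector $v_2$ need not have $v_1,v_3$ as its \emph{only} neighbours (Corollary~\ref{1fblock} allows one further unbreakable neighbour, cf.\ Lemma~\ref{lem:v2sblock}), but applying Corollary~\ref{1fblock} at $v_1$ still rules out $m=4$ in that subcase; and your gappy-index exclusion tacitly assumes the norm-$\ge 3$ vector is not tight, which is fine since a tight vector pairs to zero with $e_{t-1}-e_t$ and $e_t-e_{t+1}$ for $t\ge 2$ --- a point the paper's own proof leaves equally implicit.
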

\begin{proof}
By Corollary~\ref{1fblock} and Lemma~\ref{lem:CycleTwoCases}, we can assume that $V(C)$ does not contain both $v_1$ and $v_f$ and that $V(C)$ induces a complete subgraph. If $v_{f-1}$ is breakable and in $V(C)$, then $V(C) \setminus \{v_{f-1}\}$ will still induce a complete subgraph, so it suffices to assume that $V(C)$ contains no breakable vector. If $V(C)$ had more than two vectors of norm at least $3$, any three of them would form a heavy triple. Since $V(C)$ induces a complete subgraph, any two vectors of norm $2$ in $V(C)$ are of the form $v_i = e_{i-1} - e_i$ and $v_{i+1} = e_i - e_{i+1}$, for some index $i$. However, any other standard basis vector $v_j$ that neighbored both $v_i$ and $v_{i+1}$ would have to have a gappy index at either $i-1$ or $i$, which cannot happen by Lemma~\ref{gappy3} because $v_i$ and $v_{i+1}$ both have norm $2$. Therefore, $V(C)$ has at most one vector of norm $2$, and at most two vectors of norm at least $3$, so has at most $3$ vectors overall. Since $C$ is a cycle, it must have exactly $3$.
\end{proof}

The main use we will have for this characterization of cycles is the following lemma that restricts the possible forms of gappy vectors, which is \cite[Lemma~7.1]{greene:LSRP}. As usual, the proof will not be repeated.

\begin{lemma}[Lemma~7.1 in \cite{greene:LSRP}]\label{gappyform}
Suppose that $v_g \in S$ is gappy, and that $S$ contains no breakable vector. Then $v_g$ is the unique gappy vector, $v_g = -e_g + e_{g-1} + \cdots + e_j + e_k$ for some $k + 1 < j < g$, and $v_k$ and $v_{k+1}$ belong to distinct components of $G(S_{g-1})$.
\end{lemma}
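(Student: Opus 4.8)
The plan is to reproduce Greene's proof of \cite[Lemma~7.1]{greene:LSRP} essentially verbatim; the point is that every ingredient it uses is now available in the D-type setting. These are: the dictionary between pairings of standard basis vectors and pairings of the associated intervals (Lemmas~\ref{tausign}, \ref{smallpair}, \ref{lem:IntervalProd}); the forbidden-subgraph lemmas for $G(S)$, namely claw-freeness, the absence of heavy triples, and Corollary~\ref{uniqueheavy}; and the cycle classification. Since $S$ is assumed to contain no breakable vector, Lemma~\ref{lem:CycleLength} tells us that every simple cycle in $G(S)$ is a triangle, and Lemma~\ref{lem:CycleTwoCases} that it induces a complete subgraph. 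The single place where Greene's argument uses linear independence of all the $[A_i]$ is covered by Lemma~\ref{indepintervals}.

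First I would fix notation. Write $v_g = -e_g + \sum_{i\in A}e_i$; by Lemma~\ref{lem:j-1}, $g-1\in A$. Let $[j,g-1]$ be the maximal run of consecutive integers contained in $A$ with right endpoint $g-1$. Since $v_g$ is gappy we have $j\ge2$, $j-1\notin A$, and $A\cap[0,j-2]\ne\emptyset$; set $k=\max\bigl(A\cap[0,j-2]\bigr)$. Then $k$ is a gappy index for $v_g$, $A\cap[k+1,j-1]=\emptyset$, and $k+1<j<g$. Because $k$ is a gappy index, Lemma~\ref{gappy3} gives $|v_{k+1}|\ge3$, and $|v_g|\ge3$ since $A$ contains the two distinct indices $k$ and $g-1$. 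Now $k\in A\cap\supp^+ v_{k+1}$, while $g\notin\supp v_{k+1}$ and $k+1\notin A$, so a direct count yields $\braket{v_g}{v_{k+1}} = 1 + |A\cap\supp^+ v_{k+1}\cap[0,k-1]|\ge 1$. As $v_g$ and $v_{k+1}$ are unbreakable of norm at least $3$, Lemma~\ref{smallpair} forces $\braket{v_g}{v_{k+1}}=1$, hence $A\cap\supp^+ v_{k+1}\cap[0,k-1]=\emptyset$, the intervals $[A_g]$ and $[A_{k+1}]$ are consecutive, and $\epsilon_g\epsilon_{k+1}=-1$.

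Second I would show $A=\{k\}\cup[j,g-1]$ and that $v_g$ is the only gappy vector in $S$. Suppose $A$ contained an index $m<k$. One identifies the standard basis vectors $v_\ell$ that ``see'' this extra support; pairing $v_g$ against $v_k$, $v_{k+1}$, and such $v_\ell$, translating to the intersection graph via Lemma~\ref{tausign} and bounding the pairings with Lemma~\ref{smallpair}, one finds that the neighbourhood of $v_g$ in $G(S)$ cannot be arranged consistently: it forces either a claw centred at $v_g$, or a heavy triple among $v_g$ and two vertices of norm at least $3$ (contradicting Corollary~\ref{uniqueheavy}), or a simple cycle through $v_g$ of length at least $4$ (contradicting Lemma~\ref{lem:CycleLength}). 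This pins down $A=\{k\}\cup[j,g-1]$, so $v_g = -e_g+e_{g-1}+\cdots+e_j+e_k$. Applying the same mechanism to a hypothetical second gappy vector $v_{g'}$ — which would have to interact with $v_g$ through the vertices $v_k,v_{k+1}$ — again yields a forbidden configuration, so $v_g$ is unique.

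Finally, for the component statement I would argue as follows. With $A=\{k\}\cup[j,g-1]$ one computes $\braket{v_g}{v_k}=-1$ and $\braket{v_g}{v_{k+1}}=1$, so $v_g$ is adjacent in $G(S)$ to both $v_k$ and $v_{k+1}$. If $v_k$ and $v_{k+1}$ lay in the same component of $G(S_{g-1})$, take a shortest path between them there and complete it with $v_g$; extracting a minimal simple cycle through $v_g$ and invoking Lemma~\ref{lem:CycleLength} (it must be a triangle), claw-freeness at $v_g$, and the explicit forms of $v_k,v_{k+1},v_g$ together with the support information established above, one reaches a contradiction, whence $v_k$ and $v_{k+1}$ lie in distinct components of $G(S_{g-1})$. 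I expect the main obstacle to be the middle step: the bookkeeping that rules out every additional element of $\supp^+ v_g$ and every alternative gappy pattern by exhibiting, case by case, the forbidden claw, heavy triple, or long cycle. This is precisely the case analysis in Greene's original proof, and it transfers because the interval/standard-basis dictionary and the forbidden-subgraph lemmas hold here without change.
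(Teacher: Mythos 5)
Your proposal takes exactly the route the paper does: the paper gives no independent argument for Lemma~\ref{gappyform}, but cites Greene's Lemma~7.1 and notes the proof is not repeated precisely because the ingredients you list (the interval/standard-basis dictionary of Lemmas~\ref{tausign} and~\ref{smallpair}, claw-freeness, absence of heavy triples via Corollary~\ref{uniqueheavy}, and the cycle classification of Lemmas~\ref{lem:CycleTwoCases} and~\ref{lem:CycleLength}, whose only new input in the D-type setting is Lemma~\ref{indepintervals}) have all been established. Your worked-out first step (forcing $\braket{v_g}{v_{k+1}}=1$ via Lemmas~\ref{lem:j-1}, \ref{gappy3}, and~\ref{smallpair}) is correct, and deferring the remaining case analysis to Greene's original argument is consistent with the paper's treatment.
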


%%%%%
%%%%%
%%%%%
%%%%%
%%%%%

\section{Blocked vectors}\label{sec:Blocked}

In this section, we define blocked vectors and prove some lemmas about blocked vectors. These lemmas will be useful in the later sections.

\begin{definition}
Let $m\in\{1,2,\dots,n+2\}$. A vector $v_i \in S_m$ is {\it$(m,N)$-blocked} if $v_i \not \sim v_j$ for any $m < j \leq N$. Otherwise, $v_i$ is {\it $(m,N)$-open}. When $N = n+3$ and $S_N = S$, we simply say $v_i$ is {\it $m$-blocked} or {\it $m$-open}.
\end{definition}

\begin{lemma} \label{mblock3}
When $v_{f-1}$ is unbreakable, $v_1$ and $v_f$ are $f$-blocked. 
\end{lemma}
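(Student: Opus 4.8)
The goal is to show that when $v_{f-1}$ is unbreakable, neither $v_1$ nor $v_f$ can abut any standard basis vector $v_j$ with $f < j$. The plan is to combine the structural restrictions already established: the description of which intervals touch $v_1$ and $v_f$ (Corollary~\ref{1fblock}), the tightness/pairing dichotomy for $v_{f-1}$ (Lemma~\ref{pairequal} and Corollary~\ref{cor:OneTight}), and the form of the vectors $v_1 = e_0 - e_1$ and $v_f$ forced by Lemma~\ref{modify}.

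First I would record the shapes of $v_1$ and $v_f$. By Lemma~\ref{modify} we have $v_1 = x_* = e_0 - e_1$, and $x_{**} = v_f$ is a norm-$2$ vector of the form $e_{f-1} - e_f$ with $\sigma_0 = \sigma_1 = \cdots = \sigma_f$ (this uses the same reasoning as in the proof of Lemma~\ref{modify}: $v_f$ has norm $2$, pairs to $0$ with $v_1$, and by Lemma~\ref{lem:j-1} contains $f-1$ in its support). Next, suppose toward a contradiction that some $v_j$ with $j > f$ satisfies $v_j \sim v_1$ (the case $v_j \sim v_f$ being symmetric since $[A_1] = [A_f]$, so any $v_j$ with $j\notin\{1,f\}$ abuts $v_1$ iff it abuts $v_f$). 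Since $\{1,j\}\ne\{1,f\}$, Corollary~\ref{cor:AdjacencyNon0} gives $\braket{v_j}{v_1}\ne 0$, and then the argument in the proof of Corollary~\ref{1fblock} (via Proposition~\ref{prop:OnlyTau}) forces $0 \in A_j$, i.e.\ the interval $[A_j]$ has left endpoint $0$ (equivalently, $v_j$ corresponds to an interval containing $x_0$).

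Now I would exploit the presence of $v_{f-1}$. Since $v_{f-1}$ is unbreakable and $v_1, v_f$ are distinct norm-$2$ vectors with $\braket{v_{f-1}}{v_1}=-1$ forced by Lemma~\ref{lem:j-1} (as $f-1 \in \supp v_f$ makes $v_{f-1}$ and $v_f$ pair, hence by Lemma~\ref{pairequal} $v_{f-1}\sim v_1$ too), we see $v_{f-1}$ is an unbreakable vector other than $v_1, v_f$ that neighbors $v_1$. Now $v_j$ is another vector neighboring $v_1$; if $v_j$ is unbreakable this already contradicts Corollary~\ref{1fblock} (at most one unbreakable vector other than $v_1,v_f$ neighbors $v_1$), unless $v_j$ and $v_{f-1}$ are "the same" in the relevant sense — but $v_{f-1}$ and $v_j$ both have an interval containing $0$, so by Corollary~\ref{uniqueheavy} they cannot both have norm $\ge 3$; one of them has norm $2$. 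If $v_{f-1}$ has norm $2$, then $v_{f-1} = e_{f-2}-e_{f-1}$, forcing $\sigma_{f-2}=\sigma_{f-1}$, and then $v_2, v_{f-1}, v_1, v_f$ would all be norm-$2$ intervals meeting in a way that the cycle analysis of Lemma~\ref{lem:CycleLength} (together with the no-claw lemma) rules out once $v_j$ is attached. If $v_j$ has norm $2$ with $0\in A_j$, then $v_j = e_0 - e_j$ with $\sigma_0=\sigma_j$ but $\sigma_0=\cdots=\sigma_f$ and $j>f$ then forces $\sigma_0=\cdots=\sigma_j$, contradicting the maximality in Definition~\ref{stbasis} (we could have written $v_j$ using the gap, or $v_j$ tight), or more directly contradicts Corollary~\ref{cor:OneTight} since a norm-$2$ vector $e_0-e_j$ with all intermediate $\sigma$'s equal makes $v_{f-1}$ gappy. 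The breakable case for $v_j$ is handled by Lemma~\ref{lem:BrIsTight}: a breakable $v_j$ is tight, but a tight vector has $\braket{v_j}{v_1}=1 \ne -1$, and $0\in A_j$ is incompatible with tightness (noted in the proof of Lemma~\ref{tausign}).

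\textbf{Main obstacle.} The delicate point is the case analysis when $v_{f-1}$ has norm $2$: here $v_1, v_f, v_{f-1}, v_2$ are all norm-$2$ vectors and one must rule out a fourth vector $v_j$ ($j>f$) abutting $v_1$. I expect this to reduce to a short combinatorial argument with the explicit coordinate forms $v_1=e_0-e_1$, $v_2=e_1-e_2$, $v_f = e_{f-1}-e_f$, $v_{f-1}=e_{f-2}-e_{f-1}$ (so $f=4$ and $\sigma_0=\cdots=\sigma_4$), checking directly that any $v_j$ with $j>4$ and $0\in\supp v_j$ would be forced to contain $e_1$ or $e_2$ or $e_3$ in a way creating either a claw at $v_1$ or an excluded $4$-cycle, contradicting Lemma~\ref{gappy3}/the cycle lemmas; alternatively, invoking that such a $v_j$ makes one of $v_2, v_{f-1}$ gappy via Lemma~\ref{gappy3}. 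The rest of the argument is a routine assembly of the cited structural lemmas.
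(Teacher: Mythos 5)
Your first case is sound and is essentially the paper's argument: if $v_j\sim v_1$ for some $j>f$, then $v_j$ is not tight (Lemma~\ref{pairequal}, since tight forces $j=f-1$), hence not breakable (Lemma~\ref{lem:BrIsTight}), and its interval must contain the vertex $0$; since $v_{f-1}$ is an unbreakable neighbour of $v_1$ whose interval also contains $0$, when $a_0\ge 3$ both intervals have norm at least $3$ and Corollary~\ref{uniqueheavy} (or simply the statement of Corollary~\ref{1fblock}) gives the contradiction. The trouble begins at ``one of them has norm $2$.'' A norm-$2$ interval containing $x_0$ can only exist when $a_0=2$, and in that case Lemma~\ref{modify} forces $f=3$, so $v_{f-1}=v_2=e_1-e_2$ and the four vectors $v_1,v_2,v_{f-1},v_f$ you list collapse to three. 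The configuration analyzed in your ``main obstacle'' paragraph ($f=4$, $v_{f-1}=e_{f-2}-e_{f-1}$, $\sigma_0=\cdots=\sigma_4$) therefore never occurs, and the case you actually need, namely $a_0=2$ with $f=3$, is exactly the one you leave unproved (``I expect this to reduce to a short combinatorial argument\ldots''). The paper disposes of it in one line: since $|v_1|=|v_2|=|v_3|=2$, none of $0,1,2$ can be a gappy index (Lemma~\ref{gappy3}), so no vector other than $v_2$ can pair nontrivially with both $v_1$ and $v_3$, while Lemma~\ref{pairequal} forces any neighbour of one of $v_1,v_3$ to be a neighbour of the other; hence $v_1$ and $v_3$ are $3$-blocked.

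Two intermediate assertions are also incorrect as stated and cannot be patched. First, Lemma~\ref{modify} does not give $\sigma_0=\cdots=\sigma_f$ in general; it gives only $\sigma_{f-1}=\sigma_f$ (the full chain of equalities is derived precisely in the case $a_0=2$, where it is used to conclude $f=3$). Second, a norm-$2$ standard basis vector is $e_{j-1}-e_j$ by Lemma~\ref{lem:j-1}; it can never be $e_0-e_j$ for $j>1$, so the sub-case ``$v_j=e_0-e_j$, contradicting maximality / Corollary~\ref{cor:OneTight}'' is vacuous as written, and in that passage you also conflate $0\in A_j$ (a statement about the vertex-basis interval) with $0\in\supp v_j$ (a statement about coordinates in $\Z^{n+4}$). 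None of this affects your $a_0\ge 3$ argument, but it means the norm-$2$ branch, which is the only branch corresponding to $a_0=2$, is a genuine gap in the proposal.
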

\begin{proof}
If $a_0 \geq 3$, by Corollary~\ref{uniqueheavy} $A_{f-1}$ is the unique unbreakable interval containing $x_0$, so $v_1,v_f$ are $f$-blocked. If $a_0 = 2$, then $f = 3$ and $|v_1|=|v_2|=|v_3| = 2$ by Lemma~\ref{modify}. Since $0,1,2$ cannot be gappy indices, no vector other than $v_2$ can neighbor $v_1,v_3$ at the same time, so $v_1,v_3$ are $3$-blocked by Lemma~\ref{pairequal}.
\end{proof}

Below, we introduce the notion of an $(m,N)$-blocking neighbor. We will see in Lemma~\ref{mblock1} that a standard basis vector is $(m,N)$-blocked if it has two $(m,N)$-blocking neighbors that do not abut each other.

\begin{definition}\label{defn:blockingneighbor}
Let $v_i,v_j \in S_m$ be such that $v_j \sim v_i$. We say $v_j$ is an {\it$(m,N)$-blocking neighbor of $v_i$} if one of the following holds:
\begin{enumerate}
\item $v_j$ is $(m,N)$-blocked.
\item $|v_i| \ge 3$, $|v_j|\ge 3$, both unbreakable. $v_\ell$ is unbreakable for $m < \ell \le N$. 
\item $j = i \pm 1$, and $|v_i| = |v_j| = 2$.
\item $j = i \pm 1$, $|v_{\max (i, j)}| = 2$, and $v_{\min(i,j)}$ is unbreakable. $v_\ell$ is $(m,N)$-blocked for $\ell < \min(i,j)$, and $v_\ell$ is unbreakable for $m < \ell \leq N$.
\end{enumerate}
When $N = n+3$ and $S_N = S$, we simply say $v_j$ is an {\it $m$-blocking neighbor of $v_i$}.
\end{definition}

\begin{lemma}\label{premblock1}
Suppose $v_j$ is an $(m,N)$-blocking neighbor of $v_i$. If $v_i \sim v_k$ for some $m < k \leq N$, then $v_j \not\sim v_k$.	
\end{lemma}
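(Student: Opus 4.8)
\textbf{Proof plan for Lemma~\ref{premblock1}.}

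The plan is to argue by contradiction, assuming $v_i\sim v_k$, $v_j\sim v_k$, and $v_i\sim v_j$ for some $m<k\le N$, and then to examine the induced subgraph on $\{v_i,v_j,v_k\}$. Since these three vectors pairwise abut, they form a triangle, and by Lemma~\ref{lem:CycleLength} the only way a triangle can survive without one of the usual obstructions is if we have not yet run into a contradiction from the classification of cycles; so the real content is to rule out the triangle using whichever of the four defining conditions in Definition~\ref{defn:blockingneighbor} $v_j$ satisfies. I would split into the four cases accordingly.

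In Case (1), $v_j$ is $(m,N)$-blocked, which by definition means $v_j\not\sim v_\ell$ for any $m<\ell\le N$; since $k$ satisfies $m<k\le N$, this directly contradicts $v_j\sim v_k$. In Case (2), all of $v_i,v_j,v_k$ are unbreakable of norm at least $3$ (here $v_k$ is unbreakable because $m<k\le N$), and they pairwise abut, so $(v_i,v_j,v_k)$ is a heavy triangle — forbidden by Lemma~\ref{lem:CycleLength}'s companion, the ``no heavy triple'' lemma. In Case (3), $j=i\pm1$ with $|v_i|=|v_j|=2$, so $v_i$ and $v_j$ are $e_{i-1}-e_i$ and $e_i-e_{i+1}$ in some order; a vector $v_k$ neighboring both would have to have a gappy index at $i-1$ or $i$, which is impossible by Lemma~\ref{gappy3} since $v_i$ and $v_j$ both have norm $2$. (One should also dispatch the degenerate possibility $k\in\{1,f\}$ or the $a_0=2$ exceptional indices, but the norm-$2$ argument via Lemma~\ref{gappy3} covers it directly.) Case (4) is the one I expect to be the main obstacle: here $j=i\pm1$, the larger of $v_i,v_j$ has norm $2$, the smaller is unbreakable, everything below $\min(i,j)$ is $(m,N)$-blocked, and everything in $(m,N]$ is unbreakable. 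I would argue that $v_k$ neighboring both $v_i$ and $v_j$, together with the fact that $v_{\min(i,j)}$ carries the ``heavy'' index $z_{\min(i,j)}$ (if its norm is $\ge3$) while the norm-$2$ vector among $\{v_i,v_j\}$ forces $\max(i,j)-1\in\supp^+ v_k$ or similar, pushes the pairing $\braket{v_k}{v_{\min(i,j)}}$ outside the allowed range $[-1,\infty)$ cut down by Lemma~\ref{smallpair} — or else forces $v_k$ to be a cycle-vertex violating Lemma~\ref{lem:CycleLength}; alternatively, since $v_k$ is unbreakable and $v_k\sim v_i$, $v_k\sim v_j$, the triangle $\{v_i,v_j,v_k\}$ combined with the blockedness of everything below $\min(i,j)$ should contradict the structure of $G(S_{\max(i,j)})$ via Lemma~\ref{gappyform} or the claw/cycle lemmas.

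The hardest step is Case (4): it requires tracking both the support constraints on $v_k$ coming from $v_k\sim v_i$ and $v_k\sim v_j$ (one of which is norm $2$) and the hypothesis that all smaller vectors are blocked, and combining these to locate where $v_k$ must "attach," then deriving a contradiction with Corollary~\ref{uniqueheavy} or Lemma~\ref{smallpair}. The other three cases each reduce to a one-line citation of an already-established structural lemma. I would write the proof as a short case analysis, spending essentially all the prose on Case (4) and handling (1)–(3) in a sentence apiece.
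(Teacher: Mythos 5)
Your handling of cases (1)--(3) coincides with the paper's proof: (1) is immediate from the definition of blocked, (3) is exactly the gappy-index argument via Lemma~\ref{gappy3}, and (2) is the heavy-triple argument. One small unjustified step in (2): you assert $|v_k|\ge 3$, but hypothesis (2) of Definition~\ref{defn:blockingneighbor} only gives unbreakability of $v_k$; the paper supplies the missing observation that $|v_k|=2$ is impossible because then $v_k=e_{k-1}-e_k$ would have only one smaller neighbor, while here it neighbors both $v_i$ and $v_j$ with $i,j\le m<k$.

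The genuine gap is Case (4), which you correctly identify as the crux but do not actually prove; the lemmas you gesture at do not assemble into an argument, and in part point in the wrong direction. The paper's argument is: WLOG $j=i+1$, $|v_i|\ge3$, $|v_j|=2$; as in case (3) one gets $i\notin\supp v_k$ and $i+1\in\supp v_k$, hence $\ell:=\min\supp v_k<i$. If $\ell=0$, let $s<i$ be the first gappy index of $v_k$; one must show $v_k\sim v_{s+1}$, and here lies the subtlety your sketch misses: $\braket{v_k}{v_{s+1}}>0$ does \emph{not} by itself give adjacency, since $v_{s+1}$ could be breakable, and ruling that out uses Lemma~\ref{newirreducibility} together with Lemmas~\ref{lem:IntervalProd}, \ref{tausign} and Corollary~\ref{cor:AdjacencyNon0}. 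The blockedness hypothesis then forces $s=i-1$, so $\braket{v_i}{v_k}\ge2$, contradicting Lemma~\ref{smallpair}. If $\ell>0$, then $\braket{v_k}{v_\ell}=-1$ while $v_k\not\sim v_\ell$ (this is precisely where the hypothesis that $v_\ell$ is $(m,N)$-blocked for $\ell<\min(i,j)$ enters), so $v_\ell$ must be breakable, i.e. $\ell=f-1$, forcing $|v_k|=3$ and $v_k=-e_k+e_{k-1}+e_{f-1}$; then $v_k\sim v_{\ell+1}=v_f$, contradicting blockedness if $\ell+1<i$ and contradicting $|v_i|\ge3$ if $\ell+1=i$. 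None of this is a cycle, claw, or heavy-triple argument; Lemma~\ref{lem:CycleLength} and Lemma~\ref{gappyform} play no role (the latter is not even available, since a breakable vector may be present). As written, your plan does not close Case (4).
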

\begin{proof}
We prove this case by case.
\begin{enumerate}
\item Since $v_j$ is $(m,N)$-blocked, by definition $v_k \not\sim v_j$.
\item For contradiction, suppose $v_k \sim v_j$. Since $v_k \sim v_i$ and $v_k \sim v_j$, $|v_k| \geq 3$. Otherwise, if $|v_k| = 2$, $v_k$ has only one smaller neighbor. Since we assume $v_i,v_j,v_k$ are unbreakable, $(v_i,v_j,v_k)$ is a heavy triangle.
\item Suppose $v_k \sim v_j$ for contradiction. Assume $j = i+1$. This assumption is made without loss of generality because the situation $v_k \sim v_i,v_j$ and $v_i \sim v_j$ is symmetric in $i$ and $j$. 
Since $v_k \sim v_j$ and $v_j = e_i-e_{i+1}$ ($i > 0$), exactly one of $i$ and $i+1$ is in $\supp v_k$. Since $|v_j| = 2$, $j-1 = i$ cannot be a gappy index by Lemma~\ref{gappy3}. Therefore, $i \notin \supp v_k$ and $i+1 \in \supp v_k$.
Since $v_k \sim v_i$ and $v_i = e_{i-1}-e_i$, $i-1 \in \supp v_k$. This contradicts Lemma~\ref{gappy3} since $|v_i| = 2$.
\item Again, suppose $v_k \sim v_j$ for contradiction. Assume $j = i+1$ without loss of generality, since the situation is symmetric in $i$ and $j$. From item 3, we may assume $|v_i| \geq 3$ and $|v_{j}| = 2$. 
As in item 3, $i \notin \supp v_k$ and $i+1 \in \supp v_k$. Let $\ell=\min\supp v_k$. Since $v_k \sim v_i$ and $i \notin \supp v_k$, $\ell < i$. 

If $\ell=0$, let $s$ be the first gappy index of $v_k$, then $s<i$, we claim that $v_k\sim v_{s+1}$. Since $\braket{v_k}{v_{s+1}}>0$, if $v_k\not\sim v_{s+1}$, it follows from Lemmas~\ref{lem:IntervalProd} and \ref{tausign} and Corollary~\ref{cor:AdjacencyNon0} that $v_{s+1}$ is breakable, $A_k\pitchfork A_{s+1}$ and $\epsilon_k=\epsilon_{s+1}$. Thus up to applying $\tau$, $v_k-v_{s+1}$ becomes $[A_k]-[A_{s+1}]$, which is reducible since it is the signed sum of two distant intervals, a contradiction to Lemma~\ref{newirreducibility}.
Since $v_k\sim v_{s+1}$ and $s+1\le i$, $s=i-1$ by our assumption. 
Since $|v_i| \geq 3$, we must have $\braket{v_i}{v_k} \geq 2$. However, since $v_i$ and $v_k$ are both unbreakable, this contradicts Lemma~\ref{smallpair}.

If $\ell>0$, then $\braket{v_{\ell}}{v_k}=-1$. 
Since $v_k \not\sim v_\ell$, it follows from Corollary~\ref{cor:AdjacencyNon0} that $v_{\ell}$ is breakable, so $\ell=f-1$. By Lemma~\ref{tausign} and Lemma~\ref{lem:IntervalProd}, $|v_k|=3$.  Using Lemma~\ref{lem:j-1}, we get $v_k = -e_k + e_{k-1} + e_{\ell}$. Since $\ell < i < m< k$, $k > \ell+2$ so $k-1 > \ell+1$. Since $v_{\ell+1} =v_f= -e_{\ell+1} + e_\ell$, $\braket{v_k}{v_{\ell+1}}=1$, so $v_k\sim v_{\ell+1}$. If $\ell+1 < i$, then $v_{\ell+1}$ is $m$-blocked by assumption, a contradiction. If $\ell+1 = i$, $|v_i|=2$, a contradiction to our assumption that $|v_i|\geq3$.
\qedhere
\end{enumerate}
\end{proof}

\begin{lemma}\label{mblock1}
If $v_i \in S_m$ has two $(m,N)$-blocking neighbors $v_{j_1}, v_{j_2} \in S_m$ such that $v_{j_1} \not\sim v_{j_2}$, then $v_i$ is $(m,N)$-blocked.	
\end{lemma}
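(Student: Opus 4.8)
The plan is to argue by contradiction and exhibit a forbidden claw in $G(S)$. Suppose $v_i$ is not $(m,N)$-blocked; then by definition there is an index $k$ with $m < k \le N$ and $v_i \sim v_k$. The goal is to show that, together with the two blocking neighbors $v_{j_1}, v_{j_2}$, this forces a claw centered at $v_i$, which is impossible.

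First I would verify that the four vectors $v_i, v_k, v_{j_1}, v_{j_2}$ are pairwise distinct: since $v_i, v_{j_1}, v_{j_2} \in S_m$ their indices are at most $m$, whereas $k > m$, so $v_k$ differs from the other three; $v_{j_1} \ne v_{j_2}$ because they are non-adjacent (in particular distinct) neighbors of $v_i$; and $v_i$ differs from each $v_{j_t}$ because $v_i \sim v_{j_t}$. Next I would invoke Lemma~\ref{premblock1} twice: since $v_{j_1}$ is an $(m,N)$-blocking neighbor of $v_i$ and $v_i \sim v_k$ with $m < k \le N$, the lemma yields $v_{j_1} \not\sim v_k$, and the identical argument applied to $v_{j_2}$ yields $v_{j_2} \not\sim v_k$. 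Combining these with the hypothesis $v_{j_1} \not\sim v_{j_2}$ shows that $v_i$ is adjacent to each of $v_k, v_{j_1}, v_{j_2}$ while no two of the latter three are adjacent, so $(v_i;\, v_k, v_{j_1}, v_{j_2})$ is a claw in $G(S)$. This contradicts the fact that $G(S)$ has no claws (\cite[Lemma~4.8]{greene:LSRP}), so no such $k$ exists and $v_i$ is $(m,N)$-blocked.

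I do not expect any genuine obstacle here: all of the real work is contained in Lemma~\ref{premblock1} (which did the case analysis over the four types of blocking neighbor) and in the claw-freeness of $G(S)$. The only point requiring a moment's care is confirming that the configuration is a \emph{bona fide} claw, i.e.\ that the four vertices involved are genuinely distinct; this is immediate from the index bounds $i, j_1, j_2 \le m < k$ and from adjacency forcing distinctness.
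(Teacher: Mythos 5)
Your proof is correct and is essentially identical to the paper's argument: assume $v_i \sim v_k$ for some $m < k \le N$, apply Lemma~\ref{premblock1} to each blocking neighbor to get $v_{j_1} \not\sim v_k$ and $v_{j_2} \not\sim v_k$, and conclude that $(v_i; v_{j_1}, v_{j_2}, v_k)$ is a claw, contradicting claw-freeness of $G(S)$. The extra check that the four vertices are distinct is a harmless (and reasonable) addition the paper leaves implicit.
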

\begin{proof}
Suppose $v_k \sim v_i$ for some $m < k \leq N$. By Lemma~\ref{premblock1}, $v_{j_1} \not\sim v_k$ and $v_{j_2} \not\sim v_k$. However, this would imply $\{v_i; v_{j_1},v_{j_2},v_k\}$ is a claw.
\end{proof}

%\begin{lemma} \label{mblock1}
%Suppose that $v_i \in S_m$ has two neighbors $v_{j_1}$, $v_{j_2} \in S_m$ with $v_{j_1} \not \sim v_{j_2}$. Then $v_i$ is $m$-blocked in $S_N$ if for each $a = 1,2$ one of the following holds:
%\begin{enumerate}
%\item $v_{j_a}$ is $m$-blocked in $S_N$.
%\item $|v_i| \ge 3$, $|v_{j_a}|\ge 3$, both are unbreakable, and $i < m$.
%\item $j_a = i \pm 1$, and $|v_i| = |v_{j_a}| = 2$.
%\item $j_a = i \pm 1$, $|v_{\max (i, j_a)}| = 2$, and $v_\ell$ is $m$-blocked in $S_N$ for all $\ell < \min(i,j_a)$.
%\end{enumerate}
%\end{lemma}
%\begin{proof}
%Suppose for contradiction $v_k \sim v_i$ for some $m < k \leq N$. To show that $\{v_i;v_k,v_{j_1},v_{j_2}\}$ is a claw and hence arrive at a contradiction, it suffices to prove that in each of these cases, $v_k \not\sim v_{j_a}$. 

%Item $1$ is clear. In item $2$, if $v_k\sim v_{j_a}$, then $v_k$ neighbors two vectors in $S_m$. Since $k\geq m$, $|v_k| \geq 3$, and $(v_{j_a}, v_i, v_k)$ would be a heavy triple. In item 3, by Lemma~\ref{gappy3}, neither $i-1$ nor $j_a - 1$ can be a gappy index for $v_k$, so if $v_k$ neighbors $v_{\min(i, j_a)}$, $i$ and $j_a$ are both in $\supp v_k$, and $v_k \not \sim v_{\max(i, j_a)}$. In item 4, $\min \supp v_k \ge \min(i, j_a)$, so if $v_k \sim v_{\min(i,j_a)}$, $\min(i,j_a) \in \supp v_k$. Since $|v_{\max(i,j_a)}| = 2$, $v_k$ is not gappy at $\min(i,j_a)$, so $\max(i,j_a) \in \supp v_k$ and $v_k \not \sim v_{\max(i,j_a)}$.
%\end{proof}

\begin{lemma} \label{mblock2}
Suppose that $v_k$ is just right for all $k>m$, and that $v_j \in S_m$ is unbreakable. Then $v_i \in S_m$ is $m$-blocked if either of the following holds
\begin{enumerate}
\item $\supp v_j$ has at least $4$ elements $\ge i$.
\item $\supp v_j$ has at least 3 elements $\ge i$, and $v_j$ is m-blocked.
\end{enumerate}
\end{lemma}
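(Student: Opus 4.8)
The plan is to argue by contradiction: assume $v_i\sim v_k$ for some $k>m$ and produce a pairing that is forbidden. The structural input is that, since every $v_k$ with $k>m$ is just right, its support is a contiguous block $\supp v_k=\{a_k,a_k+1,\dots,k-1,k\}$ and $v_k=e_{a_k}+\cdots+e_{k-1}-e_k$; in particular $|v_k|=|\supp v_k|$ and $v_k$ is unbreakable (a just right vector is not tight, and a breakable vector is tight by Lemma~\ref{lem:BrIsTight}). I will also use repeatedly that $\supp v_j\subseteq\{0,\dots,j\}$, that the largest element of $\supp v_j$ is $j$, and that $j$ is the unique index at which $v_j$ has a negative coordinate (and likewise for $v_i$). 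A first reduction is to the case $\braket{v_i}{v_k}\ne 0$: since $\supp v_i\subseteq\{0,\dots,i\}$ and $k>m\ge i$ we have $k\notin\supp v_i$, so $v_i\sim v_k$ forces $\braket{v_i}{v_k}\ne0$ by Corollary~\ref{cor:AdjacencyNon0} unless $\{i,k\}=\{1,f\}$, i.e.\ $i=1$ and $k=f>m$. That remaining subcase is incompatible with the support hypotheses (when $a_0=2$ one has $f=3$ while the hypothesis forces $j\ge 4$, so $f\le m$; when $a_0\ge 3$ it follows via Lemma~\ref{mblock3}, since $f>m$ makes $v_{f-1}$ just right, hence unbreakable), and I expect this to be a short routine check; one may alternatively regard $i\notin\{1,f\}$ as the substantive content.

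So assume $\braket{v_i}{v_k}\ne0$. Expanding $\braket{v_i}{v_k}=\sum_{\ell}(v_i)_\ell(v_k)_\ell$, using $(v_k)_\ell=1$ for $a_k\le\ell\le k-1$, $(v_k)_\ell=0$ for $\ell<a_k$, and $k\notin\supp v_i$, gives $\braket{v_i}{v_k}=\sum_{\ell\in\supp v_i,\ \ell\ge a_k}(v_i)_\ell$; since the largest element of $\supp v_i$ is $i$, this sum is empty---hence zero---unless $a_k\le i$. Therefore $a_k\le i$, so $\supp v_k=\{a_k,\dots,k\}\supseteq\{i,i+1,\dots,k\}$, and because $j\le m<k$ this block contains every element of $\supp v_j$ that is $\ge i$, each with $v_k$-coordinate $+1$. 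The same bookkeeping then yields
\[
\braket{v_j}{v_k}=\Big(\sum_{\ell\in\supp^+v_j,\ \ell\ge a_k}(v_j)_\ell\Big)-1\ \ge\ \#\{\ell\in\supp^+v_j:\ell\ge i\}-1,
\]
the $-1$ being the contribution of $(v_j)_j=-1$, counted because $j\ge i\ge a_k$.

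To conclude, note $\#\{\ell\in\supp v_j:\ell\ge i\}=\#\{\ell\in\supp^+v_j:\ell\ge i\}+1$, the missing element being $j$ itself. In case (1) this gives $\braket{v_j}{v_k}\ge 3-1=2$; but $v_j$ is unbreakable with $|v_j|\ge|\supp v_j|\ge 4$, and $v_k$ is unbreakable with $|v_k|=|\supp v_k|\ge 4$ (its support contains the $\ge 4$ elements of $\supp v_j$ lying $\ge i$), so Lemma~\ref{smallpair} forces $|\braket{v_j}{v_k}|\le 1$, a contradiction. In case (2) we instead get $\braket{v_j}{v_k}\ge 2-1=1\ne0$; since $v_j$ and $v_k$ are both unbreakable, Corollary~\ref{cor:AdjacencyNon0} gives $v_j\sim v_k$---impossible, as $v_j$ is $m$-blocked and $k>m$. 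Either way the assumption $v_i\sim v_k$ is untenable, so $v_i$ is $m$-blocked.

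The only real friction I anticipate is bookkeeping: isolating the exceptional edge $v_1\sim v_f$ when deducing $a_k\le i$ from $v_i\sim v_k$, and then carefully tracking which coordinates of $v_j$ and $v_k$ are $+1$, $-1$, or $0$ so that the estimates $\braket{v_j}{v_k}\ge 2$ (case (1)) and $\ge 1$ (case (2)) are airtight---including the harmless possibility that $v_j$ is the unique tight vector, where the coordinate $2$ only makes the pairing larger.
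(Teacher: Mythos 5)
Your main argument is the paper's argument: from $v_i\sim v_k$ with $k>m$ you deduce $\min\supp v_k\le i$, use that $\supp v_k$ is a contiguous block containing $\{i,\dots,k\}$ to bound $\braket{v_j}{v_k}$ from below, and then contradict Lemma~\ref{smallpair} in case (1), respectively the $m$-blockedness of $v_j$ via Corollary~\ref{cor:AdjacencyNon0} in case (2). Those estimates (including the remark that a tight coordinate $2$ only increases the pairing, and that $v_k$ is unbreakable because just right vectors are not tight) are carried out correctly and match the paper's proof, which is exactly this computation stated tersely.

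The one step where you go beyond the paper --- dismissing the exceptional pair $\{i,k\}=\{1,f\}$ --- is not right when $a_0\ge 3$. Lemma~\ref{mblock3} only says that $v_1$ and $v_f$ have no neighbors $v_\ell$ with $\ell>f$; it says nothing about the edge between $v_1$ and $v_f$ themselves, which is precisely the edge at issue when $k=f>m$ (and if $f=m+1$, the vector $v_{f-1}=v_m$ is not covered by the ``just right for $k>m$'' hypothesis, so you cannot even invoke its unbreakability this way). Moreover the configuration is not ``incompatible with the support hypotheses'': in the lattice of Proposition~\ref{f!=3JR}, case 1(c), with changemaker $(1,1,2,2,5,5,10,22)$, one has $f=5$, $v_4=e_1+e_2+e_3-e_4$ just right and unbreakable, and $v_5,v_6,v_7$ just right, so the hypotheses of case (1) hold with $(i,j,m)=(1,4,4)$ while $k=f=5>m$. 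What renders this corner harmless is not your argument: either one reads the definition of abutment literally (the identical intervals $A_1=A_f$ do not satisfy the common-endpoint condition), or one observes that every application of the lemma in the paper has $m\ge f$ (or $i\ge 2$), so the $v_1$--$v_f$ edge never enters; the paper's own proof simply assumes adjacency forces overlapping supports and ignores this pair. So keep the main computation, but either restrict the reduction to the situations in which the lemma is actually used, or handle the pair $\{1,f\}$ by a genuinely different argument --- the appeal to Lemma~\ref{mblock3} does not do it.
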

\begin{proof}
In the first case, any $v_k$ neighboring $v_i$ with $k>m$ would have $\min\supp v_k\le i$, hence $|\supp v_k\cap \supp v_j|\ge4$ and
$\braket{v_k}{v_j} \ge 2$, contradicting Lemma~\ref{smallpair}. In the second case if $v_i\sim v_k$ for some $k>m$, $\braket{v_k}{v_j} \ge 1$, so $v_k \sim v_j$, a contradiction.
\end{proof}

\begin{lemma} \label{2sforever}
Suppose $v_i$ is $(m,N)$-blocked for all $i < m$. If $S_{m-1}$ contains at least one unbreakable vector of norm $\ge 3$, then $|v_j| = 2$ for all $m < j \leq N$. %\todo{I don't think we ever used the second half of this lemma so I removed it. Please correct me if I overlooked anything.} 

%If $S_{m-1}$ has only vectors of norm 2, but $|v_m| \ge 3$ and $v_m$ has a smaller neighbor, then either $|v_{m+1}| = 2$ or $\min \supp v_{m+1} = 0$. In either case, $|v_j| = 2$ for $m+1 < j \leq N$.
\end{lemma}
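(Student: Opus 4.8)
\emph{Setup.} We argue by contradiction. Suppose $v_i$ is $(m,N)$-blocked for every $i<m$, that $S_{m-1}$ contains an unbreakable vector of norm $\ge 3$, but that some $v_j$ with $m<j\le N$ has norm $\ge 3$. Take the smallest such $j$, call it $j_0$. The plan is to show that $v_{j_0}$ must abut some vector of $S_{m-1}$, and then to derive a contradiction with the blocking hypothesis via the structure results on claws, heavy triples, and blocking neighbors from this section and the previous one.

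\emph{Key steps.} First I would record that, since every $v_i$ with $i<m$ is $(m,N)$-blocked and $v_{j_0}$ is the first vector of norm $\ge 3$ past index $m$, all vectors $v_\ell$ with $m<\ell<j_0$ have norm $2$; combined with Lemma~\ref{lem:j-1} this pins down their shape as $v_\ell = e_{\ell-1}-e_\ell$. Next, using Lemma~\ref{lem:j-1} again, $j_0-1\in\supp v_{j_0}$, so $v_{j_0}$ abuts $v_{j_0-1}$. If $j_0-1>m$ this gives, by Corollary~\ref{cor:AdjacencyNon0} and the norm-$2$ description, a forced chain of abutments running back through the norm-$2$ vectors $v_{m+1},\dots,v_{j_0-1}$ down to some vector $v_p$ with $p\le m$; indeed since consecutive norm-$2$ vectors $e_{\ell-1}-e_\ell$ are linked and Lemma~\ref{gappy3} forbids gappy indices at positions where the norm is $2$, $\min\supp v_{j_0}\le m$, so $v_{j_0}$ actually abuts some $v_p\in S_{m-1}$. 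The heart of the argument is then: $v_p$ is $(m,N)$-blocked (by hypothesis, since $p<m$), so $v_p\sim v_{j_0}$ contradicts the definition of $(m,N)$-blocked unless... — here is where I would invoke the hypothesis that $S_{m-1}$ contains an unbreakable norm-$\ge 3$ vector $v_q$. If $v_q$ neighbors $v_p$ in $G(S_{m-1})$ then $v_q$ is an $(m,N)$-blocking neighbor of $v_p$ of type (2) or (4) in Definition~\ref{defn:blockingneighbor}; I would aim to produce a \emph{second} such blocking neighbor not abutting the first (using the chain of norm-$2$ vectors on the other side, or $v_{p\pm1}$), and then Lemma~\ref{mblock1} forces $v_p$ to be $(m,N)$-blocked, contradicting $v_p\sim v_{j_0}$. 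If instead no such neighbor configuration is available, I would argue directly: $\braket{v_{j_0}}{v_q}$ must be nonzero for support reasons (both contain a long run of $e_i$'s reaching back past $m$, by Lemma~\ref{lem:j-1} applied to $v_q$ and the norm-$2$ chain), giving $v_{j_0}\sim v_q$ with $q<m<j_0$, directly contradicting that $v_q$ is $(m,N)$-blocked.

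\emph{Main obstacle.} The delicate point is handling the case where the unbreakable norm-$\ge 3$ vector $v_q\in S_{m-1}$ does \emph{not} lie on the forced chain of norm-$2$ vectors connecting $v_p$ to $v_{j_0}$, so that abutment is not immediate from support containments alone. There one must combine Lemma~\ref{smallpair} (to bound $|\braket{v_{j_0}}{v_q}|$ and read off that any abutment is consecutiveness), Corollary~\ref{uniqueheavy} (the index $z$ of the large coordinate is unique among unbreakables, ruling out that $z_q$ lands inside $\supp v_{j_0}$), and the no-heavy-triple lemma to close the loop; the bookkeeping of exactly which indices lie in which support, and ensuring the two blocking neighbors feeding Lemma~\ref{mblock1} genuinely fail to abut, is where the real work lies. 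I expect the rest to be routine once the shape of the norm-$2$ vectors between $m$ and $j_0$ is nailed down.
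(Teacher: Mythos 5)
There is a genuine gap at the step you call the heart of the argument. From Lemma~\ref{gappy3} and the fact that $|v_\ell|=2$ for $m<\ell<j_0$ you can only conclude that $\supp v_{j_0}\cap\{m+1,\dots,j_0-1\}$ is a consecutive block ending at $j_0-1$; nothing forces this block to reach down past $m$. A vector of the shape $v_{j_0}=e_{\ell}+e_{\ell+1}+\cdots+e_{j_0-1}-e_{j_0}$ with $m<\ell\le j_0-2$ is perfectly consistent with everything you have established at that point: its support is disjoint from $\supp v_i$ for every $i\le m$, so it abuts only norm-$2$ vectors $v_\ell$ with $\ell>m$, which are \emph{not} covered by the blocking hypothesis (that hypothesis only concerns $v_i$ with $i<m$), and your contradiction evaporates. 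Your fallback branch has the same problem: there is no support reason why $\braket{v_{j_0}}{v_q}\neq 0$ for the unbreakable high-norm vector $v_q\in S_{m-1}$ when $\min\supp v_{j_0}>m$, and even when $\min\supp v_{j_0}=0$ the intermediate vectors $v_2,\dots,v_{m-1}$ need not have norm $2$, so you cannot claim a "long run of $e_i$'s" meeting $\supp v_q$.

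This missing case is exactly what the paper's proof is built to handle: it inducts on $N-m$. At each stage it only needs to control the \emph{next} vector $v_{m+1}$, for which the dichotomy "$\min\supp v_{m+1}=0$ or $|v_{m+1}|=2$" genuinely follows from the blocking hypothesis (a first support index in $\{1,\dots,m-1\}$ would force abutment with a blocked vector, and $\min\supp v_{m+1}=0$ forces abutment with the unbreakable norm-$\ge 3$ vector in $S_{m-1}$, which is blocked). It then advances the frontier: $v_m$ has a smaller neighbor $v_s$ (which is $(m,N)$-blocked) and the new neighbor $v_{m+1}$ of norm $2$, these are non-abutting $(m+1,N)$-blocking neighbors in the sense of Definition~\ref{defn:blockingneighbor}(1),(4), so Lemma~\ref{mblock1} makes $v_m$ itself $(m+1,N)$-blocked, and the inductive hypothesis applies with $m$ replaced by $m+1$. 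Your plan has no analogue of this frontier-advancing mechanism, and without it the minimal-counterexample argument cannot reach vectors whose support starts strictly above $m$; so as written the proposal does not prove the lemma.
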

\begin{proof}
We induct on $N-m$. The base case $N-m = 0$ is trivial. Inductively, suppose the lemma holds whenever $N-m \leq k$ for any non-negative integer $k$. The following shows that the lemma also holds when $N-m = k+1$. 

Since $v_i$ is $(m,N)$-blocked for all $i < m$, either $\min\supp v_{m+1} = 0$ or $|v_{m+1}| = 2$. Since $S_{m-1}$ contains some unbreakable vector $v_i$ of norm $\ge 3$, $\min\supp v_{m+1} = 0$ would imply $v_{m+1} \sim v_i$, but $v_i$ is $m$-blocked. Therefore, $|v_{m+1}| = 2$.  

%To apply the inductive assumption, we need to prove that $v_m$ is $(m+1)$-blocked. 
Since $S_{m-1}$ contains a high norm vector, $v_m$ has some smaller neighbor $v_s$, which is $(m,N)$-blocked by assumption. By Definition~\ref{defn:blockingneighbor}~(1)(4), $v_s$ and $v_{m+1}$ are $(m+1,N)$-blocking neighbors of $v_m$. Since $|v_{m+1}| = 2$ and $s < m$, $v_s \not\sim v_{m+1}$. Hence, by Lemma~\ref{mblock1}, $v_m$ is $(m+1)$-blocked. 

Since $N-(m+1)\leq k$ and $v_i$ is $(m+1,N)$-blocked for all $i<m+1$, the inductive assumption implies $|v_j| = 2$ for all $m+1<j\leq N$. Since we have also shown $|v_{m+1}| = 2$, this finishes the induction proof.
\end{proof}

\begin{lemma} \label{opensguaranteed}
If $G(S_m)$ is disconnected, every connected component has at least one $m$-open vector.
\end{lemma}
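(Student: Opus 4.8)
The plan is to argue by contradiction: suppose some connected component $\mathcal{C}$ of $G(S_m)$ consists entirely of $m$-blocked vectors, and derive a contradiction by showing that the standard basis $S$ then decomposes in a way incompatible with the indecomposability of $L \cong \Delta(p,q)$ (Corollary~\ref{indecomposable}). The key observation is that if every vector in $\mathcal{C}$ is $m$-blocked, then by definition no $v_j$ with $j > m$ has $v_j \sim v_i$ for any $v_i \in \mathcal{C}$; combined with the fact that $\mathcal{C}$ is already a full connected component of $G(S_m)$, this means $\mathcal{C}$ is a connected component of the entire intersection graph $G(S)$. Since $G(S)$ and $\hat{G}(S)$ differ only by the single edge between $v_1$ and $v_f$ (in the unbreakable case; more care is needed when breakable vectors are present, but $v_1$ and $v_f$ are always either both in $\mathcal{C}$ or both outside it, so this edge does not connect $\mathcal{C}$ to anything new), and since $\hat{G}(S)$ being disconnected would force $L$ to split as an orthogonal direct sum, we get the desired contradiction.

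First I would establish the claim that $\mathcal{C}$ is closed under the abutment relation in all of $G(S)$, not just $G(S_m)$: no $v_j$ with $j > m$ abuts any element of $\mathcal{C}$ (this is exactly $m$-blockedness), and no $v_i \in S_m$ outside $\mathcal{C}$ abuts an element of $\mathcal{C}$ (since $\mathcal{C}$ is a full component of $G(S_m)$). Hence the vertex set $V(\mathcal{C})$ spans a union of connected components of $G(S)$. Next I would use Corollary~\ref{cor:AdjacencyNon0}: for unbreakable vectors, $v_i \sim v_j$ iff $\braket{v_i}{v_j} \neq 0$, and the only pair of abutting vectors with $\braket{v_i}{v_j} = 0$ is $\{v_1, v_f\}$. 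So, up to the single pair $\{v_1,v_f\}$, the partition of $S$ into $V(\mathcal{C})$ and its complement is a partition into sets with zero mutual pairing. Since $v_1$ and $v_f$ both pair nontrivially with $v_{f-1}$ (Lemma~\ref{pairequal}) and are joined to the rest of the graph through it, they lie in the same component of $G(S)$ as each other, so this exceptional pair does not straddle the partition. Therefore $L = (\sigma)^\perp$, which is spanned by $S$, decomposes as an orthogonal direct sum of the sublattice spanned by $V(\mathcal{C})$ and the sublattice spanned by the rest — contradicting Corollary~\ref{indecomposable}, since $\mathcal{C}$ is nonempty and (being a proper component when $G(S_m)$ is disconnected and every component would then have to be all-blocked, but in any case) its complement in $S$ is nonempty.

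The main obstacle I anticipate is handling breakable vectors cleanly. Corollary~\ref{cor:AdjacencyNon0} gives the clean ``$\sim$ iff nonzero pairing'' dictionary only for unbreakable vectors, and a breakable $v_j$ could in principle have $\braket{v_j}{v_k} \neq 0$ with $v_j \not\sim v_k$ or vice versa. However, by Lemma~\ref{lem:BrIsTight} a breakable vector is tight, and by Corollary~\ref{cor:OneTight} there is at most one tight vector in $S$, which must be $v_{f-1}$ (by Lemma~\ref{pairequal}); so there is at most one breakable vector to worry about, and its neighbors are controlled by the cycle and claw lemmas together with Lemma~\ref{pairequal}. I would check directly that $v_{f-1}$, $v_1$, and $v_f$ all lie in the same component of $G(S)$, so that no matter which component $\mathcal{C}$ is, either it contains all three or none of them, and the pairing-versus-adjacency discrepancies caused by $v_{f-1}$ never connect $\mathcal{C}$ to its complement across the partition. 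Once the breakable case is dispatched, the orthogonal-splitting contradiction with Corollary~\ref{indecomposable} closes the argument.
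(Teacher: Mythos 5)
Your proposal takes essentially the same route as the paper: an all-$m$-blocked component of $G(S_m)$ would remain a connected component of $G(S_{n+3})$, contradicting the connectivity of $G(S_{n+3})$ forced by indecomposability (Corollary~\ref{indecomposable}). The paper's own proof is exactly this two-line observation, and your additional discussion of the pairing--adjacency dictionary (Corollary~\ref{cor:AdjacencyNon0}) and of the breakable vector just spells out details the paper leaves implicit.
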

\begin{proof}
Otherwise, a component of $G(S_m)$ would still be a component in $G(S_{n+3})$, disconnected from the rest of $G(S_{n+3})$. However, this contradicts Corollary~\ref{indecomposable}, which says the lattice is indecomposable and $G(S_{n+3})$ is connected.
\end{proof}

\begin{lemma} \label{2ssoon}
Suppose $S$ contains no breakable vector. If $S_m$ contains at least one vector of norm $\ge 3$, and contains exactly one $m$-open vector $v_j$ with $j < m$, then $v_{m+1}$ neighbors $v_j$ and has no other smaller neighbors, and $|v_k| = 2$ for all $k > m+1$.
\end{lemma}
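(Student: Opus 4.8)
The plan is to first identify $v_{m+1}$ together with its smaller neighbours, and then reduce the assertion ``$|v_k|=2$ for all $k>m+1$'' to the single statement that $v_j$ is $(m+1,n+3)$-blocked, which can be fed into Lemma~\ref{2sforever}. Throughout one uses that every standard basis vector is unbreakable (as $S$ has no breakable vector), so by Corollary~\ref{cor:AdjacencyNon0} adjacency in $G(S)$ is the same as non-orthogonality, with the single exception of the edge $v_1v_f$.

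First I would observe that, since $v_j$ is the only $m$-open vector of $S_m$, Lemma~\ref{opensguaranteed} forces $G(S_m)$ to be connected; and since $S_m$ contains a vector of norm $\ge 3$, whose index is at least $2$, we have $m\ge 2$, so $v_j$ has a neighbour inside $S_m$. A short support computation using Lemma~\ref{lem:j-1} and Lemma~\ref{gappy3} (examining $\ell=\min\supp v_{m+1}$, together with the vectors $v_1,v_2$ of norm $2$ when $\ell$ is small) shows that $v_{m+1}$ is adjacent to some $v_i$ with $i\le m$; since every vector of $S_m$ other than $v_j$ is $m$-blocked, this $v_i$ must be $v_j$, and $v_j$ is then the unique smaller neighbour of $v_{m+1}$, which is the first part of the lemma. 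Along the way one gets $|v_{m+1}|\ge 3$ (otherwise $v_{m+1}=e_m-e_{m+1}$, whose only smaller neighbour is $v_m\ne v_j$), and also $j\ge 2$: if $j=1$, then $v_1$ is $f$-blocked by Lemma~\ref{mblock3} while being $m$-open, which forces $f>m$ and hence $v_f\notin S_m$; by Corollary~\ref{1fblock} the only neighbours of $v_1$ are $v_f$ and one more vector, necessarily $v_{m+1}$ since $|v_{m+1}|\ge 3>|v_f|$, so $v_1$ would have no neighbour in $S_m$, contradicting connectedness of $G(S_m)$.

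Next I would invoke Lemma~\ref{2sforever} with $m$ replaced by $m+1$ (noting $S_m=S_{(m+1)-1}$ indeed contains an unbreakable vector of norm $\ge 3$): this reduces ``$|v_k|=2$ for all $k>m+1$'' to showing that $v_i$ is $(m+1,n+3)$-blocked for every $i\le m$. For $i\ne j$ this is immediate from $v_i$ being $m$-blocked, so the whole problem comes down to proving that $v_j$ is $(m+1,n+3)$-blocked. Suppose not, say $v_j\sim v_k$ with $k\ge m+2$, and pick a neighbour $v_i\in S_m$ of $v_j$. Since $v_i$ is $m$-blocked, $v_i\not\sim v_{m+1}$ and $v_i\not\sim v_k$, so the fact that $G(S)$ has no claws, applied to $\{v_j;v_i,v_{m+1},v_k\}$, forces $v_{m+1}\sim v_k$; thus $v_j,v_{m+1},v_k$ span a triangle, and since no triple is heavy and $|v_{m+1}|\ge 3$, one of $v_j,v_k$ has norm $2$. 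If $|v_k|=2$ then $v_k=e_{k-1}-e_k$ with $k-1\ge m+1>j$, so $\supp v_j\subseteq\{0,\dots,j\}$ is disjoint from $\supp v_k$, giving $\braket{v_j}{v_k}=0$ --- a contradiction.

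So the remaining case is $|v_j|=2$, say $v_j=e_{j-1}-e_j$ with $j\ge 2$, and I expect this to be the main obstacle. Here Lemma~\ref{gappy3} (since $|v_j|=2$, the index $j-1$ is nobody's gappy index) forces every neighbour $v_t$ of $v_j$ with $t>j$ to have $j\in\supp v_t$, $j-1\notin\supp v_t$, and $\braket{v_j}{v_t}=-1$. Using that $v_{m+1}$ has no smaller neighbour besides $v_j$, so $v_{j+1},\dots,v_m\not\sim v_{m+1}$, an inductive computation of the pairings $\braket{v_{m+1}}{v_{j+1}},\braket{v_{m+1}}{v_{j+2}},\dots$ shows in turn that $j+1,\dots,m\in\supp v_{m+1}$, whence $v_{m+1}=e_j+e_{j+1}+\dots+e_m-e_{m+1}$ is just right. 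One then computes $\braket{v_{m+1}}{v_k}=\bigl|\{j,\dots,m\}\cap\supp v_k\bigr|-[\,m+1\in\supp v_k\,]$, which is $\ge 1$ because $j\in\supp v_k$; Lemma~\ref{smallpair} (both $v_{m+1}$ and $v_k$ are unbreakable of norm $\ge 3$) then pins $\braket{v_{m+1}}{v_k}=1$, and feeding this into the equality clause of Lemma~\ref{smallpair} together with the explicit shape of $v_k$ (just right, or the unique gappy form of Lemma~\ref{gappyform}) produces a contradiction. The lone residual configuration $m=j+1$, in which the inductive step is vacuous, can be dispatched directly from the shapes of $v_{m+1}$ and $v_k$. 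This would establish that $v_j$ is $(m+1,n+3)$-blocked, completing the proof.
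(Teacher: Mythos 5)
Your first two-thirds is correct and is essentially the paper's own argument with the machinery unpacked: you identify $v_j$ as the unique smaller neighbour of $v_{m+1}$, deduce $|v_{m+1}|\ge 3$, reduce the norm statement via Lemma~\ref{2sforever} to showing that $v_j$ is $(m+1)$-blocked, and then run the claw/heavy-triple argument, which is exactly the content of Definition~\ref{defn:blockingneighbor}(2), Lemma~\ref{premblock1} and Lemma~\ref{mblock1} that the paper invokes in one line. Your instinct that $|v_j|=2$ is the delicate case is also sound (it is in fact the case occurring in every later application of the lemma, and the paper's appeal to Definition~\ref{defn:blockingneighbor}(2) tacitly presumes $|v_j|\ge 3$), so isolating it is reasonable.

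That case, however, is where your proposal has a genuine gap. First, a small slip: $j\in\supp v_k$ only gives $\braket{v_{m+1}}{v_k}\ge 0$; positivity needs the adjacency $v_{m+1}\sim v_k$ you obtained from claw-freeness together with Corollary~\ref{cor:AdjacencyNon0} (fixable). The real problem is the final assertion that the equality clause of Lemma~\ref{smallpair} plus ``the explicit shape of $v_k$'' produces a contradiction. The equality clause only says $A_{m+1}\dagger A_k$ and $\epsilon_{m+1}\epsilon_k=-1$, which is perfectly satisfiable, and the shapes compatible with $\braket{v_{m+1}}{v_k}=1$, $j\in\supp v_k$, $j-1\notin\supp v_k$ include the gappy vectors $v_k=e_j+e_m+e_{m+1}+\cdots+e_{k-1}-e_k$ and $v_k=e_j+e_{j'}+\cdots+e_{k-1}-e_k$ with $j'\ge m+2$, as well as, when $m=j+1$, the just-right $v_k=e_j+\cdots+e_{k-1}-e_k$. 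The two gappy shapes can still be excluded, but only by pairing $v_k$ against the blocked vectors $v_m$ (which is forced both to contain and not to contain $e_j$) respectively $v_{j+1}$ (which would become adjacent to $v_k$), not by Lemma~\ref{smallpair} and the shape alone; and the residual configuration $m=j+1$ cannot be ``dispatched directly from the shapes of $v_{m+1}$ and $v_k$'': there $v_{m+1}=e_j+e_{j+1}-e_{j+2}$ and $v_k=e_j+\cdots+e_{k-1}-e_k$ pair to $1$ with each other, to $-1$ with $v_j$, and to $0$ with every other element of $S_m$ (using Lemma~\ref{lem:j-1}), so the local picture is consistent with claw-freeness, the cycle lemmas, Lemma~\ref{smallpair}, Lemma~\ref{gappy3} and all the blockedness hypotheses; excluding it requires input you never bring to bear, such as the blocking-neighbour formalism the paper uses, or the placement of $v_1,v_f$ (Lemma~\ref{pairequal}) and the structure of $S_f$ from Section~\ref{sec:basisthroughf}. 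Until that configuration is ruled out, the claim that $v_j$ is $(m+1)$-blocked, and hence the lemma, is not established.
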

\begin{proof}
Since $S_m$ has a high norm vector, $v_{m+1}$ has some smaller neighbor, which must be $v_j$. By Lemma~\ref{opensguaranteed}, $G(S_m)$ is connected, so $v_j$ neighbors some $v_i \in S_m$. Since $v_j$ is the only $m$-open vector in $S_m$, $v_i$ is $m$-blocked, and $v_i$ is an $m$-blocking neighbor of $v_j$ by Definition~\ref{defn:blockingneighbor}~(1). Since $v_{m+1} \sim v_j$ and $j < m$, $|v_{m+1}| \ge 3$. By Definition~\ref{defn:blockingneighbor}~(2), $v_{m+1}$ is an $(m+1)$-blocking neighbor of $v_j$. By Lemma~\ref{mblock1}, $v_j$ is $(m+1)$-blocked. The rest follows by Lemma~\ref{2sforever}.
\end{proof}

%%%%%
%%%%%
%%%%%
%%%%%
%%%%%

\section{$a_0=2$}\label{sec:a0=2}

In this section, we assume $a_0 = 2$. By Lemma~\ref{modify}, $f = 3$, and $v_2 = e_1 - e_2$. %In Section~\ref{sec:basisthroughf} -- Section~\ref{sec:pandq}, we will assume $a_0 \ge 3$. 

\begin{lemma}\label{lem:v2sblock}
If $v_s \sim v_2$ for $s>3$, then $v_2$ is $s$-blocked.	
\end{lemma}
\begin{proof}
By Lemma~\ref{modify} and Lemma~\ref{a1whena0=2}, $v_2 = x_0$ and $|x_1| = a_1 \geq 3$. If $v_s \sim v_2$, then by Corollary~\ref{uniqueheavy} $A_s$ is the unique interval containing $x_1$. Therefore, $v_1,v_3,v_s$ are the only neighbors of $v_2$.
\end{proof}

\begin{lemma}
The vector $v_4$ is just right and $|v_4| \in \{3,5\}$.
\end{lemma}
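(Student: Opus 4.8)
The plan is to pin down the single entry $\sigma_4$ of the changemaker vector, since $\sigma_4$ determines $v_4$ completely. As established in the proof of Lemma~\ref{modify} in the case $a_0 = 2$, we have $\sigma_0 = \sigma_1 = \sigma_2 = \sigma_3 = 1$, and (after the normalization of Lemma~\ref{modify}) $v_1 = e_0 - e_1 = x_*$, $v_2 = e_1 - e_2 = x_0$, and $v_3 = e_2 - e_3 = x_{**}$. Since $\sigma$ is a changemaker vector its entries are nondecreasing and $\sigma_4 \le 1 + \sigma_0 + \sigma_1 + \sigma_2 + \sigma_3 = 5$, so $\sigma_4 \in \{1,2,3,4,5\}$. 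I would first discard $\sigma_4 = 5$: this is precisely the condition that $v_4$ be tight, which is impossible by Corollary~\ref{cor:OneTight} because $a_0 = 2$.

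For the remaining values $\sigma_4 \in \{1,2,3,4\}$, Definition~\ref{stbasis} gives $v_4 = \bigl(\sum_{i \in A} e_i\bigr) - e_4$, where $A \subseteq \{0,1,2,3\}$ is the $\sum 2^i$-maximal set with $\sum_{i \in A}\sigma_i = \sigma_4$; since $\sigma_0 = \cdots = \sigma_3 = 1$ this forces $A = \{4-\sigma_4, \dots, 3\}$. As $A$ is then a block of consecutive integers whose largest element is $3 = j-1$, the vector $v_4$ is automatically just right, and $|v_4| = \sigma_4 + 1 \in \{2,3,4,5\}$. So it only remains to exclude $\sigma_4 \in \{1,3\}$.

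For this I would invoke Lemma~\ref{pairequal}: since $v_4$ is not tight and $4 \notin \{1,f\} = \{1,3\}$, we must have $\braket{v_4}{v_1} = \braket{v_4}{v_3}$. With $v_1 = e_0 - e_1$ and $v_3 = e_2 - e_3$ this becomes $[0 \in A] - [1 \in A] = [2 \in A] - [3 \in A]$, and since $3 \in A$ always, the right-hand side equals $[2\in A]-1$. Checking the four possibilities: $\sigma_4=1$ ($A=\{3\}$) gives $0 = -1$ and $\sigma_4 = 3$ ($A = \{1,2,3\}$) gives $-1 = 0$, both false; while $\sigma_4 = 2$ ($A = \{2,3\}$) and $\sigma_4 = 4$ ($A = \{0,1,2,3\}$) both give $0 = 0$. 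Hence $\sigma_4 \in \{2,4\}$, and correspondingly $v_4$ is just right with $|v_4| = \sigma_4 + 1 \in \{3,5\}$.

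There is no serious obstacle; the only point requiring care is to carry along the normalization of Lemma~\ref{modify} so that $v_1$ and $v_3$ really are $x_*$ and $x_{**}$, which is what makes Lemma~\ref{pairequal} (equivalently, Lemma~\ref{parity} applied to $x = v_4$) available. One could also phrase the last step purely in terms of parity: $\braket{v_4}{x_*} \equiv \braket{v_4}{x_{**}} \pmod 2$ already rules out $\sigma_4 \in \{1,3\}$.
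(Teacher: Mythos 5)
Your proof is correct and in substance the same as the paper's: the paper's one-line argument cites Lemma~\ref{mblock3} (that $v_1,v_3$ are $3$-blocked) together with Lemmas~\ref{gappy3} and~\ref{lem:j-1}, and when $a_0=2$ that blocking statement is itself just Lemma~\ref{pairequal} combined with the gappy-index restriction, which is exactly the constraint you apply to exclude $\sigma_4\in\{1,3\}$. Your enumeration of $\sigma_4\in\{1,\dots,5\}$, with Corollary~\ref{cor:OneTight} ruling out the tight case and the $\sum 2^i$-maximality of $A$ yielding just-rightness (which is precisely Lemma~\ref{gappy3}), is the same mechanism, merely phrased in changemaker arithmetic rather than intersection-graph language.
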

\begin{proof}
By Lemma~\ref{mblock3}, $v_1,v_3$ are $3$-blocked. We can then use Lemmas~\ref{gappy3} and~\ref{lem:j-1} to get our conclusion.
\end{proof}

\begin{lemma}
Suppose $|v_4| = 3$. Unless $4 = n+3$, $|v_5| \in \{2,6\}$ and $|v_j| = 2$ for $j >5$.
\end{lemma}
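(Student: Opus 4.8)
The plan is to determine the vectors $v_4$ and $v_5$ explicitly and then use the blocked-vector machinery of Section~\ref{sec:Blocked} to force $|v_j|=2$ for all $j>5$. Throughout, note that since $a_0=2$ there are no tight vectors (Corollary~\ref{cor:OneTight}), hence no breakable vectors (Lemma~\ref{lem:BrIsTight}); so every $v_j$ is unbreakable and of the form $v_j=\big(\sum_{i\in A_j}e_i\big)-e_j$ with $A_j\subseteq\{0,\dots,j-1\}$ and $j-1\in A_j$ (Lemma~\ref{lem:j-1}). Recall also from the setup of this section that $v_1=e_0-e_1$, $v_2=e_1-e_2$, $v_3=e_2-e_3$, each of norm $2$. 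We may assume $n+3\geq5$, since otherwise there is no $v_5$ and nothing to prove.

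First I would pin down $v_4$ and $v_5$. Under the hypothesis $|v_4|=3$, together with the preceding lemma (which says $v_4$ is just right), we get $|A_4|=2$ and $3\in A_4$, and the just-right condition forces $A_4=\{2,3\}$; that is, $v_4=e_2+e_3-e_4$. Since $\braket{v_4}{v_2}=-1\neq0$, Corollary~\ref{cor:AdjacencyNon0} gives $v_4\sim v_2$, so Lemma~\ref{lem:v2sblock} (applied with $s=4$) shows $v_2$ is $4$-blocked; combined with Lemma~\ref{mblock3}, which gives that $v_1$ and $v_3$ are $3$-blocked, all of $v_1,v_2,v_3$ are $5$-blocked. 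Writing $v_5=\sum_i c_ie_i$ (so $c_i\in\{0,1\}$ for $i<5$, $c_5=-1$, and $c_4=1$ because $4\in\supp v_5$), the relations $\braket{v_5}{v_1}=\braket{v_5}{v_2}=\braket{v_5}{v_3}=0$, which hold by Corollary~\ref{cor:AdjacencyNon0} since $v_1,v_2,v_3$ are not adjacent to $v_5$, force $c_0=c_1=c_2=c_3=:c\in\{0,1\}$. Hence $v_5=c(e_0+e_1+e_2+e_3)+e_4-e_5$, so $|v_5|=4c^2+2\in\{2,6\}$, which is the first assertion.

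For the second assertion I would show that $v_4$ is $5$-blocked and then apply Lemma~\ref{2sforever} with $m=5$: since $v_1,\dots,v_4$ are all $5$-blocked and $S_4$ contains the unbreakable norm-$3$ vector $v_4$, it follows that $|v_j|=2$ for all $5<j\leq n+3$. To prove $v_4$ is $5$-blocked I would apply Lemma~\ref{mblock1}, exhibiting two $5$-blocking neighbors of $v_4$ that do not abut. One is $v_2$, which is $4$-blocked and hence a blocking neighbor by Definition~\ref{defn:blockingneighbor}(1). The other is $v_5$: one computes $\braket{v_4}{v_5}=2c-1\neq0$, so $v_4\sim v_5$, and $v_5$ is a blocking neighbor of $v_4$ via Definition~\ref{defn:blockingneighbor}(2) when $c=1$ (then $v_4,v_5$ both have norm $\geq3$, are unbreakable, and every later $v_\ell$ is unbreakable) and via Definition~\ref{defn:blockingneighbor}(4) when $c=0$ (then $v_5=e_4-e_5$ has norm $2$, $v_4$ is unbreakable, $v_1,v_2,v_3$ are $5$-blocked, and all later $v_\ell$ are unbreakable). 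Finally $\braket{v_2}{v_5}=c-c=0$, so $v_2\not\sim v_5$, and Lemma~\ref{mblock1} applies.

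I do not anticipate a serious obstacle. The one point that requires care is matching $v_5$ to the correct clause of Definition~\ref{defn:blockingneighbor}, which is why the argument splits according to whether $|v_5|$ is $2$ or $6$. One should also note the degenerate cases: $n+3=4$ is the excluded case (no $v_5$), and when $n+3=5$ the second assertion is vacuous, consistent with the vacuous conclusion of Lemma~\ref{2sforever} over the empty range $5<j\leq 5$.
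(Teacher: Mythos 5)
Your proof is correct and follows essentially the same route as the paper: establish that $v_1,v_3$ are $3$-blocked (Lemma~\ref{mblock3}) and $v_2$ is $4$-blocked (Lemma~\ref{lem:v2sblock}), deduce $|v_5|\in\{2,6\}$ from the vanishing pairings with $v_1,v_2,v_3$, show $v_4$ is $5$-blocked via Lemma~\ref{mblock1} with Definition~\ref{defn:blockingneighbor} clauses (1),(2) or (1),(4), and finish with Lemma~\ref{2sforever}. Your write-up merely makes explicit the forms of $v_4$ and $v_5$ that the paper leaves implicit.
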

\begin{proof}
By Lemma~\ref{mblock3}, $v_1$ and $v_3$ are $3$-blocked. Lemma~\ref{lem:v2sblock} implies that $v_2$ is $4$-blocked since $v_4 \sim v_2$. If $v_5$ exists, to avoid pairing with $v_1,v_2,v_3$, either $|v_5| = 2$ or $|v_5| = 6$. In either case, $v_4$ is $5$-blocked by Lemma~\ref{mblock1} and Definition~\ref{defn:blockingneighbor} (1),(4) or (1),(2). By Lemma~\ref{2sforever}, $|v_j| = 2$ for all $j > 5$.
\end{proof}

\begin{lemma}
Suppose $|v_4| = 5$. There is an index $s$ for which $v_s$ is just right, $|v_s|=s-1$, and $|v_j| = 2$ for $4 < j < s$. Either $s = n+3$, or $|v_{s+1}| = 3$ and $|v_j| = 2$ for $j > s+1$.
\end{lemma}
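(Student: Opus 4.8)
\textit{Setup and strategy.} The plan is to march along the standard basis $v_1,v_2,v_3,\dots$, tracking the intersection graph $G(S_m)$ and the supports $\supp v_j$, exactly in the spirit of the proof just given for $|v_4|=3$; the difference is that here $v_4$ is \emph{distant} from the triangle $\{v_1,v_2,v_3\}$, so more steps are needed before everything is blocked. First I would pin down $v_4$: since there are no tight vectors (Corollary~\ref{cor:OneTight}) and $3\in\supp v_4$ (Lemma~\ref{lem:j-1}), the only vector of norm $5$ of the allowed shape is $v_4=e_0+e_1+e_2+e_3-e_4$, which is just right, and in particular $\braket{v_4}{v_1}=\braket{v_4}{v_2}=\braket{v_4}{v_3}=0$. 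Next, since $\Delta(p,q)$ is indecomposable (Corollary~\ref{indecomposable}), $G(S)$ is connected, so the triangle $\{v_1,v_2,v_3\}$ must be joined to the rest of $G(S)$; as $v_1,v_3$ are permanently $3$-blocked (Lemma~\ref{mblock3}) while $v_2$ admits at most one neighbour of index $>3$, which then makes $v_2$ blocked (Lemma~\ref{lem:v2sblock}), there is a \emph{unique} index $s$ with $v_s\sim v_2$, and $s>4$ because $v_4\not\sim v_2$. (One checks $|v_s|\ge3$: a norm-$2$ vector paired nontrivially with $v_2$ would also be paired with $v_1$ or $v_3$, contradicting that these are $3$-blocked.)

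\textit{The path $v_5,\dots,v_{s-1}$.} The crux is to show $|v_j|=2$ for $4<j<s$, which I would prove by induction on $j$. For such $j$ one has $v_j\not\sim v_2$, so (all three being unbreakable, since there are no tight vectors, hence no breakable vectors by Lemma~\ref{lem:BrIsTight}) $v_j$ pairs to zero with each of $v_1,v_2,v_3$; this forces $\supp^+v_j\cap\{0,1,2,3\}$ to be all of $\{0,1,2,3\}$ or empty, and the first option would give $\braket{v_j}{v_4}\ge3$, contradicting Lemma~\ref{smallpair}, so $\supp v_j\subseteq\{4,\dots,j\}$. If $|v_j|\ge3$, then since $v_5,\dots,v_{j-1}$ already have norm $2$, Lemma~\ref{gappy3} forces $v_j=e_t+\cdots+e_{j-1}-e_j$ with $4\le t\le j-2$; I would then obtain a contradiction by a case analysis on $t$: for $t\ge5$ the set $\{v_t;v_{t-1},v_{t+1},v_j\}$ is a claw, which $G(S)$ does not contain, and for $t=4$ one has $\braket{v_j}{v_4}=-1$, so $[A_j]$ and $[A_4]$ are consecutive by Lemma~\ref{smallpair}, and one rules out every placement of the interval corresponding to $v_j$ relative to $[A_4]$ and its norm-$2$ neighbour $v_5$, using the interval dictionary of Proposition~\ref{prop:OnlyTau} together with Corollary~\ref{uniqueheavy}, the cycle-length restriction of Lemma~\ref{lem:CycleLength}, and the absence of heavy triples. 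Hence every $v_j$ with $4<j<s$ has norm $2$, in fact $v_j=e_{j-1}-e_j$, so $v_4-v_5-\cdots-v_{s-1}$ is a path in $G(S)$ and $s$ is indeed the smallest index $>4$ with $|v_s|\ne2$.

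\textit{The vector $v_s$.} Analysing the pairings of $v_s$ with $v_1,v_2,v_3$ (they are $3$-blocked, and $v_s\sim v_2$) forces $\{2,3\}\subseteq\supp^+v_s$ and $\{0,1\}\cap\supp^+v_s$ to be determined by the sign $\epsilon_s$; the choice $\epsilon_s=-1$ would make $\supp^+v_s$ contain the two isolated indices $0,1$ together with a run ending at $s-1$, forcing $v_s$ to be a gappy vector of a shape forbidden by Lemma~\ref{gappyform}. So $\epsilon_s=1$ and, using Lemma~\ref{gappy3} along the norm-$2$ vectors $v_5,\dots,v_{s-1}$ and $s-1\in\supp v_s$, one gets $v_s=e_2+e_3+\cdots+e_{s-1}-e_s$, which is just right with $|v_s|=s-1$, as claimed.

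\textit{After $v_s$.} By Lemma~\ref{lem:v2sblock}, $v_2$ is $s$-blocked, so if $s<n+3$ then $v_{s+1}$ again pairs to zero with $v_1,v_2,v_3$ and $\supp v_{s+1}\subseteq\{4,\dots,s+1\}$. I would rule out $|v_{s+1}|=2$ (it would create a claw at $v_s$) and $|v_{s+1}|>3$ (it would violate Lemma~\ref{smallpair} against $v_s$, or create a heavy triangle on $\{v_4,v_s,v_{s+1}\}$), leaving $|v_{s+1}|=3$; the same heavy-triple/claw dichotomy shows $v_4$, and then the whole path $v_5,\dots,v_{s-1}$, is $(s+1)$-blocked, so all of $v_1,\dots,v_s$ are $(s+1)$-blocked. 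Since $S_s$ contains the norm-$5$ vector $v_4$, Lemma~\ref{2sforever} then gives $|v_j|=2$ for every $j>s+1$, completing the proof. I expect the main obstacle to be the inductive step ruling out intermediate high-norm vectors: the ``branch'' vector $v_j=e_4+\cdots+e_{j-1}-e_j$ is perfectly consistent with the changemaker condition by itself, so excluding it genuinely requires the $D$-type lattice structure, and carefully bookkeeping all the cases—especially locating $v_j$'s interval within the plumbing tree—is where the real work lies.
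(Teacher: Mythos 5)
Your overall scaffolding matches the paper's: define $s$ as the unique index $>3$ with $v_s\sim v_2$, show $|v_j|=2$ for $4<j<s$, identify $v_s=e_2+\cdots+e_{s-1}-e_s$, and then block everything beyond $v_{s+1}$. But the decisive step---excluding a norm $\ge 3$ vector $v_j$ with $4<j<s$---has a genuine gap at exactly the point you flag. Your claw argument does dispose of $v_j=e_t+\cdots+e_{j-1}-e_j$ with $t\ge5$, but the ``branch vector'' $v_j=e_4+\cdots+e_{j-1}-e_j$ cannot be killed by the local data you propose to use ($[A_4]$, its norm-$2$ neighbour $v_5$, Proposition~\ref{prop:OnlyTau}, Corollary~\ref{uniqueheavy}, Lemma~\ref{lem:CycleLength}, and the absence of heavy triples): on $\{v_4,v_5,\dots,v_j\}$ the intersection graph is just the path $v_4\sim v_5\sim\cdots\sim v_{j-1}$ together with the single extra edge $v_4\sim v_j$, a tree with no vertex of degree three and with only the two high-norm vectors $v_4,v_j$, so no claw, no cycle and no heavy triple occurs among these vectors; moreover the corresponding intervals can be placed consistently in the linear part of the D-type tree ($A_j$ consecutive to $A_4$ on one side, the norm-two chain $A_5,\dots,A_{j-1}$ on the other), so the ``placement'' analysis you sketch has nothing to contradict. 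The obstruction is global and runs through $v_s$. Because $\{1,2\}\cap\supp v_s=\{2\}$, $3\in\supp v_s$ (Lemma~\ref{gappy3}, as $|v_1|=|v_2|=|v_3|=2$) and $4\in\supp v_s$ (else $\braket{v_s}{v_4}=2$, against Lemma~\ref{smallpair}), and because no index in $\{4,\dots,j-2\}$ can be gappy for $v_s$ when $|v_5|=\cdots=|v_{j-1}|=2$, one gets $\{2,\dots,j-1\}\subset\supp v_s$, hence $\braket{v_s}{v_j}\ge|v_j|-2\ge 1$; so $v_s$ neighbors both $v_4$ and $v_j$, while $v_j$ is joined to $v_4$ inside $G(S_{s-1})$, and $(v_4,v_j,v_s)$ is a heavy triple---that is the paper's actual contradiction, and it handles all $t\ge 4$ at once without ever needing the precise shape of $v_j$. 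Your induction never brings $v_s$ into contact with $v_j$, so the branch vector survives it; as you yourself note, this is ``where the real work lies,'' and it is work the proposal does not do.

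Two smaller points. The paper determines $2,3,4\in\supp v_s$ (hence $v_s\sim v_4$) \emph{before} excluding intermediate high-norm vectors, precisely because that information feeds the heavy-triple argument; also, the shape of $v_s$ is forced by gappy-index restrictions and Lemma~\ref{smallpair}, not by a choice of the sign $\epsilon_s$ as in your sketch. For the tail, your separate exclusions of $|v_{s+1}|=2$ and $|v_{s+1}|>3$ are avoidable: once one checks (via Lemma~\ref{lem:v2sblock} and the blocking lemmas) that $v_{s-1}$ is the only possible $s$-open vector, Lemma~\ref{2ssoon} immediately gives $v_{s+1}=e_{s-1}+e_s-e_{s+1}$ and $|v_j|=2$ for $j>s+1$.
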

\begin{proof}
Note that $G(S)$ is connected while $G(S_4)$ has two components with vertex sets $\{v_1,v_2,v_3\}$ and $\{v_4\}$. Since $v_1$ and $v_3$ are $4$-blocked, $v_2$ must be $4$-open.
Let $s > 4$ be the (unique) index with $v_s \sim v_2$.
Since $|v_2| = 2$, $1$ cannot be a gappy index for $v_s$. Hence, $v_s \sim v_2$ implies $\{1,2\}\cap \supp v_s=\{2\}$, which then implies $3 \in \supp v_s$. To avoid having pairing 2 with $v_4$ (which would contradict Lemma~\ref{smallpair}), $4 \in \supp v_s$ and $v_s \sim v_4$. 

For contradiction, suppose there exists $|v_j| \ge 3$ with $4 < j < s$ chosen minimally. Since $|v_5|=\cdots=|v_{j-1}|=2$, we have $v_4\sim v_5\sim\cdots\sim v_{j-1}$. Since $v_j$ does not neighbor $v_1, v_2,v_3$, and $|\braket{v_4}{v_j}|\le1$ by Lemma~\ref{smallpair}, we must have $\min\supp v_j\ge4$. Thus, $v_j$ pairs nontrivially with one of $v_4,v_5,\dots,v_{j-1}$. In other words, in $G(S_{s-1})$, $v_j$ is in the same connected component as $v_4$. Since $|v_i| = 2$ for $4 < i < j$, any $i$ cannot be a gappy index for $4 \leq i < j-1$. Since $2,3,4 \in \supp v_s$, $\{2,3,...,j-1\} \subset \supp v_s$. Hence, $\braket{v_s}{v_j}\ge|v_j|-2$, which in turn implies $v_s \sim v_j$, creating a heavy triple $(v_4, v_j, v_s)$ and resulting in a contradiction. 

Therefore, $|v_j| = 2$ for $4 < j < s$. It follows from Lemma~\ref{gappy3} that $v_s$ is just right, and $|v_s| = s-1$.
So all the adjacency relations in $G(S_s)$ are
\[
v_1\sim v_2, \quad v_1\sim v_3, \quad v_2\sim v_3,\quad v_2\sim v_s\sim v_4\sim v_5\sim\cdots\sim v_{s-1}.
\]
By Lemma~\ref{lem:v2sblock}, since $v_2 \sim v_s$, $v_2$ is $s$-blocked. 
By Lemma~\ref{mblock1} and Definition~\ref{defn:blockingneighbor} (3) and (4), $v_j$ is $s$-blocked when $4<j<s-1$. By Lemma~\ref{mblock1} and Definition~\ref{defn:blockingneighbor} (1) and (2), $v_4$ is $s$-blocked. By Lemma~\ref{mblock1} and Definition~\ref{defn:blockingneighbor} (1), $v_s$ is $s$-blocked. So
$v_{s-1}$ is the only possible $s$-open vector. If $s<n+3$, the connectivity of $G(S)$ implies that $v_{s-1}$ is $s$-open, so we can use Lemma~\ref{2ssoon} to conclude that $v_{s-1}$ is the only smaller neighbor of $v_{s+1}$, which implies that $v_{s+1}=e_{s-1}+e_{s}-e_{s+1}$, and $|v_j|=2$ for $j>s+1$.
\end{proof}

To summarize, we have the following proposition.

\begin{prop}\label{f=3JR}
When $a_0=2$, one of the following holds:
\begin{enumerate}
	%\item $n = 0$.
    \item $\abs{v_4} = 3$, $\abs{v_i} = 2$ for $i \geq 5$.
    \item $\abs{v_4} = 3$, $\abs{v_5} = 6$, and $\abs{v_i} = 2$ for $i > 5$.
    \item $\abs{v_4} = 5$, $\abs{v_i} = 2$ for $5\leq i\leq s-1$, $\abs{v_s} = s-1$, and $s = n+3$. 
    \item $\abs{v_4} = 5$, $\abs{v_i} = 2$ for $5\leq i\leq s-1$, $\abs{v_s} = s-1$, $\abs{v_{s+1}} = 3$, and $\abs{v_j} = 2$ for $j \geq s+2$. 
\end{enumerate}
\end{prop}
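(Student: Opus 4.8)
The statement is a consolidation of the three lemmas proved above in this section, so the plan is to assemble them into a single case analysis keyed on the value of $\abs{v_4}$. First I would invoke the lemma that $v_4$ is just right with $\abs{v_4}\in\{3,5\}$; this produces the top-level split.

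In the branch $\abs{v_4}=3$ I would distinguish whether $n+3=4$. If $n+3=4$, then $S=\{v_1,v_2,v_3,v_4\}$ and the clause ``$\abs{v_i}=2$ for $i\ge5$'' in family~(1) is vacuous, so $P(p,q)$ lies in family~(1). If $n+3>4$, the lemma handling $\abs{v_4}=3$ gives $\abs{v_5}\in\{2,6\}$ together with $\abs{v_j}=2$ for every $j>5$; the subcase $\abs{v_5}=2$ falls into family~(1) and the subcase $\abs{v_5}=6$ into family~(2). In the branch $\abs{v_4}=5$ I would apply the corresponding lemma directly: it yields an index $s$ with $v_s$ just right, $\abs{v_s}=s-1$, and $\abs{v_j}=2$ for $4<j<s$, and it tells us that either $s=n+3$, which is family~(3), or $\abs{v_{s+1}}=3$ with $\abs{v_j}=2$ for $j>s+1$, which is family~(4). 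Collecting the resulting possibilities gives exactly the four families, since two of the subcases above both yield family~(1).

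Because all of the substantive work has already been carried out in the three lemmas, there is no genuine obstacle; the only point requiring care is the treatment of the degenerate endpoints --- $n+3=4$ in the $\abs{v_4}=3$ branch and $s=n+3$ in the $\abs{v_4}=5$ branch --- where one must read the ``tail'' conditions of the stated families as vacuous, so that these cases are correctly absorbed into families~(1) and~(3) respectively.
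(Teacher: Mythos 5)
Your proposal is correct and matches the paper exactly: the paper introduces this proposition with ``To summarize,'' treating it as the assembly of the three preceding lemmas ($v_4$ just right with $\abs{v_4}\in\{3,5\}$, the $\abs{v_4}=3$ lemma, and the $\abs{v_4}=5$ lemma), which is precisely your case analysis. Your explicit attention to the degenerate endpoints ($4=n+3$ and $s=n+3$) is consistent with the caveats already built into those lemmas (``Unless $4=n+3$'' and ``Either $s=n+3$, or \dots''), so nothing further is needed.
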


The corresponding changemakers are
\begin{itemize}
    \item $(1,1,1,1,2^{[s]})$, $s>0$,
    \item $(1,1,1,1,4^{[s]},4s+2,4s+6^{[t]})$,
\end{itemize}
where $a^{[s]}$ means $s$ copies of $a$.

\begin{rmk}
We combine degenerate cases when listing changemakers at the end of each section, and in our parametrization $s,t \geq 0$ unless otherwise specified. The parameter $s$ is not the previous $s$ in this section. The same convention is also used in later sections.
\end{rmk}

%%%%%
%%%%%
%%%%%
%%%%%
%%%%%

\section{Classification of $S_f$}\label{sec:basisthroughf}

From now on, we assume $a_0 \geq 3$. In this section, we classify the possible forms of $v_1, \dots, v_f$. Always, $v_1 = e_0-e_1$, and $v_f = e_{f-1}-e_f$.

\begin{lemma}\label{lem:AdjV1}
For $1<i<f-1$, we have $v_i\not\sim v_1$. Moreover, $v_{f-1}\sim v_1$.
\end{lemma}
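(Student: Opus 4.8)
The plan is to prove the two assertions in turn, both by combining the explicit form of the standard basis vectors with the dictionary between lattice pairings and abutment of intervals.

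For the first assertion, suppose $1<i<f-1$. By Lemma~\ref{pairequal} the only standard basis vector that can be tight is $v_{f-1}$, so $v_i$ is not tight, and hence by Definition~\ref{stbasis} we may write $v_i=\bigl(\sum_{\ell\in A}e_\ell\bigr)-e_i$ with $A\subseteq\{0,\dots,i-1\}$; in particular $\supp v_i\subseteq\{0,1,\dots,i\}$. Since $i<f-1<f$, neither $f-1$ nor $f$ lies in $\supp v_i$, so $\braket{v_i}{v_f}=\braket{v_i}{e_{f-1}-e_f}=0$. As $i\neq 1,f$ and $v_i$ is not tight, Lemma~\ref{pairequal} then gives $\braket{v_i}{v_1}=\braket{v_i}{v_f}=0$, and because $\{i,1\}\neq\{1,f\}$ the first part of Corollary~\ref{cor:AdjacencyNon0} forces $v_i\not\sim v_1$.

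For the second assertion I would first note that $f\geq 3$: otherwise $v_2=x_{**}$ has norm $2$, so $v_2=e_1-e_2$ by the maximality in Definition~\ref{stbasis} together with $\sigma_0=\sigma_1$ (see \eqref{eq:sigma1}), whence $\braket{v_1}{v_2}=-1$ is odd, contradicting $\braket{v_1}{x_*}\equiv\braket{v_1}{x_{**}}\pmod 2$ from Lemma~\ref{parity}. Now write $v_{f-1}$ in its standard form: its coefficient on $e_{f-1}$ is $-1$ and $f\notin\supp v_{f-1}$, so $\braket{v_{f-1}}{v_f}=\braket{v_{f-1}}{e_{f-1}-e_f}=-1$. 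Applying Lemma~\ref{parity} with $x=v_{f-1}$ gives $\braket{v_{f-1}}{v_1}\equiv\braket{v_{f-1}}{v_f}=-1\pmod 2$, so $\braket{v_{f-1}}{v_1}\neq 0$.

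It then remains to upgrade this nonvanishing pairing to an edge of $G(S)$. Since $\{f-1,1\}\neq\{1,f\}$, Lemma~\ref{tausign} yields $\braket{v_{f-1}}{v_1}=\pm\braket{[A_{f-1}]}{[A_1]}$, so $\braket{[A_{f-1}]}{[A_1]}\neq 0$. Here $[A_1]$ is $x_*$ or $x_{**}$ up to $\tau$; that is, $A_1$ is a single vertex of norm $2$. Applying Lemma~\ref{lem:IntervalProd} to the pair $A_{f-1},A_1$: if they are disjoint then $\braket{[A_{f-1}]}{[A_1]}\in\{0,-1\}$, hence equals $-1$ and the intervals are consecutive; if they intersect then $A_1\subseteq A_{f-1}$, so $|[A_1\cap A_{f-1}]|=2$ and $\braket{[A_{f-1}]}{[A_1]}\in\{0,1\}$, hence equals $1$ and the intervals share a common endpoint. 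In either case $[A_{f-1}]$ and $[A_1]$ abut, so $v_{f-1}\sim v_1$. The only point requiring any care is this last step: $v_{f-1}$ may be breakable, so Corollary~\ref{cor:AdjacencyNon0} does not apply directly, which is exactly why I would route the argument through the interval description and Lemma~\ref{lem:IntervalProd} rather than through adjacency of standard basis vectors.
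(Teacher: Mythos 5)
Your proof is correct and follows essentially the same route as the paper: Lemma~\ref{pairequal} gives the vanishing pairings $\braket{v_i}{v_1}=\braket{v_i}{v_f}=0$ for $1<i<f-1$, and Lemma~\ref{parity} gives the odd (hence nonzero) pairing $\braket{v_{f-1}}{v_1}$. The extra care you take in upgrading that nonzero pairing to an edge via Lemma~\ref{tausign} and Lemma~\ref{lem:IntervalProd} (since $v_{f-1}$ may be breakable, so Corollary~\ref{cor:AdjacencyNon0} does not apply directly) merely spells out a step the paper's two-line proof leaves implicit.
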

\begin{proof}
By Lemma~\ref{pairequal},
$\braket{v_i}{v_1} =\braket{v_i}{v_f} =0$ for $1<i<f-1$.
By Lemma~\ref{parity}, $\braket{v_{f-1}}{v_1} \equiv \braket{v_{f-1}}{v_f} = -1 \pmod 2$, so $v_{f-1}\sim v_1$.
\end{proof}

\begin{prop}\label{f=3basistovf}
When $f = 3$, $v_2 = 2e_0 + e_1 - e_2$.
\end{prop}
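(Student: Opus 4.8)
The plan is to pin down the support and coefficients of $v_2$ directly from the structural facts already established for the case $a_0 \ge 3$, $f=3$. First I would recall that $v_1 = e_0 - e_1$ and $v_f = v_3 = e_2 - e_3$, and that by Lemma~\ref{lem:AdjV1} (applied with $f=3$, so $f-1=2$) we have $v_{f-1} = v_2 \sim v_1$; thus $\braket{v_2}{v_1} \ne 0$. Since $v_2$ is a standard basis vector, $\braket{v_2}{v_1} \in \{-1\} \cup \{\text{positive values}\}$ according to whether $v_2$ is non-tight or tight. By Lemma~\ref{parity}, $\braket{v_2}{v_1} \equiv \braket{v_2}{v_3} \pmod 2$, and $\braket{v_2}{v_3} \in \{0,-1\}$ by Lemma~\ref{gappy3} (since $2 \in \supp v_2$ forces $\supp v_2 \cap \{2,3\} \ne \{2\}$ only if... ) — more carefully, I would use Lemma~\ref{pairequal}: since $f-1 = 2$, either $v_2$ is tight with $\braket{v_2}{v_1} = -\braket{v_2}{v_3} = 1$, or $v_2$ is not tight and $\braket{v_2}{v_1} = \braket{v_2}{v_3} \in \{-1,0\}$. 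Because $v_2 \sim v_1$ the value cannot be $0$.

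Next I would separate the two cases. If $v_2$ is not tight, then $\braket{v_2}{v_1} = \braket{v_2}{v_3} = -1$. Recall $v_2 = (\sum_{i\in A} e_i) - e_2$ for some $A \subset \{0,1\}$ chosen to maximize $\sum_{i\in A} 2^i$, since $\sigma_2 = \sum_{i\in A}\sigma_i$ and $\sigma_0 = \sigma_1 = 1$ by~\eqref{eq:sigma1}; the possible supports below index $2$ are $\{0\}$, $\{1\}$, $\{0,1\}$. Pairing with $v_1 = e_0 - e_1$: if $A = \{0\}$ the pairing is $+1 \ne -1$; if $A = \{1\}$ the pairing is $-1$, which is consistent; if $A = \{0,1\}$ the pairing is $0 \ne -1$. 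So in the non-tight case $v_2 = e_1 - e_2$, hence $|v_2| = 2$. But then, as in the proof of Lemma~\ref{modify} / Lemma~\ref{a1whena0=2}, $|v_2| = 2$ together with $v_2 = x_{**}$ (from Lemma~\ref{modify}, which sends $x_{**}$ to $v_f = v_3$ — wait, when $a_0 \ge 3$ we only know $x_* \mapsto v_1$, $x_{**}\mapsto v_f = v_3$). Actually the cleaner route: $v_2$ tight is forced because $a_0 \ge 3$ means $v_2 \neq x_*, x_{**}, x_0$; I would instead argue that if $v_2 = e_1 - e_2$ then $\sigma_1 = \sigma_2$, so $\sigma_0 = \sigma_1 = \sigma_2 = \sigma_3$ (as $v_3 = e_2 - e_3$ already gives $\sigma_2 = \sigma_3$), and then by the argument in Lemma~\ref{modify} we would be in the situation $a_0 = 2$, contradicting our standing assumption $a_0 \ge 3$. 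Hence $v_2$ must be tight.

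Finally, $v_2$ tight means by Definition~\ref{stbasis} that $v_2 = (2e_0 + \sum_{i=1}^{1} e_i) - e_2 = 2e_0 + e_1 - e_2$, which is exactly the claim. I expect the main obstacle to be the case analysis ruling out $v_2 = e_1 - e_2$: one must cleanly invoke that $\sigma_0 = \sigma_1 = \sigma_2 = \sigma_3$ would force $f = 3$ with $v_2 = e_1 - e_2$ to coincide with the $a_0 = 2$ normal form from Lemma~\ref{modify}, contradicting $a_0 \ge 3$; alternatively one can note that a norm-$2$ standard basis vector $v_2$ with $v_2 \sim v_1$ and $v_2 \sim v_3$ would make $\{v_1, v_2, v_3\}$ a norm-$2$ triangle, forcing (via Lemma~\ref{modify}) $a_0 = 2$. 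Either formulation is short; the rest is immediate from the definition of tightness.
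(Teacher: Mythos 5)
Your reduction of the possibilities for $v_2$ is fine and is the same first step as the paper's: since $\sigma_0=\sigma_1=1$, the changemaker condition leaves $v_2\in\{e_1-e_2,\; e_0+e_1-e_2,\; 2e_0+e_1-e_2\}$, and $v_2\sim v_1$ (Lemma~\ref{lem:AdjV1}) together with Corollary~\ref{cor:AdjacencyNon0} (or Lemma~\ref{pairequal}) kills the middle option. The gap is in how you exclude $v_2=e_1-e_2$. You argue that this would give $\sigma_0=\sigma_1=\sigma_2=\sigma_3$ and that ``by the argument in Lemma~\ref{modify} we would be in the situation $a_0=2$.'' But Lemma~\ref{modify} is a one-way statement: assuming $a_0=2$, it derives $f=3$, $\sigma_0=\cdots=\sigma_3$ and the normal form for $v_2$. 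Neither its statement nor its proof shows that this $\sigma$-pattern (or a ``norm-$2$ triangle'' $\{v_1,v_2,v_3\}$) can only occur when $a_0=2$; that converse is essentially what the present proposition is asserting, so invoking it here is circular. Your second formulation has the same defect: nothing you cite rules out a norm-$2$ element of $\Delta(p,q)$ pairing $-1$ with both $x_*$ and $x_{**}$ when $a_0\ge 3$.

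What is missing is exactly the paper's short norm argument. If $v_2$ is not tight, Proposition~\ref{prop:OnlyTau} says the interval $A_2$ associated to $v_2$ contains neither $*$ nor $**$; hence $\braket{v_2}{v_1}=\braket{v_2}{x_*}\neq 0$ forces $x_0\in A_2$, and then $|v_2|\ge |x_0|=a_0\ge 3$, which rules out $e_1-e_2$ (norm $2$). (Equivalently, one can show directly that when $a_0\ge3$ no norm-$2$ element of $\Delta(p,q)$ pairs $-1$ with $x_*$, using Proposition~\ref{irred} and indecomposability to see that norm-$2$ elements are, up to sign and $\tau$, intervals of norm-$2$ vertices; but some argument of this kind must be supplied, and it does not follow from Lemma~\ref{modify}.) Once that step is in place, your conclusion that $v_2$ must be tight, and hence equals $2e_0+e_1-e_2$, is correct.
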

\begin{proof}
It follows from Lemma~\ref{lem:AdjV1} that $v_2\sim v_1$, so $v_2 = e_1 - e_2$ or $v_2 = 2e_0 + e_1 - e_2$. Since $\langle v_2,v_1\rangle \neq 0$, $x_0 \in A_2$, so $|v_2|\geq |x_0|=a_0 \geq 3$.
\end{proof}

\begin{prop}\label{f>3basistovf}
When $f > 3$, $v_2 = e_0 + e_1 - e_2$, and $v_j$ is just right for all $j < f-1$. 
\begin{enumerate}
	\item If $v_{f-1}$ is tight, then $|v_j| = 2$ for $2 < j < f-1$.
	\item If $v_{f-1}$ is just right, then $v_{f-1} = e_1+...+e_{f-2}-e_{f-1}$, and one of the following holds.
		\begin{enumerate}
			\item $|v_j| = 2$ for $2 < j < f-1$.
			\item $f = 5$, and $|v_3| = 4$.
			\item $f = 6$, $|v_3| = 2$, and $|v_4| = 3$.
		\end{enumerate}
	\item If $v_{f-1}$ is gappy, then $f\ge5$ and $v_{f-1} = e_1 + e_3 + \cdots + e_{f-2} - e_{f-1}$, and one of the following holds.
		\begin{enumerate}
    		\item $|v_j| = 2$ for $2 < j < f-1$.
    		\item $|v_3| = 4$, and $|v_j| = 2$ for $3 < j < f-1$.
    		\item $|v_3| = 2$, $|v_4| = 3$, and $|v_j| = 2$ for $4 < j < f-1$.
    	\end{enumerate}
\end{enumerate}
\end{prop}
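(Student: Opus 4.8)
The plan is to determine $v_1$ and $v_2$ outright, then split into the three cases according to whether $v_{f-1}$ is tight, just right, or gappy, pin down the form of $v_{f-1}$ in each, and finally use the blocked-vector lemmas of Section~\ref{sec:Blocked} together with the absence of claws and heavy triples to force $v_3,\dots,v_{f-2}$ to have norm $2$ outside the listed small-$f$ exceptions.

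First I would settle $v_1$, $v_2$, and the non-tightness of the middle vectors. We always have $v_1=e_0-e_1$. Since $f>3$, Lemma~\ref{lem:AdjV1} gives $v_2\not\sim v_1$; the only standard-basis candidates for $v_2$ are $e_1-e_2$, $e_0+e_1-e_2$, and the tight $2e_0+e_1-e_2$, whose pairings with $v_1$ are $-1,0,1$, so $v_2=e_0+e_1-e_2$, $\sigma_2=2$, and $v_2$ is just right. By Corollary~\ref{cor:OneTight} and the parity computation in Lemma~\ref{pairequal}, the only possible tight standard-basis vector is $v_{f-1}$; hence every $v_j$ with $2<j<f-1$ is non-tight, and Lemma~\ref{pairequal} together with Lemma~\ref{lem:AdjV1} gives $\braket{v_j}{v_1}=\braket{v_j}{v_f}=0$ for $2<j<f-1$, so $\supp v_j\cap\{0,1\}\in\{\{0,1\},\emptyset\}$ (using Lemmas~\ref{lem:j-1} and~\ref{gappy3} to control the low end).

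Next I would pin down $v_{f-1}$. By Lemma~\ref{lem:AdjV1}, $v_{f-1}\sim v_1$, so Lemma~\ref{parity} makes the pair $(\braket{v_{f-1}}{v_1},\braket{v_{f-1}}{v_f})$ equal $(1,-1)$ if $v_{f-1}$ is tight and $(-1,-1)$ otherwise. If tight, $v_{f-1}=2e_0+e_1+\cdots+e_{f-2}-e_{f-1}$. If just right, $\braket{v_{f-1}}{v_1}=-1$ excludes $0\in\supp v_{f-1}$, and with Lemma~\ref{lem:j-1} and the just-right condition this gives $v_{f-1}=e_1+\cdots+e_{f-2}-e_{f-1}$. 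If gappy, observe first that a breakable vector would be tight (Lemma~\ref{lem:BrIsTight}), hence equal to $v_{f-1}$ (Corollary~\ref{cor:OneTight}, Lemma~\ref{pairequal}), which is impossible; so $S$ contains no breakable vector and Lemma~\ref{gappyform} applies, making $v_{f-1}$ the unique gappy vector, with one isolated positive coordinate and one block ending at $f-2$. Combining this with $0\notin\supp v_{f-1}$, $1\in\supp^+v_{f-1}$ (from $\braket{v_{f-1}}{v_1}=-1$), and $|v_f|=2$ (so $f-1$ is not a gap) forces the isolated coordinate to be $1$ and the block to be $\{j_0,\dots,f-2\}$; a claw at $v_{f-1}$ through $v_1$, $v_2$, and the first block-neighbor then excludes $j_0\ge4$, so $v_{f-1}=e_1+e_3+\cdots+e_{f-2}-e_{f-1}$ and $f\ge5$.

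Finally I would analyze $v_3,\dots,v_{f-2}$. When $v_{f-1}$ is unbreakable, Lemma~\ref{mblock3} makes $v_1$ and $v_f$ $f$-blocked; when it is breakable, it is tight and one argues separately. Walking along the path $v_2\sim v_3\sim\cdots$ and using the second paragraph, any $v_j$ with $2<j<f-1$ of norm $\ge3$ is non-tight, hence unbreakable (Lemma~\ref{lem:BrIsTight}); using the non-existence of claws and heavy triples, Corollary~\ref{uniqueheavy}, Lemma~\ref{smallpair}, and the blocking lemmas (Lemmas~\ref{mblock1}, \ref{2sforever}, \ref{2ssoon}), one shows that such a vector either collides with $z_{f-1}$ or creates a claw or a heavy triple with $v_{f-1}$ and a later vector, unless $f\in\{5,6\}$ and it sits at the very start, which yields the exceptional shapes $|v_3|=4$ (for $f=5$) and $|v_3|=2,|v_4|=3$ (for $f=6$); otherwise all $v_j$ with $2<j<f-1$ have norm $2$, and the just-right property of $v_3,\dots,v_{f-2}$ is read off their explicit forms. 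I expect this last step to be the main obstacle: ruling out every heavy vector among $v_3,\dots,v_{f-2}$ apart from the two small-$f$ exceptions requires tracking which vectors are $f$-blocked while checking that any such vector shares the heavy index $z_{f-1}$ or produces a forbidden subgraph; a secondary subtlety is that in the tight branch $v_{f-1}$ may be breakable, so Lemma~\ref{gappyform} is unavailable and one must show directly from Lemmas~\ref{gappy3} and~\ref{lem:j-1} that no $v_j$ with $2<j<f-1$ is gappy.
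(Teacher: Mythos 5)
Your opening steps (pinning down $v_2=e_0+e_1-e_2$, the parity bookkeeping, and the three shapes of $v_{f-1}$) match the paper, but the final step asserts something false and this is where the proof actually lives. You claim that, outside the configurations $|v_3|=4$ with $f=5$ and $|v_3|=2,|v_4|=3$ with $f=6$, every $v_j$ with $2<j<f-1$ has norm $2$. In the gappy branch this is not true: cases 3(b) and 3(c) of the proposition occur for arbitrarily large $f$. For instance the changemaker $(1,1,2,4,4^{[s]},4s+5,4s+5,\dots)$ has $v_3=e_0+e_1+e_2-e_3$ of norm $4$, $v_4,\dots,v_{f-2}$ of norm $2$, and gappy $v_{f-1}=e_1+e_3+\cdots+e_{f-2}-e_{f-1}$ with $f=s+5$. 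So no amount of claw/heavy-triple bookkeeping can rule these out, and a plan aimed at proving ``all middle norms are $2$ unless $f\in\{5,6\}$'' will either stall or prove a false statement; the restriction $f=s+2$ is a feature only of the just-right branch, where it comes from the specific claw $(v_s;v_2,v_{s+1},v_{f-1})$.

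Two further steps do not go through as written. First, in the gappy branch your claw $(v_{f-1};v_1,v_2,v_{j_0})$ excludes a block starting at $j_0\ge4$ only when $v_{j_0}$ has norm $2$: if $|v_{j_0}|\ge3$ with $\min\supp v_{j_0}=2$ then $v_{j_0}\sim v_2$, so two legs are adjacent and the contradiction is instead the heavy triangle $(v_2,v_{j_0},v_{f-1})$; if $1\in\supp v_{j_0}$ then (since $v_{f-1}$ is the unique gappy vector and $\braket{v_{j_0}}{v_1}=0$ forces $0\in\supp v_{j_0}$) one gets $\braket{v_{f-1}}{v_{j_0}}=0$, so the third leg is not adjacent to the center at all, and the paper's contradiction in that situation is a claw centered at $v_2$. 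This is why the paper interleaves the determination of the block start with the analysis of which middle vectors can have norm at least $3$, rather than doing them in two independent passes. Second, in the tight branch $v_{f-1}$ may be breakable, so it can belong to no heavy triple and Lemma~\ref{smallpair} does not bound $\braket{v_j}{v_{f-1}}$; ruling out a middle vector $v_s$ of norm $\ge3$ there requires the breakable-interval machinery (Lemmas~\ref{lem:IntervalProd} and~\ref{tausign}: $A_2\prec A_{f-1}$ sharing the right endpoint, forcing $\min\supp v_s=2$ and $A_s\pitchfork A_{f-1}$), culminating in the claw $(v_2;v_s,v_{f-1},v_3)$ — an argument your outline, which only invokes claws or heavy triples ``with $v_{f-1}$ and a later vector,'' does not supply.
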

\begin{proof}
By Lemma~\ref{lem:AdjV1}, $v_2\not\sim v_1$, so $v_2 = e_0 + e_1 - e_2$. Again using Lemma~\ref{lem:AdjV1}, to avoid a claw, any two smaller neighbors of $v_{f-1}$ other than $v_1$ neighbor each other. Therefore, to avoid a cycle of length 4, (which would violate Lemma~\ref{lem:CycleLength},) $v_{f-1}$ has at most two smaller neighbors other than $v_1$. Furthermore, if $v_{f-1}$ is not tight and has two smaller neighbors, then one of them must have norm 2, otherwise these three vectors would form a heavy triple. (Recall that $v_{f-1}$ is the only possible tight or breakable vector by Lemma~\ref{lem:BrIsTight} and Lemma~\ref{pairequal}.)

When there exists $|v_j| > 2$ for some $2 < j < f-1$, define \[s = \min\{2 < j < f-1:|v_j|\geq 3\}.\] The only possible gappy index for $v_s$ is $1$, which cannot occur since then either $\braket{v_s}{v_1} = -1$ or $\braket{v_s}{v_2} = 2$, contradicting Lemma~\ref{lem:AdjV1} or Lemma~\ref{smallpair}. Thus, $v_s$ is just right. Let 
\[k = \min\supp v_s\le s-2.\] 
To avoid a claw $(v_k;v_{k-1},v_{k+1},v_s)$, we must have $k \leq 2$. Since $\braket{v_s}{v_1} = 0$, \[k\in \{0,2\}.\] So 
\begin{equation}\label{eq:vsv2}
    \braket{v_s}{v_2}=\pm1\ne0.
\end{equation}

Suppose $v_{f-1}$ is tight, then $\braket{v_{f-1}}{v_2} = 2 = |v_2| - 1$. It follows from Lemma~\ref{lem:IntervalProd} and Lemma~\ref{tausign} that $\epsilon_2 = \epsilon_{f-1}$, and $A_2$ shares an end with $A_{f-1}$. Since the left endpoint of $A_{f-1}$ is $v_*$, (see Proposition~\ref{prop:OnlyTau} and the paragraph following it,) $A_2$ and $A_{f-1}$ share right endpoints. By Lemma~\ref{smallpair} and (\ref{eq:vsv2}), $A_s \dagger A_2$. Therefore, either $A_s \dagger A_{f-1}$ and $\braket{v_s}{v_{f-1}} = \pm 1$, or $A_s \pitchfork A_{f-1}$ and $\braket{v_s}{v_{f-1}} = |v_s|-2$. If $k = 0$, then $\braket{v_s}{v_{f-1}} = |v_s|-1$, which does not fall into either case. Hence, $k = 2$, $\braket{v_s}{v_{f-1}} = |v_s|-2$. We claim that $A_s$ and $A_{f-1}$ are not consecutive. Otherwise, $\braket{v_s}{v_{f-1}} =1$ and $\epsilon_s=-\epsilon_{f-1}=-\epsilon_2$. Hence $\braket{v_s}{v_2} =-\braket{[A_s]}{[A_{2}]} =1$, contradicting the fact that $k=2$. So $A_s \pitchfork A_{f-1}$. However, since $v_s \not\sim v_{f-1}$, $(v_2;v_s,v_{f-1},v_3)$ is a claw. Thus, $v_s$ cannot exist when $v_{f-1}$ is tight.

Suppose $v_{f-1}$ is just right. From the first paragraph we know that $v_{f-1}$ has at most one smaller neighbor with norm $\ge3$.
Since $v_{f-1}\sim v_1$, $\min \supp v_{f-1} = 1$. Therefore, $v_{f-1} \sim v_j$ for $2 < j < f-1$ if and only if $|v_j| \ge 3$. This implies $v_1$ and $v_s$ are the only smaller neighbors of $v_{f-1}$, and $|v_i| = 2$ for $s < i < f-1$. Since $\braket{v_{f-1}}{v_s} \le 1$, either $s = 3$ and $v_s = e_0 + e_1 + e_2 - e_3$ or $s = 4$ and $v_s = e_2 + e_3 - e_4$. In either case, unless $s+1 = f-1$, $(v_s;v_2,v_{s+1},v_{f-1})$ forms a claw. Thus, $f = s + 2$.

Suppose $v_{f-1}$ is gappy. By Lemma~\ref{lem:BrIsTight} and Lemma~\ref{pairequal}, $S$ contains no breakable vector. Since $v_{f-1}\sim v_1$, $1 =\min \supp v_{f-1}$. By Lemma~\ref{gappyform},
\begin{equation*}
    v_{f-1} = e_1 + e_j + \cdots + e_{f-2} - e_{f-1},
\end{equation*}
for some $j \ge 3$, and all other standard basis vectors are just right. By Lemma~\ref{lem:j-1}, $f-2\in \supp v_{f-1}$, so $f-2\ge j\ge3$. Since $v_{f-1} \sim v_2$,
by the discussion in the first paragraph of this proof, 
\begin{equation}\label{eq:NeighborNorm2}
\text{if } v_i\sim v_{f-1} \text{ and }2<i<f-1, \text{ then }|v_i|=2.
\end{equation}
In particular, $v_s\not\sim v_{f-1}$.
If  $|v_i| = 2$ for any $2 < i < f-1$, to avoid a claw $(v_j; v_{j-1}, v_{j+1}, v_{f-1})$ we must have either $j=3$ or $j=f-2$. If $j=f-2>3$, we would have a cycle 
\[v_2\sim v_3\sim\cdots\sim v_{f-1}\sim v_2\]
of length $f-2>3$. So $j=3$ if $v_s$ does not exist.
Now assume $v_s$ exists.
Note $v_s \sim v_2$ by (\ref{eq:vsv2}), $v_{f-1} \sim v_2$, and $v_3 \sim v_2$ if $3<s$. To avoid a claw $(v_2; v_3, v_s, v_{f-1})$ either $s = 3$ or $v_{f-1} \sim v_3$. In the first case, $v_3 = e_0 + e_1 + e_2 - e_3$. Since $|v_3|>2$, $v_{f-1} \not \sim v_3$, forcing $j = 3$. If $|v_i| \ge 3$ for some $i$ with $3<i<f-1$, by (\ref{eq:NeighborNorm2}) we have $v_i \not\sim v_{f-1}$. Hence $i = 4$ and $v_4 = e_2 + e_3 - e_4$, which would create a claw $(v_2; v_3, v_4, v_{f-1})$. Therefore, $|v_i| = 2$ for all $3 < i < f-1$. In the second case, by (\ref{eq:NeighborNorm2}) $|v_3| = 2$. Since $v_{f-1} \sim v_3$, $j = 3$. Since $v_s \not\sim v_{f-1}$, $s = 4$ and $v_s = e_2 + e_3 - e_4$. If $|v_l|>2$ for some $l$ with $4<l<f-1$, we would have $\braket{v_l}{v_{f-1}}>0$, contradicting (\ref{eq:NeighborNorm2}).
\end{proof}

%%%%%
%%%%%
%%%%%
%%%%%
%%%%%

\section{$f >3$, $v_{f-1}$ gappy}\label{sec:Gappy}

Recall from Proposition~\ref{f>3basistovf}, when $f >3$ and $v_{f-1}$ is gappy, 
\begin{align*}
v_2 &= e_0+e_1-e_2,\\
v_{f-1} &= e_1+e_3+\cdots+e_{f-2}-e_{f-1}.    
\end{align*}

\begin{prop}\label{vf-1gappy}
When $v_{f-1}$ is gappy, one of the following holds:
\begin{enumerate}
    \item $f = n+3$.
    \item $f = 5$, $|v_3| = 2$, $|v_6| = 5$, and $|v_j| = 2$ for $j > f+1$.
    \item $|v_{f+1}| = 4$, and $|v_j| = 2$ for $j > f+1$. Either $|v_4|=3$, or $|v_3|=4$.
\end{enumerate}
\end{prop}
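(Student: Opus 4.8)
The plan is to run a blocking argument on $S_f$, analogous to those in Section~\ref{sec:a0=2} and the later sections, built on the rigid description of $v_1,\dots,v_f$ supplied by Proposition~\ref{f>3basistovf}(3). First I would collect the facts that come for free. Since $v_{f-1}$ is gappy it is not tight, so by Lemma~\ref{lem:BrIsTight} and Lemma~\ref{pairequal} the basis $S$ has no breakable vector; hence every $v_j$ with $j\ne f-1$ is just right, $v_{f-1}$ is the unique gappy vector, $v_{f-1}=e_1+e_3+\cdots+e_{f-2}-e_{f-1}$ (Lemma~\ref{gappyform} and Proposition~\ref{f>3basistovf}), and $v_1,v_f$ are $f$-blocked (Lemma~\ref{mblock3}). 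By Lemma~\ref{lem:AdjV1} and Corollary~\ref{1fblock}, in $G(S)$ the vector $v_1$ neighbours only $v_f$ and $v_{f-1}$, and $v_f$ neighbours only $v_1$ and $v_{f-1}$. If $f=n+3$ we land in case (1); otherwise $f<n+3$, and since $G(S)$ is connected while $G(S_f)$ is not, Lemma~\ref{opensguaranteed} produces an $f$-open vector, and $v_{f+1}$ must abut one.

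The second step is to determine the $f$-open vectors of $S_f$ in each of the sub-cases (a), (b), (c) of Proposition~\ref{f>3basistovf}(3). In all three, $v_2,\dots,v_{f-2}$ form a path (carrying at most one vector of norm $3$ or $4$ near its beginning), $v_{f-1}$ abuts $v_1$, $v_2$, $v_f$ and --- in sub-case (a) --- also $v_3$, while $v_f$ abuts $v_{f-1}$. Running Lemma~\ref{gappy3}, Lemma~\ref{smallpair}, Corollary~\ref{uniqueheavy}, claw-freeness of $G(S)$, Lemma~\ref{mblock1} together with Definition~\ref{defn:blockingneighbor}, and Lemma~\ref{mblock2}(1) (which blocks the single high-norm vector in sub-cases (b) and (c)), I would show that $S_f$ has exactly one $f$-open vector, and then invoke Lemma~\ref{2ssoon} (or repeat its argument) to conclude that $v_{f+1}$ abuts precisely that vector and that $|v_k|=2$ for all $k>f+1$.

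It then remains to determine $v_{f+1}$ itself. Using Lemma~\ref{lem:j-1}, the bound $|\braket{v_{f+1}}{v_j}|\le 1$ for unbreakable $v_j$ (Lemma~\ref{smallpair}), Lemma~\ref{gappy3}, claw-freeness, and the cycle restriction Lemma~\ref{lem:CycleLength}, I would pin down the support of $v_{f+1}$ from its sole smaller neighbour. In sub-case (a) this forces $f=5$ and $v_{f+1}=v_6=e_2+e_3+e_4+e_5-e_6$, of norm $5$ --- this is case (2); the obstruction to $f>5$ here is exactly the extra abutment $v_{f-1}\sim v_2$ (and $v_{f-1}\sim v_3$) forced by the gappy shape of $v_{f-1}$, which for $f>5$ would make $v_{f+1}$ abut a second path vector or $v_{f-1}$ itself, producing a heavy triple or a cycle of length $>3$. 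In sub-cases (b) and (c) the same analysis instead forces $|v_{f+1}|=4$ --- this is case (3), with the two alternatives $|v_4|=3$ and $|v_3|=4$ inherited from Proposition~\ref{f>3basistovf}(3)(b),(c).

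The step I expect to be the main obstacle is the second one: identifying the components of $G(S_f)$ and the unique $f$-open vector uniformly across all three sub-cases, since $v_{f-1}$ attaches both to the ``far'' end $v_1$ and to the ``near'' end $v_2$ of the path (and, in sub-case (a), also to $v_3$), so $G(S_f)$ already contains cycles through $v_{f-1}$, and one must verify at each stage that no further adjacency among $v_3,\dots,v_{f-2},v_{f-1}$ or with $v_{f+1}$ creates a claw or a cycle of length exceeding $3$. Sub-case (c), where a norm-$3$ vector $v_4$ sits on the path adjacent to the triangle $\{v_2,v_3,v_{f-1}\}$, should require the most delicate argument.
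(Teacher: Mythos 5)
Your overall strategy is the same as the paper's (blocking in $S_f$, then Lemma~\ref{2ssoon}/\ref{2sforever}, with the heavy-triple obstruction deciding between cases (2) and (3)), but two of your load-bearing steps are stated incorrectly. First, $G(S_f)$ is \emph{connected} in all three sub-cases of Proposition~\ref{f>3basistovf}(3): the gappy vector $v_{f-1}$ abuts $v_1$, $v_2$ and $v_f$ (and $v_3$ when $|v_3|=2$), and $v_2\sim v_3\sim\cdots\sim v_{f-2}$ is a path, so Lemma~\ref{opensguaranteed} cannot be invoked the way you do; indeed your own closing remark about cycles through $v_{f-1}$ contradicts your disconnectedness claim. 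The existence of an $f$-open vector when $f<n+3$ has to come instead straight from indecomposability (otherwise $S_f$ would be a separate component of $G(S)$), which is how the paper uses it implicitly.

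Second, and more seriously, your step 2 --- ``$S_f$ has exactly one $f$-open vector, then apply Lemma~\ref{2ssoon}'' --- does not go through uniformly. In sub-case (a) with $f=5$ the blocking lemmas you cite leave \emph{two} candidates, $v_2$ and $v_3$ (the paper says exactly this), and which of them is actually open depends on $v_6$ itself, so you cannot first fix the open vector and then read off $v_{f+1}$ without extra input (either the paper's route: pin down $v_6$ directly from $v_6\not\sim v_1,v_{f-1},v_f$ to get $|v_6|\in\{4,5\}$, then block $v_2,v_3$ at stage $6$ and use Lemma~\ref{2sforever}; or a separate Lemma~\ref{smallpair} computation showing at most one of $v_2,v_3$ can abut a later just-right vector). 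Relatedly, your mechanism for excluding $f>5$ in sub-case (a) is not what happens: there the unique candidate is $v_{f-2}$, Lemma~\ref{2ssoon} gives $v_{f+1}=e_{f-2}+e_{f-1}+e_f-e_{f+1}$, and this vector abuts \emph{only} $v_{f-2}$ --- no second abutment with a path vector or with $v_{f-1}$ ever appears, so no claw or long cycle arises. The contradiction is the heavy triple $(v_{f-1},v_2,v_{f+1})$, whose connections run through paths of norm-$2$ vectors; the same heavy triple is what forces $|v_6|=5$ (rather than $4$) in case (2) and yields the ``$|v_4|=3$ or $|v_3|=4$'' clause of case (3). As written, your plan would stall at precisely the point you flag as the main obstacle.
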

\begin{proof}
By Lemmas~\ref{lem:BrIsTight} and~\ref{pairequal}, $S$ contains no breakable vector.
Since $v_{f-1}$ is unbreakable, $v_1$ and $v_f$ are $f$-blocked by Lemma~\ref{mblock3}. Since $v_{f-1}\sim v_1$ and $v_2$, $v_{f-1}$ is also $f$-blocked by Lemma~\ref{mblock1} and Definition~\ref{defn:blockingneighbor} (1) and (2). Lemma~\ref{gappyform} implies that $v_j$ is just right for $j > f$. 

When $f > 5$, $v_i$ is $f$-blocked for $i < f-2$ by Lemma~\ref{mblock2}, (the $j$ in the statement of Lemma~\ref{mblock2} is $f-1$,) and the only possible $f$-open vector is $v_{f-2}$. When $f = 5$ and $|v_3| = 4$, 
\[v_3=e_0+e_1+e_2-e_3,\quad v_4=e_1+e_3-e_4.\]
Since $v_3\sim v_2 \sim v_4$ and $v_3 \not\sim v_4$, $v_2$ is $f$-blocked by Lemma~\ref{mblock1} and Definition~\ref{defn:blockingneighbor} (1) and (2), so again the only possible $f$-open vector is $v_{f-2}$. In these two cases, by Lemma~\ref{2ssoon}, $v_{f+1} \sim v_{f-2}$, and $|v_j| = 2$ for $j > f+1$. Since $v_{f-2}$ is the only smaller neighbor of $v_{f+1}$ and $v_{f+1}$ is just right, $|v_{f+1}| = 4$.

When $f = 5$ and $|v_3| = 2$, both $v_2$ and $v_3$ are possibly $f$-open. Since $v_6 \not\sim v_1,v_4$ or $v_5$, $|v_6| \in \{4,5\}$. In either case, $v_2$ and $v_3$ are $6$-blocked by Lemma~\ref{mblock2}, and by Lemma~\ref{2sforever}, $|v_j| = 2$ for $j > f+1$.

To see the second statement in item 3, note when $|v_3| = \dots = |v_{f-2}| = 2$ and $|v_{f+1}| = 4$, 
$(v_{f-1},v_2,v_{f+1})$ forms a heavy triple, so either Case~3~(b) or Case~3~(c) in Proposition~\ref{f>3basistovf} happens.
\end{proof}

The corresponding changemakers are:
\begin{itemize}
    \item $(1,1,2,2,2^{[s]},2s+3,2s+3)$,
    \item $(1,1,2,4,4^{[s]},4s+5,4s+5,8s+14^{[t]})$,
    \item $(1,1,2,2,4^{[s]},4s+3,4s+3,8s+10^{[t]})$.
\end{itemize}
%%%%%
%%%%%
%%%%%
%%%%%
%%%%%

\section{$f > 3$, $v_{f-1}$ just right}\label{sec:JustRight}

Assume $f > 3$ and $v_{f-1}$ is just right. Recall from Proposition~\ref{f>3basistovf} that 
\begin{align*}
v_2 &= e_0+e_1-e_2,\\
v_{f-1} &= e_1+\cdots+e_{f-2}-e_{f-1}.    
\end{align*}
First, we consider Cases~2(b) and 2(c) in Proposition~\ref{f>3basistovf}.

\begin{lemma}\label{lem:f=n+3}
If there exists $|v_j| \ge 3$ for some $2 < j < f-1$, then $f = n+3$. 
\end{lemma}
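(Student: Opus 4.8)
The plan is to show that every standard basis vector $v_1,\dots,v_f$ is $f$-blocked; granting this, $G(S_f)$ is a union of connected components of $G(S)$, and since $G(S_f)$ is connected while $G(S)$ is connected by Corollary~\ref{indecomposable}, we conclude $S_f=S$, i.e.\ $f=n+3$. Before starting I would record the relevant structure. The hypothesis places us in Case~2(b) or Case~2(c) of Proposition~\ref{f>3basistovf}, so there is exactly one index $s$ with $1<s<f-1$ and $|v_s|\ge 3$; moreover $s=f-2$, the only smaller neighbors of $v_{f-1}$ are $v_1$ and $v_s$, and $v_1\not\sim v_s$ (Lemma~\ref{lem:AdjV1}). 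Also, since $v_{f-1}$ is just right it is not tight, so by Lemma~\ref{pairequal} there is no tight vector in $S$, and by Lemma~\ref{lem:BrIsTight} no breakable vector either; hence every $v_\ell$ is just right or gappy, and in particular unbreakable.

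First I would dispose of $v_1$, $v_f$, $v_{f-1}$. The vectors $v_1$ and $v_f$ are $f$-blocked by Lemma~\ref{mblock3}. For $v_{f-1}$: its smaller neighbors $v_1$ and $v_s$ satisfy $v_1\not\sim v_s$, the vector $v_1$ is an $f$-blocking neighbor of $v_{f-1}$ by Definition~\ref{defn:blockingneighbor}(1), and $v_s$ is one by Definition~\ref{defn:blockingneighbor}(2) (both $v_{f-1}$ and $v_s$ have norm $\ge 3$ and are unbreakable, and every $v_\ell$ with $\ell>f$ is unbreakable); so Lemma~\ref{mblock1} shows $v_{f-1}$ is $f$-blocked.

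The main step is to show that no $v_j$ with $j>f$ is adjacent to any of $v_2,\dots,v_{f-2}$. Suppose $v_j\sim v_i$ with $2\le i\le f-2=s$. Since $v_1,v_{f-1},v_f$ are $f$-blocked, $v_j$ is adjacent to none of them, and $\supp v_i\subseteq\{0,\dots,f-2\}$. First, $v_j$ cannot be gappy: from $v_j\not\sim v_1$ its unique gappy index $k$ satisfies $k\ge 2$; if $k\le f-1$ then $v_k$ and $v_{k+1}$ lie in the same component of $G(S_f)\subseteq G(S_{j-1})$, contradicting Lemma~\ref{gappyform}, while if $k\ge f$ then $\supp v_j\subseteq\{f,\dots,j\}$ is disjoint from $\supp v_i$, contradicting $v_j\sim v_i$. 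So $v_j$ is just right, $v_j=e_{\min A_j}+\dots+e_{j-1}-e_j$. The condition $v_j\not\sim v_1$ excludes $\min A_j=1$, and also $\min A_j=0$, the latter because then $\braket{v_j}{v_{f-1}}=f-3\ge 2$, contradicting Lemma~\ref{smallpair}; and $v_j\sim v_i$ with $\supp v_i\subseteq\{0,\dots,f-2\}$ forces $\min A_j\le f-2=s$. Thus $2\le\min A_j\le s$. If $\min A_j<s$, then $\braket{v_j}{v_{f-1}}=f-2-\min A_j\ge 1$, so $v_j\sim v_{f-1}$, contradicting that $v_{f-1}$ is $f$-blocked. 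If $\min A_j=s$, a short computation shows $v_j\sim v_s$ and $v_j\not\sim v_\ell$ for every other $v_\ell\in S_f$, so $(v_s;v_2,v_{f-1},v_j)$ is a claw (using $v_s\sim v_2$ and $v_s\sim v_{f-1}$, and that $v_2,v_{f-1},v_j$ are pairwise non-adjacent), contradicting the fact that $G(S)$ has no claws (Lemma~4.8 in~\cite{greene:LSRP}). Hence no such $v_j$ exists, so $v_2,\dots,v_{f-2}$ are all $f$-blocked; all of $v_1,\dots,v_f$ are $f$-blocked, and the argument of the first paragraph finishes the proof.

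I expect the third step to be the main obstacle: one must rule out a gappy $v_j$ and then run the case split on $\min A_j$, in each case extracting the correct contradiction from Lemma~\ref{smallpair}, Lemma~\ref{gappyform}, or the no-claw lemma. None of these computations is deep, but they rely on having the explicit forms of $v_1,\dots,v_f$ from Proposition~\ref{f>3basistovf} and on checking carefully that the hypotheses of the blocking lemmas of Section~\ref{sec:Blocked} are satisfied in each case.
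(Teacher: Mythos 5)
Your proof is correct, and its skeleton is the paper's: show that every vector of $S_f$ is $f$-blocked and then deduce $f=n+3$ from connectivity of $G(S)$ (Corollary~\ref{indecomposable}); the treatment of $v_1,v_f$ via Lemma~\ref{mblock3} and of $v_{f-1}$ via Lemma~\ref{mblock1} (blocking neighbors $v_1$ and $v_s$, non-adjacent by Lemma~\ref{lem:AdjV1}) is exactly what the paper does. Where you diverge is the middle step. The paper rules out all gappy vectors at indices $>f$ by asserting that every $G(S_g)$, $g>f$, is connected and citing Lemma~\ref{gappyform}, then blocks $v_2,\dots,v_{f-3}$ with Lemma~\ref{mblock2} (using $v_{f-1}$ as reference) and $v_{f-2}=v_s$ with Lemma~\ref{mblock1}. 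You instead take a hypothetical neighbor $v_j$, $j>f$, of some $v_i$ with $2\le i\le f-2$, exclude the gappy case using only connectivity of $G(S_f)$ (and disjointness of supports when the gappy index is $\ge f$), and then split on $\min\supp v_j$ for a just right $v_j$, finishing with Lemma~\ref{smallpair}, the blockedness of $v_{f-1}$, and a claw at $v_s$ with arms $v_2,v_{f-1},v_j$. In effect you inline the content of Lemmas~\ref{mblock1} and~\ref{mblock2} in this special situation; the pairing computations check out in both cases $f=5$ and $f=6$ of Proposition~\ref{f>3basistovf}, and your gappy exclusion is slightly more self-contained than the paper's, since it does not need connectivity of the larger graphs $G(S_g)$, only of $G(S_f)$. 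The few facts you assert without verification --- that $G(S_f)$ is connected, and that $v_s\sim v_2$, $v_s\sim v_{f-1}$, $v_2\not\sim v_{f-1}$ --- are immediate from the explicit vectors in Proposition~\ref{f>3basistovf} (the paper makes the same connectivity assertion without proof), so there is no gap; the trade-off is that the paper's version is shorter by leaning on the Section~\ref{sec:Blocked} machinery, while yours is more explicit and makes the final connectivity step, which the paper leaves implicit, precise.
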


Recall that in this case $f = j + 2=5 \text{ or }6$ by Proposition~\ref{f>3basistovf}~2(b) and 2(c).
\begin{proof}[Proof of Lemma~\ref{lem:f=n+3}]
By Lemma~\ref{mblock3},
$v_1$ and $v_f$ are $f$-blocked, since $v_{f-1}$ is unbreakable. Therefore, $v_{f-1}$ is $f$-blocked by Lemma~\ref{mblock1}, since it neighbors $v_1$ and $v_j$. Because $G(S_f)$ is connected, any $G(S_g)$ is also connected for $g>f$. So there are no gappy vectors by Lemma~\ref{gappyform}. By Lemma~\ref{mblock2}, $v_{i}$ is $f$-blocked for $i < f-2$. It remains only to be shown that $v_{f-2} = v_j$ is $f$-blocked, but this follows from Lemma~\ref{mblock1} and the fact that it neighbors $v_2$ and $v_{f-1}$, both of which are $f$-blocked but do not neighbor each other.
\end{proof}

From now on in this section, we assume $|v_i| = 2$ for $2 < i < f-1$, i.e. Case~2(a) in Proposition~\ref{f>3basistovf}.

\begin{lemma}\label{f+1}
The vector $v_{f+1}$ is just right. Furthermore,
\begin{enumerate}
	\item If $f = 4$, then $|v_5| \in \{3,4,6\}$.
	\item If $f = 5$, then $|v_6| \in \{3,4,5\}$.
	\item If $f > 5$, then $|v_{f+1}| \in \{3,4\}$.
\end{enumerate}
\end{lemma}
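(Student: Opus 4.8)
The plan is to derive everything about $v_{f+1}$ from three inputs: $v_1$ and $v_f$ are $f$-blocked, the bound $|\braket{v_{f+1}}{v_{f-1}}|\le 1$, and the absence of claws. First I would record the coordinates in Case~2(a) of Proposition~\ref{f>3basistovf}: $v_1=e_0-e_1$, $v_2=e_0+e_1-e_2$, $v_i=e_{i-1}-e_i$ for $2<i<f-1$, $v_{f-1}=e_1+\cdots+e_{f-2}-e_{f-1}$, $v_f=e_{f-1}-e_f$. Since $v_{f-1}$ is just right it is unbreakable of norm $f-1\ge 3$, so by Corollary~\ref{cor:OneTight} and Lemma~\ref{lem:BrIsTight} there is no tight or breakable vector in $S$; then Lemma~\ref{mblock3} makes $v_1$ and $v_f$ $f$-blocked, whence $\braket{v_{f+1}}{v_1}=\braket{v_{f+1}}{v_f}=0$ by Corollary~\ref{cor:AdjacencyNon0}. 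A short coordinate check shows that in $G(S_f)$ the vectors $v_2,\dots,v_{f-2}$ form a single path component; this is the only structural fact I will use.

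Next I would show $v_{f+1}$ is just right. It is not tight (no tight vectors exist), and if it were gappy then Lemma~\ref{gappyform} would give $v_{f+1}=-e_{f+1}+e_f+\cdots+e_j+e_k$ with $k+1<j<f+1$ and $v_k,v_{k+1}$ in distinct components of $G(S_f)$; but $\braket{v_{f+1}}{v_f}=0$ forces $j\le f-1$ and $\braket{v_{f+1}}{v_1}=0$ forces $k\ge 2$, so $2\le k<k+1\le f-2$ and $v_k,v_{k+1}$ both lie on the path, a contradiction. Hence $v_{f+1}$ is just right, and (using $f\in\supp v_{f+1}$, which is Lemma~\ref{lem:j-1}) it has the form $v_{f+1}=e_m+e_{m+1}+\cdots+e_f-e_{f+1}$; now $\braket{v_{f+1}}{v_f}=0$ gives $m\le f-1$ and $\braket{v_{f+1}}{v_1}=0$ gives $m\ne 1$, so $|v_{f+1}|=f-m+2$ with $m\in\{0\}\cup\{2,\dots,f-1\}$.

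To finish I would apply Lemma~\ref{smallpair}: $v_{f+1}$ and $v_{f-1}$ are unbreakable of norm $\ge 3$, so $|\braket{v_{f+1}}{v_{f-1}}|\le 1$, and a direct count shows $\braket{v_{f+1}}{v_{f-1}}$ equals $f-3$ if $m=0$, equals $f-2-m$ if $2\le m\le f-2$, and equals $-1$ if $m=f-1$. This survives only for $m=0$ with $f=4$ (norm $6$), for $m\in\{f-3,f-2\}$ lying in $\{2,\dots,f-2\}$ (norms $5$ and $4$), and for $m=f-1$ (norm $3$), which already yields (1) for $f=4$ and (2) for $f=5$. For $f>5$ the remaining possibility $m=f-3$ (norm $5$) would make $v_{f-3}=e_{f-4}-e_{f-3}$ an interior norm-$2$ vector of the path adjacent to $v_{f-4}$, $v_{f-2}$, and $v_{f+1}=e_{f-3}+e_{f-2}+e_{f-1}+e_f-e_{f+1}$, while one checks those three are pairwise non-adjacent; so $(v_{f-3};v_{f-4},v_{f-2},v_{f+1})$ is a claw, which is impossible ($G(S)$ has no claws). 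Hence $|v_{f+1}|\in\{3,4\}$, which is (3).

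I expect the claw step to be the only real obstacle: it is the one place where a constraint beyond the pairings with $v_1,v_{f-1},v_f$ is needed, and one has to track the hypothesis $f>5$, which is exactly what guarantees $f-4\ge 2$ (so $v_{f-4}$ is defined) and $f-3>2$ (so $v_{f-3}$ is a genuine norm-$2$ path vector, not $v_2$). All the pairing computations and the six (non)adjacency checks are routine bookkeeping with the explicit coordinates above.
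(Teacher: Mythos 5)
Your proposal is correct and follows essentially the same route as the paper: after establishing that $v_{f+1}$ is just right, you constrain $\min\supp v_{f+1}$ via the vanishing pairings with the $f$-blocked vectors $v_1,v_f$, apply Lemma~\ref{smallpair} against $v_{f-1}$, and exclude norm $5$ for $f>5$ with exactly the paper's claw $(v_{f-3};v_{f-4},v_{f-2},v_{f+1})$. The only (harmless) variations are that you rule out gappiness via Lemma~\ref{gappyform} and the component structure of $G(S_f)$ instead of the paper's use of Lemma~\ref{gappy3}, and that the fact that $v_{f-1}$ is the only possible tight (hence breakable) vector should be credited to Lemma~\ref{pairequal} together with Lemma~\ref{lem:BrIsTight} rather than to Corollary~\ref{cor:OneTight}.
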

\begin{proof}
The only vectors of norm at least 3 in $S_f$ are $v_2$ and $v_{f-1}$, so the only possible gappy indices for $v_{f+1}$ are $1$ and $f-2$. However, $1$ cannot be a gappy index because then either $\braket{v_{f+1}}{v_1} = -1$ or $\braket{v_{f+1}}{v_2} = 2$, contradicting Lemma~\ref{mblock3} or Lemma~\ref{smallpair} and $f-2$ cannot be a gappy index because then $\braket{v_{f+1}}{v_f} = -1$, contradicting Lemma~\ref{mblock3}. Therefore, $v_{f+1}$ is just right. 

Let $j = \min \supp v_{f+1}$. To avoid pairing with $v_1$ and $v_f$, either $j=0$ or $1<j<f$. When $f = 4$, $j\in\{0,2,3\}$ and $|v_5|\in\{3,4,6\}$. When $f>4$, $j \neq 0$ because otherwise $\braket{v_{f-1}}{v_{f+1}} = f-3 > 1$, contradicting Lemma~\ref{smallpair}. So $1 < j < f$. Since $\braket{v_{f-1}}{v_{f+1}} = f - 2 - j \le 1$, $j \ge f-3$. If $j = f-3$, then $(v_{f-3};v_{f-4},v_{f-2},v_{f+1})$ is a claw unless $f = 5$. Therefore, $j$ is either $f-1$ or $f-2$, unless $f = 5$ in which case $j$ can be $f-3$. 
\end{proof}

\begin{lemma}
If $f = 4$ and $|v_5| = 6$, or $f = 5$ and $|v_6| = 5$, then $f+1 = n+3$.
\end{lemma}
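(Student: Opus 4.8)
The plan is to assume a further standard basis vector $v_{f+2}\in S$ exists and derive a contradiction; this forces the standard basis to terminate at $v_{f+1}$, i.e. $n+3=f+1$.

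The first step is to pin down $v_1,\dots,v_{f+1}$ exactly. Since $v_{f-1}$ is just right, Lemma~\ref{pairequal} shows it is the only vector that could be tight, so $S$ contains no tight vector and, by Lemma~\ref{lem:BrIsTight}, no breakable vector; in particular Lemma~\ref{gappyform} is available. Combining Proposition~\ref{f>3basistovf}(2a), Lemma~\ref{f+1}, and the fact $j-1\in\supp v_j$ (Lemma~\ref{lem:j-1}), the hypotheses determine the vectors uniquely: when $f=4$, $|v_5|=6$ we get $v_1=e_0-e_1$, $v_2=e_0+e_1-e_2$, $v_3=e_1+e_2-e_3$, $v_4=e_3-e_4$, $v_5=e_0+e_1+e_2+e_3+e_4-e_5$; when $f=5$, $|v_6|=5$ we get $v_1=e_0-e_1$, $v_2=e_0+e_1-e_2$, $v_3=e_2-e_3$, $v_4=e_1+e_2+e_3-e_4$, $v_5=e_4-e_5$, $v_6=e_2+e_3+e_4+e_5-e_6$. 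In each case $v_{f+1}$ is just right of norm $\ge 3$ with $\min\supp v_{f+1}$ as small as Lemma~\ref{f+1} permits, and the only interval of the prescribed size achieving the prescribed norm is the one listed. A short computation of all pairings then shows that $G(S_{f+1})$ is connected (using the special edge $v_1\sim v_f$) and that the only smaller neighbors of $v_{f+1}$ are $v_2$ and $v_{f-1}$, with $v_2\not\sim v_{f-1}$.

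Now suppose $v_{f+2}$ exists. It is not tight (only $v_{f-1}$ could be), and it is not gappy: by Lemma~\ref{gappyform} a gappy $v_{f+2}$ would place two vectors $v_k,v_{k+1}$ with $1\le k\le f-1$ in distinct components of $G(S_{f+1})$, contradicting connectedness. Hence $v_{f+2}$ is just right, so unbreakable, and by Lemma~\ref{lem:j-1} has the form $v_{f+2}=\bigl(\sum_{i=j}^{f+1}e_i\bigr)-e_{f+2}$ for some $0\le j\le f+1$. Since $v_{f-1}$ is unbreakable, $v_1$ and $v_f$ are $f$-blocked (Lemma~\ref{mblock3}); as $v_1$ is $f$-blocked and $\{1,f+2\}\ne\{1,f\}$, Corollary~\ref{cor:AdjacencyNon0} forces $\braket{v_{f+2}}{v_1}=0$, which excludes $j=1$. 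The remaining values of $j$ are eliminated one at a time: for the small values (namely $j\in\{0,2\}$ when $f=4$, and $j\in\{0,2,3\}$ when $f=5$) one computes $\braket{v_{f+2}}{v_{f+1}}\ge 2$, contradicting Lemma~\ref{smallpair} for the unbreakable pair $v_{f+1},v_{f+2}$ of norm $\ge 3$; for the next value of $j$ one gets $\braket{v_{f+2}}{v_{f+1}}=1$ and $\braket{v_{f+2}}{v_{f-1}}=-1$, so $(v_{f-1},v_{f+1},v_{f+2})$ spans a heavy triangle, which is impossible; for the value with $\supp^+ v_{f+2}=\{f-1,f\}$ one has $\braket{v_{f+2}}{v_f}=-1$, so $v_{f+2}\sim v_f$, contradicting that $v_f$ is $f$-blocked; and $j=f+1$ gives $v_{f+2}=e_{f+1}-e_{f+2}$ of norm $2$ with $v_{f+2}\sim v_{f+1}$, whence $(v_{f+1};v_2,v_{f-1},v_{f+2})$ is a claw in $G(S)$ (the outer three are pairwise non-adjacent by the explicit forms), which is forbidden. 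Every value of $j$ having produced a contradiction, $v_{f+2}$ cannot exist, so $f+1=n+3$.

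I expect the only real labor to be the bookkeeping in the last step — writing $v_{f+2}$ down for each admissible $j$ and computing its three relevant pairings — together with double-checking that the displayed forms of $v_1,\dots,v_{f+1}$ are genuinely forced by the cited results. There is no conceptual obstacle once the connectedness of $G(S_{f+1})$ and the neighbor list of $v_{f+1}$ are established.
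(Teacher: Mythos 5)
Your proof is correct and takes essentially the same route as the paper's: with no tight or breakable vectors and $G(S_{f+1})$ connected, Lemma~\ref{gappyform} forces $v_{f+2}$ to be just right, and the possible values of $\min\supp v_{f+2}$ are then killed by Lemma~\ref{smallpair}, the $f$-blockedness of $v_1$ and $v_f$, a heavy triple at $j=f-1$, and the claw $(v_{f+1};v_2,v_{f-1},v_{f+2})$ at $j=f+1$ --- the paper merely compresses your small-$j$ and $j\in\{1,f\}$ cases into the single assertion that $\min\supp v_{f+2}\in\{f-1,f+1\}$. One cosmetic slip: the case you label ``$\supp^+ v_{f+2}=\{f-1,f\}$'' should be $j=f$, i.e.\ $\supp^+ v_{f+2}=\{f,f+1\}$ (a vector supported positively on $\{f-1,f\}$ would be gappy, which you already excluded); with that relabeling the pairing $\braket{v_{f+2}}{v_f}=-1$ and the contradiction with $v_f$ being $f$-blocked go through as you state.
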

\begin{proof}
In either case, $G(S_{f+1})$ is connected, so $v_{f+2}$ would be just right by Lemma~\ref{gappyform}. Since $\langle v_{f+2},v_{f+1}\rangle \leq 1$ and $v_{f+2} \not\sim v_f$, $\min\supp v_{f+2} \in \{f-1,f+1\}$. If $\min\supp v_{f+2} = f-1$, we have a heavy triple $(v_{f-1},v_{f+1},v_{f+2})$. If $\min\supp v_{f+2} = f+1$, then one obtains the claw $(v_{f+1};v_{2},v_{f-1},v_{f+2})$.
\end{proof}

Now, assume we are not in the above two cases, i.e. 
\begin{equation}\label{eq:vf+1=34}
|v_{f+1}| \in \{3,4\}.    
\end{equation}
Then $G(S_{f+1})$ is disconnected.
 Let $s > f+1$ be the smallest index with $|v_s| \ge 3$. Note that such an index must exist, since otherwise $\hat{G}(S)$ would be disconnected, contradicting Corollary~\ref{indecomposable}.

\begin{lemma}\label{f+1openvectors}
The only possible $(f+1)$-open vectors are $v_{f-2}$, $v_{f-1}$, and $v_{f+1}$. 
\end{lemma}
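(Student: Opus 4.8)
The plan is to show that each of $v_1,\dots,v_{f-3}$ and $v_f$ is $(f+1)$-blocked; since these together with $v_{f-2},v_{f-1},v_{f+1}$ exhaust $S_{f+1}=\{v_1,\dots,v_{f+1}\}$, that is exactly the assertion. Recall (Proposition~\ref{f>3basistovf}) that in the present case $v_2=e_0+e_1-e_2$, $v_{f-1}=e_1+\cdots+e_{f-2}-e_{f-1}$, $v_i=e_{i-1}-e_i$ for $2<i<f-1$, and $|v_{f+1}|\in\{3,4\}$. The vectors $v_1,v_f$ are immediate: $v_{f-1}$ is just right, hence unbreakable, so Lemma~\ref{mblock3} gives that $v_1$ and $v_f$ are even $f$-blocked. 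Throughout one uses that $S$ has no breakable vector (Lemma~\ref{lem:BrIsTight} and Lemma~\ref{pairequal}, since $v_{f-1}$ is just right) and no tight vector (Corollary~\ref{cor:OneTight}); consequently every $v_k$ with $k>f+1$ is unbreakable of the form $\bigl(\sum_{i\in A}e_i\bigr)-e_k$ with $A\subseteq\{0,\dots,k-1\}$, and by Corollary~\ref{cor:AdjacencyNon0} adjacency of such a $v_k$ to another basis vector is equivalent to a nonzero pairing.

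I would next dispatch $v_i$ for $3\le i\le f-3$ (a nonempty range only when $f\ge 6$). Here $v_i=e_{i-1}-e_i$ has norm $2$, and a direct check of pairings shows that $v_{i-1}$ and $v_{i+1}$ are neighbors of $v_i$ inside $S_{f+1}$ and do not abut each other. If $i\ge 4$, then $v_{i-1}$ and $v_{i+1}$ also have norm $2$, hence each is an $(f+1)$-blocking neighbor of $v_i$ by Definition~\ref{defn:blockingneighbor}(3); if $i=3$, then $v_4$ is an $(f+1)$-blocking neighbor by Definition~\ref{defn:blockingneighbor}(3), and $v_2$ is one by Definition~\ref{defn:blockingneighbor}(4), using that $v_1$ is $(f+1)$-blocked and that all vectors after $v_{f+1}$ are unbreakable. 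In every case Lemma~\ref{mblock1} shows $v_i$ is $(f+1)$-blocked.

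It remains to show $v_2$ is $(f+1)$-blocked (needed when $f\ge 5$). Suppose, for contradiction, that $v_k\sim v_2$ for some $k>f+1$. Since $v_1$ and $v_f$ are $f$-blocked, $\braket{v_k}{v_1}=\braket{v_k}{v_f}=0$, so $(v_k)_0=(v_k)_1$ and $(v_k)_{f-1}=(v_k)_f$, all lying in $\{0,1\}$. As $\braket{v_k}{v_2}=2(v_k)_0-(v_k)_2\ne 0$, either $(v_k)_0=1$ or $(v_k)_2=1$. In the first case $(v_k)_1=1$ and $\min\supp v_k=0$; by Lemma~\ref{gappyform} a gappy vector with $\min\supp=0$ carries only the index $0$ below its gap, which is incompatible with $(v_k)_1=1$, so $v_k$ is just right, hence $\{0,1,\dots,f-2\}\subseteq\supp^+v_k$ and $\braket{v_k}{v_{f-1}}=(f-2)-(v_k)_{f-1}\ge f-3\ge 2$, contradicting Lemma~\ref{smallpair}. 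In the second case $(v_k)_0=(v_k)_1=0$ and $(v_k)_2=1$, so $\min\supp v_k=2$; since $v_3,\dots,v_{f-2}$ all have norm $2$, Lemma~\ref{gappy3} forces $\{2,3,\dots,f-2\}\subseteq\supp^+v_k$, whence $\braket{v_k}{v_{f-1}}=(f-3)-(v_k)_{f-1}\ge f-4$. For $f\ge 6$ this contradicts Lemma~\ref{smallpair}. For $f=5$ the bound $\braket{v_k}{v_{f-1}}\le 1$ forces $(v_k)_{f-1}=(v_k)_f=1$, and testing $v_k$ against $v_{f+1}$ then yields either $\braket{v_k}{v_{f+1}}\ge 2$ when $\min\supp v_{f+1}=f-2$ (again contradicting Lemma~\ref{smallpair}), or, when $\min\supp v_{f+1}=f-1$, the adjacencies $v_k\sim v_{f-1}$, $v_k\sim v_{f+1}$, $v_{f-1}\sim v_{f+1}$, so that $(v_k,v_{f-1},v_{f+1})$ is a heavy triangle, contradicting \cite[Lemma~4.10]{greene:LSRP}. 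Thus no such $v_k$ exists and $v_2$ is $(f+1)$-blocked.

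The bulk of this is routine: bookkeeping of pairings to identify the neighbors of each $v_i$ inside $S_{f+1}$, and checking which clause of Definition~\ref{defn:blockingneighbor} applies. I expect the one genuinely delicate step to be the treatment of $v_2$, for which Lemma~\ref{mblock1} is unavailable (inside $S_{f+1}$ the vector $v_2$ has only the single neighbor $v_3$), so that one must instead extract enough information from $v_1$ and $v_f$—both $f$-blocked, hence orthogonal to every later $v_k$—to pin down $\supp v_k$; the case $f=5$ is sharpest, since the estimate on $\braket{v_k}{v_{f-1}}$ there falls one short of a contradiction and has to be supplemented by the heavy-triangle argument.
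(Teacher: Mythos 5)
Your proposal is correct and takes essentially the same approach as the paper: $v_1,v_f$ are handled by Lemma~\ref{mblock3}, the intermediate norm-$2$ vectors by blocking neighbors and Lemma~\ref{mblock1}, and the crucial step for $v_2$ by pinning down $\supp v_k$ of a hypothetical later neighbor and contradicting Lemma~\ref{smallpair} or producing a heavy triangle $(v_{f-1},v_{f+1},v_k)$, exactly as in the paper (your case split by $(v_k)_0$ versus $(v_k)_2$ and the separate shortcut for $f\ge 6$ are only cosmetic reorganizations). One trivial citation slip: the absence of tight vectors follows from Lemma~\ref{pairequal} together with $v_{f-1}$ being just right, not from Corollary~\ref{cor:OneTight} alone, but this does not affect the argument.
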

\begin{proof}
Since $v_{f-1}$ is unbreakable, by Lemma~\ref{mblock3}, $v_1,v_f$ are $f$-blocked. As the statement is obvious for $f = 4$, we may assume $f > 4$ without loss of generality. Since $|v_j| = 2$ for $3 \leq j \leq f-2$, $v_j$ is $f$-blocked for $3 \leq j < f-2$ by Definition~\ref{defn:blockingneighbor} (3) and (4) and Lemma~\ref{mblock1}. The following shows $v_2$ is $(f+1)$-blocked. Suppose $v_j \sim v_2$ for some $j > f+1$. If $2 \not\in \supp v_j$, then $v_j \not\sim v_1$ implies $0,1 \in \supp v_j$, which contradicts $|\braket{v_j}{v_2}| \leq 1$. If $2 \in \supp v_j$, then $\{2,3,...,f-2\} \subset \supp v_j$ because none of $2,...,f-3$ can be a gappy index. We also have $f-1 \in \supp v_j$, because otherwise $\braket{v_j}{v_{f-1}}\geq f-3 >1$. Since $|v_f|=2$, $f-1$ is not a gappy index, so $f \in \supp v_j$.  To satisfy (\ref{eq:vf+1=34}) and $|\braket{v_{f+1}}{v_{j}}|\leq 1$, it must be the case that $|v_{f+1}| = 3$, but this results in a heavy triangle $(v_{f-1},v_{f+1},v_j)$.
\end{proof}

\begin{lemma}
If $v_s$ is just right, then $s = f+2$. Furthermore, either $|v_{f+1}| = 4$ and $|v_{f+2}| = 4$, or $|v_{f+1}| = 3$ and $|v_{f+2}| = 5$. In either case, $f+2 = n+3$.
\end{lemma}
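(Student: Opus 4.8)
The plan is to argue in three stages: first that $s=f+2$, then that $(|v_{f+1}|,|v_{f+2}|)$ is one of the two stated pairs, and finally that $f+2=n+3$. Throughout, $v_{f-1}$ being just right means $S$ has no breakable vector (Lemmas~\ref{lem:BrIsTight} and~\ref{pairequal}), so every standard basis vector past $v_f$ is just right or gappy; the lemma's hypothesis is that $v_s$ is just right, so, using Lemma~\ref{lem:j-1}, $v_s=e_k+e_{k+1}+\cdots+e_{s-1}-e_s$ with $k:=\min\supp v_s$.

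First I would show $s=f+2$. Assume not, so that $s>f+2$ and $|v_{f+2}|=\cdots=|v_{s-1}|=2$, whence $v_{f+1}\sim v_{f+2}\sim\cdots\sim v_{s-1}$ is a path. A direct computation of $\braket{v_s}{v_j}$ for $f+1<j\le s-1$ shows that among $v_{f+2},\dots,v_{s-1}$ the vector $v_s$ abuts at most $v_k$, and only if $f+1<k<s$; but in that case $v_k$ is an interior vertex of the path, so $(v_k;v_{k-1},v_{k+1},v_s)$ is a claw, which is impossible. Hence $k\le f+1$. I would then run a short case analysis on $k$ and on $|v_{f+1}|\in\{3,4\}$ (see~\eqref{eq:vf+1=34}): $k=f$ gives $\braket{v_s}{v_f}\ne0$, contradicting that $v_f$ is $f$-blocked (Lemma~\ref{mblock3}); $k=f+1$ makes $v_s$, $v_{f+2}$, and the smaller neighbour of $v_{f+1}$ inside the main block ($v_{f-1}$ if $|v_{f+1}|=3$, $v_{f-2}$ if $|v_{f+1}|=4$) into a claw centred at $v_{f+1}$; and the bound $|\braket{v_s}{v_{f-1}}|\le1$ from Lemma~\ref{smallpair} forces $k\ge f-3$, so that for each remaining admissible $k$ one obtains either a claw centred at $v_{f+1}$ or a heavy triangle on $\{v_{f-1},v_{f+1},v_s\}$. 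In every case this contradicts the absence of claws or of heavy triples, so $s=f+2$.

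Next, with $s=f+2$, the vector $v_{f+2}$ is just right with $\supp v_{f+2}=\{k,\dots,f+2\}$, so $|v_{f+2}|=f+3-k$; moreover $k\ne1$ (else $v_{f+2}\sim v_1$, contradicting Lemma~\ref{mblock3}) and $\braket{v_{f+2}}{v_f}=0$. If $|v_{f+1}|=4$, then $\braket{v_{f+2}}{v_{f+1}}\le1$ excludes $k\le f-2$ while $k=f$ gives $v_{f+2}\sim v_f$; hence $k=f-1$ and $|v_{f+2}|=4$. If $|v_{f+1}|=3$, connectedness of $G(S)$ (Corollary~\ref{indecomposable}) forces $v_{f+2}\sim v_{f+1}$, so $k\le f-1$, while $|\braket{v_{f+2}}{v_{f-1}}|\le1$ forces $k\ge f-3$; and both $k=f-3$ and $k=f-1$ make $\{v_{f-1},v_{f+1},v_{f+2}\}$ a heavy triangle, leaving $k=f-2$, so $|v_{f+2}|=5$. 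Finally, to see $f+2=n+3$, suppose $v_{f+3}$ exists; one checks $G(S_{f+2})$ is connected, so $v_{f+3}$ is just right by Lemma~\ref{gappyform}, say $v_{f+3}=e_\ell+\cdots+e_{f+2}-e_{f+3}$. In either of the two cases just found, $|\braket{v_{f+3}}{v_{f+2}}|\le1$ forces $\ell\in\{f,f+1\}$; $\ell=f$ yields $v_{f+3}\sim v_f$, contradicting Lemma~\ref{mblock3}; and $\ell=f+1$ yields $\braket{v_{f+3}}{v_{f+2}}=0$ while $v_{f+3}\sim v_{f+1}$, producing a claw at $v_{f+1}$, exactly as in the proof of the preceding lemma concerning $|v_5|=6$ and $|v_6|=5$. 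Hence $v_{f+3}$ cannot exist.

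The one genuinely delicate point is the first stage, ruling out $s>f+2$: a long just-right vector $v_s$ can attach to $G(S)$ at the main block or at $v_{f+1}$ in a handful of ways depending on $\min\supp v_s$, and one must verify that each of them forces a forbidden configuration --- a claw, a heavy triple, a cycle longer than a triangle, or a pairing of absolute value $\ge2$ between two unbreakable vectors of norm $\ge3$. Pinning $\min\supp v_s$ down with Lemma~\ref{mblock3} (no attachment at $v_1$ or $v_f$) and Lemma~\ref{smallpair} (a lower bound on $\min\supp v_s$) makes this a finite, routine check, but it is where essentially all of the argument's content lies; Stages~2 and~3 are then quick finite verifications of the same kind.
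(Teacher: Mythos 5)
Your overall strategy (excluding configurations by exhibiting claws, heavy triples, or pairings of absolute value at least $2$ after pinning down $\min\supp$) is the same as the paper's, and Stages~1 and~2 are essentially sound, but Stage~3 has a genuine gap. The bound $|\braket{v_{f+3}}{v_{f+2}}|\le 1$ does \emph{not} force $\ell\in\{f,f+1\}$: the value $\ell=f+2$, i.e.\ $v_{f+3}=e_{f+2}-e_{f+3}$ of norm $2$, also satisfies it (the pairing is $-1$), and your case analysis never excludes it. Since the assertion $f+2=n+3$ means that \emph{no} further standard basis vector exists --- not merely no further vector of norm at least $3$ --- this case must be ruled out, and it is substantive: a norm-$2$ tail would produce a different changemaker family. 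The repair is of the same kind as your other steps but with a different center: a norm-$2$ vector $v_{f+3}$ neighbors only $v_{f+2}$, and $v_{f+2}$ already has the two mutually non-adjacent neighbors $v_{f+1}$ and $v_{f-1}$ (resp.\ $v_{f-2}$ in the other case), so one gets a claw centered at $v_{f+2}$, not at $v_{f+1}$ as in the case you do treat. As written, your argument does not establish the last clause of the lemma.

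Two smaller points. In Stage~2 the inference ``connectedness of $G(S)$ forces $v_{f+2}\sim v_{f+1}$'' is not valid: when $|v_{f+1}|=3$ the vector $v_{f+1}$ already pairs nontrivially with $v_{f-1}$ (and with $v_{f-2}$ when $|v_{f+1}|=4$), so it is not isolated and connectedness gives nothing; what you actually need, $k\le f-1$, follows directly from $|v_{f+2}|\ge3$ (so $k\le f$) together with $v_f$ being $f$-blocked (Lemma~\ref{mblock3}). Second, in Stages~1 and~2 the formula $\braket{v_s}{v_{f-1}}=f-2-k$ holds only for $k\ge1$; when $k=0$ the pairing equals $f-3$, so for $f=4$ the case $\min\supp v_s=0$ is \emph{not} excluded by Lemma~\ref{smallpair} as you claim. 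That case does succumb to the same contradictions you list (a heavy triangle on $\{v_{f-1},v_{f+1},v_s\}$ when $|v_{f+1}|=3$, or $\braket{v_s}{v_{f+1}}=2$ when $|v_{f+1}|=4$), but it must be included in the check rather than dismissed by the bound.
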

\begin{proof}
Since $|v_s| \ge 3$, $j:=\min \supp v_s < s-1$. By assumption, $|v_{f+2}|=\dots=|v_{s-1}| = 2$. If $j\ge f+2$, we would have a claw $(v_j;v_{j-1},v_{j+1},v_s)$. If $j =f+1$, we would have a claw $(v_{f+1};v_{f+2},v_{s},v_{f-1} \text{ or }v_{f-2})$. Since $\braket{v_s}{v_f} = 0$, in fact $j < f$. In particular, $v_s \sim v_{f+1}$. By (\ref{eq:vf+1=34}), $v_{f+1}$ has exactly one smaller neighbor, which is either $v_{f-1}$ or $v_{f-2}$. In either case, this is not  a neighbor of $v_{s}$ - in the first case, this would form a heavy triple, and in the second, it would make $\braket{v_s}{v_{f+1}} \ge 2$. To avoid a claw $(v_{f+1};v_s,v_{f+2},v_{f-1}\text{ or }v_{f-2})$, $s=f+2$. By Lemma~\ref{f+1openvectors}, $\min \supp v_{f+2} \in \{f-1,f-2\}$. By (\ref{eq:vf+1=34}), $\min \supp v_{f+1} \in \{f-1,f-2\}$. As we showed earlier in this paragraph, the smaller neighbor of $v_{f+1}$ is not adjacent to $v_{f+2}$, so $\min \supp v_{f+2} \neq \min \supp v_{f+1}$. The conclusion about $|v_{f+2}|$ follows. 

To see $v_{f+2}$ is the last standard basis vector, note that %$G(\{v_2, \dots, v_{f+2}\})$ is a path with ends $v_2$ and $v_f$, and  %When $f > 4$, $v_2$ is $(f+1)$-blocked, so $v_{f+3}\not\sim v_2$. When $f = 4$, suppose $v_{f+3}\sim v_2$, then $v_{f+3}\not\sim v_3$ (otherwise a long cycle), but $2,3 \in \supp v_{f+3}$ would imply $v_{f+3} \sim v_{f+1}$ or $v_{f+2}$ and create a heavy triple. Since $v_{f+3} \not\sim v_2$ and $v_{f+3}\not\sim v_f$, to avoid a claw, $v_{f+3}$ must have two smaller neighbors $v_i, v_j$ with $i < j$, but then $|v_j| \ge 3$ and we have a heavy triple $(v_2,v_j,v_{f+3})$.
$G(S_{f+2})$ is connected. By Lemma~\ref{gappyform}, $v_{f+3}$ is just right. Since $|v_{f+2}|\ge4$ and $\braket{v_{f+2}}{v_{f+3}}\le1$, $|v_{f+3}|\in\{2,3,4\}$. Since $v_{f+3}\not\sim v_f$, $v_{f+3}\ne4$. In $G(S_{f+2})$, there is a path containing $4$ vertices $v_{f-2}, v_{f+1}, v_{f+2}, v_{f-1}$. Here $v_{f-2},v_{f-1}$ are the two ends, and $v_{f+1}, v_{f+2}$ are in the interior. Since $|v_{f+3}|\in\{2,3\}$,
 $v_{f+3}$ is adjacent to exactly one of $v_{f+1}, v_{f+2}$, and it does not neighbor $v_{f-2},v_{f-1}$. So we get a claw centered at the neighbor of $v_{f+3}$.
\end{proof}

\begin{lemma}
If $v_s$ is gappy, then $f = 4$, $|v_5| = 4$, and 
\begin{equation*}
v_s = e_2 + e_5 + \dots + e_{s-1} - e_s.
\end{equation*}
Furthermore, if $n+3 > s$, then $|v_{s+1}| = 3$, and $|v_j| = 2$ for all $j > s+1$.
\end{lemma}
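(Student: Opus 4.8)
The plan is to feed the gappy vector $v_s$ into Greene's structural description of gappy vectors (Lemma~\ref{gappyform}) and then run a case analysis driven by the absence of claws and heavy triples in the intersection graph, the parity constraint of Lemma~\ref{parity}, and the $f$-blockedness of $v_1$ and $v_f$.

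First, since $v_{f-1}$ is just right there is no tight vector in $S$ (Corollary~\ref{cor:OneTight}), hence no breakable vector (Lemma~\ref{lem:BrIsTight}), so Lemma~\ref{gappyform} applies: $v_s$ is the unique gappy vector,
\[
v_s = -e_s + e_{s-1} + \cdots + e_j + e_k\qquad(k+1<j<s),
\]
and $v_k$ and $v_{k+1}$ lie in distinct components of $G(S_{s-1})$. The next step is to identify those components explicitly. Using $v_1=e_0-e_1$, $v_f=e_{f-1}-e_f$, $v_2=e_0+e_1-e_2$, $v_{f-1}=e_1+\cdots+e_{f-2}-e_{f-1}$, the fact that $|v_i|=2$ for $2<i<f-1$, that $|v_{f+1}|\in\{3,4\}$ by (\ref{eq:vf+1=34}), that $|v_i|=2$ for $f+1<i<s$ by minimality of $s$, and Lemma~\ref{f+1openvectors} to rule out stray edges, one computes that $G(S_{s-1})$ has exactly two components: a triangle on $\{v_1,v_{f-1},v_f\}$ with the norm-$2$ path $v_{f+1},v_{f+2},\dots,v_{s-1}$ attached at $v_{f-1}$ when $|v_{f+1}|=3$, and the path $\{v_2,\dots,v_{f-2}\}$ with the path $v_{f+1},\dots,v_{s-1}$ attached at $v_{f-2}$ (which is $v_2$ when $f=4$) when $|v_{f+1}|=4$. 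Hence the only consecutive indices lying in different components are $(1,2)$, $(f-2,f-1)$, and, when $|v_{f+1}|=4$, also $(f,f+1)$, so $k\in\{1,f-2,f\}$. But $k=1$ would put $1\in\supp^+ v_s$, giving $v_s\sim v_1$ with $s>f$, contradicting Lemma~\ref{mblock3}; and $k=f$ would give $v_s\sim v_f$ with $s>f$, again contradicting Lemma~\ref{mblock3}. So $k=f-2$, and in particular $f\ge4$.

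Next I would rule out $f\ge5$. With $k=f-2$ one computes $v_s\sim v_{f-1}$ with $\langle v_s,v_{f-1}\rangle=1$, and $v_s\sim v_{f-2}$. If $|v_{f+1}|=4$, then also $v_{f-2}\sim v_{f+1}$, and $v_s$ bridges the two components, so $(v_{f-1},v_{f+1},v_2)$ is a heavy triple: the path $v_{f-1}\sim v_s\sim v_{f-2}\sim v_{f+1}$ avoids $v_2$, the path $v_{f+1}\sim v_{f-2}\sim\cdots\sim v_2$ avoids $v_{f-1}$, and $v_{f-1}\sim v_s\sim v_{f-2}\sim\cdots\sim v_2$ avoids $v_{f+1}$, contradicting Greene's Lemma~4.10. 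If $|v_{f+1}|=3$, then according to $j$ one gets: if $j=f$, $v_s\sim v_f$ with $s>f$, contradicting Lemma~\ref{mblock3}; if $j=f+1$, a heavy triangle on $(v_s,v_{f-1},v_{f+1})$; if $j\ge f+2$, a claw $(v_s;v_{f-2},v_{f-1},v_j)$. Hence $f=4$, $k=2$, and $v_s=e_2+e_j+\cdots+e_{s-1}-e_s$ with $j\ge4$. The same style of argument then eliminates $|v_5|=3$ (if $j=4$ then $v_s\sim v_4=v_f$ with $s>4$; if $j=5$ then $(v_s,v_3,v_5)$ is a heavy triangle; if $j\ge6$ then $(v_s;v_2,v_3,v_j)$ is a claw), so $|v_5|=4$ and $v_5=e_2+e_3+e_4-e_5$; and the same two sub-arguments ($j=4$ giving $v_s\sim v_f$, $j\ge6$ giving the claw) force $j=5$. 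This yields $v_s=e_2+e_5+e_6+\cdots+e_{s-1}-e_s$. I expect this case analysis to be the main obstacle: there is no single mechanism, and each of the $f\ge5$ and $|v_5|=3$ branches requires its own small argument assembled from claws, heavy triples, and $f$-blockedness.

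For the final statement, assume $s<n+3$, so $v_{s+1}\in S$; it is just right since $v_s$ is the unique gappy vector. From the now-explicit graph $G(S_s)$, in which $v_s$ bridges the two former components, I would check—using Lemma~\ref{mblock3} for $v_1$ and $v_f$, and repeated applications of Lemma~\ref{mblock1} with the clauses of Definition~\ref{defn:blockingneighbor} for the remaining vectors—that $v_{s-1}$ is the unique $s$-open vector of index $<s$. Since $S_s$ contains the norm-$3$ vector $v_{f-1}$, Lemma~\ref{2ssoon} then gives that $v_{s-1}$ is the only smaller neighbor of $v_{s+1}$ and that $|v_j|=2$ for all $j>s+1$. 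Finally, $v_{s+1}$ is just right with $s\in\supp v_{s+1}$ by Lemma~\ref{lem:j-1}, so $v_{s+1}=e_a+\cdots+e_s-e_{s+1}$ for some $a\le s$; if $a=s$ then $v_s$ is a smaller neighbor of $v_{s+1}$, and if $a\le s-2$ then $\langle v_{s+1},v_{s-1}\rangle=0$, each contradicting the previous sentence. Hence $a=s-1$ and $|v_{s+1}|=3$, which completes the proof.
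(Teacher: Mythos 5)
Your proposal is correct and follows essentially the same route as the paper: both arguments pin down the single-gap form of $v_s$ with bottom index $f-2$ (you via the component condition in Lemma~\ref{gappyform}, the paper via Lemma~\ref{gappy3} and the pairings with $v_1,v_2,v_f,v_{f+1}$), then use claws, heavy triples and the $f$-blockedness of $v_1,v_f$ to force $f=4$, $|v_5|=4$, $j=5$, and finish with the blocking machinery and Lemma~\ref{2ssoon}, with only the order of the $|v_{f+1}|\in\{3,4\}$ and $f\ge5$ cases arranged differently. One small point: before citing Lemma~\ref{2ssoon} you should also record that $v_s$ itself is $s$-blocked (its two $s$-blocked neighbors $v_2$ and $v_{f-1}$ are non-adjacent, so Lemma~\ref{mblock1} applies), since the proof of that lemma needs every vector of $S_s$ other than $v_{s-1}$ to be blocked, not just those of index less than $s$; this is exactly the extra check the paper makes.
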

\begin{proof}
The possible gappy indices for $v_s$ are $1, f$, and $f-2$. However, $1$ (resp. $f$) cannot be a gappy index, because otherwise $\braket{v_s}{v_1}= -1$ (resp. $\braket{v_s}{v_f} = -1$) or $\braket{v_s}{v_2} = 2$ (resp. $\braket{v_s}{v_{f+1}}\ge2$). Therefore, 
\begin{equation*}
    v_s = e_{f-2} + e_j + \dots + e_{s-1} - e_s,
\end{equation*}
for $f < j < s$. We have $v_s \sim v_{f-2}$ and $v_s \sim v_{f-1}$. If $|v_{f+1}|=3$, then $v_{f+1} \sim v_{f-1}$, so either $(v_{f-1}; v_1, v_{f+1}, v_s)$ is a claw, or $(v_{f-1}, v_{f+1}, v_s)$ is a heavy triple. Therefore, $|v_{f+1}| = 4$, and $v_{f+1} \sim v_{f-2}$. If $j > f+1$, then $(v_s, v_{f-2}, v_{f+1}, \dots, v_j, v_s)$ is a cycle of length $\geq 4$. Therefore, $j = f+1$, and $v_s \not \sim v_{f+1}$. To avoid a claw $(v_{f-2}; v_{f-3}, v_{f+1}, v_s)$, $f = 4$ and $j = 5$.

Since $v_{f-1}\sim v_1,v_s$ and $v_{2}\sim v_{f+1},v_s$, we get that $v_{f-1}$ and $v_{2}$ are both $s$-blocked by Lemma~\ref{mblock1} and Definition~\ref{defn:blockingneighbor} (1) and (2), hence $v_s$ is $s$-blocked by Lemma~\ref{mblock1} and Definition~\ref{defn:blockingneighbor} (1). Lemma~\ref{mblock1} and Definition~\ref{defn:blockingneighbor} (1) and (3) can further imply that $v_j$ is $s$-blocked for $f+1\leq j<s-1$. Thus, $v_{s-1}$ is the only $s$-open vector, and the result follows from Lemma~\ref{2ssoon}. 
\end{proof}

Summarizing, we have:

\begin{prop}\label{f!=3JR}
Suppose that $f > 3$ and $v_{f-1}$ is just right, then $|v_{f-1}| = f-1$, and one of the following holds. (Other than $v_s$ in Case 1(e), all vectors are just right.) 
\begin{enumerate}

\item $|v_j| = 2$ for $2 < j < f-1$.
\begin{enumerate}
\item $f = 4$, $n = 2$, $|v_5| = 6$.
\item $f = 5$, $n = 3$, $|v_6| = 5$.
\item $n = f - 1$, $|v_{f+1}| = 3$, $|v_{f+2}| = 5$.
\item $n = f - 1$, $|v_{f+1}| = 4$, $|v_{f+2}| = 4$.
\item $f = 4$, $|v_5| = 4$, $|v_j| = 2$ for $5 < j < s$ or $j > s+1$, $|v_{s+1}| = 3$, and
\begin{equation*}
    v_s = e_2 + e_5 + \dots + e_{s-1} - e_s.
\end{equation*}

\end{enumerate}
\item $f = 5$, $n = 2$, $|v_3| = 4$. 
\item $f = 6$, $n = 3$, $|v_3| = 2$, $|v_4| = 3$. 
\end{enumerate}
\end{prop}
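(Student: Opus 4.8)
The plan is to assemble the lemmas proved in this section into the stated list; essentially all the hard work has been done, and what remains is to organize the case split and read off the ranges of $n$. First I would invoke Proposition~\ref{f>3basistovf}, which in the just-right case already pins down $v_2 = e_0+e_1-e_2$ and $v_{f-1} = e_1+\cdots+e_{f-2}-e_{f-1}$; in particular $|v_{f-1}| = f-1$, as claimed in the statement, and exactly one of the three possibilities 2(a), 2(b), 2(c) of that proposition holds for the segment $v_3,\dots,v_{f-2}$. So the proof organizes itself around which of these occurs.

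If some $v_j$ with $2<j<f-1$ has norm $\ge 3$ — that is, we are in case 2(b) ($f=5$, $|v_3|=4$) or 2(c) ($f=6$, $|v_3|=2$, $|v_4|=3$) — then Lemma~\ref{lem:f=n+3} forces $f=n+3$, so the standard basis is exactly $v_1,\dots,v_f$ and there is nothing more to say: these two possibilities are Cases~2 and~3 of the proposition. The substantial case is 2(a), where $|v_i|=2$ for all $2<i<f-1$. Here I would first observe that $f=n+3$ cannot occur: since $\langle v_{f-1},v_j\rangle = 0$ for every $2\le j\le f-2$, if the standard basis stopped at $v_f$ the graph $\wh G(S)$ would be disconnected, the path through $v_2,\dots,v_{f-2}$ being separated from the component of $v_1, v_{f-1}, v_f$, contradicting indecomposability (Corollary~\ref{indecomposable}). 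Hence $v_{f+1}$ exists, and Lemma~\ref{f+1} shows it is just right with $|v_{f+1}|$ in $\{3,4,6\}$, $\{3,4,5\}$, or $\{3,4\}$ according as $f=4$, $f=5$, or $f>5$.

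The two large-norm possibilities are handled by the next lemma: if $f=4,|v_5|=6$ or $f=5,|v_6|=5$ then $f+1=n+3$, which is Cases~1(a) and~1(b). Otherwise $|v_{f+1}|\in\{3,4\}$, the graph $G(S_{f+1})$ is disconnected, and indecomposability again guarantees a least index $s>f+1$ with $|v_s|\ge 3$; Lemma~\ref{f+1openvectors} confines the $(f+1)$-open vectors to $\{v_{f-2},v_{f-1},v_{f+1}\}$, which is what makes $v_s$ controllable. Since there is no tight vector in this section (by Corollary~\ref{cor:OneTight}, Lemma~\ref{lem:BrIsTight} and Lemma~\ref{pairequal}, a tight vector would have to be $v_{f-1}$, which is just right), $v_s$ is either just right or gappy, and these two subcases are precisely the last two lemmas: if $v_s$ is just right then $s=f+2=n+3$ and $(|v_{f+1}|,|v_{f+2}|)\in\{(4,4),(3,5)\}$, giving Cases~1(c),(d); if $v_s$ is gappy then $f=4$, $|v_5|=4$, $v_s = e_2+e_5+\cdots+e_{s-1}-e_s$, and beyond $v_s$ one has $|v_{s+1}|=3$ with all later norms $2$, which is Case~1(e). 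Collecting these yields the list, and the parenthetical remark about all vectors except $v_s$ in Case~1(e) being just right is visible along the way.

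There is no genuine obstacle in the proof of the proposition itself — it is bookkeeping — but it leans on \emph{every} lemma of Sections~\ref{sec:DLattice}--\ref{sec:JustRight} being in place, and the step I would watch most carefully is exhaustiveness in the degenerate small cases: when $f=4$ the segment $v_3,\dots,v_{f-2}$ is empty and $v_{f-1}$ coincides with $v_3$, while for $f=5$ several indices crowd together, so one must verify that the successive dichotomies (2(a) versus 2(b)/2(c); $|v_{f+1}|$ large versus in $\{3,4\}$; $v_s$ just right versus gappy) really do cover all configurations, and that the advertised equalities $n=f-1$, $n=2$, $n=3$ come out exactly as listed.
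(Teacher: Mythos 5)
Your proposal is correct and matches the paper's own treatment: the proposition is stated as a summary of Section~\ref{sec:JustRight}, and you assemble Proposition~\ref{f>3basistovf}, Lemma~\ref{lem:f=n+3}, Lemma~\ref{f+1}, the $|v_5|=6$/$|v_6|=5$ lemma, Lemma~\ref{f+1openvectors}, and the two lemmas on $v_s$ just right versus gappy in exactly the same way, with the same appeal to indecomposability for the existence of $v_{f+1}$ and of $s$. Your explicit check that $f=n+3$ cannot occur in case 2(a) is a small point the paper leaves implicit, but it is the same argument the paper uses elsewhere (Corollary~\ref{indecomposable}), so the route is essentially identical.
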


The corresponding changemakers are:
\begin{itemize}
    \item $(1,1,2,3,3,10)$,
    \item $(1,1,2,2,5,5,14)$,
    \item $(1,1,2^{[s+1]},2s+3,2s+3,4s+6,8s+14)$,
    \item $(1,1,2^{[s+1]},2s+3,2s+3,4s+8,8s+14)$,
    \item $(1,1,2,3,3,8,8^{[s]},8s+10,8s+18^{[t]})$,
    \item $(1,1,2,4,7,7)$,
    \item $(1,1,2,2,4,9,9)$.
\end{itemize}
%%%%%
%%%%%
%%%%%
%%%%%
%%%%%

\section{$f > 3$, $v_{f-1}$ tight}\label{sec:f>3Tight}

In this section, we assume $f > 3$ and $v_{f-1}$ is tight. By Lemma \ref{f>3basistovf}, $v_2 = e_0 + e_1 - e_2$, and $|v_j| = 2$ for $2 < j < f-1$. Since $\braket{v_{f-1}}{v_2} = 2 = |v_2| - 1$, 
\begin{equation}\label{eq:epsilon2=f-1}
\epsilon_2 = \epsilon_{f-1}    \text{ and } A_2 \prec A_{f-1}
\end{equation}
by Lemma~\ref{lem:IntervalProd} and Lemma~\ref{tausign}. Since the left endpoint of $A_{f-1}$ is $x_*$ (which is only contained in $A_1$, $A_{f-1}$ and $A_f$), $A_2$ and $A_{f-1}$ must share the right endpoint. 

\begin{lemma} \label{tightblock}
The vector $v_{f-1}$ is $f$-blocked. For $j > f$, $|\braket{v_j}{v_{f-1}}| \in \{0, |v_j| - 2\}$. The only possible $f$-open vectors are $v_1,v_{f-2},v_f$.
\end{lemma}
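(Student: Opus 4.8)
## Proof Proposal for Lemma~\ref{tightblock}

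The plan is to exploit the special structure forced by tightness of $v_{f-1}$: from~\eqref{eq:epsilon2=f-1} we know $A_2\prec A_{f-1}$ and the two intervals share their right endpoint, while the left endpoint of $A_{f-1}$ is $x_*$. First I would establish that $v_1$ and $v_f$ are $f$-blocked. This does not follow from Lemma~\ref{mblock3} directly, since $v_{f-1}$ is not unbreakable here; instead I would argue that any $v_j$ with $j>f$ neighboring $v_1$ (equivalently, by Lemma~\ref{pairequal} and the paragraph after Proposition~\ref{prop:OnlyTau}, neighboring $v_f$) must have $0\in A_j$, and then show this is incompatible with the pairing constraints coming from $v_{f-1}$: such a $v_j$ would pair with $[A_{f-1}]$ at least as strongly as $[A_2]$ does, forcing $\braket{v_j}{v_{f-1}}$ large and contradicting Lemma~\ref{smallpair} (or producing a heavy triple with $v_2$). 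Some care is needed because $v_{f-1}$ is tight hence possibly breakable, so Lemma~\ref{smallpair} does not apply to the pair $(v_j,v_{f-1})$ directly; I would instead route the contradiction through $v_2$, which is just right, unbreakable, and of norm $\ge 3$.

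Next I would prove the divisibility-type statement $|\braket{v_j}{v_{f-1}}|\in\{0,|v_j|-2\}$ for $j>f$. The idea: using Proposition~\ref{prop:OnlyTau}, $v_j$ (for $j>f$, $j$ not tight by Corollary~\ref{cor:OneTight}) is $\pm[A_j]$ with $[A_j]$ an interval not containing $x_*$ or $x_{**}$. By Lemma~\ref{tausign}, $\braket{v_j}{v_{f-1}}=\pm\braket{[A_j]}{[A_{f-1}]}$, and by Lemma~\ref{lem:IntervalProd} the latter is $0$ or $-1$ if $A_j\cap A_{f-1}=\emptyset$, and otherwise is $|[A_j\cap A_{f-1}]|-1$ or $|[A_j\cap A_{f-1}]|-2$. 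The cases $|[A_j]|=2$ give values in $\{-1,0\}=\{0,|v_j|-2\}$ (noting $\braket{v_j}{v_{f-1}}=-1$ is excluded by Lemma~\ref{pairequal}/parity against $v_1$, as in Lemma~\ref{pairequal}, since $v_j\not\sim v_1$ once $v_1$ is $f$-blocked). For $|[A_j]|\ge3$, $v_j$ is unbreakable (Lemma~\ref{lem:BrIsTight}), so by Lemma~\ref{lem:IntervalProd} $|[A_j\cap A_{f-1}]|$ is either $|[A_j]|$ or $2$; in the first case we'd get $\braket{v_j}{v_{f-1}}$ equal to $|v_j|-1$ or $|v_j|-2$, but $|v_j|-1$ forces $z_j=z_{f-1}$ in a way that contradicts Corollary~\ref{uniqueheavy} once we observe $A_2\prec A_{f-1}$ forces the shared endpoint to be the right one and $z_{f-1}$ to lie in $A_2$ as well — I would need to check the endpoint bookkeeping here carefully — and in the second case we get $0$. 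Either way the claim follows.

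Finally, for the list of possible $f$-open vectors: $v_1$ and $v_f$ are $f$-blocked by the first part. For $2<i<f-1$ we have $|v_i|=2$, and $v_i=e_{i-1}-e_i$ sits in a chain $v_2\sim v_3\sim\cdots\sim v_{f-1}$ with $v_2$ and $v_{f-1}$ both $f$-blocked (the latter being immediate once we show it, the former by an argument parallel to the $v_1$ case); then Lemma~\ref{mblock1} with Definition~\ref{defn:blockingneighbor}~(3),(4) blocks $v_i$ for $2<i<f-2$. To see $v_{f-1}$ itself is $f$-blocked: it neighbors $v_1$ and $v_2$, which are non-adjacent $f$-blocking neighbors (Definition~\ref{defn:blockingneighbor}~(1) for $v_1$, and for $v_2$ one needs $v_2$ to be $f$-blocked, which we get by the parallel argument), so Lemma~\ref{mblock1} applies. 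Hence the only vectors not yet blocked are $v_1$, $v_f$ — wait, those are blocked — so among all of $S_f$ the only ones not forced to be $f$-blocked are $v_{f-2}$ and (before we block them) we must double-check $v_1, v_f$: these are blocked, leaving exactly $v_1, v_{f-2}, v_f$ as the claimed possibilities; but $v_1$ and $v_f$ being already shown $f$-blocked, the genuinely-open candidates reduce further — so I would phrase the conclusion as: every vector in $S_f$ other than $v_1$, $v_{f-2}$, $v_f$ is $f$-blocked, matching the statement.

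\textbf{Main obstacle.} The delicate point is that $v_{f-1}$, being tight, may be breakable, so the clean tools (Lemma~\ref{smallpair}, Corollary~\ref{uniqueheavy}) do not apply to pairs involving $v_{f-1}$. Every contradiction must therefore be channeled through the \emph{unbreakable} vectors $v_2$ and $v_1$, using $A_2\prec A_{f-1}$ to transfer information about $v_{f-1}$ to statements about $v_2$. Getting the interval-endpoint bookkeeping right — in particular that the common endpoint of $A_2$ and $A_{f-1}$ is the right one, forced by the left endpoint of $A_{f-1}$ being $x_*$ — is what makes the divisibility statement and the blocking arguments go through, and that is where I expect the real work to be.
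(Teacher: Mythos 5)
Your proposal is built around a claim that is actually false, and that is why the lemma is stated the way it is. You begin by trying to prove that $v_1$ and $v_f$ are $f$-blocked; but the lemma deliberately lists $v_1$, $v_{f-2}$, $v_f$ as the \emph{possible $f$-open} vectors, because in the tight case $v_1$ and $v_f$ genuinely can be (and, whenever $n+3>f$, are) $f$-open. Indeed, the next lemma in the paper (Lemma~\ref{Vjpossible}, leading to~\eqref{eq:v_f+1}) forces $v_{f+1}=e_1+\cdots+e_{f-2}+e_f-e_{f+1}$, which pairs $-1$ with $v_1$ and nontrivially with $v_f$; since $v_{f+1}$, $v_1$, $v_f$ are all unbreakable, Corollary~\ref{cor:AdjacencyNon0} gives $v_{f+1}\sim v_1,v_f$. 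Your intended contradiction ``route through $v_2$'' evaporates on exactly this vector: $\braket{v_{f+1}}{v_2}=0$ and $\braket{v_{f+1}}{v_{f-1}}=f-2=|v_{f+1}|-2$, which violates nothing. What the paper proves instead in its first step is that $v_{f-1}$ itself is $f$-blocked: assuming $v_j\sim v_{f-1}$ with $j>f$, it rules out a shared left endpoint (the left end of $A_{f-1}$ is $x_*$), rules out a shared right endpoint using $A_2\prec A_{f-1}$ together with $z_j\neq z_2$ (Corollary~\ref{uniqueheavy} applied to the \emph{unbreakable} pair $v_j,v_2$, not to $v_{f-1}$), and then kills the remaining consecutive case $A_j\dagger A_{f-1}$ by a support/parity computation against $v_1$ and $v_2$ ($\braket{v_j}{v_1}=0$, $\epsilon_2=\epsilon_{f-1}$ forces $\braket{v_j}{v_2}=\braket{v_j}{v_{f-1}}=1$, hence $0,1,2\in\supp v_j$ and $\braket{v_j}{v_{f-1}}\ge3$, a contradiction).

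This first step is also what your middle paragraph is missing. You invoke $z_{f-1}$ and Corollary~\ref{uniqueheavy} for the pair $(v_j,v_{f-1})$, but $v_{f-1}$ is tight, hence possibly breakable, so $z_{f-1}$ is not even defined (Definition~\ref{defn:z_j}) and Corollary~\ref{uniqueheavy} does not apply; and you never exclude the consecutive case $A_j\dagger A_{f-1}$ with $|v_j|\ge4$, which would give $|\braket{v_j}{v_{f-1}}|=1\notin\{0,|v_j|-2\}$. In the paper the statement $|\braket{v_j}{v_{f-1}}|\in\{0,|v_j|-2\}$ is an immediate corollary of the non-abutment just established, via Lemma~\ref{lem:IntervalProd} and unbreakability of $v_j$. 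Your final paragraph (blocking $v_i$ for $2\le i\le f-1$, $i\neq f-2$, via Lemma~\ref{mblock1} and the norm-$2$ chain) is essentially the paper's argument, except that the paper blocks $v_2$ using $v_{f-1}$ (already shown $f$-blocked) and $v_3$ as blocking neighbors, whereas your route again leans on the false blocking of $v_1$. So the overall architecture needs to be inverted: the heart of the proof is that $v_{f-1}$ is $f$-blocked, and $v_1,v_f$ must be left as potentially open.
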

\begin{proof}
Suppose for contradiction $v_j \sim v_{f-1}$ for some $j > f$. Since the left endpoint of $A_{f-1}$ is $x_*$, $A_j$ cannot share its left endpoint with $A_{f-1}$. Since $j > f$ and $\braket{v_j}{v_{f-1}} \neq 0$, $|v_j| \geq 3$. Since $z_j \neq z_2$ (Corollary~\ref{uniqueheavy}) and $A_2 \prec A_{f-1}$, $A_j$ does not share its right endpoint with $A_{f-1}$. Thus, $A_j \dagger A_{f-1}$. Since the left endpoint of $A_{f-1}$ is $*$, $A_j$ is distant from $A_1$, so $\braket{v_j}{v_1}=0$.
Since $\epsilon_2=\epsilon_{f-1}$, $\braket{v_j}{v_2} = \braket{v_j}{v_{f-1}} = \pm 1$. This implies $\supp v_j \cap \{0,1,2\} \neq \emptyset$, so $\braket{v_j}{v_{f-1}} \geq 0$. Hence, $\braket{v_j}{v_2} = \braket{v_j}{v_{f-1}} = 1$. However, if $\braket{v_j}{v_2} = 1$, using $\braket{v_j}{v_1}=0$, we get $0,1,2 \in \supp v_j$, and $\braket{v_j}{v_{f-1}} \geq 3$, a contradiction. Thus, for all $j > f$, $v_j \not\sim v_{f-1}$, and immediately $|\braket{v_j}{v_{f-1}}| \in \{0, |v_j| - 2\}$ by Lemma~\ref{lem:IntervalProd}.

Since $|v_i| = 2$ for $2 < i < f-1$, $v_i$ is $f$-blocked for $3 < i < f-2$ by Lemma~\ref{mblock1}. When $3<f-1$, by Definition~\ref{defn:blockingneighbor} (1) and (4), $v_{f-1}$ and $v_3$ are $f$-blocking neighbors of $v_2$. By Lemma~\ref{mblock1}, $v_2$ is $f$-blocked. When $3 < f-2$, $v_3$ is also $f$-blocked by Lemma~\ref{mblock1}, since $v_2$ and $v_4$ are $f$-blocking neighbors. 
\end{proof}

\begin{definition}\label{defn:Vj}
$V_j := \supp v_j \cap \{0,1,...,f\}$.
\end{definition}

\begin{lemma} \label{Vjpossible}
$V_{f+1} = \{1,2,\dots,f-2,f\}$, $V_{f+2} = \{f-2\}$ or $\emptyset$, and $V_j = \emptyset$ for $j > f+2$.
\end{lemma}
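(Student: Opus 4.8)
The plan is to compute $\supp v_j\cap\{0,\dots,f\}$ for $j>f$ by playing off the two halves of Lemma~\ref{tightblock} against the explicit shape of $v_1,\dots,v_f$ and the usual claw/heavy-triple exclusions. By Lemma~\ref{tightblock} the vectors $v_2,\dots,v_{f-3}$ are $f$-blocked, and since they are all unbreakable (norm $2$, or norm $3$ for $v_2$) Corollary~\ref{cor:AdjacencyNon0} promotes this to $\braket{v_j}{v_i}=0$ for $2\le i\le f-3$. Writing $v_j=\sum_i c_ie_i$ (with $c_i\in\{0,1\}$ for $i<j$, since $v_j$ is not tight by Corollary~\ref{cor:OneTight}) and reading these orthogonalities against $v_2=e_0+e_1-e_2$ and $v_i=e_{i-1}-e_i$ $(3\le i\le f-3)$ gives $c_2=\cdots=c_{f-3}$ and $c_0+c_1=c_2$, so $\supp v_j\cap\{0,1,\dots,f-3\}$ must be one of $\emptyset$, $\{1,2,\dots,f-3\}$, or $\{0,2,3,\dots,f-3\}$.

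For $j=f+1$ (so $A_{f+1}\subset\{0,\dots,f\}$, and $f\in\supp v_{f+1}$ by Lemma~\ref{lem:j-1}) I would evaluate $\braket{v_{f+1}}{v_{f-1}}$ using the tight vector $v_{f-1}=2e_0+e_1+\cdots+e_{f-2}-e_{f-1}$: when $\supp v_{f+1}$ meets $\{0,\dots,f-3\}$ this pairing is strictly positive, hence equals $|v_{f+1}|-2$ by Lemma~\ref{tightblock}, and matching that against $|v_{f+1}|=|\supp v_{f+1}|$ forces $c_{f-1}=0$ and kills the branch $c_0=1$. The branch $\supp v_{f+1}\subset\{f-2,f-1,f\}$ is excluded because it would make $\sigma_{f+1}$ small, contradicting that the monotone changemaker with $\sigma_0=\sigma_1=1$, $\sigma_2=2$ and $v_{f-1}$ tight forces $\sigma_{f+1}$ large enough that its maximal representing subset reaches below index $f-3$; and the branch with $c_{f-2}=0$ is ruled out since then $v_{f+1}$ would abut $v_{f-1}$, contradicting that $v_{f-1}$ is $f$-blocked (here Lemma~\ref{tightblock}'s bound $|\braket{v_{f+1}}{v_{f-1}}|\in\{0,|v_{f+1}|-2\}$ is what lets one conclude $v_{f-1}\not\sim v_{f+1}$ even when $v_{f-1}$ happens to be breakable; alternatively one exhibits a claw at $v_{f-2}$). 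What remains is $V_{f+1}=\{1,2,\dots,f-2,f\}$; the degenerate case $f=4$, where $v_2=v_{f-2}$ and the range $2<i<f-1$ is empty, is handled by the same pairing computation applied directly to $v_5$.

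Once $V_{f+1}$ is known, $v_{f+1}$ overlaps the earlier vectors enough that the Section~\ref{sec:Blocked} tools apply: Lemmas~\ref{mblock1} and~\ref{mblock2} show $v_{f+1},v_1,v_f,v_{f-1}$ are all $(f+1)$-blocked, so $v_{f-2}$ is the only possible $(f+1)$-open vector, and then Lemma~\ref{2ssoon} forces $v_{f+2}$ to have $v_{f-2}$ as its unique smaller neighbor (or none) and $|v_k|=2$ for $k>f+2$. Rerunning the orthogonality-to-coordinates step for $v_{f+2}$, now also orthogonal to $v_{f+1}$, collapses $\supp v_{f+2}\cap\{0,\dots,f\}$ to $\{f-2\}$ or $\emptyset$; and for $j>f+2$ the norm-$2$ conclusion together with $j-1\in\supp v_j$ pushes $\supp v_j$ entirely past $f$, giving $V_j=\emptyset$. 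The main obstacle is the middle step: eliminating the two degenerate supports for $v_{f+1}$ means balancing the tight-vector pairing bound, changemaker monotonicity, and the graph exclusions all at once, while keeping track that $v_{f-1}$ may be breakable — which is exactly why one leans on Lemma~\ref{tightblock}'s explicit pairing bound rather than on non-adjacency alone.
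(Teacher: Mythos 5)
Your first reduction (orthogonality to the $f$-blocked unbreakable vectors $v_2,\dots,v_{f-3}$, giving $\supp v_j\cap\{0,\dots,f-3\}\in\{\emptyset,\{1,\dots,f-3\},\{0,2,\dots,f-3\}\}$) is sound and parallels the paper's opening move, but the way you dispose of the degenerate branch $V_{f+1}\subseteq\{f-2,f-1,f\}$ is a genuine gap, and it is exactly where the real work of this lemma lies. No changemaker-size argument can exclude it: taking $v_{f+1}=e_{f-1}+e_f-e_{f+1}$ gives $\sigma_{f+1}=\sigma_{f-1}+\sigma_f=4f-6$, which is \emph{larger} than the value $4f-8$ produced by the true answer $V_{f+1}=\{1,\dots,f-2,f\}$, has $\{f-1,f\}$ as its maximal representing subset, satisfies the changemaker condition, passes the parity test of Lemma~\ref{parity}, and is consistent with the bound of Lemma~\ref{tightblock} since $\braket{v_{f+1}}{v_{f-1}}=-1=-(|v_{f+1}|-2)$. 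So nothing "forces $\sigma_{f+1}$ large enough that its maximal representing subset reaches below index $f-3$." The paper kills this case by showing $v_{f-1}+v_{f+1}$ is irreducible (Lemma~\ref{lem:SumIrr}) and then exhibiting it, after applying $\tau$, as a signed sum of distant intervals; the other degenerate possibility $V_{f+1}=\{f-2,f-1,f\}$ (which also passes parity and Lemma~\ref{tightblock}) is excluded only by the geometric argument that $A_3,\dots,A_{f-2}$ sit inside $A_{f-1}$ to the left of $z_2$, forcing $\braket{v_j}{v_{f-1}}\neq 0$ when $\min\supp v_j=f-2$. Your proposal has no counterpart to either argument. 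Two smaller mechanisms also fail as stated: in the branch $c_0=1$, matching the pairing with $|v_{f+1}|-2$ yields $2c_{f-1}+c_f=2$, i.e.\ $c_{f-1}=1$, $c_f=0$, which is not a contradiction (that branch dies instead by Lemma~\ref{gappy3} or Lemma~\ref{pairequal}); and $c_{f-2}=0$ does not force $v_{f+1}$ to abut $v_{f-1}$, since a pairing of size $|v_{f+1}|-2$ is equally consistent with $A_{f+1}\pitchfork A_{f-1}$ and $z_{f+1}\in A_{f-1}$ (again Lemma~\ref{gappy3} is the correct, easy fix).

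The second half has a separate problem: Lemma~\ref{2ssoon} assumes $S$ contains no breakable vector and Lemma~\ref{mblock2} assumes every later vector is just right, and neither hypothesis is available here because $v_{f-1}$ is tight and may well be breakable (its interval $A_{f-1}$ can contain two high-weight vertices). This is precisely why the paper does not run the Section~\ref{sec:Blocked} machinery at this point; instead it carries out the same $\min\supp v_j$ case analysis uniformly for all $j>f$, concluding $V_j\in\{\emptyset,\{f-2\}\}$ for $j>f+1$, and then uses the sign bookkeeping $\epsilon_j=\epsilon_{f-1}=\epsilon_{f+1}$ against $\braket{v_j}{v_{f+1}}=1$ to show $V_j=\{f-2\}$ can occur only for $j=f+2$ (the norm statement $|v_j|=2$ for $j>f+2$ is deferred to Proposition~\ref{f!=3tight}, where blockedness is established directly and only Lemma~\ref{2sforever}, which tolerates a breakable vector, is invoked). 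To repair your argument you would need to replace the $\sigma$-size claim by the irreducibility and interval-geometry arguments above, and replace the appeals to Lemmas~\ref{2ssoon} and~\ref{mblock2} by an analysis that does not presuppose the absence of breakable vectors.
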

\begin{proof}
Suppose $j > f$ and $V_j \neq \emptyset$, then $\min\supp v_j \in \{0,1,f-2,f-1\}$ by Lemma~\ref{tightblock} and the parity condition (Lemma~\ref{parity}). We will discuss each of the four possibilities of $\min\supp v_j$. Since $|v_1| = |v_f| = 2$, $0$ or $f-1$ is not a gappy index. Since $|v_i| = 2$ for $2 < i < f-1$, no $i$ is a gappy index for $2 \le i < f-2$. The following shows $1$ is not a gappy index of $v_j$. For contradiction, suppose $1 \in \supp v_j$ and $2 \not\in \supp v_j$. Since $|\braket{v_j}{v_2}| \le 1$, $0 \not\in \supp v_j$. Since $\braket{v_j}{v_2} = 1$, by Lemma~\ref{smallpair} and (\ref{eq:epsilon2=f-1}), $\epsilon_{j} = -\epsilon_2 = -\epsilon_{f-1}$. Since $\braket{v_j}{v_1} = -1$, $A_j$ has $x_0$ as its left endpoint, so $z_{j} \in A_{f-1}$. However, since $\braket{v_j}{v_f} \equiv \braket{v_j}{v_1} \equiv -1 \pmod 2$, $\supp v_j\cap\{f-1,f\}=\{f\}$, so $\braket{v_j}{v_{f-1}} \geq 0$, contradicting $z_{j} \in A_{f-1}$ and $\epsilon_{j} = -\epsilon_{f-1}$. Thus, the only possible gappy index in $\{0,1,\dots,f-1\}$ is $f-2$.

If $\min \supp v_j = 0$, then $0,1,2, \dots,f-2 \in \supp v_j$, and $\braket{v_j}{v_2} = 1$, so $\epsilon_j = -\epsilon_2$ and $A_j\dagger A_2$ by Lemma~\ref{smallpair}. However, since $\braket{v_j}{v_{f-1}} \ge f-1\ge3$, $\epsilon_j = \epsilon_{f-1}$, contradicting (\ref{eq:epsilon2=f-1}). 

If $\min \supp v_j = f-1$, then $\braket{v_j}{v_{f-1}} = -1$. By Lemma \ref{tightblock}, $|\braket{v_j}{v_{f-1}}| = |v_j|-2$, so $|v_j| = 3$, $A_j\pitchfork A_{f-1}$ and $\epsilon_j=-\epsilon_{f-1}$. However, $V_j = \{f-1,f\}$ by the parity condition, so by Lemma~\ref{lem:j-1}, $j = f+1$. Now $v_{f+1} = -e_{f+1}+e_f+e_{f-1}$, so $v_{f+1}+v_{f-1}$ is irreducible by Lemma~\ref{lem:SumIrr}. 
%Recall that the left endpoint of $A_{f-1}$ is $*$, and (\ref{eq:epsilon2=f-1}) implies that the rightmost vertex of $A_{f-1}$ with norm $\ge3$ is $z_2$.
%Since $A_{f+1} \pitchfork A_{f-1}$ and $z_2 \notin A_{f+1}$, $A_{f+1} \subset A_{f-1}$. 
Since $\braket{v_{f+1}}{v_1}=0$, $A_{f+1}$ does not contain $x_0$, so $\tau([A_{f+1}])=-[A_{f+1}]$.
Since $A_{f+1}\pitchfork A_{f-1}$ and $\epsilon_{f+1} = -\epsilon_{f-1}$, up to applying $\tau$, $v_{f-1} + v_{f+1}$ becomes $[A_{f-1}]-[A_{f+1}]$, which is a signed sum of two distant intervals, contradicting the irreducibility of $v_{f-1} + v_{f+1}$.  

If $\min \supp v_j = 1$, then $1, 2, \dots, f-2 \in \supp v_j$. Since $\braket{v_j}{v_f} \equiv \braket{v_j}{v_1} \equiv -1 \pmod 2$ and $f-1$ is not a gappy index, $f-1 \notin \supp v_j$ and $f \in \supp v_j$. Hence, $\braket{v_j}{v_{f-1}} = f-2$. By Lemma~\ref{tightblock}, $\braket{v_j}{v_{f-1}} = |v_j|-2$, so $|v_j| = f$. By Lemma~\ref{lem:j-1}, $j-1,j \in \supp v_j$. So $|v_j| = f$ implies that $j = f+1$ and $V_{f+1} = \{1,2,\dots,f-2,f\}$.

If $\min \supp v_j = f-2$, then $\braket{v_j}{v_f}=\braket{v_j}{v_1} =0$. Suppose $\braket{v_j}{v_{f-1}} = 0$, then $z_j \not \in A_{f-1}$, so $z_j$ is to the right of $z_2$ by (\ref{eq:epsilon2=f-1}). However, since $v_2\sim v_3$ and $v_3 \not\sim v_{f-1}$ when $f > 4$, $A_3$ is consecutive to $A_2$ on the left, so $A_3, \dots, A_{f-2}$ (all of norm 2) lie inside $A_{f-1}$ to the left of $z_2$. This contradicts $v_{f-2} \sim v_j$ when $f > 4$. When $f = 4$, the contradiction instead comes via a similar argument from the fact that $v_j\sim v_2$, but neither $z_j \in A_{f-1}$ nor $v_j \sim v_{f-1}$. The above shows $\braket{v_j}{v_{f-1}} \neq 0$, hence $f-1 \notin \supp v_j$. Since $\braket{v_j}{v_f}=0$, $V_j = \{f-2\}$. 

By Lemma~\ref{lem:j-1}, $f \in \supp v_{f+1}$, we may conclude $V_{f+1} = \{1,2,\dots,f-2,f\}$.

For $j > f+1$, the above shows either $V_j = \emptyset$ or $V_j = \{f-2\}$. If $V_j = \{f-2\}$, then since $|v_j| - 2= \braket{v_j}{v_{f-1}} = 1$, $|v_j| = 3$ and $v_j = -e_j+e_{j-1}+e_{f-2}$. Since $\braket{v_j}{v_{f-1}} > 0$ and $\braket{v_{f+1}}{v_{f-1}} > 0$, using Lemma~\ref{tightblock}, we have $\epsilon_j = \epsilon_{f-1}$ and $\epsilon_{f+1} = \epsilon_{f-1}$. Unless $j = f+2$, $\braket{v_j}{v_{f+1}} = 1$, thus $A_j\dagger A_{f+1}$ and $\epsilon_j = -\epsilon_{f+1}$, a contradiction. 
\end{proof}

It follows from Lemma~\ref{Vjpossible} that
\begin{equation}\label{eq:v_f+1}
v_{f+1} = e_1+...+e_{f-2}+e_f-e_{f+1},
\end{equation}
 and that \[v_{f+2} = e_{f+1}-e_{f+2} \text{ or } e_{f-2}+e_{f+1}-e_{f+2}.\]

\begin{prop}\label{f!=3tight}
We have $|v_j| = 2$ for $j > f+2$.
\end{prop}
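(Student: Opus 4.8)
The plan is to show that $v_i$ is $(f+2)$-blocked for every $i\le f+1$ and then apply Lemma~\ref{2sforever} with $m=f+2$, $N=n+3$: since $S_{f+1}$ contains $v_2$, which has norm $3$ and (by Corollary~\ref{cor:OneTight} and Lemma~\ref{lem:BrIsTight}, as $v_{f-1}$ is the only tight and hence only possibly breakable vector) is unbreakable, this will give $|v_j|=2$ for all $j>f+2$. A preliminary remark: from the explicit forms $v_{f+1}=e_1+\cdots+e_{f-2}+e_f-e_{f+1}$ and $v_{f-1}=2e_0+e_1+\cdots+e_{f-2}-e_{f-1}$ we get $\braket{v_{f+1}}{v_{f-1}}=f-2\ge2$; since $v_{f+1}$ is unbreakable of norm $f\ge4$, Lemma~\ref{smallpair} forces $v_{f-1}$ to be breakable, so every $v_\ell$ with $\ell\ne f-1$ is unbreakable.

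By Lemma~\ref{tightblock} the vectors $v_2,\dots,v_{f-1}$ other than $v_{f-2}$ are $f$-blocked, hence $(f+2)$-blocked. For $v_1$, $v_f$ and $v_{f-2}$, Lemma~\ref{Vjpossible} gives $\supp v_j\subseteq\{f+1,f+2,\dots\}$ for every $j>f+2$; as $\supp v_1$, $\supp v_f$ and $\supp v_{f-2}$ all lie in $\{0,\dots,f\}$, each of these three vectors pairs trivially with every such $v_j$ and is $(f+2)$-blocked.

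It remains to handle $v_{f+1}$, and here we split on the two possibilities for $v_{f+2}$ from Lemma~\ref{Vjpossible}. If $v_{f+2}=e_{f+1}-e_{f+2}$, then $|v_{f+2}|=2$, and a direct computation shows $v_1\sim v_{f+1}$, $v_{f+2}\sim v_{f+1}$ and $v_1\not\sim v_{f+2}$. Here $v_1$ is an $(f+2,n+3)$-blocking neighbor of $v_{f+1}$ by Definition~\ref{defn:blockingneighbor}(1), and $v_{f+2}$ is one by Definition~\ref{defn:blockingneighbor}(4) (using that every $v_\ell$ with $\ell\le f$ is $(f+2)$-blocked and every $v_\ell$ with $\ell>f-1$ is unbreakable), so Lemma~\ref{mblock1} gives that $v_{f+1}$ is $(f+2)$-blocked. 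If $v_{f+2}=e_{f-2}+e_{f+1}-e_{f+2}$, suppose for contradiction that $v_{f+1}\sim v_j$ for some $j>f+2$, chosen minimal. Since $\supp v_j\cap\{0,\dots,f\}=\emptyset$, the only index shared by $v_{f+1}$ and $v_j$ is $f+1$, so $f+1\in\supp^+v_j$ and $\braket{v_{f+1}}{v_j}=-1$; also $v_{f-2}\sim v_{f+2}$ while $v_{f+1}\not\sim v_{f+2}$. If $f+2\notin\supp v_j$, then $\braket{v_{f+2}}{v_j}=1$, which is a genuine edge of $G(S)$ because the transverse alternative would force the heavy index $z_{f+2}$ into $A_j$, impossible by Corollary~\ref{uniqueheavy}; then
\[
v_{f-2}\sim v_{f+2}\sim v_j\sim v_{f+1}\sim v_1\sim v_{f-1}\sim v_2\sim v_3\sim\cdots\sim v_{f-2}
\]
is a simple cycle on $f+2\ge6$ vertices, which is neither complete nor of the special four-vertex type, contradicting Lemma~\ref{lem:CycleLength} (via Lemma~\ref{lem:CycleTwoCases}). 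If $f+2\in\supp v_j$, then $\braket{v_{f+2}}{v_j}=0$; using the minimality of $j$, Lemma~\ref{Vjpossible} and Lemma~\ref{gappy3} one pins down $\supp v_j$ and the supports of $v_{f+3},\dots,v_{j-1}$, and then chasing the adjacencies of $v_j$ and $v_{f+1}$ again produces a claw or an over-long cycle. In all cases $v_{f+1}$ is $(f+2)$-blocked, and the proposition follows from Lemma~\ref{2sforever}.

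The main obstacle is the last sub-case, $v_{f+2}=e_{f-2}+e_{f+1}-e_{f+2}$ with $f+2\in\supp v_j$: unlike the other cases, the two neighbors $v_1,v_f$ of $v_{f+1}$ that are available for free abut one another via the exceptional $v_1$--$v_f$ edge of $G(S)$, so the no-claws lemma cannot be applied at $v_{f+1}$, and the argument has to descend into the tail $v_{f+3},\dots,v_j$, controlling its structure via Lemma~\ref{gappy3} and Corollary~\ref{uniqueheavy} before a forbidden subgraph can be produced.
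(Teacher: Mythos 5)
Most of your argument is sound and, where it works, it is a genuinely more combinatorial route than the paper's (the paper establishes the interval containments $A_{f+1}\subset A_{f-1}$ and $A_{f+2}\subset A_{f-1}$ and then uses the dichotomy coming from $\braket{v_j}{v_{f-1}}=0$ and Lemma~\ref{lem:IntervalProd}, rather than blocking neighbors and forbidden subgraphs). Your blocking of $v_1,v_f,v_{f-2}$ by the support argument, the case $v_{f+2}=e_{f+1}-e_{f+2}$ via Definition~\ref{defn:blockingneighbor}(1),(4) and Lemma~\ref{mblock1}, and the sub-case $f+2\notin\supp v_j$ via the long cycle through $v_{f-2},v_{f+2},v_j,v_{f+1},v_1,v_{f-1},v_2,\dots$ all check out.

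However, the last sub-case, $v_{f+2}=e_{f-2}+e_{f+1}-e_{f+2}$ with $f+1,f+2\in\supp v_j$, is not proved: you only assert that "one pins down" the supports and "produces a claw or an over-long cycle", and this is exactly where the content of the proposition sits. Concretely, take $j=f+3$ and $v_{f+3}=e_{f+1}+e_{f+2}-e_{f+3}$ (which is compatible with Lemma~\ref{Vjpossible}, Lemma~\ref{lem:j-1}, Lemma~\ref{gappy3} and Lemma~\ref{tightblock}): then $\braket{v_{f+3}}{v_{f+2}}=0$, $\braket{v_{f+3}}{v_i}=0$ for all $i\le f$, so in $G(S_{f+3})$ the vector $v_{f+3}$ has $v_{f+1}$ as its only neighbor; no claw can be centered so as to use it (the claw $(v_{f+1};v_1,v_f,v_{f+3})$ fails because of the exceptional edge $v_1\sim v_f$, as you note), and no cycle passes through it. So the forbidden-subgraph chase you sketch does not terminate at this stage, and it is unclear it ever does without importing finer lattice-geometric information. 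The paper excludes precisely this kind of vector by a different mechanism: from $\braket{v_j}{v_{f-1}}=0$ and Lemma~\ref{lem:IntervalProd} it deduces that either $|v_j|=2$ or $A_j$ is distant from $A_{f-1}$, and since $A_{f+1}\subset A_{f-1}$ (because $z_{f+1}$ is the leftmost high-norm vertex and $z_2$ the rightmost high-norm vertex of $A_{f-1}$), a distant $A_j$ cannot abut $A_{f+1}$, so $v_j\not\sim v_{f+1}$. Until you supply an argument of comparable strength for this sub-case, the proof has a genuine gap.
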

\begin{proof}
By (\ref{eq:v_f+1}), $v_{f+1}\sim v_1,v_f$, so $z_{f+1}$ is the leftmost vertex with norm $\ge3$. In particular,  $z_{f+1}\in A_{f-1}$. We also know that $z_2$ is the rightmost vertex in $A_{f-1}$ with norm $\ge3$ from the beginning of this section. Thus, $A_{f+1} \subset A_{f-1}$.

By Lemma~\ref{Vjpossible}, $V_{f+2} = \{f-2\}$ or $\emptyset$. If $V_{f+2} = \emptyset$, then $v_{f+2} \sim v_{f+1}$. Since $A_{f+1} \subset A_{f-1}$ and $z_2\in A_{f-1}$ is to the right of $A_{f+1}$, $A_{f+2} \subset A_{f-1}$. If $V_{f+2}=\{f-2\}$, we claim that $z_{f+2}\in A_{f-1}$.
When $f>4$,
since $v_3 \not\sim v_{f-1}$, $v_3\sim v_2$, and $A_2$ shares the right endpoint with $A_{f-1}$, $A_3$ must abut $A_2$ on its left. Then since $v_{f+2} \sim v_{f-2} \sim ... \sim v_3$ and $z_{f+2}$ is the unique element with norm $\ge3$ in the intervals $A_{f+2},A_{f-2},\dots,A_3$, $z_{f+2}$ must be to the left of $z_2$. When $f = 4$, this still holds because $v_{f+2} \not\sim v_{f-1}$ and $v_{f+2}\sim v_2$. Hence $z_{f+2}\in A_{f-1}$. In either case, since $z_2\in A_{f-1}$ is to the right of $z_{f+2}$, $A_{f+2} \subset A_{f-1}$.
 
For any $j > f+2$, $V_j = \emptyset$ by Lemma~\ref{Vjpossible}. Since $\braket{v_j}{v_{f-1}} = 0$, Lemma~\ref{lem:IntervalProd} implies that either $A_j$ is distant from $A_{f-1}$ or $|v_j| = 2$. If $A_j$ and $A_{f-1}$ are distant, by the result in the first  paragraph in this proof, $A_{f+1} \subset A_{f-1}$ and $v_j \not\sim v_{f+1}$. When $f > 4$, the argument in the second paragraph shows that $A_{f-2} \subset A_{f-1}$, and when $f = 4$, $A_{f-2} = A_2\prec A_{f-1}$ by (\ref{eq:epsilon2=f-1}). So we also have $v_j \not\sim v_{f-2}$. If $|v_j|=2$, $\braket{v_j}{v_{f+1}}=\braket{v_j}{v_{f-2}}=0$, so $v_j\not\sim v_{f+1},v_{f-2}$. Thus, $v_{f+1}$ and $v_{f-2}$ are $(f+2)$-blocked. Since $\braket{v_1}{v_i} = 0$ for all $i > f+2$, $v_1, v_f$ are $(f+2)$-blocked. The result follows from Lemma~\ref{tightblock} and Lemma~\ref{2sforever}.
\end{proof}

The corresponding changemakers are
\begin{itemize}
    \item $(1,1,2^{[s+1]},2s+5,2s+5,4s+8^{[t]})$,
    \item $(1,1,2^{[s+1]},2s+5,2s+5,4s+8,4s+10^{[t]})$, $t>0$.
\end{itemize}
%%%%%
%%%%%
%%%%%
%%%%%
%%%%%

\section{$f=3$}\label{sec:f=3}

In this case, $a_0 \geq 3$, $v_2 = -e_2+e_1+2e_0$ is tight by Proposition~\ref{f=3basistovf}, and the left endpoint of $A_2$ is $x_*$. 

\begin{lemma}\label{highnormsinT2}
If $A_2$ contains $m$ distinct vertices $x_{i_1},x_{i_2},...,x_{i_m}$ of norm $\geq 3$, then $S$ contains at least $m-1$ vectors $v_j$ such that $j > 3$, $z_j \in A_2$ and $V_j \neq \emptyset$, where $V_j$ is defined in Definition~\ref{defn:Vj}.
\end{lemma}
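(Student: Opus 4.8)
The plan is to extract, from the $m$ high-norm vertices sitting inside the interval $A_2$, a collection of $m-1$ standard basis vectors of the required kind. Recall that $v_2 = 2e_0 + e_1 - e_2$ is tight, so $A_2$ is an interval (up to applying $\tau$) with left endpoint $x_*$, and $|v_2| = a_0 + \cdots$ is determined via \eqref{eq:IntervalNorm} by which high-norm vertices lie in $A_2$. The first step is to recall Corollary~\ref{uniqueheavy}: if $v_j$ is unbreakable of norm $\ge 3$, the vertex $z_j$ of norm $\ge 3$ in $A_j$ is unique, and distinct unbreakable high-norm standard basis vectors have distinct $z_j$. Since $v_2$ is tight and breakable in general, $A_2$ can contain many high-norm vertices, but the point is that each high-norm vertex $x_{i_\ell}$ with $\ell \ge 2$ (say, all but the leftmost) must be ``accounted for'' by some other standard basis vector whose associated interval reaches into $A_2$.

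The key step is the following: order the high-norm vertices of $A_2$ along the interval as $x_{i_1}, \dots, x_{i_m}$ (with $x_{i_1}$ closest to the left endpoint $x_*$). For each consecutive pair $x_{i_\ell}, x_{i_{\ell+1}}$, consider the sub-interval $B_\ell$ of $A_2$ strictly between them (containing only norm-$2$ vertices). The corresponding interval $[B_\ell]$ has norm $2$ and pairs $-1$ with $[A_2]$ when $A_2 \supsetneq B_\ell$ shares a common endpoint appropriately; more to the point, $[B_\ell]$ is not itself expressible in the standard basis of $R$ in a way that would let us write $[A_2]$ as a sum of lower vectors unless there is a standard basis vector $v_j$ with $j>3$ whose interval $A_j$ straddles the gap. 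Concretely, I would argue as in Greene's proof of the analogous lens-space statement \cite[proof of Proposition~3.6]{greene:LSRP} and use Lemma~\ref{indepintervals}: the intervals $[A_2], [A_4], \dots, [A_{n+3}]$ together with $[A_1]$ are linearly independent, and $[A_2]$ has $x_{i_1}, \dots, x_{i_m}$ in its support; if fewer than $m-1$ of the later standard basis vectors had $z_j \in A_2$ with $V_j \ne \emptyset$, then the span of $\{v_j : j > 3\}$ restricted to the ``high-norm coordinates'' inside $A_2$ would be too small to contain $v_2$, contradicting that $v_2$ is itself a standard basis vector and that $S$ spans $L$.

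More carefully, I would make this a counting/rank argument: pass to the quotient lattice $L / R$ (quotient by the sublattice generated by norm-$2$ vectors), or equivalently project onto the span of the high-norm vertices $x_{i}$ with $a_i \ge 3$. Under this projection, $v_2$ maps to a vector whose support is exactly $\{x_{i_1}, \dots, x_{i_m}\}$, hence has ``length'' $m$ in the relevant sense. The vectors $v_1, v_3$ project to $0$ (they are norm-$2$), and by Corollary~\ref{uniqueheavy} each unbreakable high-norm $v_j$ projects to something supported on the single vertex $z_j$. For the projections of $\{v_1,\dots,v_{n+3}\}$ to span the projection of $v_2$ — which they must, since $S$ spans $L$ — we need the high-norm vertices $x_{i_1},\dots,x_{i_m}$ of $A_2$ all to appear as $z_j$ for various $j$, and at least $m-1$ of these $j$ must exceed $3$ (the exception $j \le 3$ being absorbed by, at most, the leftmost vertex, or by $v_2$ itself which we are not allowed to count). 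Each such $v_j$ with $z_j \in A_2$ automatically has $V_j \ne \emptyset$: indeed, since $A_j$ intersects $A_2$ (they share the vertex $z_j$) and $A_2$'s left endpoint is $x_*$ which lies ``below index $f=3$'', the interval $A_j$ must extend down into $\{0,1,\dots,f\}$, forcing $\min\supp v_j \le f$, i.e.\ $V_j \ne \emptyset$ — or else $A_j \subsetneq A_2$, but then $v_j$ would be a high-norm vector disjoint from the region below index $3$, and one checks via Lemma~\ref{parity} and the structure of $v_2$ that this still forces $j$ into the required count or yields a contradiction with the irreducibility results of Section~\ref{sec:DLattice}.

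The main obstacle I anticipate is bookkeeping the boundary/endpoint interactions cleanly: distinguishing whether a later interval $A_j$ shares an endpoint with $A_2$, is nested inside it, or straddles a high-norm vertex, and making sure the map ``$v_j \mapsto z_j$'' genuinely hits all of $x_{i_2},\dots,x_{i_m}$ rather than doubling up or missing one. This is exactly where Corollary~\ref{uniqueheavy} (injectivity of $j \mapsto z_j$ on unbreakable high-norm vectors) and Lemma~\ref{indepintervals} (independence of the $[A_j]$) do the real work; the former prevents two later vectors from covering the same high-norm vertex wastefully, and the latter guarantees the projections are genuinely independent so that a rank count gives the sharp bound $m-1$ rather than something weaker.
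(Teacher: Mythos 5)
Your core argument---pass to the quotient of $\Delta(p,q)$ by the sublattice generated by the norm-$2$ vertices, note that $v_1,v_3$ die and each unbreakable high-norm $v_j$ projects to $\pm z_j$ (Corollary~\ref{uniqueheavy}), and run a spanning count in the quotient to force at least $m-1$ of the $x_{i_t}$ to occur as $z_j$ for some $j>3$---is exactly the proof the paper gives, so your proposal is essentially the same approach and is correct in outline. Two small repairs: the spanning target must be the individual generators $\pi(x_{i_1}),\dots,\pi(x_{i_m})$ of the quotient (requiring the projections of $S$ to span ``the projection of $v_2$'' is vacuous since $v_2\in S$), and $V_j\neq\emptyset$ follows in one line because $z_j\in A_2$ forces $\braket{v_j}{v_2}\neq 0$ while $\supp v_2\subseteq\{0,1,2\}$, rather than via the interval-straddling discussion (which conflates vertex-basis indices with coordinates of $\Z^{n+4}$).
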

\begin{proof}
%Since $S$ spans the entire lattice and $A_2$ is the only breakable interval, at least $m-1$ vertices among $x_{i_1},x_{i_2},...,x_{i_m}$ are each contained in an interval $A_j$ for some $j \neq 2$. 
Let $\Delta=\Delta(p,q)$, $\overline{\Delta}$ be the quotient of $\Delta$ by the sublattice spanned by vectors of norm 2, and let $\pi:\Delta\to\overline{\Delta}$ be the quotient map. By the definition of $\tau=\tau_0$ in Definition~\ref{defn:Reflection}, we have
\begin{equation}\label{eq:pi}
\pi\circ\tau=-\pi.    
\end{equation}
Since $S$ spans $\Delta$, it follows from Proposition~\ref{prop:OnlyTau} and (\ref{eq:pi}) that $\{\pi([A_i])\}_{i=1}^{n+3}$ spans $\overline{\Delta}$. In particular, $x_{i_1},x_{i_2},...,x_{i_m}$ are all linear combinations of $\pi([A_i])$'s. Hence there are at least $m-1$ intervals  other than $A_2$ containing one  $x_{i_t}$. Since $A_2$ is the only possibly breakable interval, Corollary~\ref{uniqueheavy} implies that at least $m-1$ vertices among $x_{i_1},x_{i_2},...,x_{i_m}$ are each contained in an interval $A_j$ for some $j > 3$.
For each $z_j \in A_2$, $\braket{v_j}{v_2} \neq 0$ and $V_j \neq \emptyset$.
\end{proof}

\begin{lemma}\label{Vjpossibilities}
$v_4 = -e_4+e_3+e_2$ or $v_4 = -e_4+e_3+e_1$. For all $j \ge 5$, $V_j = \emptyset \text{ or } \{0,1\} \text{ or } \{2,3\}$. \end{lemma}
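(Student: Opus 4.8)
The plan is to handle $v_4$ and the vectors $v_j$ with $j\ge 5$ uniformly. Since $v_2$ is the unique tight vector by Corollary~\ref{cor:OneTight}, no $v_j$ with $j\ge 4$ is tight, so $v_j=\bigl(\sum_{i\in A}e_i\bigr)-e_j$ for some $A\subseteq\{0,\dots,j-1\}$, and $|v_j|=|A|+1$. The task is to pin down $A\cap\{0,1,2,3\}$, using three inputs: the parity relation $\langle v_j,v_1\rangle\equiv\langle v_j,v_3\rangle\pmod 2$ of Lemma~\ref{parity} (recall $v_1=x_*$ and $v_3=v_f=x_{**}$); the gappy-index lemma~\ref{gappy3} applied at $k=0$ and $k=2$ via $|v_1|=|v_3|=2$; and, for the remaining cases, the interval dictionary (Lemmas~\ref{tausign}, \ref{lem:IntervalProd}, \ref{smallpair}, together with the unbreakability of $[A_j]$ since $v_j$ is not tight), the norm formula~\eqref{eq:IntervalNorm}, and $a_0\ge 3$. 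For $v_4$: Lemma~\ref{lem:j-1} gives $3\in A$, and $A\ne\{0,1,2,3\}$ (else $v_4$ is tight); computing $\langle v_4,v_1\rangle=[0\in A]-[1\in A]$ and $\langle v_4,v_3\rangle=[2\in A]-1$, the parity relation says $[0\in A]+[1\in A]+[2\in A]$ is odd, so exactly one of $0,1,2$ lies in $A$ and $A\in\bigl\{\{0,3\},\{1,3\},\{2,3\}\bigr\}$. Finally $A=\{0,3\}$ would make $0$ a gappy index of $v_4$, impossible by Lemma~\ref{gappy3} since $|v_1|=2$; this leaves the two listed forms.

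For $j\ge 5$, the parity relation forces $|A\cap\{0,1,2,3\}|$ to be even, i.e.\ $|V_j|\in\{0,2,4\}$. The case $|V_j|=0$ is the first option. If $|V_j|=2$, then $V_j=\{0,1\}$ and $V_j=\{2,3\}$ are the other two allowed options, while $V_j=\{0,2\}$ and $V_j=\{0,3\}$ both have $0\in A$, $1\notin A$, $0<j-1$, so $0$ is a gappy index of $v_j$, contradicting Lemma~\ref{gappy3} and $|v_1|=2$; and $V_j=\{1,2\}$ has $2\in A$, $3\notin A$, $2<j-1$, so $2$ is a gappy index, contradicting Lemma~\ref{gappy3} and $|v_3|=2$.

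The two cases left to rule out are $V_j=\{0,1,2,3\}$ and $V_j=\{1,3\}$, both via the intervals. If $V_j=\{0,1,2,3\}$ then $\langle v_j,v_2\rangle=2$ and $|v_j|=|A|+1\ge 5$; since $v_j$ is unbreakable, Lemma~\ref{smallpair} forces $v_2$ breakable, so $\langle v_j,v_2\rangle=\epsilon_j\epsilon_2\langle[A_j],[A_2]\rangle$ by Lemma~\ref{tausign}. The intervals $A_j,A_2$ cannot be disjoint (Lemma~\ref{lem:IntervalProd} would give $|\langle[A_j],[A_2]\rangle|\le 1$), and $|[A_j\cap A_2]|$ cannot equal $2$ (same reason), so $|[A_j\cap A_2]|=|v_j|$ and $|\langle v_j,v_2\rangle|\in\{|v_j|-1,|v_j|-2\}$, both $\ge 3$, a contradiction. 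If $V_j=\{1,3\}$ then $\langle v_j,v_1\rangle=\langle v_j,v_3\rangle=-1$ and $\langle v_j,v_2\rangle=1$; since $x_*\notin A_j$, the intervals $[A_j]$ and $[A_1]=\{x_*\}$ are disjoint, so $\langle[A_j],[A_1]\rangle=-1$ (Lemmas~\ref{lem:IntervalProd} and~\ref{tausign}), i.e.\ they are consecutive, forcing $x_0\in A_j$; then $x_0\in A_j\cap A_2$ and $a_0\ge 3$ give $|[A_j\cap A_2]|\ge a_0>2$ by~\eqref{eq:IntervalNorm}, so $|[A_j\cap A_2]|=|v_j|$, and (using Lemma~\ref{tausign} again) $\langle v_j,v_2\rangle=1$ forces $|v_j|\in\{2,3\}$, hence $|v_j|=3$ since $1,3,j\in\supp v_j$. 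Then $A=\{1,3\}$ and $v_j=e_1+e_3-e_j$, whose support $\{1,3,j\}$ omits $j-1$ because $j\ge 5$, contradicting Lemma~\ref{lem:j-1}. The main obstacle throughout is keeping track of the interval/standard-basis correspondence and correctly invoking the breakability dichotomy for $v_2$ in these last two cases; all the other cases are forced immediately by parity and the gappy-index lemma.
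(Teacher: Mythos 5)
Your argument is essentially the paper's (parity via Lemma~\ref{parity}, gappy-index exclusions from Lemma~\ref{gappy3}, and the interval dictionary of Lemmas~\ref{tausign} and~\ref{lem:IntervalProd} to kill $V_j=\{0,1,2,3\}$ and $V_j=\{1,3\}$), but there is one false step in the $v_4$ analysis: the parenthetical claim that $A=\{0,1,2,3\}$ would make $v_4$ tight. The vector $e_0+e_1+e_2+e_3-e_4$ has coefficient $1$ on $e_0$, so it is just right, not tight, and nothing elementary rules it out: Corollary~\ref{cor:OneTight} does not apply, and the changemaker condition is perfectly consistent with it (a vector beginning $(1,1,3,3,8,\dots)$ is a changemaker, since here $\sigma_2=\sigma_3=3$ forces $\sigma_4=8$ exactly when $A=\{0,1,2,3\}$). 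So this case genuinely needs a lattice-theoretic argument, and as written your proof of the first assertion of the lemma has a gap.

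The gap is local and fixable with material you already have: your exclusion of $V_j=\{0,1,2,3\}$ uses only that $v_j$ is a non-tight (hence unbreakable) standard basis vector with $\braket{v_j}{v_2}=2$ and $|v_j|\ge 5$, and all of this holds for $j=4$ with $A=\{0,1,2,3\}$ (where $|v_4|=5$). This is exactly how the paper proceeds: it treats all $j\ge 4$ uniformly, and in the subcase $0,1\in V_j$ uses Lemma~\ref{lem:IntervalProd} to show $\braket{v_j}{v_2}=2$ would force $z_j\in A_2$ and $|v_j|\in\{3,4\}$, contradicting $V_j=\{0,1,2,3\}$; the case $V_j=\{1,3\}$ is then shown to force $|v_j|=3$, hence $j=4$ by Lemma~\ref{lem:j-1}, which is your $\{1,3\}$ argument run in the other direction. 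Two minor remarks: the detour through Lemma~\ref{smallpair} to conclude that $v_2$ is breakable is unnecessary, since the equality $\braket{v_j}{v_2}=\pm\braket{[A_j]}{[A_2]}$ from Lemma~\ref{tausign} needs only that both norms are at least $3$ (indeed you never use breakability of $v_2$ afterwards); and the paper gets the constraints on $\braket{v_j}{v_1},\braket{v_j}{v_3}$ from the sharper Lemma~\ref{pairequal} rather than bare parity, though parity suffices for your case analysis.
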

\begin{proof}
Take any $j \geq 4$. Suppose $0,1 \notin V_j$, then Lemma~\ref{pairequal} implies that $\braket{v_j}{v_3}=0$, so $V_j = \emptyset$ or $V_j = \{2,3\}$.

Suppose $0 \in V_j$, then $1 \in V_j$ because $|v_1|=2$ implies $0$ cannot be a gappy index, so $V_j = \{0,1,2,3\}$ or $\{0,1\}$ by Lemma~\ref{pairequal}. However, if $V_j = \{0,1,2,3\}$, then, using Lemma~\ref{lem:IntervalProd}, $\braket{v_j}{v_2} = 2$ implies $z_j \in A_2$ and $|v_j| = 3$ or $4$, which contradicts $V_j = \{0,1,2,3\}$. Hence, $V_j = \{0,1\}$.

Suppose $0 \notin V_j$ and $1 \in V_j$, then Lemma~\ref{pairequal} implies that $V_j = \{1,3\}$. Since $\braket{v_j}{v_1} = -1$, $x_0$ is the left endpoint of $A_j$, and $z_j = x_0 \in A_2$. Hence, $|\braket{v_j}{v_2}| = |v_j|-2$, and $|v_j| = 3$, which only happens when $j = 4$ by Lemma~\ref{lem:j-1}. 

Since $3 \in \supp v_4$ by Lemma~\ref{lem:j-1}, $v_4 = -e_4+e_3+e_2$ or $v_4 = -e_4+e_3+e_1$. 
\end{proof}

\begin{lemma}\label{Vj23}
There is at most one $j \geq 4$ such that $V_j = \{2,3\}$. If $V_j = \{2,3\}$, then $A_j \dagger A_2$ and $\epsilon_j = \epsilon_2$. So $v_j \sim v_2$ and $z_j \notin A_2$.
\end{lemma}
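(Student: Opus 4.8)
The plan is to first pin down the relation between $A_j$ and $A_2$, deduce $A_j\dagger A_2$ and $\epsilon_j=\epsilon_2$ (which instantly yields $v_j\sim v_2$ and $z_j\notin A_2$), and only then argue uniqueness. Throughout write $A_2=\{*,0,1,\dots,k\}$; recall that its left endpoint is $x_*$, that $**$ lies in no $A_\ell$, that $\epsilon_2=1$, and that $|v_2|=6$.

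Fix $j\ge4$ with $V_j=\{2,3\}$. Because $j>3$, the $e_2$- and $e_3$-coordinates of $v_j$ equal $+1$, so pairing against $v_1=e_0-e_1$, against $v_2=2e_0+e_1-e_2$ (Proposition~\ref{f=3basistovf}), and against $v_3=e_2-e_3$ gives $\braket{v_j}{v_1}=\braket{v_j}{v_3}=0$ and $\braket{v_j}{v_2}=-1$; moreover $|v_j|\ge3$ and $v_j$ is unbreakable, since it is not the unique tight vector $v_2$. By Lemma~\ref{tausign}, $\braket{[A_j]}{[A_2]}=\epsilon_j\epsilon_2\braket{v_j}{v_2}=-\epsilon_j\in\{\pm1\}$. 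If $A_j\cap A_2=\emptyset$, then Lemma~\ref{lem:IntervalProd} forces $\braket{[A_j]}{[A_2]}=-1$, hence $A_j\dagger A_2$ and $\epsilon_j=\epsilon_2=1$, as desired; then $z_j$ (defined since $[A_j]$ is unbreakable of norm $\ge3$) lies in $A_j$, so $z_j\notin A_2$, and $A_j\dagger A_2$ gives $v_j\sim v_2$.

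The crux is to rule out $A_j\cap A_2\ne\emptyset$. In that case $\braket{[A_j]}{[A_2]}\ge0$, hence $=1$ and $\epsilon_j=-1$. By Proposition~\ref{prop:OnlyTau} the interval $[A_j]$ contains neither $*$ nor $**$, so $A_j\subseteq\{0,\dots,n\}$ and $v_j=\tau([A_j])=-[A_j]$; then $0=\braket{v_j}{x_{**}}=-\braket{[A_j]}{x_{**}}$ forces $0\notin A_j$, so $A_j=\{i_1,\dots,i_2\}$ with $1\le i_1\le i_2$. Since $\min A_j\ne*=\min A_2$ and $A_j\cap A_2\ne\emptyset$, either $\max A_j=k$, so $A_j\prec A_2$ and $\braket{[A_j]}{[A_2]}=|v_j|-1\ge2$, absurd; or $A_j\pitchfork A_2$ and $1=\braket{[A_j]}{[A_2]}=|[A_j\cap A_2]|-2$, where $|[A_j\cap A_2]|\in\{|v_j|,2\}$ because $[A_j]$ is unbreakable, whence $|v_j|=3$, $\supp v_j=\{2,3,j\}$, and $j=4$ with $v_4=e_2+e_3-e_4$ by Lemma~\ref{lem:j-1}. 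Closing this last configuration is the step I expect to be hardest, since pairing bounds no longer bite: I apply Lemma~\ref{lem:SumIrr}, noting that $v_2$ is tight and $v_4=e_2+e_{4-1}-e_4$, so $v_2+v_4$ is irreducible. But $v_2+v_4=[A_2]-[A_4]$, and splitting off $x=[L]$ with $L=\{*,0,\dots,i_1-1\}\ne\emptyset$, the remainder $y=[A_2]-[A_4]-x$ is nonzero and supported on indices $>i_1$, so $\braket{x}{y}=0$; thus $v_2+v_4$ is reducible — a contradiction. This eliminates the intersection case and completes $A_j\dagger A_2$, $\epsilon_j=\epsilon_2$, and with them $v_j\sim v_2$ and $z_j\notin A_2$.

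For uniqueness, suppose $V_j=V_{j'}=\{2,3\}$ with $4\le j<j'$. Writing $v_j=e_2+e_3+w-e_j$ and $v_{j'}=e_2+e_3+w'-e_{j'}$ with $w,w'$ nonnegative and supported above $3$, a direct count gives $\braket{v_j}{v_{j'}}=2+\braket{w}{w'}-[\,j\in\supp w'\,]\ge1$; since $v_j,v_{j'}$ are unbreakable of norm $\ge3$, Lemma~\ref{smallpair} yields $\braket{v_j}{v_{j'}}=1$ together with $A_j\dagger A_{j'}$, in particular $A_j\cap A_{j'}=\emptyset$. On the other hand the part already proven gives $A_j\dagger A_2$ and $A_{j'}\dagger A_2$; the only intervals disjoint from and consecutive to $A_2=\{*,0,\dots,k\}$ are $\{**\}$ and the intervals with left endpoint $x_{k+1}$, so since $A_j,A_{j'}\ne\{**\}$ both contain $x_{k+1}$, contradicting $A_j\cap A_{j'}=\emptyset$. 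Hence there is at most one such $j$.
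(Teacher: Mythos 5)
Your proof is correct, and its core is the same as the paper's: from $\braket{v_j}{v_2}=-1$ you extract the identical dichotomy (either $A_j\dagger A_2$ with $\epsilon_j=\epsilon_2$, or $A_j\pitchfork A_2$ with $|v_j|=3$, which by Lemma~\ref{lem:j-1} forces $j=4$ and $v_4=e_2+e_3-e_4$), and you eliminate the second branch exactly as the paper does, by noting $0\notin A_4$ so that $v_2+v_4$ becomes $[A_2]-[A_4]$ and then contradicting Lemma~\ref{lem:SumIrr}; your explicit splitting off of $x=[\{*,0,\dots,i_1-1\}]$ merely makes the reducibility of $[A_2]-[A_4]$ visible, which the paper asserts without writing out. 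Two remarks. First, the line ``$0=\braket{v_j}{x_{**}}=-\braket{[A_j]}{x_{**}}$'' quietly presupposes $v_j=-[A_j]$, which is what you are in the middle of establishing; the non-circular (and the paper's) route is $0=\braket{v_j}{v_1}=\braket{\tau([A_j])}{x_*}=\braket{[A_j]}{x_{**}}$, which would be $-1$ if $0\in A_j$ — same conclusion, trivial fix. Your unremarked use of $\epsilon_2=1$ is fine, since it follows from the Section~\ref{sec:f=3} convention that $x_*$ is the left endpoint of $A_2$ together with $\braket{v_2}{v_1}=1$ from Lemma~\ref{pairequal}. Second, uniqueness is where you genuinely diverge: the paper settles it in one line by noting that $z_j$ must be the leftmost heavy vertex outside $A_2$, so two such indices would contradict Corollary~\ref{uniqueheavy}, whereas you compute $\braket{v_j}{v_{j'}}\ge 1$ directly, use Lemma~\ref{smallpair} to force $A_j\dagger A_{j'}$ (hence disjointness), and then observe that both intervals, being consecutive to $A_2$ and distinct from $\{**\}$, must contain the vertex immediately to the right of $A_2$. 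Both arguments are valid; yours is longer but more self-contained, since it does not require unpacking what the paper's minimality claim implicitly relies on.
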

\begin{proof}
Suppose $V_j = \{2,3\}$. Since $\braket{v_j}{v_2} = -1$, either $A_j \dagger A_2$ and $\epsilon_j = \epsilon_2$, or $A_2 \pitchfork A_j$, $|v_j| = 3$ and $\epsilon_j = -\epsilon_2$. If $j > 4$, the latter is impossible because $|v_j| \geq 4$ by Lemma~\ref{lem:j-1}.  If $j = 4$ and $A_2 \pitchfork A_4$, notice that $A_4$ does not contain $x_0$ since $\braket{v_4}{v_1}=0$, so $\tau([A_{4}])=-[A_{4}]$.
Up to applying $\tau$, $v_4 + v_2$ becomes $([A_2] - [A_4])$ which is reducible, contradicting Lemma~\ref{lem:SumIrr}.

Since $j = \min\{i: |z_i|\geq 3\text{ and } z_i \notin A_2\}$, there is at most one such $j$.
\end{proof}

\begin{lemma}\label{2sinbetween}
If $V_j = \emptyset$ for $4 < j \leq m$, then $|v_j| = 2$ for $4 < j \leq m$. 	
\end{lemma}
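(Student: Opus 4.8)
The plan is to induct on $j$, from $j=5$ up to $j=m$; the statement is vacuous when $m\le 4$. For the base case $j=5$: since $V_5=\emptyset$ and $v_5$ is not tight (the unique tight vector is $v_2$, by Corollary~\ref{cor:OneTight} and Proposition~\ref{f=3basistovf}), $v_5=(\sum_{i\in A}e_i)-e_5$ with $A\subseteq\{4\}$, and Lemma~\ref{lem:j-1} forces $4\in A$, so $v_5=e_4-e_5$ and $|v_5|=2$. For the inductive step I would fix $j$ with $5<j\le m$, assume $v_i=e_{i-1}-e_i$ (hence $|v_i|=2$) for $5\le i<j$, and suppose for contradiction that $|v_j|\ge 3$. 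Since $|v_{i+1}|=2$ for every $i$ with $4\le i\le j-2$, Lemma~\ref{gappy3} shows that none of $4,\dots,j-2$ is a gappy index of $v_j$; combined with $\min\supp v_j\ge 4$ (because $V_j=\emptyset$) and the fact that $v_j$ is not tight, this forces $v_j$ to be just right, say $v_j=e_k+e_{k+1}+\cdots+e_{j-1}-e_j$ with $k:=\min\supp v_j$ and $4\le k\le j-2$ (the last inequality since $|v_j|\ge 3$).

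The key step is then to produce a claw in $G(S)$, contradicting the no-claw lemma (Lemma~4.8 of \cite{greene:LSRP}). A short computation gives $\braket{v_j}{v_k}=-1$ and $\braket{v_j}{v_i}=0$ for $k<i\le j-1$, so by Corollary~\ref{cor:AdjacencyNon0} (using that $v_j$ and the $v_i$ involved are unbreakable) we get $v_j\sim v_k$ and $v_j\not\sim v_i$ for $k<i\le j-1$. If $k\ge 5$, then $v_k$ is adjacent to $v_{k-1}$, $v_{k+1}$, and $v_j$ (here $v_{k-1}$ means $v_4$ when $k=5$), whereas one checks that $v_{k-1}$, $v_{k+1}$, $v_j$ are pairwise non-adjacent; hence $(v_k;v_{k-1},v_{k+1},v_j)$ is a claw. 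If $k=4$, then $v_j\sim v_4\sim v_5$, and by Lemma~\ref{Vjpossibilities} the vector $v_4$ abuts exactly one vector $v_b\in\{v_1,v_2,v_3\}$, namely $v_1$ when $v_4=-e_4+e_3+e_1$ and $v_2$ when $v_4=-e_4+e_3+e_2$; since $\supp v_b\subseteq\{0,1,2\}$ is disjoint from $\supp v_5$ and from $\supp v_j=\{4,\dots,j\}$ we get $v_b\not\sim v_5$ and $v_b\not\sim v_j$, so $(v_4;v_b,v_5,v_j)$ is a claw. Either way we reach a contradiction, and the induction closes.

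I expect the main obstacle to be the final case split, and in particular the subcase $k=4$ with $v_4=-e_4+e_3+e_2$, where the claw relies on the edge $v_4\sim v_2$; here $v_2$ is the tight vector and hence the only possible breakable vector in $S$, so this edge is not handed to us by Corollary~\ref{cor:AdjacencyNon0}. To establish it I would pass to intervals: from $\braket{v_4}{v_2}\ne 0$ and Lemma~\ref{tausign} one has $\braket{[A_4]}{[A_2]}\ne 0$, and then $|v_4|=3$, $v_4\not\sim v_1$, and the fact that the left endpoint of $A_2$ is $x_*$ (together with Corollary~\ref{uniqueheavy}) rule out a transverse intersection, so $[A_4]$ and $[A_2]$ must abut and $v_4\sim v_2$. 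All the other verifications — the explicit pairing computations and the unbreakability checks feeding Corollary~\ref{cor:AdjacencyNon0} — are routine.
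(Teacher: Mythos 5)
Your route---an induction on $j$ that converts a hypothetical $|v_j|\ge 3$ into a claw---is genuinely different from the paper's, which instead notes that $v_1,v_2,v_3$ are $(4,m)$-blocked, shows $v_4$ is $(5,m)$-blocked in both of its possible forms, and finishes with Lemma~\ref{2sforever}. Most of your verifications are sound: the base case, the just-rightness of $v_j$ via Lemma~\ref{gappy3}, the pairing computations, the claw when $k\ge 5$, and the claw $(v_4;v_1,v_5,v_j)$ when $v_4=-e_4+e_3+e_1$. (For the non-edges at $v_2$ you only need the first statement of Corollary~\ref{cor:AdjacencyNon0}, which does not require unbreakability, so those are fine even though $v_2$ may be breakable.)

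There is, however, a genuine gap exactly at the step you flagged: the edge $v_4\sim v_2$ when $v_4=-e_4+e_3+e_2$. Since $v_2$ is tight and hence possibly breakable, $\braket{v_4}{v_2}=-1$ does not give adjacency; you must exclude $A_4\pitchfork A_2$. In that transverse case Lemmas~\ref{lem:IntervalProd} and~\ref{tausign} force $z_4\in A_2$, $\braket{[A_4]}{[A_2]}=1$ and $\epsilon_4=-\epsilon_2$, and none of the facts you cite rules this out: Corollary~\ref{uniqueheavy} applies only to pairs of \emph{unbreakable} norm-$\ge 3$ vectors, so it says nothing about the heavy vertex $z_4$ lying inside the long (possibly breakable) interval $A_2$; and knowing $|v_4|=3$, $*,0\notin A_4$ (from $v_4\not\sim v_1$) and that the left endpoint of $A_2$ is $x_*$ only prevents a shared \emph{left} endpoint---$A_4$ can still sit strictly inside $A_2$, or overhang its right end, with $z_4\in A_2$. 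What kills this configuration is the irreducibility of $v_2+v_4$ from Lemma~\ref{lem:SumIrr}: since $0\notin A_4$, $\tau([A_4])=-[A_4]$, and with $\epsilon_4=-\epsilon_2$ the sum $v_2+v_4$ becomes, up to $\tau$, $[A_2]-[A_4]$, which is reducible when $A_2\pitchfork A_4$. This is precisely the content of Lemma~\ref{Vj23}, which applies here because $V_4=\{2,3\}$, and which was proved before the present lemma; citing it yields $A_4\dagger A_2$ and $v_4\sim v_2$, after which your claw $(v_4;v_2,v_5,v_j)$ and the whole induction go through. So the fix is immediate, but as written your justification for this one edge does not stand on its own.
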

\begin{proof}
By assumption, $v_1,v_2,v_3$ are $(4,m)$-blocked. Unless $m = 4$ (the lemma is vacuously true when $m = 4$), $|v_5| = 2$ and $v_5 \sim v_4$ by Lemma~\ref{lem:j-1}. If $v_4 = -e_4+e_3+e_2$, then $v_2$ and $v_5$ are $(5,m)$-blocking neighbors of $v_4$ by Definition~\ref{defn:blockingneighbor} (1) and (4), so $v_4$ is $(5,m)$-blocked by Lemma~\ref{mblock1}. If $v_4 = -e_4+e_3+e_1$, then $v_4\sim v_1$, hence $z_4 = x_0$, and $v_i \not\sim v_4$ unless $|v_i| = 2$ or $z_i \in A_2$. If $|v_i| = 2$, $v_i\not\sim v_4$ for $i>5$.
For $5 < i \le m$, since $\braket{v_i}{v_2} = 0$, $z_i \notin A_2$. Hence, $v_4$ is also $(5,m)$-blocked in the case $v_4 = -e_4+e_3+e_1$. The result follows from Lemma~\ref{2sforever}.
\end{proof}

\begin{lemma}\label{lem:v4=-431}
If $v_4 = -e_4+e_3+e_1$, then $z_4 = x_0$, and $A_2$ contains at least two vertices with norm $\ge3$. Moreover,  $\epsilon_2=\epsilon_4$ and $v_2 \not\sim v_4$.
\end{lemma}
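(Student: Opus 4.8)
The plan is to first nail down the interval $A_4$ (hence $z_4$ and the value of $a_0$), then determine how $A_4$ is situated relative to $A_2$ inside the vertex tree, and finally compare norms to count the heavy vertices of $A_2$.

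First I would pin down $A_4$. Since $v_4 = -e_4+e_3+e_1$ is not tight and $4 \notin \{1,f\}$, Proposition~\ref{prop:OnlyTau} forces $A_4$ to contain neither $*$ nor $**$, so $A_4$ is a connected sub-path of $x_0, \dots, x_n$. Using $v_1 = x_*$ (Lemma~\ref{modify}), a direct computation gives $\braket{v_4}{v_1} = \braket{-e_4+e_3+e_1}{e_0-e_1} = -1 \neq 0$; since $x_*$ pairs nontrivially with none of $x_1, \dots, x_n$, this forces $x_0 \in A_4$, so $x_0$ is the left endpoint of $A_4$. As $|v_4| = 3$, $v_4$ is unbreakable, so by Lemma~\ref{lem:TwoIndBr} the interval $A_4$ has a unique vertex of norm $\ge 3$; since $a_0 \ge 3$ that vertex is $x_0$, i.e.\ $z_4 = x_0$, and then \eqref{eq:IntervalNorm} gives $a_0 = |[A_4]| = |v_4| = 3$.

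Next I would analyze the position of $A_4$ relative to $A_2$. The left endpoint of $A_2$ is $x_*$ (beginning of this section), so $x_0 \in A_2$ and $x_0 \in A_4 \cap A_2 \neq \emptyset$; as $x_0$ is the only vertex of norm $\ge 3$ in $A_4$, it is the only one in $A_4 \cap A_2$, so $|[A_4 \cap A_2]| = a_0 = 3$ by \eqref{eq:IntervalNorm}. A direct computation gives $\braket{v_4}{v_2} = \braket{-e_4+e_3+e_1}{2e_0+e_1-e_2} = 1$, so $|\braket{[A_4]}{[A_2]}| = 1$ by Lemma~\ref{tausign}. If $A_4$ and $A_2$ shared a common endpoint, Lemma~\ref{lem:IntervalProd} would force $\braket{[A_4]}{[A_2]} = |[A_4 \cap A_2]| - 1 = 2$, a contradiction; hence $A_4 \pitchfork A_2$. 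This means $A_4$ and $A_2$ do not abut, so $v_2 \not\sim v_4$, and Lemma~\ref{lem:IntervalProd} gives $\braket{[A_4]}{[A_2]} = |[A_4 \cap A_2]| - 2 = 1$. Since $v_2$ and $v_4$ both have norm $\ge 3$, the second part of Lemma~\ref{tausign} then yields $1 = \braket{v_4}{v_2} = \epsilon_4\epsilon_2\braket{[A_4]}{[A_2]} = \epsilon_4\epsilon_2$, so $\epsilon_2 = \epsilon_4$.

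Finally, to count the heavy vertices of $A_2$ I would compare norms: $|[A_2]| = |v_2| = 6$ (since $v_2 = 2e_0+e_1-e_2$), whereas if $x_0$ were the only vertex of norm $\ge 3$ in $A_2$ then \eqref{eq:IntervalNorm} would force $|[A_2]| = 2 + (a_0 - 2) = 3$; hence $A_2$ contains at least two vertices of norm $\ge 3$. The step I expect to require the most care is deducing $v_2 \not\sim v_4$ even though $\braket{v_4}{v_2} = 1 \neq 0$: since $v_2$ is the tight vector and may be breakable, Corollary~\ref{cor:AdjacencyNon0} does not apply, so one must argue directly at the level of intervals, using the exact value $|\braket{v_4}{v_2}| = 1$ to rule out the nested configuration $A_4 \prec A_2$ (which would give an edge) in favor of the crossing one $A_4 \pitchfork A_2$.
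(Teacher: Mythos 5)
Your proof is correct and follows essentially the same route as the paper's: use $\braket{v_4}{v_1}\ne 0$ to place $x_0$ in $A_4$ and get $z_4=x_0$, $a_0=|v_4|=3$, compare $|v_2|=6>a_0$ to find a second heavy vertex in $A_2$, and read off $A_2\pitchfork A_4$, $v_2\not\sim v_4$, $\epsilon_2=\epsilon_4$ from $\braket{v_4}{v_2}=1=|v_4|-2$ via Lemmas~\ref{lem:IntervalProd} and~\ref{tausign}. Your write-up just makes explicit the interval-level case analysis that the paper's terser proof leaves implicit.
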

\begin{proof}
We have $\braket{v_4}{v_1}\ne0$, so $A_4$ is an interval containing $x_0\in A_2$, $z_4 = x_0$, and $|v_4|=a_0=3$. In particular, $A_2$ and $A_4$ are not consecutive.
Since $|v_2|=6>a_0$, $A_2$ contains another vertex with norm $\ge3$. 
Since $\braket{v_4}{v_2}=1=|v_4|-2$, $\epsilon_2=\epsilon_4$,
$A_2 \pitchfork A_4$ and $v_2 \not\sim v_4$.
\end{proof}

\begin{lemma}\label{Vj01}
If $V_j =\{0,1\}$ for some $j > 4$, we must have $j=5$ and $v_5 = -e_5+e_4+e_1+e_0$.
\end{lemma}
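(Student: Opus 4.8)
The plan is this. Since $V_j=\{0,1\}$ with $j>4$ and $v_2$ is the only tight vector (Corollary~\ref{cor:OneTight}), hence the only possibly breakable one (Lemma~\ref{lem:BrIsTight}), the vector $v_j$ is not tight, so $v_j=e_0+e_1+\bigl(\sum_{i\in A}e_i\bigr)-e_j$ for some $A\subseteq\{4,\dots,j-1\}$ with $j-1\in A$ (by Lemma~\ref{lem:j-1} together with $2,3\notin\supp v_j$). In particular $1$ is a gappy index, so $v_j$ is gappy; and since $\supp v_2=\{0,1,2\}$ meets $\supp v_j$ exactly in $\{0,1\}$, a direct computation gives $\braket{v_j}{v_2}=3$. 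I would first rule out the case that $S$ contains no breakable vector: then Lemma~\ref{gappyform} would force $v_j=-e_j+e_{j-1}+\cdots+e_{j'}+e_{k'}$ with $k'+1<j'<j$, whose support $\{k'\}\cup\{j',\dots,j\}$ cannot meet $\{0,1,2,3\}$ in exactly $\{0,1\}$ (one would need $k'=0$ and then $j'\le 1$, contradicting $j'>k'+1$). Hence $v_2$ is breakable, so by Lemma~\ref{lem:TwoIndBr} the interval $A_2$ contains at least two vertices of norm $\ge 3$; recall also $|v_2|=|[A_2]|=6$ by Proposition~\ref{f=3basistovf}, and that $x_*$ is the left endpoint of $A_2$.

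Next I would pin down the shape of $v_j$. Since $j-1\ge 4$ we have $|v_j|\ge 4$, so both $v_j$ and $v_2$ have norm $\ge 3$, and Lemma~\ref{tausign} gives $\braket{[A_j]}{[A_2]}=\pm 3$. Because interval norms are positive, Lemma~\ref{lem:IntervalProd} forces $A_j\cap A_2\ne\emptyset$ and $|[A_j\cap A_2]|\in\{4,5\}$; since $v_j$ (hence $[A_j]$) is unbreakable, the same lemma gives $|[A_j\cap A_2]|\in\{|[A_j]|,2\}$, so $|[A_j\cap A_2]|=|v_j|\in\{4,5\}$ and, by~\eqref{eq:IntervalNorm}, $z_j\in A_2$. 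Thus $|A|\in\{1,2\}$, i.e.\ $v_j=e_0+e_1+e_{j-1}-e_j$ or $v_j=e_0+e_1+e_i+e_{j-1}-e_j$ for some $4\le i<j-1$. Note that once we show $j=5$ we are done, since then $A\subseteq\{4\}$ and $4\in A$ force $v_5=-e_5+e_4+e_1+e_0$.

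It remains to prove $j=5$. Assume for contradiction $j\ge 6$. By Lemma~\ref{Vjpossibilities} and Lemma~\ref{lem:j-1}, $v_5$ is one of $e_4-e_5$, $e_0+e_1+e_4-e_5$, or $e_2+e_3+e_4-e_5$. If $V_5=\{0,1\}$, the argument of the previous paragraph applied to $v_5$ yields $z_5\in A_2$ with $|v_5|=4$, and $z_5\ne z_j$ by Corollary~\ref{uniqueheavy}; moreover neither $z_5$ nor $z_j$ is $x_0$, since $x_0\in A_\ell$ would give $\braket{v_\ell}{v_1}=\pm 1\ne 0$, contradicting $0\in\supp v_\ell$. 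But then $A_2$ would contain three distinct vertices $x_0,z_5,z_j$ of norm $\ge 3$ with $(a_0-2)+(a_{z_5}-2)+(a_{z_j}-2)\le |[A_2]|-2=4$, while the left side is $\ge 1+2+2=5$ — a contradiction. Hence $V_5\in\{\emptyset,\{2,3\}\}$; in the second case Lemma~\ref{Vj23} forces $v_4=e_1+e_3-e_4$, so $z_4=x_0\in A_2$ by Lemma~\ref{lem:v4=-431}. In both remaining sub-cases one then computes the adjacencies among $v_1,\dots,v_5$ and $v_j$ from their explicit forms and derives a contradiction using the claw-free and no-heavy-triple properties of $G(S)$, Corollary~\ref{uniqueheavy}, the budget $\sum_{x_i\in A_2}(a_i-2)=4$, and the blocking lemmas (Lemmas~\ref{mblock1},~\ref{mblock3},~\ref{2sforever},~\ref{2sinbetween}), ruling out any $v_j$ of the shape found above with $j\ge 6$. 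This last step — handling $V_5\in\{\emptyset,\{2,3\}\}$ — is the main obstacle: unlike the $V_5=\{0,1\}$ case there is no single clean counting contradiction, and one must carefully track how $v_j$ meets $v_2$, $v_4$, $v_5$ and the intervening norm-$2$ vectors until a claw, a heavy triple, or a violation of the $A_2$-budget appears.
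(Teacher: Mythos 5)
Your set-up is fine (the computation $\braket{v_j}{v_2}=3$, the use of Lemmas~\ref{tausign} and~\ref{lem:IntervalProd} to get $z_j\in A_2$ and $|v_j|\in\{4,5\}$, and the budget contradiction in the sub-case $V_5=\{0,1\}$ all check out), but the proof has a genuine gap at exactly the point you flag yourself: when $j\ge 6$ and $V_5\in\{\emptyset,\{2,3\}\}$ you do not derive a contradiction, you only assert that tracking adjacencies "until a claw, a heavy triple, or a violation of the $A_2$-budget appears" will do it. That is the heart of the lemma, and it is not clear such an argument closes; for instance $v_j=e_0+e_1+e_{j-1}-e_j$ with $|v_5|=2$ and $v_4=-e_4+e_3+e_2$ has pairing $0$ with $v_1,v_3,v_4$ and $-1$ with $v_{j-1}$, and no claw, heavy triple, or budget violation is visible among $v_1,\dots,v_5,v_j$ from the numerics alone. (A secondary, fixable issue: you quote Lemma~\ref{lem:TwoIndBr} in the converse direction, "breakable $\Rightarrow$ two high-norm vertices in $A_2$", which the paper never proves and which your main counting argument does not actually need.)

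The missing idea is the paper's use of Lemma~\ref{newirreducibility}: since $v_2$ is tight and $v_j=e_0+e_1+(\sum_{i\in A}e_i)-e_j$ with $A\subseteq\{4,\dots,j-1\}$, the difference $v_j-v_2$ is irreducible, which upgrades "$A_j\cap A_2\ne\emptyset$" to $A_j\prec A_2$ (they share the right endpoint of $A_2$, since $x_*$ is its left endpoint). This pins $\braket{v_j}{v_2}=|v_j|-1$, hence $|v_j|=4$ exactly, not $\{4,5\}$. Then $\epsilon_j=\epsilon_2=\epsilon_4$ (via Lemma~\ref{lem:v4=-431} or Lemma~\ref{Vj23} according to the form of $v_4$) together with Lemma~\ref{smallpair} forces $\braket{v_j}{v_4}\le 0$, hence $4\in\supp v_j$; with $|v_j|=4$ and $j-1\in\supp v_j$ (Lemma~\ref{lem:j-1}) this gives $j=5$ immediately, with no case analysis on $v_5$ at all. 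To repair your write-up you would either have to import this irreducibility/endpoint argument or genuinely carry out the $j\ge 6$ exclusion you postponed; as it stands the proposal does not prove the lemma.
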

\begin{proof}
Suppose $V_j = \{0,1\}$ for some $j > 4$, then $\braket{v_j}{v_2} = 3$, and $\epsilon_j = \epsilon_2$. By Lemma~\ref{newirreducibility}, $v_j-v_2$ is irreducible, so $A_j \prec A_2$. Hence $|\braket{v_j}{v_2}| = |v_j| - 1$, and $|v_j| = 4$. Depending on whether $v_4 = -e_4+e_3+e_1$ or $v_4 = -e_4+e_3+e_2$, we can use Lemma~\ref{lem:v4=-431} or Lemma~\ref{Vj23} to conclude that $\epsilon_j = \epsilon_2 = \epsilon_4$. 
So $\braket{v_j}{v_4} \leq 0$ by Lemma~\ref{smallpair}, and hence $4 \in \supp v_j$. Since $|v_j| = 4$, Lemma~\ref{lem:j-1} implies that $v_j$ must be $v_5$.
\end{proof}

First, we consider the case where $v_4 = -e_4+e_3+e_2$. Lemma~\ref{Vj23} implies that $A_2 \dagger A_4$. We split the case $v_4 = -e_4+e_3+e_2$ into two subcases according to whether $A_2$ contains multiple high norm vertices.

\begin{lemma}\label{f=3unbreakable}
Suppose $v_4 = -e_4+e_3+e_2$ and $a_0 = 6$. Then, $|v_i| = 2$ for all $i > 4$.
\end{lemma}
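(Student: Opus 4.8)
The plan is to exploit the hypothesis $a_0 = 6$ to force the interval $A_2$ to be as small as possible. Since $v_2$ is tight we have $|v_2| = |[A_2]| = 6$, and by Proposition~\ref{f=3basistovf} we know $x_0 \in A_2$ with $|x_0| = a_0 = 6$. Plugging into the norm formula~\eqref{eq:IntervalNorm},
\[
6 = |[A_2]| = 2 + \sum_{i \in A_2}(a_i - 2) \ge 2 + (a_0 - 2) = 6,
\]
so equality holds and every element of $A_2$ other than $x_0$ has norm $2$. In particular $x_0$ is the unique element of $A_2$ of norm at least $3$, so by Lemma~\ref{lem:TwoIndBr} the interval $[A_2]$ is unbreakable and $z_2 = x_0$ (Definition~\ref{defn:z_j}).

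Next I would analyze $V_j = \supp v_j \cap \{0,1,2,3\}$ for each $j > 4$. By Lemma~\ref{Vjpossibilities}, $V_j \in \{\emptyset, \{0,1\}, \{2,3\}\}$. Since $v_4 = -e_4 + e_3 + e_2$ already has $V_4 = \{2,3\}$, Lemma~\ref{Vj23} forbids $V_j = \{2,3\}$ for any $j > 4$. To rule out $V_j = \{0,1\}$, suppose it holds for some $j > 4$; then Lemma~\ref{Vj01} forces $j = 5$ and $v_5 = -e_5 + e_4 + e_1 + e_0$, and (as in the proof of that lemma) $A_5 \prec A_2$, hence $A_5 \subset A_2$ and $|[A_5]| = |v_5| = 4$. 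But if $x_0 \in A_5$ then $|[A_5]| \ge 2 + (a_0 - 2) = 6$, while if $x_0 \notin A_5$ then every element of $A_5$ has norm $2$ (by the previous paragraph) and $|[A_5]| = 2$; either way this contradicts $|[A_5]| = 4$. Therefore $V_j = \emptyset$ for every $j > 4$, and applying Lemma~\ref{2sinbetween} with $m = n+3$ gives $|v_j| = 2$ for all $j > 4$, as desired.

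The only genuinely new input here is the norm bookkeeping in the first two paragraphs; everything else is an assembly of the structural lemmas already established in this section. The step I expect to need the most care is verifying the incompatibility of $A_5 \subset A_2$ with $|[A_5]| = 4$: one must be sure that the assumption $a_0 = 6$ really does collapse $A_2$ to a single heavy vertex of norm exactly $6$, and that $A_5 \prec A_2$ (rather than merely $A_5 \dagger A_2$) is extracted correctly from the proof of Lemma~\ref{Vj01}. Once those two facts are pinned down, the conclusion is immediate.
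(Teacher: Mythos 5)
Your proof is correct and follows essentially the same route as the paper: identify $x_0$ as the only vertex of norm $\ge 3$ in $A_2$ via the norm count $|[A_2]|=6=a_0$, eliminate $V_j=\{2,3\}$ and $V_j=\{0,1\}$ for $j>4$ using Lemmas~\ref{Vjpossibilities}, \ref{Vj23}, \ref{Vj01}, and finish with Lemma~\ref{2sinbetween}; the only (equally valid) local difference is that you kill the case $v_5=-e_5+e_4+e_1+e_0$ by the norm contradiction coming from $A_5\prec A_2$, whereas the paper notes $\braket{v_5}{v_2}=3$ is incompatible with $z_5\notin A_2$. One small caveat: your aside that $[A_2]$ is unbreakable cites Lemma~\ref{lem:TwoIndBr} in its unstated converse direction, but this remark is never used in your argument, so it is harmless.
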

\begin{proof}
Since $a_0=|x_0| = |v_2|$, $x_0$ is the only vertex of norm $\geq 3$ in $A_2$. By Lemmas~\ref{Vjpossibilities} and~\ref{Vj23}, $v_5 = -e_5+e_4$ or $-e_5+e_4+e_1+e_0$. If $v_5 = -e_5+e_4+e_1+e_0$, then $\braket{v_5}{v_2} = 3$, contradicting $z_5 \notin A_2$. Therefore, $v_5 = -e_5+e_4$. By Lemmas~\ref{Vjpossibilities}, \ref{Vj23} and~\ref{Vj01}, $V_j = \emptyset$ for all $j > 4$. The result follows from Lemma~\ref{2sinbetween}.
\end{proof}

\begin{lemma}\label{f=3v4justright}
Suppose $v_4 = -e_4+e_3+e_2$ and $a_0 < 6$. Then, $v_5 = -e_5+e_4+e_1+e_0$, and $|v_i| = 2$ for all $i > 5$, except possibly one $v_m = -e_m+e_{m-1}+\cdots+e_4$. 
\end{lemma}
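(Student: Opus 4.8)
\emph{Plan.} The proof breaks into three stages: first identify $v_5$, then describe the norm-$2$ ``tail'' $v_6,v_7,\dots$ and the shape of any high-norm tail vector, and finally show there can be at most one such vector.

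\emph{Identifying $v_5$.} Since $v_2=-e_2+e_1+2e_0$ we have $|v_2|=6$, hence $|[A_2]|=6$; as $x_0\in A_2$ (the left endpoint of $A_2$ is $x_*$) and $a_0=|x_0|<6$, equation~\eqref{eq:IntervalNorm} forces $A_2$ to contain at least two vertices of norm $\ge 3$. By Lemma~\ref{highnormsinT2} there is at least one $v_j$ with $j>3$, $z_j\in A_2$ and $V_j\ne\emptyset$. Because $V_4=\{2,3\}$, Lemma~\ref{Vj23} gives $z_4\notin A_2$, so $j\ge 5$; then Lemma~\ref{Vjpossibilities} limits $V_j$ to $\emptyset$, $\{0,1\}$, or $\{2,3\}$, the last being impossible by Lemma~\ref{Vj23} (it would put $z_j\notin A_2$). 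Hence $V_j=\{0,1\}$ and Lemma~\ref{Vj01} yields $j=5$ and $v_5=-e_5+e_4+e_1+e_0$. Note also that $v_5$ is then the \emph{only} vector with $j>3$, $z_j\in A_2$, $V_j\ne\emptyset$, so Lemma~\ref{highnormsinT2} in fact forces $A_2$ to contain exactly two vertices of norm $\ge 3$ (one of which, since $\braket{v_5}{v_2}=3=|v_5|-1$, is $z_5$, with $A_5\prec A_2$).

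\emph{The tail and the shape of a high-norm tail vector.} For $j>5$, Lemmas~\ref{Vjpossibilities}, \ref{Vj23} and \ref{Vj01} force $V_j=\emptyset$, i.e.\ $\supp v_j\subseteq\{4,5,\dots,j\}$; a direct pairing computation (using $\supp v_j\cap\{0,1,2,3\}=\emptyset$ for $j>5$, together with the explicit forms of $v_4,v_5$) shows $v_1,v_2,v_3$ are $5$-blocked, so every smaller neighbor of such a $v_j$ lies among $v_4,v_5,\dots$. If all $v_j$ with $j>5$ have norm $2$ we are done; otherwise let $m$ be the smallest index $>5$ with $|v_m|\ge 3$. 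By Lemma~\ref{lem:j-1} the norm-$2$ vectors $v_6,\dots,v_{m-1}$ are exactly $-e_6+e_5,\dots,-e_{m-1}+e_{m-2}$, and by Lemma~\ref{gappy3} the only index $<m-1$ that can be gappy for $v_m$ is $4$ (since each $v_{i+1}$ with $5\le i\le m-2$ has norm $2$, but $|v_5|\ge 3$). Thus $v_m$ is one of $-e_m+e_{m-1}+\dots+e_4$; or $-e_m+e_{m-1}+\dots+e_s+e_4$ with $s\ge 6$; or $-e_m+e_{m-1}+\dots+e_s$ with $5\le s\le m-2$. In the second case $\braket{v_m}{v_4}=-1$ and $\braket{v_m}{v_5}=1$, so $(v_4,v_5,v_m)$ is a heavy triangle, contradicting the no-heavy-triple result (the analogue of \cite[Lemma~4.10]{greene:LSRP}). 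In the third case $v_m$ has exactly one smaller neighbor among $v_4,\dots,v_{m-1}$ — namely $v_5$ if $s=5$, or the norm-$2$ vector $v_s$ if $s\ge 6$ — which in turn has two further, mutually non-adjacent neighbors besides $v_m$ (for $v_5$: $v_4$ and $v_6$; for $v_s$: $v_{s-1}$ and $v_{s+1}$), giving a claw, which is impossible by the analogue of \cite[Lemma~4.8]{greene:LSRP}. Hence $v_m=-e_m+e_{m-1}+\dots+e_4$, the claimed exceptional form.

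\emph{Uniqueness.} Suppose a further index $m'>m$ has $|v_{m'}|\ge 3$, chosen minimal. As in the previous step $v_{m+1},\dots,v_{m'-1}$ equal $-e_{m+1}+e_m,\dots,-e_{m'-1}+e_{m'-2}$, so $v_m\sim v_{m+1}\sim\dots\sim v_{m'-1}\sim v_{m'}$ is a path in $G(\bar S)$ joining $v_m$ to $v_{m'}$ and avoiding $v_4$. Analysing $\supp v_{m'}$ as before (now the only possibly-gappy indices below $m'-1$ are $4$ and $m-1$), in every case one finds either $4\in\supp v_{m'}$, whence $v_{m'}\sim v_4$ and, using the chain above together with $v_4\sim v_5\sim v_{m'}$ when needed, a suitable triple from $\{v_4,v_5,v_m,v_{m'}\}$ is a heavy triple; or a pairing $\braket{v_{m'}}{v_m}\ge 2$ between two unbreakable vectors of norm $\ge 3$, contradicting Lemma~\ref{smallpair}. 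Either way we reach a contradiction, so at most one tail vector has norm $\ge 3$. Combined with the previous steps (or, alternatively, with Lemma~\ref{2sforever} applied once the relevant vectors are shown to be blocked), this gives the statement.

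\emph{Main obstacle.} The technical core is the enumeration of the possible supports of $v_m$ and $v_{m'}$ under the no-gappy-index restriction of Lemma~\ref{gappy3}, and then, case by case, selecting the correct obstruction — a claw, a heavy triple/triangle, or the pairing bound of Lemma~\ref{smallpair}. The subtle point is the norm-$3$ subcase, where Lemma~\ref{smallpair} only bounds pairings by $\pm 1$ rather than yielding an immediate contradiction, so one must realize the relevant connecting paths through the norm-$2$ chains and invoke the heavy-triple obstruction instead.
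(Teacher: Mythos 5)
Your identification of $v_5$ follows the paper, and your elimination of the ``gappy at $4$'' and ``$\min\supp v_m\ge 5$'' shapes for the \emph{first} high-norm tail vector (via a heavy triangle and a claw) is a correct, somewhat different route from the paper, which instead proves $\min\supp v_m=4$ by a blocking argument. The problem is the uniqueness step. Your dichotomy --- either $4\in\supp v_{m'}$ and one gets a heavy triple, or $\braket{v_{m'}}{v_m}\ge 2$ against Lemma~\ref{smallpair} --- is not exhaustive. Take $v_m=-e_6+e_5+e_4$ (so $m=6$) and $v_{m'}=-e_{m'}+e_{m'-1}+\cdots+e_5$: here $4\notin\supp v_{m'}$ and $\braket{v_{m'}}{v_m}=0$, so neither branch applies. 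Concretely, for $\sigma=(1,1,3,3,6,8,14,22)$ the vectors $v_1,\dots,v_7$ with $v_6=-e_6+e_5+e_4$, $v_7=-e_7+e_6+e_5$ satisfy every constraint you invoke: all gappy/just-right conditions, Lemma~\ref{lem:j-1}, Lemma~\ref{smallpair} (all pairings among unbreakable norm-$\ge 3$ vectors are $0$ or $\pm1$), and the graph on $\{v_1,\dots,v_7\}$ has no claw and no heavy triple (the only high-norm adjacencies are $v_4\sim v_5$, $v_4\sim v_6$, $v_5\sim v_7$, a tree once the breakable $v_2$ is removed from $\bar S$). So the purely graph-combinatorial toolkit you restrict yourself to cannot produce the contradiction; also note your claimed chain $v_m\sim v_{m+1}\sim\cdots\sim v_{m'-1}\sim v_{m'}$ breaks at the last step whenever $m'-2\in\supp v_{m'}$, and some sub-cases (e.g.\ $\min\supp v_{m'}\ge m+1$) need claws rather than your two stated obstructions.

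What is missing is exactly the interval-geometric input the paper uses: since $\braket{v_5}{v_2}=3=|v_5|-1$, one has $A_5\prec A_2$ with $z_5\in A_2$, and by Lemma~\ref{Vj23} $A_2\dagger A_4$; hence any unbreakable $v_i$ with $i>6$, $|v_i|\ge3$ abutting $A_5$ would have $z_i\in A_2$ (using Corollary~\ref{uniqueheavy} and the fact that $A_i$ cannot contain $x_*$ or $x_{**}$), contradicting $V_i=\emptyset$, i.e.\ $\braket{v_i}{v_2}=0$. This shows $v_5$ is $6$-blocked (in particular it kills $v_{m'}=-e_{m'}+\cdots+e_5$), after which Lemma~\ref{mblock1} and Lemma~\ref{2sforever} give $\min\supp v_m=4$, the just-right form of $v_m$, and $|v_i|=2$ for $i>m$. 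Without establishing this blockedness of $v_5$ (or an equivalent statement about intervals meeting $A_2$), your case analysis for $v_{m'}$ cannot be closed.
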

\begin{proof}
Since $|x_0| < |v_2|$, there are at least 2 vertices of norm $\geq 3$ in $A_2$. By Lemma~\ref{Vj23}, $z_4 \notin A_2$. Therefore, by Lemma~\ref{highnormsinT2}, there exists $V_i \neq \emptyset$ for some $i > 4$. By Lemmas~\ref{Vjpossibilities}, \ref{Vj23} and \ref{Vj01}, $v_5 = -e_5+e_4+e_1+e_0$, and $V_i = \emptyset$ for all $i > 5$. This implies $v_2$ is $5$-blocked by Corollary~\ref{cor:AdjacencyNon0}. Observe that $v_4 \sim v_5$. Since $\braket{v_5}{v_2}=3$, 
$z_5 \in A_2$. By Lemma~\ref{Vj23}, $A_2\dagger A_4$, so $z_4 \notin A_2$. If $|v_i| = 2$ for some $i>6$, then $\braket{v_i}{v_5} = 0$, so $v_5 \not\sim v_i$. If $|v_i|\ge3$, since $z_5 \in A_2$ and $A_2 \dagger A_4$, $v_5 \sim v_i$ would imply that $z_i \in A_2$. However, for $i>6$, $V_i = \emptyset$ and $\braket{v_i}{v_2} = 0$, so $z_i \notin A_2$. Thus, $v_5$ is $6$-blocked.

Let $m > 5$ be the minimal index such that $|v_m| \ge 3$, if such $m$ exists. If $m = 6$, $V_m = \emptyset$ implies $v_6 = -e_6+e_5+e_4$. Assume $m > 6$, then $v_5$ is $(m-1)$-blocked. By Lemma \ref{mblock1}, $v_i$ is $(m-1)$-blocked for any $5 < i < m-1$. Thus, $\min\supp v_m = 4$. Since $v_m \not\sim v_5$, $4$ is not a gappy index of $v_m$. Since $|v_i| = 2$ for $5 < i < m$, $i$ is not a gappy index for $5 \leq i < m-1$. Hence, $v_m$ is just right.

Since $v_2$ and $v_m$ are $m$-blocking neighbors of $v_4$ by Definition~\ref{defn:blockingneighbor} (1) and (2), $v_4$ is $m$-blocked by Lemma \ref{mblock1}. We claim $v_{m-1}$ is also $m$-blocked. Suppose $v_i \sim v_{m-1}$ for some $i > m$, then $|v_i| \ge 3$. Observe that $v_i$ is connected to $v_5$ through a path of norm 2 vectors ($v_i \sim v_{m-1} \sim v_{m-2} \sim ... \sim v_5$). Since $z_5 \in A_2$ and $A_2\dagger A_4$, we must have $z_i \in A_2$, which contradicts $V_i = \emptyset$ and $\braket{v_i}{v_2} = 0$. Thus, $v_i$ is $m$-blocked for all $i < m$. By Lemma \ref{2sforever}, $|v_i| = 2$ for all $i > m$.
\end{proof}

Next, we consider the case where $v_4 = -e_4+e_3+e_1$.  

\begin{lemma}\label{onlyV5canbe01}
Suppose $v_4 = -e_4+e_3+e_1$. If $v_5 = -e_5+e_4+e_1+e_0$, then $|v_i| = 2$ for all $i > 5$.
\end{lemma}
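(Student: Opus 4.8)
The plan is to deduce the conclusion from Lemma~\ref{2sforever}, applied with $m=5$ and $N=n+3$. Since $S_4=\{v_1,v_2,v_3,v_4\}$ contains the unbreakable norm-$3$ vector $v_4$, it suffices to prove that each of $v_1,v_2,v_3,v_4$ is $(5,n+3)$-blocked, i.e.\ that none of them abuts $v_j$ for any $j\ge 6$. I first record the structure we will use. By Lemma~\ref{lem:v4=-431} we have $z_4=x_0$, $\epsilon_2=\epsilon_4$, $v_2\not\sim v_4$, and $A_2$ contains at least two vertices of norm $\ge3$; by Lemma~\ref{lem:TwoIndBr} this makes $v_2$ breakable, so by Corollary~\ref{cor:OneTight} and Lemma~\ref{lem:BrIsTight}, $v_2$ is the unique breakable (and unique tight) vector in $S$. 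Arguing exactly as in the proof of Lemma~\ref{Vj01}, from $v_5=-e_5+e_4+e_1+e_0$ we get $\braket{v_5}{v_2}=3$, $\epsilon_5=\epsilon_2$ and $A_5\prec A_2$; since $\braket{v_5}{v_1}=0$, $A_5$ cannot share the left endpoint $x_*$ of $A_2$, so it shares the right endpoint, whence $z_5$ is the rightmost norm-$\ge3$ vertex of $A_2$ while $x_0=z_4$ is the leftmost one. Finally, $\braket{v_4}{v_1}=\braket{v_4}{v_3}=-1$, so $v_4$ is an unbreakable vector other than $v_1,v_f$ neighboring both $v_1$ and $v_f=v_3$; by Corollary~\ref{1fblock} it is the \emph{only} such vector neighboring $v_1$, and likewise the only one neighboring $v_3$.

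Next I reduce the blocking statement to two concrete configurations. For every $j\ge6$, Lemmas~\ref{Vjpossibilities} and~\ref{Vj01} give $V_j\in\{\emptyset,\{2,3\}\}$, every such $v_j$ is unbreakable (so abutment is detected by nonzero pairing, by Corollary~\ref{cor:AdjacencyNon0}), and since $v_j$ is not tight and $j>3$, its $e_2$- and $e_3$-coordinates agree and its $e_0$-coordinate vanishes. A direct computation with $v_1=e_0-e_1$, $v_2=2e_0+e_1-e_2$, $v_3=e_2-e_3$, $v_4=-e_4+e_3+e_1$ then gives $\braket{v_j}{v_1}=\braket{v_j}{v_3}=0$ and
\[
\braket{v_j}{v_2}=\begin{cases}0,&V_j=\emptyset,\\-1,&V_j=\{2,3\},\end{cases}
\qquad
\braket{v_j}{v_4}=\begin{cases}-(v_j)_4,&V_j=\emptyset,\\1-(v_j)_4,&V_j=\{2,3\}.\end{cases}
\]
Thus $v_1$ and $v_3$ are automatically $(5,n+3)$-blocked, and $v_2,v_4$ are too provided one rules out, for every $j\ge6$: (a) $V_j=\{2,3\}$; and (b) $V_j=\emptyset$ with $4\in\supp v_j$ (note that (a) also removes the only residual danger for $v_4$). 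So the whole proof comes down to ruling out (a) and (b).

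For (a): suppose $V_m=\{2,3\}$ with $m\ge6$. Then $v_m\sim v_2$, and Lemma~\ref{Vj23} gives $A_m\dagger A_2$, $\epsilon_m=\epsilon_2$, $z_m\notin A_2$. Since the left endpoint of $A_2$ is the leaf $x_*$ of the plumbing tree (which, with $x_{**}$, is attached only to $x_0$), the only tree edge that can join a norm-$\ge3$ interval consecutively to $A_2$ is the one at the right endpoint of $A_2$; as $A_5$ contains that endpoint, $A_m\dagger A_5$, so $\braket{v_m}{v_5}=\epsilon_m\epsilon_5\braket{[A_m]}{[A_5]}=-1$ by Lemma~\ref{tausign}. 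On the other hand $(v_m)_0=(v_m)_1=0$, so $\braket{v_m}{v_5}=(v_m)_4-(v_m)_5$, forcing $(v_m)_4=0$ and $(v_m)_5=1$; then $\braket{v_m}{v_4}=1-(v_m)_4=1$, so $v_m\sim v_4$, and $v_1-v_2-v_5-v_m-v_4-v_1$ is a simple $5$-cycle through the breakable vector $v_2$, contradicting Lemma~\ref{lem:CycleLength}. For (b): let $m\ge6$ be minimal with $|v_m|\ge3$ (if it exists); by (a), $V_m=\emptyset$, and $v_6,\dots,v_{m-1}$ have norm $2$. As $v_m$ is unbreakable of norm $\ge3$ with $z_m\ne z_4=x_0$ (Corollary~\ref{uniqueheavy}), $x_0\notin A_m$, so $\tau([A_m])=-[A_m]$ by Proposition~\ref{prop:OnlyTau}. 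If $\min\supp v_m\ge5$, one checks $v_m$ is just right and abuts exactly one vector $v_\ell$ among $v_5,\dots,v_{m-1}$ (with $\ell=\min\supp v_m$), making $(v_\ell;v_{\ell-1},v_{\ell+1},v_m)$ — or $(v_5;v_2,v_6,v_m)$ when $\ell=5$ — a claw, which is impossible. If $\min\supp v_m=4$ then $v_m\sim v_4$, and a short interval analysis using $z_m\notin A_2$, $\braket{v_m}{v_2}=0$, $z_m\ne x_0$, and the fact that every boundary edge of $A_4\subsetneq A_2$ either re-enters $A_2$ or lands at the hidden vertex $x_{**}$ shows $A_m$ cannot actually abut $A_4$ when $v_m$ is just right; when $v_m$ is gappy at $4$ one gets both $v_m\sim v_4$ and $v_m\sim v_5$ with pairings of opposite sign, which via Lemma~\ref{tausign} forces $\epsilon_m=\epsilon_2$ and $\epsilon_m=-\epsilon_2$ simultaneously — again a contradiction (the residual shared-endpoint case being excluded because it would put $x_0$ back into $A_m$ or make $v_m$ breakable). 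Hence no such $v_m$ exists, $|v_j|=2$ for all $j\ge6$, and Lemma~\ref{2sforever} finishes the proof.

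The main obstacle is the cofinal analysis of cases (a) and (b): one must locate the interval $A_m$ precisely relative to $A_2$, $A_4$, $A_5$ — equivalently, decide which of $4,5$ lie in $\supp v_m$ and which of $v_4,v_5,v_6$ the vector $v_m$ abuts — and then extract a contradiction from the combination of the breakable-cycle length bound (Lemma~\ref{lem:CycleLength}), the no-claw lemma, the sign rule of Lemma~\ref{tausign}, and the uniqueness of high-norm vertices (Corollary~\ref{uniqueheavy}). The reduction to these sub-cases is routine; the sub-case bookkeeping is where care is needed.
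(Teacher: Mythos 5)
Your proof is correct, and while it follows the paper's overall skeleton (constrain $V_j$ for $j>5$ via Lemmas~\ref{Vjpossibilities}, \ref{Vj01}, \ref{Vj23}, use $A_5\prec A_2$ with $\epsilon_5=\epsilon_2$ and $z_4=x_0$, reduce to $5$-blockedness of $v_1,\dots,v_4$ and finish with Lemma~\ref{2sforever}), it diverges from the paper's proof at the two decisive points. In the case $V_j=\{2,3\}$ the paper first argues $v_j\not\sim v_4$ directly from the interval positions ($z_j\notin A_2$, $z_4=x_0$), deduces $4\in\supp v_j$, $5\notin\supp v_j$, and derives the sign clash $\epsilon_5=-\epsilon_j$ against $\epsilon_j=\epsilon_5=\epsilon_2$; you instead run the coefficient computation the other way, obtain $v_m\sim v_4$, and kill the configuration with the length bound on cycles (Lemma~\ref{lem:CycleLength} applied to the $5$-cycle $v_1\,v_2\,v_5\,v_m\,v_4$), which buys you independence from the paper's one-line geometric claim. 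In the case $V_j=\emptyset$ the paper blocks $v_4$ with the same quick observation ($z_j\notin A_2$, $z_4=x_0$) and lets Lemma~\ref{2sforever} absorb everything else, whereas you analyze the minimal $m\ge6$ with $|v_m|\ge3$ by hand: a claw when $\min\supp v_m\ge5$, an $\epsilon$-clash when $v_m$ is gappy at $4$, and an interval argument when $v_m$ is just right; this actually proves the conclusion outright, so your closing appeal to Lemma~\ref{2sforever} is redundant (harmless). Two small points of precision: in the gappy-at-$4$ subcase the statement that forces consecutiveness together with $\braket{v_m}{v_4}=-\epsilon_m\epsilon_4$ and $\braket{v_m}{v_5}=-\epsilon_m\epsilon_5$ is the equality case of Lemma~\ref{smallpair}, not Lemma~\ref{tausign} alone (your parenthetical about the shared-endpoint case does patch this); and the ``short interval analysis'' in the just-right subcase is compressed, but the ingredients you list do complete it — since $A_m\dagger A_4$ must attach along the edge leaving the right end of $A_4$, which lies inside $A_2$, and $\braket{v_m}{v_2}=0$ forces $A_m\cap A_2$ to contain no vertex of norm $\ge3$, while $z_5\in A_2$ sits to the right of $A_4$ (all of $A_4\setminus\{x_0\}$ has norm $2$), the interval $A_m$ can neither stay inside $A_2$ (it would contain $z_m$) nor exit it (it would contain $z_5$), a contradiction.
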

\begin{proof}
Assume $v_5 = -e_5+e_4+e_1+e_0$. By Lemmas~\ref{Vjpossibilities} and~\ref{Vj01}, $V_i = \emptyset$ or $\{2,3\}$ for $i>5$. If $V_i = \{2,3\}$, then by Lemma~\ref{Vj23}, $v_i \sim v_2$, $\epsilon_i = \epsilon_2$ and $z_i \notin A_2$. Since $\braket{v_5}{v_2}=3=|v_5|-1$, $A_2$ and $A_5$ share the right endpoint and $\epsilon_5=\epsilon_2$, so $v_i \sim v_5$. Since $z_i \notin A_2$ and $z_4 = x_0$ (which follows from Lemma~\ref{lem:v4=-431}), $v_i \not\sim v_4$. Since $\braket{v_i}{v_4} = 0$, $4 \in \supp v_i$. Since $v_i \sim v_5$, $5 \not\in \supp v_i$ and $\braket{v_i}{v_5} = 1$, so $\epsilon_5 = -\epsilon_i$. However, this contradicts $\epsilon_5 = \epsilon_2$ and $\epsilon_i = \epsilon_2$. Thus, $V_i = \emptyset$ for $i>5$. Therefore, $v_1,v_2,v_3$ are $5$-blocked by Corollary~\ref{cor:AdjacencyNon0}. For $i>5$, since $\braket{v_i}{v_2}=0$, $z_i \notin A_2$. Since $z_4 = x_0$, $v_4 \not\sim v_i$ for $i > 5$. Since $v_1,v_2,v_3,v_4$ are $5$-blocked, $|v_i| = 2$ for all $i > 5$ by Lemma~\ref{2sforever}.
\end{proof}

\begin{lemma}\label{f=3v4gappyonly23}
Suppose $v_4 = -e_4+e_3+e_1$. If there exists $V_m = \{2,3\}$, then $v_m = -e_m+e_{m-1}+\cdots+e_2$ and $|v_i| = 2$ for all $i > 4$, $i \neq m$.
\end{lemma}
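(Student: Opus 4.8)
The plan is to run three stages: first narrow the possibilities for $v_5,v_6,\dots$, then pin down $v_m$ exactly, and finally force $\abs{v_i}=2$ for all $i>m$ by a blocking argument.

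First I would establish that no index $j>4$ has $V_j=\{0,1\}$. By Lemma~\ref{Vj01} such a $j$ must equal $5$ with $v_5=-e_5+e_4+e_1+e_0$, whence Lemma~\ref{onlyV5canbe01} gives $\abs{v_i}=2$, hence $V_i=\emptyset$, for all $i>5$; since $V_4=\{1,3\}$ and $V_5=\{0,1\}$, this would leave no index with $V$-set $\{2,3\}$, contrary to hypothesis. Combined with Lemma~\ref{Vjpossibilities} this forces $V_j\in\{\emptyset,\{2,3\}\}$ for every $j\ge5$, and then Lemma~\ref{Vj23} shows $m$ is the unique such index, with $A_m\dagger A_2$, $\epsilon_m=\epsilon_2$, $v_m\sim v_2$, and $z_m\notin A_2$. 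I will also invoke Lemma~\ref{lem:v4=-431}, which in this case gives $z_4=x_0\in A_2$, $\epsilon_4=\epsilon_2$, $A_2\pitchfork A_4$, and $v_2\not\sim v_4$. Since $V_j=\emptyset$ for $4<j<m$, Lemma~\ref{2sinbetween} (applied with its ``$m$'' taken to be $m-1$) gives $\abs{v_j}=2$ for $4<j<m$, so $v_j=e_{j-1}-e_j$ there by Lemma~\ref{lem:j-1}.

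Next I would pin down $v_m$. From $V_m=\{2,3\}$ we have $2,3\in\supp v_m$ and $0,1\notin\supp v_m$, and Lemma~\ref{lem:j-1} gives $m-1\in\supp v_m$. Since $v_5,\dots,v_{m-1}$ all have norm $2$, Lemma~\ref{gappy3} shows none of $4,\dots,m-2$ is a gappy index of $v_m$, so the only way $\{4,\dots,m-1\}\not\subseteq\supp v_m$ could occur is that $3$ is a gappy index, i.e.\ $3\in\supp v_m$ while $4\notin\supp v_m$. But then $m\ge6$ (otherwise $m-1=4\in\supp v_m$), so $v_m$ is unbreakable of norm at least $3$, and $\braket{v_m}{v_4}=-\braket{v_m}{e_4}+\braket{v_m}{e_3}+\braket{v_m}{e_1}=0+1+0=1$, whereas Lemma~\ref{smallpair} applied to the unbreakable vectors $v_4,v_m$ of norm at least $3$ forces $\braket{v_m}{v_4}\in\{0,-\epsilon_4\epsilon_m\}=\{0,-1\}$, a contradiction. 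Hence $\supp v_m=\{2,3,\dots,m\}$ with all coefficients $+1$ except the last, that is $v_m=-e_m+e_{m-1}+\cdots+e_2$.

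Finally I would show $\abs{v_i}=2$ for all $i>m$ by an iterated blocking argument of the type used in the proofs of Lemmas~\ref{onlyV5canbe01} and~\ref{f=3v4justright}. Since $V_i=\emptyset$ and $\{i,\ell\}\ne\{1,f\}$ whenever $i>m$ and $\ell\in\{1,2,3\}$, Corollary~\ref{cor:AdjacencyNon0} makes $v_1,v_2,v_3$ $m$-blocked. For $v_4$ one uses, as in Lemma~\ref{onlyV5canbe01}, that $\braket{v_i}{v_2}=0$ for $i>m$ forces $z_i\notin A_2$ when $\abs{v_i}\ge3$ (via Lemmas~\ref{tausign} and~\ref{lem:IntervalProd}), while $z_4=x_0\in A_2$ and $A_4$ contains no other heavy vertex; hence no $A_i$ with $i>m$ can abut $A_4$, so $v_4$ is $m$-blocked. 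The blocking then propagates along the norm-$2$ chain $v_5,\dots,v_{m-1}$ via Lemma~\ref{mblock1} and Definition~\ref{defn:blockingneighbor} (3) and~(4) (with $v_4$ serving as the blocking neighbour required by~(4) at $j=5$, and the short cases $m=5,6$ handled by hand), so that every $v_i$ with $i<m$ is $m$-blocked; Lemma~\ref{2sforever}, applicable since $v_4\in S_{m-1}$ is unbreakable of norm at least $3$, then yields $\abs{v_i}=2$ for all $i>m$. I expect the two genuinely delicate points to be the exclusion of the gappy index $3$ for $v_m$ in the second stage --- exactly where $\epsilon_4=\epsilon_2=\epsilon_m$ together with Lemma~\ref{smallpair} is indispensable --- and the bookkeeping forced by the breakable vector $v_2$ throughout, since $G(S)$ differs from the pairing graph near $v_2$ and Corollary~\ref{cor:AdjacencyNon0} is unavailable there, which is also what makes the final chain-blocking step require care.
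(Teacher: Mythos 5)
Your stages 1 and 2 are fine. Stage 1 follows the paper's reduction (Lemmas~\ref{Vjpossibilities}, \ref{Vj23}, \ref{Vj01}, \ref{onlyV5canbe01}, \ref{2sinbetween}), and your stage 2 exclusion of a gappy index at $3$ is a correct alternative to the paper's argument: you compute $\braket{v_m}{v_4}=1$ when $4\notin\supp v_m$ and rule it out by Lemma~\ref{smallpair} together with $\epsilon_4=\epsilon_2=\epsilon_m$, whereas the paper argues geometrically that $z_m\notin A_2$ and $z_4=x_0$ force $v_m\not\sim v_4$, hence $\braket{v_m}{v_4}=0$. Both give $v_m=-e_m+e_{m-1}+\cdots+e_2$.

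The genuine gap is in stage 3, at $v_{m-1}$. Your propagation via Lemma~\ref{mblock1} requires two non-adjacent $m$-blocking neighbours, but for $m\ge 6$ the vector $v_{m-1}=e_{m-2}-e_{m-1}$ has only one neighbour in $S_m$: indeed $\braket{v_m}{v_{m-1}}=1-1=0$, so by Corollary~\ref{cor:AdjacencyNon0} $v_m\not\sim v_{m-1}$, and the only remaining candidate is $v_{m-2}$; hence Lemma~\ref{mblock1} cannot block $v_{m-1}$ (this already fails at $m=6$, which you propose to do ``by hand'' without saying how). Consequently ``every $v_i$ with $i<m$ is $m$-blocked'' is not established and Lemma~\ref{2sforever} cannot be invoked. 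The gap is not cosmetic: nothing you have proved excludes $v_{m+1}=e_{m-1}+e_m-e_{m+1}$, which pairs to zero with $v_1,\dots,v_4$ and with $v_m$, pairs $-1$ with $v_{m-1}$, has $V_{m+1}=\emptyset$, yet has norm $3$ and must be ruled out for the conclusion $|v_i|=2$ ($i>m$) to hold. The paper rules it out by an interval-geometry step you have omitted: since $A_2$ contains a second heavy vertex to the right of $x_0$ (Lemma~\ref{lem:v4=-431}) and $A_5,\dots,A_{m-1}$ form an all-norm-$2$ chain hanging off $A_4\ni x_0$, each $A_j$ with $4\le j<m$ lies in the interior of $A_2$; therefore any $A_i$ with $i>m$ abutting one of them would have $z_i\in A_2$, giving $\braket{v_i}{v_2}\ne0$ and $V_i\ne\emptyset$, a contradiction, so all of $v_4,\dots,v_{m-1}$ are $m$-blocked. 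Note that the same containment is also what your $v_4$ step tacitly needs: ``no $A_i$ can abut $A_4$'' requires $\max A_4<\max A_2$ (it does not follow from $z_i\notin A_2$ and $z_4=x_0$ alone), and that inequality comes from the second heavy vertex of $A_2$. With this geometric containment inserted your outline closes; without it, the chain-blocking as proposed fails exactly at $v_{m-1}$.
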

\begin{proof}
By Lemmas~\ref{Vjpossibilities}, \ref{Vj23}, \ref{Vj01} and \ref{onlyV5canbe01}, $V_i = \emptyset$ for all $i > 4$, $i \neq m$. By Lemma~\ref{2sinbetween}, $|v_i| = 2$ for $4 < i < m$. Hence, $i$ is not a gappy index for $4 \leq i < m-1$. Recall that $z_4 = x_0$ from Lemma~\ref{lem:v4=-431}. By Lemma~\ref{Vj23}, $z_m \notin A_2$, so $v_m \not\sim v_4$, and $4 \in \supp v_m$. Therefore, $v_m = -e_m+e_{m-1}+...+e_2$.

Since $V_i = \emptyset$ for $i > m$, $v_1,v_2,v_3$ are $m$-blocked. Take any $4 \leq j < m$. Observe that $v_j$ is connected to $v_4$ through a path of norm 2 vectors, and recall that $z_4 = x_0$, so each $A_j$ is contained in the interior of $A_2$. For any $i > m$, $v_i \not\sim v_j$ unless $|v_i| = 2$ or $z_i \in A_2$. If $|v_i|=2$, we have $\braket{v_i}{v_j}=0$, so $v_i\not\sim v_j$. If $z_i \in A_2$, then $\braket{v_i}{v_2}\ne0$, so $V_i\ne\emptyset$, a contradiction.
So $v_j$ is $m$-blocked for $4 \leq j < m$. By Lemma~\ref{2sforever}, $|v_i| = 2$ for all $i > m$.
\end{proof}

\begin{prop}\label{f=3tight}
When $f = 3$ and $v_2$ is tight, one of the following holds:
\begin{enumerate}
    \item $v_4 = -e_4+e_3+e_2$. $|v_i| = 2$ for all $i > 4$. 
    \item $v_4 = -e_4+e_3+e_2$. $v_5 = -e_5+e_4+e_1+e_0$. $|v_i| = 2$ for all $i > 5$, except possibly one $v_m = -e_m+e_{m-1}+\cdots+e_4$.
    \item $v_4 = -e_4+e_3+e_1$. $v_5 = -e_5+e_4+e_1+e_0$. $|v_i| = 2$ for all $i > 5$.
    \item $v_4 = -e_4+e_3+e_1$. $|v_i| = 2$ for all $i > 4$, except possibly one $v_m = -e_m+e_{m-1}+\cdots+e_2$.
\end{enumerate}
\end{prop}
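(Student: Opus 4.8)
The plan is to obtain Proposition~\ref{f=3tight} purely by assembling the classification lemmas proved earlier in this section, since all of the combinatorial work has already been carried out there. The organizing dichotomy comes from Lemma~\ref{Vjpossibilities}, which forces $v_4$ to be either $-e_4+e_3+e_2$ or $-e_4+e_3+e_1$; within each of these two cases I would run a short secondary split and invoke the appropriate lemma. Before splitting, I would record one auxiliary fact: by Proposition~\ref{f=3basistovf} and the discussion opening this section the left endpoint of $A_2$ is $x_*$, and since $|[A_2]| = |v_2| = 6 > 2$ the interval $A_2$ is not the singleton $\{x_*\}$; as $x_*$ is a leaf of $\mathcal T$ whose only neighbor is $x_0$, this gives $x_0 \in A_2$, and then \eqref{eq:IntervalNorm} yields $a_0 = |x_0| \le |[A_2]| = 6$. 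Thus $a_0 \in \{3,4,5,6\}$, which makes the dichotomy $a_0 = 6$ versus $a_0 < 6$ exhaustive.

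Suppose first $v_4 = -e_4+e_3+e_2$. If $a_0 = 6$, Lemma~\ref{f=3unbreakable} gives $|v_i| = 2$ for all $i > 4$, which is alternative~(1). If instead $a_0 < 6$, Lemma~\ref{f=3v4justright} gives $v_5 = -e_5+e_4+e_1+e_0$ and $|v_i| = 2$ for all $i > 5$ except possibly a single $v_m = -e_m+e_{m-1}+\cdots+e_4$, which is alternative~(2).

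Now suppose $v_4 = -e_4+e_3+e_1$, so by Lemma~\ref{lem:v4=-431} we have $z_4 = x_0$. By Lemma~\ref{Vjpossibilities}, $V_j \in \{\emptyset,\{0,1\},\{2,3\}\}$ for every $j > 4$, and by Lemma~\ref{Vj01} any $j > 4$ with $V_j = \{0,1\}$ must equal $5$ and satisfy $v_5 = -e_5+e_4+e_1+e_0$. If such a $v_5$ occurs, Lemma~\ref{onlyV5canbe01} gives $|v_i| = 2$ for all $i > 5$, which is alternative~(3). Otherwise $V_j \in \{\emptyset,\{2,3\}\}$ for all $j > 4$; by Lemma~\ref{Vj23} there is at most one index $m$ with $V_m = \{2,3\}$. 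If such an $m$ exists, Lemma~\ref{f=3v4gappyonly23} gives $v_m = -e_m+e_{m-1}+\cdots+e_2$ and $|v_i| = 2$ for all $i > 4$ with $i \ne m$; if no such $m$ exists, then $V_j = \emptyset$ for all $4 < j \le n+3$, so Lemma~\ref{2sinbetween} (applied with upper index $n+3$) gives $|v_i| = 2$ for all $i > 4$. In either subcase this is alternative~(4), and the four branches together exhaust all possibilities.

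There is no genuinely hard step: every branch reduces immediately to a lemma already proved, so the only thing that requires care is checking exhaustiveness --- that the successive dichotomies ($v_4$ having one of two forms; then $a_0 = 6$ or $a_0 < 6$; then the presence or absence of $V_5 = \{0,1\}$, and then of some $V_m = \{2,3\}$) leave no case uncovered --- and that the hypotheses of the invoked lemmas are met, notably the bound $a_0 \le 6$ needed so that "$a_0 \ne 6$" coincides with the hypothesis "$a_0 < 6$" of Lemma~\ref{f=3v4justright}.
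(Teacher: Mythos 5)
Your proposal is correct and follows essentially the same route as the paper: split on the two possible forms of $v_4$ from Lemma~\ref{Vjpossibilities}, then on $a_0=6$ versus $a_0<6$ (Lemmas~\ref{f=3unbreakable}, \ref{f=3v4justright}) in the first case, and on the presence of $V_5=\{0,1\}$ and then of some $V_m=\{2,3\}$ (Lemmas~\ref{onlyV5canbe01}, \ref{f=3v4gappyonly23}, \ref{2sinbetween}) in the second. Your explicit check that $a_0\le 6$ (so the $a_0=6$ / $a_0<6$ dichotomy is exhaustive) is a small justification the paper leaves implicit, but it does not change the argument.
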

\begin{proof}
When $v_4 = -e_4+e_3+e_2$ and $a_0=6$, item 1 holds by Lemma~\ref{f=3unbreakable}. When $v_4 = -e_4+e_3+e_2$ and $a_0<6$, item 2 holds by Lemma~\ref{f=3v4justright}. When  $v_4 = -e_4+e_3+e_1$ and $V_5 = \{0,1\}$, item 3 holds by Lemma~\ref{onlyV5canbe01}. When $v_4 = -e_4+e_3+e_1$ and $V_5 = \emptyset$ or $\{2,3\}$, item 4 holds by Lemma~\ref{f=3v4gappyonly23} (if there exsits $V_m = \{2,3\}$) or Lemma~\ref{2sinbetween} (if there is no $V_m = \{2,3\}$).
\end{proof}

The corresponding changemakers are:
\begin{itemize}
    \item $(1,1,3,3,4^{[s]},4s+6^{[t]})$, $s+t>0$,
    \item $(1,1,3,3,6,8^{[s]},8s+6^{[t]})$,
    \item $(1,1,3,3,4,6^{[s]})$, $s>0$.
\end{itemize}
%%%%%
%%%%%
%%%%%
%%%%%
%%%%%

\section{Determining $p$ and $q$}\label{sec:pandq}
\begin{table}\centering
%\ra{1.03}
\caption{$\mathcal{P}^{-}$, table of $P(p,q)$ that are realizable, $q<0$}\label{table:Types}
   \begin{tabular}{@{}lll@{}} \toprule

    Type & \begin{tabular}{l}$P(p,q)$\end{tabular} &\begin{tabular}{l}Range of parameters  \\
    ($p$ and $r$ are always odd, $p>1$)\end{tabular}\\
    \midrule
    \\
  {\bf 1A}   &\begin{tabular}{l}$P\left(p, -\frac{1}{2}(p^2 - 3p + 4)\right)$ \end{tabular} &\begin{tabular}{l}$p\ge7$ 
  \end{tabular} \bigskip \bigskip \\ \\ 
  {\bf 1B} &\begin{tabular}{l}$P\left(p, -\frac{1}{22}(p^2 - 3p + 4)\right)$\end{tabular} & \begin{tabular}{l}$p \equiv 17$ or $19$ $\pmod{22}$\\ $p>22$\end{tabular}\bigskip\\ \\  \bigskip \bigskip 
  {\bf 2} &\begin{tabular}{l}$P\left(p, -\frac{1}{|4r+2|}(r^2p + 1)\right)$ \end{tabular} &\begin{tabular}{l}$r \equiv -1 \pmod4$\\ $r\ne-1,3$ \\
  $p \equiv 2r-3 \pmod{4r+2}$\end{tabular}  \\ \\ \bigskip \bigskip 
  {\bf 3A} &\begin{tabular}{l}
  $P\left(p, -\frac{1}{2r}(p+1)(p+4)\right)$\end{tabular} &\begin{tabular}{l}$r\ge1$ \\
  $p \equiv -1\pmod{2r}$\\ $p\ge4r-1$ \end{tabular}\\ \\ \bigskip \bigskip 
  {\bf 3B} &\begin{tabular}{l}$P\left(p, -\frac{1}{2r}(p+1)(p+4)\right)$\end{tabular} &\begin{tabular}{l}$r\ge5$ \\
  $p \equiv r-4\pmod{2r}$\\ $p\ge 3r-4$\end{tabular}\\ \\ \bigskip \bigskip 
  {\bf 4} &\begin{tabular}{l}$P\left(p, -\frac{1}{2r^2}\left((2r+1)^2p + 1\right)\right)$\end{tabular} &\begin{tabular}{l}$r\ne-3,-1,1$ \\
  $p \equiv 4r-1 \pmod{2r^2}$\\ $p\ge4r-1$\end{tabular}\\ \\ \bigskip \bigskip 
  {\bf 5} &\begin{tabular}{l}$P\left(p, -\frac{1}{r^2 - 2r - 1}(r^2p + 1)\right)$\end{tabular} &\begin{tabular}{l}$r \neq 1$ \\
  $p \equiv 2r - 5$ $\pmod{r^2 - 2r - 1}$
  \\ $p\ge2r-5$
  \end{tabular}\\ \\ \bigskip \bigskip 
  {\bf Sporadic} &\begin{tabular}{l}$P(11,-30)$,  $P(17,-31)$,\\  $P(13,-47)$, $P(23,-64)$\end{tabular}&\\
  \bottomrule
\end{tabular}

\end{table}

Having classified all the $(n+3)$--dimensional $D$--type lattices which are changemaker lattices, we now aim to concretely compute the pairs $(p,q)$ for these lattices. More precisely, we will give a list $\mathcal P^-$ of prism manifolds with the property that if positive surgery on some knot in $S^3$ results in a prism manifold $P(p,q)$ with $q<0$, then $P(p,q)\in \mathcal P^-$.  

Recall from Section~\ref{sec:Background} that a prism manifold $P(p,q)$ is the boundary of a sharp $4$--manifold $X(p,q)$ when $q<0$. The homology group $H_2(X(p,q))$ equipped with the inner product $-Q_{X(p,q)}$ becomes a lattice isomorphic to $\Delta(p,q)$. From the integers $a_0, a_1, \cdots a_n$ in~\eqref{eq:InnerProduct}, we can recover $p,q$ using (\ref{eq:q/pContFrac}).	 

Our strategy to determine $p$ and $q$ is as follows. Let $S = \{v_1,\dots,v_{n+3}\}$ be the standard basis for a changemaker lattice that is isomorphic to a $D$--type lattice. Convert $S$ into a vertex basis, denoted $S^* = \{v_1^*,\dots, v_{n+3}^*\}$. Using the results of Section~\ref{sec:Changemaker}, the $a_i$ will be recovered. We can then get the pair $(p,q)$ using (\ref{eq:q/pContFrac}). The following lemma helps to simplify some involved computations of continued fractions that we will face. Theses properties are the first two items of~\cite[Lemma~9.5]{greene:LSRP}.  

\begin{lemma}\label{greene9.5}
For integers $r,s,t \ge 2$,
\begin{itemize}
    \item[1.] $[\dots,r,2^{[s]},t,\dots]^- = [\dots, r-1, -(s+1), t-1,\dots]^-$, and
    \item[2.] $[\dots, s, 2^{[t]}]^- = [\dots, s-1, -(t+1)]^-$.
\end{itemize}
\end{lemma}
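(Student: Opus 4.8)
The plan is to translate the statement into matrix identities. To an integer $a$ I associate
\[
N(a)=\begin{pmatrix} a & -1 \\ 1 & 0\end{pmatrix},
\]
so that for any integers $a_1,\dots,a_k$ (not necessarily $\ge 2$) for which the expression is defined, $[a_1,\dots,a_k]^-=p/q$, where $N(a_1)N(a_2)\cdots N(a_k)=\left(\begin{smallmatrix}p&\ast\\ q&\ast\end{smallmatrix}\right)$. Two remarks keep the bookkeeping under control. First, each $N(a)$ has determinant $1$, so if two finite products of such matrices agree up to an overall sign, they represent the same continued fraction; this is what I will use for part~(1), where a middle block is replaced inside the common ambient string ``$\dots$''. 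Second, for a continued fraction that \emph{terminates}, the value depends only on the first column of the total product, and in fact only on that column up to a scalar: left multiplication by the matrix of the common initial segment does not change the ratio of its two entries. This is what I will use for part~(2).

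Next I would record the one computation worth isolating: by a one-line induction on $s$,
\[
N(2)^s=\begin{pmatrix} s+1 & -s \\ s & 1-s\end{pmatrix}\qquad(s\ge 1).
\]
With this in hand, part~(1) reduces to the matrix identity
\[
N(r)\,N(2)^s\,N(t)=-\,N(r-1)\,N(-(s+1))\,N(t-1),
\]
and part~(2) reduces to the assertion that the first columns of $N(s)\,N(2)^t$ and $N(s-1)\,N(-(t+1))$ are negatives of each other. Both are finite computations: expand each side using the closed form above and simplify the polynomial entries in $r,s,t$. Once these are verified, the two general remarks from the first paragraph let me conclude that the corresponding continued fractions agree — the overall sign in the first identity is harmless, and in part~(2) only the first column matters since that continued fraction terminates.

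I expect the only friction to be sign bookkeeping rather than any conceptual difficulty: one must track the overall minus sign in the first identity (it genuinely appears, since $N(2)^s$ and $N(-(s+1))$ are related only after the flanking adjustments $r\mapsto r-1$ and $t\mapsto t-1$), and one should note in passing that inserting the negative entries $-(s+1)$ and $-(t+1)$ never forces an intermediate division by zero, so that the right-hand sides are legitimate continued fractions and the matrix calculus applies. Since the statement is \cite[Lemma~9.5]{greene:LSRP}, one could alternatively simply invoke it; the sketch above is the self-contained route.
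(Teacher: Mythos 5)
Your argument is correct, but note that the paper contains no proof of this lemma at all: it simply invokes \cite[Lemma~9.5]{greene:LSRP}, where the identities are checked in the routine elementary way (a short induction on the number of $2$'s using the recursion $[a_1,a_2,\dots]^-=a_1-1/[a_2,\dots]^-$). Your route packages the same computation into the standard matrix calculus, and the two identities you reduce everything to do check out: with $N(2)^s=\left(\begin{smallmatrix}s+1&-s\\ s&1-s\end{smallmatrix}\right)$ one gets $N(r)N(2)^sN(t)=-N(r-1)\,N(-(s+1))\,N(t-1)$, and the first columns of $N(s)N(2)^t$ and $N(s-1)N(-(t+1))$ are negatives of one another; since the value of the continued fraction is the ratio of the entries of the first column of the total product, and that ratio is insensitive to an overall sign and, for the common prefix $B$, depends on the trailing factor only through its first column up to scale, both parts follow. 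What the matrix formulation buys is transparent bookkeeping of the flanking ``$\dots$'' and of the sign; what it does not by itself dispose of is the point you mention only in passing: under the strict reading of $[\cdot]^-$ the identities should either be asserted where both sides are defined, with a check that the inserted entries $-(s+1)$, $-(t+1)$ never produce a vanishing tail (true in the paper's applications, where the remaining entries are $\ge 2$ or arise from earlier applications of the same moves), or, more cleanly, one interprets $[a_1,\dots,a_k]^-$ projectively in $\Q\cup\{\infty\}$ as the image of $\infty$ under the M\"obius map of $N(a_1)\cdots N(a_k)$, in which case your proof is complete as written. One small wording caveat: ``left multiplication by the matrix of the common initial segment does not change the ratio of its two entries'' should be read as saying that the first column of $BM$, hence the value, depends on $M$ only through its first column up to a scalar, not that multiplication by $B$ literally preserves the ratio.
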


\begin{ex}
We illustrate how to obtain the pair $(p,q)$ in the case $f = 3$, $v_{2}$ is tight, and $|v_4| = 3$ [c.f. Proposition~\ref{f=3tight}~(2)]. Let $S = \{v_1,...,v_{n+3}\}$ denote the standard basis for the changemaker lattice $L= (\sigma)^{\perp}$. One easily computes  \[\sigma = (1,1,3,3,6,8^{[n-1]}),\quad n\geq 2.\]
From the pairing graph it follows that $S$ is not a vertex basis. Taking
\[v_2^* = v_2 - v_1 - v_5 - ... - v_{n+3},\]
we get that
\[S^* = \{v_1,v_3\}\cup\{v_2^*,v_{n+3},\dots,v_5,v_4\}\]
is indeed a vertex basis with norms giving by the tuple
\[V^* = (2,2,4,2^{[n-2]},4,3),\quad n\geq 2.\]
Using Proposition~\ref{greene9.5} together with (\ref{eq:q/pContFrac}), we get
\begin{equation}\label{SampleCalculation}
\frac{-q}{p} = [4-1,2^{[n-2]},4,3]^- =\frac{16(n-1) + 14}{8(n-1) + 3}.
\end{equation}
Observe that $-q = 2(p + 4)$. Moreover, looking at the denominator of the right hand side of~\eqref{SampleCalculation}, we get that $p \equiv 3\pmod{8}$. Since $n\ge 2$, we have $p \ge 11$. 
\end{ex}

Similar computations for the $D-$type changemaker lattices give prism manifolds $P(p,q)$, so that up to re-parametrization, each falls into one of the families in Table~\ref{table:Types}. We shall denote the collection of these families by $\mathcal{P}^-$. Here we divide the families so that each changemaker vector corresponds to a unique family. The detailed correspondence between the changemaker vectors and $P(p,q)$ can be found in Table~\ref{BigSummary}.
It should be noted that, as discussed in Section~\ref{sec:Background}, the positive integer $p$ is always odd.

\section{Primitive/Seifert-fibered knots admitting prism manifold surgeries}\label{sec:BergeKangPrism}

In the previous sections, we provided a list of changemaker vectors in $\mathbb Z^{n+4}$ whose orthogonal complements are isomorphic to $D$--type lattices. This list gave rise to a collection $\mathcal P^-$ of prism manifolds. Let $\sigma$ be a changemaker vector that corresponds to $P(p,q)\in \mathcal P^-$. We will find a knot $K_{\sigma}\subset S^3$, on which some surgery yields $P(p,q)$.
This is the theme of this section. To start, we need to recall some definitions.

\begin{definition}
Let $H$ be a genus two handlebody. A simple closed curve $c\subset \partial H$ is {\it primitive} in $H$ if $H[c]$, the manifold obtained by adding a two-handle to $H$ along $c$, is a solid torus.  A simple closed curve $c\subset \partial H$ is {\it Seifert-fibered} in $H$ if $H[c]$ is a Seifert fibered space.
\end{definition}

\begin{definition}\label{def:P/SF}
Let $\Sigma$ denote the genus two Heegaard surface of the standard genus two Heegaard spliting of $S^3 = H \cup_{\Sigma} H'$. A knot $K\subset S^3$ is called {\it primitive/Seifert-fibered}, denoted P/SF, if it has a presentation as a simple closed curve on $\Sigma$, such that $K$ is primitive in $H'$ and Seifert-fibered in $H$. The isotopy class in $\partial \nu(K)$ of the curves in $\partial \nu(K) \cap \Sigma$ is called the {\it surface slope of $K$ with respect to $\Sigma$.} Below, we will use $\gamma\in \mathbb Z$ to denote the surface slope.
\end{definition}

P/SF knots are generalizations of doubly primitive knots (i.e. knots that can be isotoped to lie on $\Sigma$ and are primitive in both handlebodies $H$ and $H'$) studied by Berge~\cite{Berge}. In~\cite{Dean2003}, Dean studied the Dehn surgery on P/SF knots along their surface slopes. He proved that the surface slope surgery on a P/SF knot results in either a Seifert fibered manifold %with base space $S^2$ and at most three singular fibers 
or a connected sum of lens spaces~\cite[Proposition~2.3]{Dean2003}. Also, Eudave-Mu\~noz proved that the latter case does not happen for hyperbolic P/SF knots~\cite[Theorem~4]{Eudave1992}. 

Berge and Kang, in~\cite{BergeKang}, classify all P/SF knots. Furthermore, given a P/SF knot $K$, they specify the indices of the singular fibers of the Seifert fibered manifold obtained from the surface slope surgery on $K$. In the case that $H[K]$ is Seifert fibered over a disk with two singular fibers, there are five families as follows.
\begin{itemize}
\item[(1)] Knots in solid tori, denoted KIST;
\item[(2)] Knots with torus knot meridians, denoted TKM;
\item[(3)] Knots with cable knot meridians, denoted CKM;
\item[(4)] Knots in once punctured tori, denoted OPT;
\item[(5)] Sporadic.
\end{itemize}
Each of these families of knots are parametrized by a collection of integers with certain constraints, to be made precise below. Types KIST, TKM, and OPT each divides into five subfamilies enumerated I--V.

\begin{rmk}
Berge and Kang also classify all P/SF knots such that $H[K]$ is Seifert fibered over the M\"obius band with one singular fiber. From the data provided in~\cite{BergeKang}, however, it is straightforward to show that the result of the surface slope surgery on such knots is never a prism manifold.
\end{rmk}

Recall that, for a pair of relatively prime integers $p>1$ and $q$, a prism manifold $P(p,q)$ is a Seifert fibered manifold with base orbifold $S^2$ and three singular fibers of index \begin{equation} \label{S2fibers}2,2,p.\end{equation} 

\noindent In what follows, we present an example of a family of P/SF knots that admit prism manifold surgeries. The strategy is to determine which values, if any, for the parameters defining the family in~\cite{BergeKang}, give knots on which the surface slope surgeries result in Seifert fibered manifolds with singular fibers of indices as~\eqref{S2fibers}. Using~\eqref{eq:PrismHomology}, if $P(p,q)$ arises from the surface slope surgery on a P/SF knot, then
\begin{equation}\label{eq:Slope=4q}
\gamma=\text{Surface Slope}=\pm4q.
\end{equation}
%Using Lemma~\ref{podd}, extra constraint for $P(p,\pm q)$ to arise from a knot surgery is the parity of $p$ in~\eqref{S2fibers}. 
First, we make some notational conventions. As in Definition~\ref{def:P/SF}, take $\Sigma$ to be the genus two Heegaard surface of $S^3$. Let $H_1(\Sigma)$ be generated by the class of simple closed curves $A,B,X$, and $Y$, where $A,B$ bound disjoint disks in $H'$, and $X,Y$ bound disjoint disks in $H$. Moreover, these curves are oriented so that
\begin{equation}\label{eq:HomologyClass}A\cdot X=B\cdot Y=-1,\quad A\cdot B=A\cdot Y=X\cdot B=X\cdot Y=0.\end{equation}

\begin{ex}\label{KISTPrism}
The family of KIST~IV knots are characterized by five integer parameters \[(J_1,J_2,\epsilon,n,\widetilde p),\] where  \[|J_1| > 1, |J_2 + 1| > 1, \epsilon = \pm 1, \text{ and } \widetilde p + \epsilon > 1.\] 
(The parameter $\widetilde p$ is the parameter $p$ in \cite{BergeKang}. We use the notation $\widetilde p$ here because $p$ has been used.)
Let $K$ be a P/SF knot of Type KIST~IV. Using the the generators of $H_1(\Sigma)$ as in~\eqref{eq:HomologyClass}, the homology class of a simple closed curve $c$ representing $K$ in $H_1(\Sigma)$ can be written as
\[ [c] = aA + bB + xX + yY,\]
where 
\begin{eqnarray*}
                                  a &=& (J_1J_2 + J_2 + 1),\\ 
                                  b &=& (J_2 + 1)\widetilde p +J_1J_2(\widetilde p+\epsilon), \\
                                  x &=&     -1    \\
                                  y &=&     (J_2+1)(n\widetilde p+\epsilon) + J_1J_2(n(\widetilde p+\epsilon) + \epsilon).                                          
\end{eqnarray*}
(Our $(a,b,x,y)$ are the numbers $(A,B,a,b)$ in \cite{BergeKang}. Note that $A,B$ in \cite{BergeKang}  denote both the simple closed curves on $\Sigma$ and the corresponding coefficients in the homology class $[c]$.)
%\[a = (J_1J_2 + J_2 + 1),\] \[b = (J_2 + 1)p +J_1J_2(p+\epsilon),\] \[x = -1, \text{ and }\] \[y = (J_2+1)(np+\epsilon) + J_1J_2(n(n+\epsilon) + \epsilon).\]

\noindent The surface slope of $K$ is
\begin{equation}\label{eq:KISTSlope}
\gamma=ax+by,
\end{equation}
and moreover, the surface slope surgery on $K$ yields a Seifert fibered manifold over $S^2$ with singular fibers of multiplicities
\begin{equation}\label{eq:KISTfibers}
|J_1|,|J_2+1|, |\epsilon \widetilde p((J_2+1)(n\widetilde p + \epsilon) + J_1J_2(n(\widetilde p+\epsilon)+\epsilon)) - (J_1(n(\widetilde p+\epsilon)+\epsilon) + \epsilon)|.
\end{equation}
If the result of the surgery is a prism manifold $P(p,q)$, using~\eqref{S2fibers}, we get that two of the three terms in~\eqref{eq:KISTfibers} must be $2$, and the other term will be $p$. It could be that \[|J_1| = |J_2 + 1| = 2.\] 
Let, for instance, $(J_1,J_2,\epsilon) = (2,1,1)$. If $n > 0$, comparing~\eqref{eq:KISTSlope} to~\eqref{eq:Slope=4q} and using~\eqref{eq:KISTfibers}, the resulting prism manifold will be
\begin{equation}\label{eq:SamplePrism}P(n(4\widetilde p^2-2)+4\widetilde p-3,\pm (n(1+2\widetilde p)^2+4\widetilde p+1)).\end{equation}
Taking $p = |n(4\widetilde p^2-2)+4\widetilde p-3|$ and $r = 2\widetilde p+1$, the manifold in~\eqref{eq:SamplePrism} will have equivalent parameters as
\[P(p, \pm \frac{1}{r^2-2r-1}(r^2p+ 1)).\]
When $n < 0$, we get 
\[P(p, \pm \frac{1}{r^2-2r-1}(r^2p- 1)),\]
as a result of the surface slope surgery on $K$.
A similar analysis for all other possible values of $(J_1,J_2,\epsilon,n,\widetilde p)$ gives the results illustrated in Table~\ref{BK Prism with slope sign}. Note that the data in Table~\ref{BK Prism with slope sign} are sign refined version of the results of the computations presented. See Subsection~\ref{SignIssue}.   

%\centering
%\hspace*{-.8cm}\begin{minipage}{\linewidth}
%\begin{longtable}{|l l p{5cm}|}
%	\caption{Prism Manifolds Admitted as sugery along a KIST IV $P/SF_d$ knot}\label{KIST-IV-Prism} \\ \hline
%	Case & Subcase & Prism Manifold $P(p,q)$ \\ \hline
%	\multirow{10}{*}{$|J_1| = |J_2| = 2$} & $(J_1,J_2,\epsilon) = (2,1,1)$, $n>0$ & $P(P,\pm \frac{1}{a^2 - 2a - 1}(a^2P + 1))$ \\
%	 & $n<0$ & $P(P,\pm \frac{1}{a^2 - 2a - 1}(a^2P - 1))$ \\
%	 & $(J_1,J_2,\epsilon) = (2,1,-1)$, $n>0$ & $P(P,\pm \frac{1}{a^2+2a-1}(a^2P+1))$ \\
%	 & $n<0$ & $P(P,\pm \frac{1}{a^2+2a-1}(a^2P-1))$ \\
%	 & $(J_1,J_2,\epsilon) = (2,-3,1)$ & $P(P,\pm\frac{1}{2a^2}((2a+1)^2P\pm 1))$ \\
%	 & $(J_1,J_2,\epsilon) = (2,-3,-1)$ & $P(P,\pm\frac{1}{2a^2}((2a-1)^2P\pm 1))$ \\
%	 & $(J_1,J_2,\epsilon) = (-2,1,1)$ & $P(|2n+1|,\pm n) = P(p,\pm \frac{1}{2}(p\pm 1))$ \\
%	 & $(J_1,J_2,\epsilon) = (-2,1,-1)$ & $P(|2n+1|,\pm n) = P(p,\pm \frac{1}{2}(p\pm 1))$ \\
%	 & $(J_1,J_2,\epsilon) = (-2,-3,1)$ & $P(P, \pm \frac{1}{a^2-2a-1}(a^2P\pm 1))$ \\
%	 & $(J_1,J_2,\epsilon) = (-2,-3,-1)$ & $P(P, \pm \frac{1}{a^2+2a-1}(a^2P\pm 1))$ \\ \hline
%	 \multirow{2}{*}{$|J_2+1| \neq 2$} & $(J_1,J_2) = (-2,2)$ & {$P(5,\pm 9)$, $P(3, \pm 1)$, $P(3, \pm 2)$, $P(3, \pm 11)$ and $P(3,\pm 14)$} \\
%	  & $(J_1,J_2) = (-2,4)$ & \\ \hline
%	 $|J_1| \neq 2$ & *Mathematica* & $P(5,\pm 9)$, $P(3, \pm 2)$, $P(3, \pm 11)$ and $P(3,\pm 14)$. \\ \hline
%	 \caption*{In all cases above, $a$ is equal to either $2p \pm 1$ or $2p \pm 3$. Moreover, in each case the sign of $q$ is either positive or negative. }	
%\end{longtable}
%\end{minipage}
%\smallskip
\end{ex}
%\begin{rmk}\label{Non-PrismKnots} Similar computations to those done in Example~\ref{KISTPrism} will show that the P/SF knots of Types~TKM~I,~CKM,~and~Sporadic do not admit prism manifold surgeries.
%\end{rmk}
As it will be observed, in order to match all the changemaker vectors, whose orthogonal complements are isomorphic to $D$--type lattices, with P/SF knots it will be sufficient to look into the cases KIST~I, KIST~IV, TKM~II, TKM~V, and OPT~I--V. Now, we give a tabulation of these knots with data similar to those of Example~\ref{KISTPrism}. %We leave it to the reader to verify what type of prism manifolds we obtain by performing the surface slope surgeries on these knots. Having in mind that every such knot $K$ can be written as
%\begin{equation}\label{eq:HomologyCoordinates}
%[K]=aA+bB+xX+yY,
%\end{equation}
%the tabulation is as follows.
\begin{itemize}
\item KIST~I: Characterized by
\[
 (J,h,h',k,k'),
\]
satisfying
\[
J>0, 2h-k>1, \text{  } k'\ne 0, \text{ and }h'k-hk'=1.
\]
Surgery along
\[
\gamma= -(J+1)+(2h+Jk)(2h'+Jk')
\]
results in a manifold with exceptional fibers of index
\[
J + 1, 2h- k, \text{ and  } |2h' + (2J + 1)k'|,
\]
provided $|2h' + (2J + 1)k'|>1$.

\item KIST~IV: See Example~\ref{KISTPrism}.

\item TKM~II: Characterized by
\[
 (J_1, J_2, \epsilon, n, \widetilde{p}),
\]
satisfying
\[
|J_2|>1, \epsilon=\pm 1, |\widetilde p+1|>1,  \text{ and } |n(\widetilde p+1)+\epsilon|>n>0.
\]
Take $(h,h')=(n\widetilde p+\epsilon,\epsilon\widetilde p),(k,k')=(n(\widetilde p+1)+\epsilon,\epsilon(\widetilde p+1))$. Surgery along
\[
\gamma= J_2(1-J_1J_2-J_1)+((J_1J_2+J_1)k-h)((J_1J_2+J_1)k'-h')
\]
results in a manifold with exceptional fibers of index
\[
|J_1k-h|, |J_2|, \text{ and  } |(J_1J_2+J_1-1)\widetilde p+J_1|,
\]
provided $|(J_1J_2+J_1-1)\widetilde p+J_1|>1$.

\item TKM~V: Characterized by
\[
 (J_1, J_2, \delta, \epsilon, n, \widetilde{p}),
\]
satisfying
\[
|J_1|>1, J_2\ne 0, \delta =\pm 1, \epsilon=\pm 1, \widetilde p>1, n\ge 0 \text{(if } n=0, \epsilon = 1\text{) and } |J_1 (n\widetilde p+\epsilon)+n|>1.
\]
Take $(h, h')=(n\widetilde p+\epsilon, \epsilon \widetilde p)$, and $(k,k')=(n(\widetilde p+1)+\epsilon, \epsilon(\widetilde p+1))$. Surgery along
\[
\gamma= (\delta J_1+1)(J_2-\delta)+(-\delta J_1 h+J_2(J_1-1)h+J_2k)(-\delta J_1 h'+J_2(J_1-1)h'+J_2k')
\]
results in a manifold with exceptional fibers of index
\[
|\delta J_1+1|, |(J_1J_2+1)h+J_2n|, \text{ and  } |(J_2-\delta)\widetilde p-\delta J_2|,
\]
provided all indices are greater than $1$.

\vspace{10pt} 

\hspace{-25pt}{\it Below in the OPT family, we always assume $m\ge0$ and $\gcd(m,n)=1$.}

\item OPT~I: Characterized by
\[
 (m,n,s),
\]
satisfying
\[
|s|>1.
\]
Surgery along
\[
\gamma= m^2+mn+sn^2
\]
results in a manifold with exceptional fibers of index
\[
m, |s|, \text{ and } |m+n|,
\]
provided 
\[
m>1, |s|>1, \text{ and } |m+n|>1.
\]

\item OPT~II: Characterized by
\[
 (m,n,j, \epsilon, \widetilde p),
\]
satisfying
\[
\widetilde p>0, s>1, j\ge 0\text{ and }\epsilon =\pm 1,
\]
where $s=j(2\widetilde p+1)+2\epsilon$. Take $u=-j\widetilde p-\epsilon$. Surgery along
\[
\gamma= m(m+2n)-(-mu+ns)\epsilon[m\widetilde p+n(2\widetilde p+1)]
\]
results in a manifold with exceptional fibers of index
\[
m, s, \text{ and } |(\widetilde p+1)m+(2\widetilde p+1)n|,
\]
provided 
\[
m>1, \text{ and } |(\widetilde p+1)m+(2\widetilde p+1)n|>1.
\]

\item OPT~III: Characterized by
\[
 (m,n,j, \epsilon, \widetilde p),
\]
satisfying
\[
\widetilde p>0, s>1, j\ge 0\text{ and }\epsilon =\pm 1,
\]
where $s=j(2\widetilde p+3)+2\epsilon$. Take 
\[
u=-j(\widetilde p+1)-\epsilon, t=s+u, s'=-\epsilon(2\widetilde p+3),  t'=-\epsilon(\widetilde p+2), \text{ and } u'=\epsilon(\widetilde p+1).
\]
 Surgery along
\[
\gamma= -mn+(m(t+u)+ns)(m(t'+u')+ns')
\]
results in a manifold with exceptional fibers of index
\[
m, |s|, \text{ and } |2m+(2\widetilde p+3)n|,
\]
provided 
\[
m>1, \text{ and } |2m+(2\widetilde p+3)n|>1.
\]

\item OPT~IV: Characterized by
\[
 (m,n).
\]
Surgery along
\[
\gamma= m^2 + 9mn + 22n^2
\]
results in a manifold with exceptional fibers of index
\[
2, m, \text{ and } |3m+11n|,
\]
provided 
\[
m>1, \text{ and }|3m+11n|>1.
\]

\item OPT~V: Characterized by
\[
 (m,n).
\]
 Surgery along
\[
\gamma= 2m^2 + 13mn + 22n^2
\]
results in a manifold with exceptional fibers of index
\[
2, m, \text{ and } |4m+11n|,
\]
provided 
\[
m>1, \text{ and }|4m+11n|>1.
\]
\end{itemize}

\subsection{The sign of the orbifold Euler number}\label{SignIssue}
Let $K$ be a P/SF knot on which the surface slope surgery gives a prism manifold. The surface slope given in \cite{BergeKang} is not necessarily positive, while we only consider positive surgery. To get a knot with positive prism manifold surgery, we need to take the mirror image of the Berge--Kang knot if the surface slope is negative. Suppose the surface slope surgery on $K$ results in $P(p,q)$, then $p$ is the index of the singular fiber with odd index, and $q$ satisfies (\ref{eq:Slope=4q}). To achieve the goal of this section, however, we need to know the sign of $q$. This subsection addresses the issue of sign.

\begin{lemma}\label{lem:EulerPos}
Let $M$ be a Seifert fibered space over $D^2(p_1,p_2)$, $\rho,\iota,\phi\in H_1(\partial M)$ be homology classes corresponding to three distinct slopes on $\partial M$. Here we suppose that
 $\phi$ is the homology class of a regular fiber, and $\iota=0$ in $H_1(M;\mathbb Q)$. We equip $\partial M$ with the outward orientation. Let $M_{\rho}$ be the Seifert fibered space obtained by Dehn filling along the slope corresponding to $\rho$. Then the sign of the orbifold Euler number of $M_{\rho}$ is opposite to the sign of
\begin{equation}\label{eq:TripleDot}
\mathcal I(\iota,\rho,\phi):=(\iota\cdot \rho)(\rho\cdot \phi)(\phi\cdot \iota).
\end{equation}
\end{lemma}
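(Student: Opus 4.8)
The plan is to prove Lemma~\ref{lem:EulerPos} by reducing the general statement to a direct computation in homology of the closed Seifert fibered space $M_{\rho}$, keeping careful track of orientations. First I would set up coordinates on $H_1(\partial M)$. Since $M$ is Seifert fibered over $D^2(p_1,p_2)$, the boundary torus carries a natural basis: pick the fiber class $\phi$ and a section class $\mu$ coming from a cross-section of the fibration over the boundary arc of $D^2$, oriented so that $\mu\cdot\phi = 1$ with the outward orientation on $\partial M$. Write the three given slopes in this basis as $\iota = \alpha\mu + \beta\phi$, $\rho = \gamma\mu + \delta\phi$, and $\phi = 0\cdot\mu + 1\cdot\phi$. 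The hypothesis that $\iota$ is rationally nullhomologous in $M$ forces $\alpha\neq 0$ (a slope that is not a multiple of the fiber can be rationally nullhomologous, but the fiber itself has infinite order in $H_1(M;\mathbb Q)$ when the base is a disk with cone points), and in fact pins down $\beta/\alpha$ in terms of the Seifert invariants. Concretely, $H_1(M;\mathbb Q)$ is computed from the Seifert presentation, and the condition $\iota = 0$ determines the ratio $\beta/\alpha$ via the Seifert data $e, (p_1,q_1), (p_2,q_2)$.

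Next I would compute the orbifold Euler number of $M_{\rho}$ directly. Dehn filling $M$ along $\rho = \gamma\mu + \delta\phi$ produces a Seifert fibered space over $S^2(p_1,p_2,p_3)$ (or with fewer cone points in degenerate cases), and a standard formula expresses $e_{\mathrm{orb}}(M_{\rho})$ as (the orbifold Euler number contribution from the original two cone points) plus $\delta/\gamma$ up to sign conventions — equivalently, $e_{\mathrm{orb}}(M_{\rho}) = -\langle\text{something linear in }\rho\rangle$ where the linear functional is exactly the one whose kernel is $\iota$ (since filling along $\iota$ would give $e_{\mathrm{orb}} = 0$, consistent with $\iota$ being rationally nullhomologous and the filled manifold being a rational homology sphere precisely when $e_{\mathrm{orb}}\neq 0$). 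This is the conceptual heart: $e_{\mathrm{orb}}(M_{\rho})$ is, up to a positive multiple, the value at $\rho$ of the linear functional on $H_1(\partial M;\mathbb Q)$ that vanishes on $\iota$ and takes a known sign on $\phi$. Then one checks that this linear functional, evaluated at $\rho$, has sign opposite to $\mathcal{I}(\iota,\rho,\phi) = (\iota\cdot\rho)(\rho\cdot\phi)(\phi\cdot\iota)$: writing everything in the $(\mu,\phi)$ basis, $\iota\cdot\rho = \alpha\delta - \beta\gamma$, $\rho\cdot\phi = \gamma$, $\phi\cdot\iota = -\alpha$, so $\mathcal{I} = -\alpha\gamma(\alpha\delta-\beta\gamma) = -\alpha^2\gamma(\delta - (\beta/\alpha)\gamma)$, and the factor $\delta - (\beta/\alpha)\gamma$ is precisely (a nonzero multiple of) the evaluation of the "$\iota$-annihilating" functional at $\rho$, while $-\alpha^2\gamma^2/|\gamma| \cdot(\cdots)$ carries the residual sign data. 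Matching the normalization constants (which are positive) then yields the claimed sign reversal.

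The main obstacle I expect is bookkeeping of orientation conventions: the outward orientation on $\partial M$, the sign convention in the intersection form on the torus, the sign convention for $e_{\mathrm{orb}}$ in Figure~\ref{fig:SFS}, and the sign picked up when passing from the Seifert data of $M$ to that of $M_{\rho}$ must all be reconciled so that the final statement comes out with "opposite to the sign of $\mathcal{I}$" rather than "equal to." I would handle this by first verifying the lemma on a simple explicit example — e.g.\ $M$ the complement of a regular fiber in a lens space, or better, the situation already appearing in the proof of Lemma~\ref{lem:Nonhyp} where $K = T(2k+1,2)$ and the Seifert fiber slope is $4k+2$: there the two cases $|q| = k$ and $|q| = k+1$ have known orbifold Euler number signs (positive and negative respectively), and one checks that $\mathcal{I}$ flips sign between $\rho = 4k$ and $\rho = 4k+4$ relative to $\iota$ (the meridian, i.e.\ the rationally nullhomologous filling) and $\phi = 4k+2$. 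Once the conventions are fixed against this test case, the general computation is routine linear algebra on the torus together with the standard additivity of the orbifold Euler number under Dehn filling. An alternative, perhaps cleaner, route is to invoke the well-known fact that $e_{\mathrm{orb}}$ changes sign under orientation reversal of $M$ and is a "slope functional" on the filling parameter, reducing the whole lemma to the rank-one observation that a linear functional on $H_1(\partial M;\mathbb Q)\cong\mathbb Q^2$ vanishing on $\iota$ has sign at $\rho$ governed by which side of the line $\mathbb Q\iota$ the slope $\rho$ lies on, which is exactly detected by the sign of $(\iota\cdot\rho)$ together with the calibrating sign of $(\rho\cdot\phi)(\phi\cdot\iota)$.
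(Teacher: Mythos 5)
Your proposal is correct and is essentially the paper's own argument: both hinge on identifying $\iota$ as the unique filling slope for which the orbifold Euler number vanishes, and then reduce the claim to a $2\times2$ determinant computation on $H_1(\partial M)\cong\mathbb Z^2$ — in your section–fiber coordinates this is the identity $\mathcal I(\iota,\rho,\phi)=-(\alpha\gamma)^2\,e_{\mathrm{orb}}(M_{\rho})$, while the paper performs the same computation in the meridian–longitude basis of the removed exceptional fiber (with $e=0$) and finds that $\mathcal I$ has the sign of $-e_{\mathrm{orb}}(p_1p_2p_3)^2$. The one piece you defer, reconciling the orientation conventions in the Dehn-filling formula for $e_{\mathrm{orb}}$, is exactly the bookkeeping the paper does explicitly, and your calibration-by-example plan is sound because the possible discrepancy is a single universal sign (note that each of $\iota,\rho,\phi$ appears twice in $\mathcal I$, so the product is insensitive to the orientation choices of the individual curves).
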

\begin{proof}
Let $M$ be the complement of the singular fiber with index $p_3$ in the Seifert fibered space given by Figure~\ref{fig:SFS} where there are three singular fibers. Without loss of generality, we can assume $e=0$. Then the Euler number of the Seifert fibration of the closed manifold is
\[
e_{\mathrm{orb}}=\frac{q_1}{p_1}+\frac{q_2}{p_2}+\frac{q_3}{p_3}.
\]
The slope of $\iota$ is $1/(\frac{q_1}{p_1}+\frac{q_2}{p_2})$, the unique slope which makes $e_{\mathrm{orb}}(M_{\iota})=0$, and the slope of $\phi$ is $0$. Let $\rho$ correspond to the slope $-\frac{p_3}{q_3}$.
Note that the standard meridian $\mu$ and longitude $\lambda$ on a knot $K\subset Y$ give the inward orientation on $\partial(Y\setminus\nu^{\circ}(K))$. We hence have 
\[(a[\mu]+b[\lambda])\cdot(c[\mu]+d[\lambda])=-\det\begin{pmatrix}
a &b\\
c &d
\end{pmatrix},
\]
if we use the outward orientation on $\partial M$.
Now $\mathcal I(\iota,\rho,\phi)$ has the same sign as
\[-\det
\begin{pmatrix}
p_1p_2&q_1p_2+p_1q_2\\
-p_3 &q_3
\end{pmatrix}\cdot
\det
\begin{pmatrix}
-p_3 &q_3\\
0 &1
\end{pmatrix}\cdot
\det
\begin{pmatrix}
0 &1\\
p_1p_2&q_1p_2+p_1q_2
\end{pmatrix},\]
which is equal to $-e_{\mathrm{orb}}(p_1p_2p_3)^2$.
Thus our conclusion holds.
\end{proof}

In the above lemma, $\mathcal I(\iota,\rho,\phi)$ does not change if we change the sign of any one of $\iota,\rho,\phi$. In fact, the sign of $\mathcal I(\iota,\rho,\phi)$ does not change if we scale any homology class by a nonzero rational number.

We immediately get the following corollary of Lemma~\ref{lem:EulerPos}.

\begin{cor}\label{cor:Sign}
Let $K$ be a P/SF knot with a prism manifold surgery, let $\gamma$ be the surface slope, and let $K'$ be $K$ or its mirror image such that a positive surgery on $K'$ is a prism manifold $Y$. Then the orbifold Euler number of $Y$ is negative if and only if
\[
\mathrm{sign}(\gamma)=\mathrm{sign}(\mathcal I(\iota,\rho,\phi)).
\]
\end{cor}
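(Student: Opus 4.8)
The plan is to derive \Cref{cor:Sign} as an essentially immediate consequence of \Cref{lem:EulerPos}, by unpacking what ``positive surgery on $K'$'' means in terms of the homology classes $\iota,\rho,\phi$ on the torus $\partial M$, where $M = S^3 \setminus \nu^\circ(K')$ is the knot exterior. First I would recall the set-up: since $K$ is P/SF, the manifold $M$ is Seifert fibered over a disk with two cone points, and the Dehn filling of $M$ along the surface slope $\gamma$ yields the prism manifold $Y$; in the notation of \Cref{lem:EulerPos}, $\rho$ is the class of the surface slope $\gamma$, $\phi$ is the class of the regular Seifert fiber of $M$, and $\iota$ is the rational-nullhomologous slope on $\partial M$. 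The key point is that, after replacing $K$ by its mirror image if necessary so that a \emph{positive} surgery produces a prism manifold, $\iota$ is precisely the meridian $\mu$ of $K'$ up to sign (a knot in $S^3$ has $H_1(M;\Q)\cong \Q$ generated by the meridian, and the longitude is the unique slope that bounds in $M$, which is what kills $H_1$; in fact it is the meridian that is torsion in $H_1(M)$ — here one must be careful, since for a knot in $S^3$ it is the \emph{longitude} that is $0$ in $H_1(M;\Q)$, so I would set $\iota = \lambda$, the Seifert longitude), and the positivity of the surgery coefficient is encoded by a sign of an intersection number.

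The core computation is then the following. The surface slope $\gamma$, as a slope on $\partial M$, is an integer surgery coefficient because $[\Sigma\cap\partial\nu(K)]$ is an integral class; write $\rho = \gamma[\mu] + [\lambda]$ in the basis given by the standard meridian $\mu$ and Seifert longitude $\lambda$. Using the orientation convention recalled inside the proof of \Cref{lem:EulerPos} — namely $(a[\mu]+b[\lambda])\cdot(c[\mu]+d[\lambda]) = -\det\begin{pmatrix}a & b\\ c& d\end{pmatrix}$ with the outward orientation on $\partial M$ — I would compute $\iota\cdot\rho$, $\rho\cdot\phi$, and $\phi\cdot\iota$ with $\iota = [\lambda]$. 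We get $\iota\cdot\rho = [\lambda]\cdot(\gamma[\mu]+[\lambda]) = -\det\begin{pmatrix}0&1\\\gamma&1\end{pmatrix} = \gamma$. Writing $\phi = x[\mu] + y[\lambda]$ for the fiber slope (with $x\ne 0$ since $M$ is not a solid torus and the fiber is not the longitude), we get $\phi\cdot\iota = (x[\mu]+y[\lambda])\cdot[\lambda] = -\det\begin{pmatrix}x&y\\0&1\end{pmatrix} = -x$ and $\rho\cdot\phi = (\gamma[\mu]+[\lambda])\cdot(x[\mu]+y[\lambda]) = -\det\begin{pmatrix}\gamma&1\\x&y\end{pmatrix} = x - \gamma y$. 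Hence $\mathcal I(\iota,\rho,\phi) = \gamma\cdot(x-\gamma y)\cdot(-x) = -\gamma x(x - \gamma y)$. Since rescaling any of $\iota,\rho,\phi$ by a nonzero rational does not change the sign of $\mathcal I$ (as noted in the paragraph following \Cref{lem:EulerPos}), and since $\gamma>0$ in the relevant case, $\operatorname{sign}(\mathcal I(\iota,\rho,\phi)) = \operatorname{sign}(\gamma)\cdot\operatorname{sign}(-x(x-\gamma y))$; but the statement of the corollary only compares $\operatorname{sign}(\gamma)$ with $\operatorname{sign}(\mathcal I)$, and \Cref{lem:EulerPos} tells us directly that $e_{\mathrm{orb}}(Y)<0$ iff $\mathcal I(\iota,\rho,\phi)>0$, i.e. iff $\operatorname{sign}(\mathcal I(\iota,\rho,\phi)) = +1 = \operatorname{sign}(\gamma)$ (the last equality because $\gamma>0$). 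That is exactly the claimed equivalence, so the corollary follows.

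The one genuine subtlety — and the step I expect to be the main obstacle — is bookkeeping the orientation conventions consistently: \Cref{lem:EulerPos} equips $\partial M$ with the \emph{outward} orientation and uses a specific sign convention $(a\mu+b\lambda)\cdot(c\mu+d\lambda) = -\det(\cdots)$, and one must make sure that the ``positive surgery'' condition on $K'$ (surgery coefficient $+4|q|>0$, i.e. $\gamma>0$ after mirroring) matches the hypothesis ``$\iota = 0$ in $H_1(M;\Q)$'' with the \emph{correct} choice of $\iota$ as the Seifert longitude of $K'$ rather than the meridian. Once this is pinned down, there is essentially nothing left: the corollary is just the specialization of \Cref{lem:EulerPos} to the case where $M$ is a knot exterior in $S^3$, the filling slope $\rho$ is the (positive) surface slope $\gamma$, and $\iota$ is the Seifert longitude, combined with the scaling-invariance of $\operatorname{sign}\mathcal I$ and the observation that the prism manifold's orbifold Euler number $p/q$ is negative precisely when $q<0$, so that determining $\operatorname{sign}(e_{\mathrm{orb}}(Y))$ is the same as determining $\operatorname{sign}(q)$.
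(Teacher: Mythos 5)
There is a genuine gap, in fact two. First, you apply Lemma~\ref{lem:EulerPos} with $M$ equal to the knot exterior $S^3\setminus\nu^\circ(K')$ and justify its hypotheses by saying ``since $K$ is P/SF, $M$ is Seifert fibered over a disk with two cone points.'' That is false: the P/SF condition says that $H[c]$, the genus-two handlebody with a two-handle attached along $c$, is Seifert fibered; the knot exterior itself is typically hyperbolic (the interesting Berge--Kang knots are). The manifold to which the lemma is meant to be applied is $M=H[c]$, whose boundary is the surgered Heegaard surface $T$, with $\rho=[R]$ the class of the curve bounding a disk in the primitive handlebody $H'$, $\phi$ the fiber class, and $\iota$ the rationally nullhomologous class of $H[c]$ -- exactly the classes the paper computes afterwards via the formulas for $\iota$ and $\rho$ in terms of $A,B,X,Y$. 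Your $\mu,\lambda$-computation of $\mathcal I$ on the knot exterior is therefore a computation of a different quantity than the $\mathcal I(\iota,\rho,\phi)$ appearing in the corollary, and it rests on a Seifert structure that does not exist.

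Second, you dispose of the sign comparison by asserting ``$\gamma>0$ in the relevant case,'' which skips precisely the content of the corollary. The surface slope $\gamma$ of the original Berge--Kang knot $K$ can be negative; that is why $K'$ is allowed to be the mirror of $K$. The intended (short) argument is: Lemma~\ref{lem:EulerPos} gives $\operatorname{sign}\bigl(e_{\mathrm{orb}}(S^3_{\gamma}(K))\bigr)=-\operatorname{sign}\mathcal I(\iota,\rho,\phi)$ for the surface-slope surgery on $K$ itself; if $\gamma>0$ then $Y=S^3_{\gamma}(K)$, while if $\gamma<0$ then $Y=S^3_{|\gamma|}(K')=-S^3_{\gamma}(K)$ and $e_{\mathrm{orb}}(-M)=-e_{\mathrm{orb}}(M)$, so in the mirrored case the sign of $e_{\mathrm{orb}}(Y)$ agrees with that of $\mathcal I$. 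Combining the two cases yields exactly ``$e_{\mathrm{orb}}(Y)<0$ iff $\operatorname{sign}(\gamma)=\operatorname{sign}\mathcal I(\iota,\rho,\phi)$.'' Under your assumption $\gamma>0$ the statement collapses to a restatement of the lemma, and the mirror case -- the only place where the comparison with $\operatorname{sign}(\gamma)$ does any work -- is never addressed.
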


The proof of the following lemma is left to the reader.

\begin{lemma}\label{lem:SubHomo}
Let $\Sigma$ be a closed oriented surface, $c\subset \Sigma$ be a simple closed curve. Let $\Sigma'$ be the closed surface obtained by surgery on $c$. Define 
\[[c]^{\perp}=\{\alpha\in H_1(\Sigma):\alpha\cdot[c]=0\}\] Then $H_1(\Sigma')$ can be identified with
$[c]^{\perp}/\langle[c]\rangle$.
\end{lemma}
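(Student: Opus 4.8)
The plan is to prove Lemma~\ref{lem:SubHomo} by identifying $H_1(\Sigma')$ explicitly in terms of $H_1(\Sigma)$ and the class $[c]$, using the Mayer--Vietoris sequence for the surgery decomposition. Write $N \cong S^1 \times S^1$ for a tubular neighborhood of $c$ in $\Sigma$, so that $\Sigma = (\Sigma \setminus \mathrm{int}\,N) \cup_{\partial} N$ and $\Sigma' = (\Sigma \setminus \mathrm{int}\,N) \cup_{\partial} N'$, where $N'$ is obtained by regluing an annulus $N' \cong S^1 \times S^1$ along the boundary torus $T = \partial(\Sigma \setminus \mathrm{int}\,N)$ by the surgery homeomorphism. (When $\Sigma'$ is connected we may take $N' = S^1 \times [0,1] \sqcup S^1 \times [0,1]$ capped off appropriately; the point is just that $N'$ is a union of two disks glued to $T$, whose cores are parallel to $c$.) Set $\Sigma_0 = \Sigma \setminus \mathrm{int}\,N$.

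First I would record the relevant homology groups: $H_1(N) \cong \Z^2$, generated by $[c]$ and a meridional class $\mu$ with $\mu \cdot [c] = \pm 1$; $H_1(T) \cong \Z^2$ generated by $[c]$ and $\mu$ (mapping isomorphically to $H_1(N)$); and $H_1(\Sigma_0)$, which surjects onto $H_1(\Sigma)/\langle [c], \mu\rangle$ with the kernel of $H_1(T) \to H_1(\Sigma_0)$ being spanned by whichever boundary class bounds in $\Sigma_0$. Surgery on $c$ replaces $N$ by two disks glued along their boundary circles to $T$; the boundary circles are isotopic in $T$ to the curve $c$ itself (this is what "surgery on $c$" means: cut along $c$ and cap off with disks). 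So in $\Sigma'$, the class $[c]$ becomes null-homologous, being the boundary of a capping disk, while a meridian of $c$ becomes a new $1$-cycle. Then I would run Mayer--Vietoris for $\Sigma' = \Sigma_0 \cup N'$:
\[
H_1(T) \longrightarrow H_1(\Sigma_0) \oplus H_1(N') \longrightarrow H_1(\Sigma') \longrightarrow H_0(T) \longrightarrow H_0(\Sigma_0)\oplus H_0(N').
\]
Since $H_1(N') = 0$ (it is a union of disks, up to homotopy) and the map $H_1(T) \to H_1(\Sigma_0)$ has image containing the class of $c$ (as $c$ now bounds on the $N'$ side the analysis of the cokernel is governed by the $\Sigma_0$ side), one reads off that $H_1(\Sigma') \cong H_1(\Sigma_0)/\langle \text{im}(H_1(T))\rangle$ together with a contribution from $H_0$; chasing the sequence shows $H_1(\Sigma')$ is exactly the subquotient $[c]^\perp/\langle [c]\rangle$ of $H_1(\Sigma)$. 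The key identification is that $[c]^\perp \subset H_1(\Sigma)$ is precisely the image of $H_1(\Sigma_0) \to H_1(\Sigma)$ (classes in $\Sigma$ that can be pushed off $c$), since a class has zero intersection with $[c]$ iff it is represented by a cycle disjoint from $c$; quotienting by $\langle[c]\rangle$ accounts for $c$ itself becoming trivial after surgery.

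The main obstacle — or rather the only thing requiring care — is bookkeeping the $H_0$ terms of Mayer--Vietoris when $\Sigma'$ is disconnected (which happens exactly when $c$ is separating and the surgery splits $\Sigma$ into two pieces). In that case $H_0(T) \to H_0(\Sigma_0)\oplus H_0(N')$ is not injective, and the connecting map contributes an extra $\Z$ to $H_1(\Sigma')$; one checks this extra $\Z$ is matched on the other side by the fact that $[c]$, being separating, already satisfies $[c] = 0$ in $H_1(\Sigma)$, so $[c]^\perp = H_1(\Sigma)$ and the quotient $[c]^\perp/\langle[c]\rangle = H_1(\Sigma)$ has rank one more than the connected count would suggest — consistent with $b_1(\Sigma') = b_1(\Sigma) + 1$ for a separating curve killed by surgery. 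Once these two cases (separating and non-separating) are reconciled, the identification $H_1(\Sigma') \cong [c]^\perp/\langle[c]\rangle$ holds on the nose, naturally with respect to the intersection form, which is why the lemma is stated the way it is for the application to Corollary~\ref{cor:Sign}. Since the excerpt explicitly says the proof is left to the reader, I would present only this outline rather than the full diagram chase.
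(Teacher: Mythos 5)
The paper gives no argument to compare against (the proof is explicitly left to the reader), so I am judging your outline on its own. Your overall strategy is the right one: decompose $\Sigma'$ as the cut-open surface $\Sigma_0=\Sigma\setminus \mathrm{int}\,N$ with caps attached, run Mayer--Vietoris, and combine this with the key fact that $[c]^{\perp}$ is exactly the image of $H_1(\Sigma_0)\to H_1(\Sigma)$, with $\langle[c]\rangle$ accounting for the classes killed by the capping disks. That skeleton does yield the lemma.

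However, the details as written contain genuine errors, imported from the three-dimensional Dehn-surgery picture. A tubular neighborhood of a circle in an oriented surface is an annulus $S^1\times[0,1]$, not $S^1\times S^1$; its boundary is two circles $c_1\sqcup c_2$, not a torus $T$, and $H_1$ of the boundary is generated by $[c_1],[c_2]$ (both parallel to $c$), not by $[c]$ and a meridian $\mu$ with $\mu\cdot[c]=\pm1$ -- in a surface the ``meridian'' is an arc, not a closed curve. Consequently the claim that ``a meridian of $c$ becomes a new $1$-cycle'' after surgery is false: surgery on a $1$-sphere in a $2$-manifold glues in $D^2\times S^0$, whose belt sphere is $0$-dimensional, and no new $1$-cycle appears. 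Your treatment of the separating case is also wrong: with the correct decomposition the map $H_0(c_1\sqcup c_2)\to H_0(\Sigma_0)\oplus H_0(D_1\sqcup D_2)$ is injective (the two disks lie in distinct components), so the connecting homomorphism contributes nothing in either case; moreover $b_1(\Sigma')=2g_1+2g_2=2g=b_1(\Sigma)$ when $c$ separates, not $b_1(\Sigma)+1$, which is exactly what matches $[c]^{\perp}/\langle[c]\rangle=H_1(\Sigma)$ since $[c]=0$. Finally, you assert rather than verify that the kernel of $[c]^{\perp}\to H_1(\Sigma')$ is no larger than $\langle[c]\rangle$; with the corrected local model this follows from the fact that $\ker\bigl(H_1(\Sigma_0)\to H_1(\Sigma')\bigr)=\langle[c_1],[c_2]\rangle$ maps onto $\langle[c]\rangle$, while $\ker\bigl(H_1(\Sigma_0)\to H_1(\Sigma)\bigr)$ is contained in $\langle[c_1],[c_2]\rangle$, and this small diagram chase should be spelled out. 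With the annulus/two-disks model substituted throughout, your argument closes up correctly and uniformly in the separating and non-separating cases.
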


Now, let $H\cup_{\Sigma}H'$ be a genus--$2$ Heegaard splitting of $S^3$, and let $c\subset \Sigma$ be a simple closed curve which is Seifert-fibered in $H$ and primitive in $H'$. Since $ c$ is primitive in $H'$, there exists a
simple closed curve $R\subset (\Sigma\setminus c)$ which bounds a disk in $H'$. Let $K\subset S^3$ be the knot represented by $ c$, then the Dehn surgery on $K$ with the surface slope is obtained by attaching two $2$--handles along $ c$ and $R$ to $H$, then adding a $3$--handle.

Let $T$ be the torus obtained from $\Sigma$ by surgery on $ c$. 
The homology class $[c]$ in terms of $A,B,X,Y$ is given in~\cite{BergeKang}. Suppose 
\[
[c]=aA+bB+xX+yY.
\]
Then $[c]^{\perp}$ is generated (over $\mathbb Q$) by
\[
aA+xX, bB+yY, \text{ and }yA-xB.
\]
By Lemma~\ref{lem:SubHomo}, $H_1(T)$ can be identified with
$[c]^{\perp}/\langle[c]\rangle$, which is generated (over $\mathbb Q$) by
\[
U=-(aA+xX)=bB+yY,\quad V=yA-xB.
\]

Suppose that the homology class $\phi$ is given. The homology class $[\iota]$ is $0$ in $H_1(H[ c];\mathbb Q)=\langle A,B|aA+bB=0\rangle$. Notice that both $X$ and $Y$ are zero in $H_1(H)$. Suppose $\iota=uU+vV$, then $u(-aA)+v(yA-xB)$ is a rational multiple of $aA+bB$. Up to a rational scalar, we may choose $u=ax+by$, $v=ab$. Thus
\begin{equation}\label{eq:s}
\iota=-(ax+by)(aA+xX)+ab(yA-xB)=-a^2xA-abxB-x(ax+by)X.
\end{equation}

The homology class $\rho=[R]\in\ker (H_1(\partial H')\to H_1(H'))$, so it is a linear combination of $A$ and $B$. Since $[R]\cdot[ c]=0$, we get 
\begin{equation}\label{eq:r}
\rho=yA-xB,
\end{equation}
up to a scalar. Since we can determine $a,b,x,y$ and %the homology class of a singular fibre 
$\phi$ directly from~\cite{BergeKang}, we can compute $\mathcal{I}(\phi,\iota,\rho)$ using (\ref{eq:HomologyClass}), (\ref{eq:s}), (\ref{eq:r}). 

\begin{ex}
In Example~\ref{KISTPrism}, we showed that P/SF knots of Type~KIST~IV admit $P(p,q)$ surgeries, leaving whether $q>0$ or $q<0$. 
Let $K$ be a KIST~IV P/SF knot. Using the notation of Example~\ref{KISTPrism}, take
\[
(h, h') = (\widetilde p, n\widetilde p + \epsilon),  (k, k') = (\widetilde p + \epsilon, n(\widetilde p + \epsilon) + \epsilon), 
\]
and
\[(l,l')=(k-h,k'-h')=(\epsilon,n\epsilon).\]

From \cite{BergeKang} one sees that
\[[c] = (J_1J_2+J_2+1)A+((J_2+1)h+J_1J_2k)B-X+((J_2+1)h'+J_1J_2k')Y.\]
and also
\[\phi = J_1A+(J_1k-\epsilon\widetilde pl+h)B-\epsilon\widetilde pX+(J_1k'-\epsilon\widetilde pl'+h')Y.\]

Then from~(\ref{eq:s}) and~(\ref{eq:r}) we obtain
\begin{align*}
    \iota = &(J_1J_2+J_2+1)^2A +(J_1J_2+J_2+1)((J_2+1)h+J_1J_2k)B \\
    & + (-(J_1J_2+J_2+1) + ((J_2+1)h+J_1J_2k)((J_2+1)h'+J_1J_2k'))X,\\
\rho = &((J_2+1)h'+J_1J_2k')A + B.
\end{align*}

\begin{table}[t!]\centering
%\ra{1.8}
\caption{$\mathcal{P}^{+}$, table of $P(p,q)$ that are realizable, $q>0$}\label{table:Types+}
   \begin{tabular}{@{}lll@{}} \toprule

    Type & \begin{tabular}{l}$P(p,q)$\end{tabular} &\begin{tabular}{l}Range of parameters  \\
    ($p$ and $r$ are always odd, $p>1$)\end{tabular}\\
    \midrule
    \\
  {\bf 1A}   &\begin{tabular}{l}$P\left(p, \frac{1}{2}(p^2 + 3p + 4)\right)$\end{tabular} & \bigskip  \\ \\ 
  {\bf 1B} &\begin{tabular}{l}$P\left(p, \frac{1}{22}(p^2 + 3p + 4)\right)$\end{tabular} & \begin{tabular}{l}$p\equiv5$ or $3 \pmod{22}$\end{tabular}\\\bigskip \\   
  {\bf 2} &\begin{tabular}{l}$P\left(p, \frac{1}{|4r+2|}(r^2p - 1)\right)$\end{tabular} &\begin{tabular}{l}$r \equiv -1\pmod4$\\
$p \equiv -2r+3\pmod{4r+2}$\end{tabular} \bigskip  \\ \\   
  {\bf 3A} &\begin{tabular}{l}
  $P\left(p, \frac{1}{2r}(p-1)(p-4)\right)$\end{tabular} &\begin{tabular}{l}$p\equiv 1\pmod{2r}$ \end{tabular}\bigskip\\ \\  
  {\bf 3B} &\begin{tabular}{l}$P\left(p, \frac{1}{2r}(p-1)(p-4)\right)$\end{tabular} &\begin{tabular}{l}$p\equiv r+4\pmod{2r}$\end{tabular} \bigskip\\ \\  \bigskip
  {\bf 4} &\begin{tabular}{l}$P\left(p, \frac{1}{2r^2}\left((2r+1)^2p - 1\right)\right)$\end{tabular} &\begin{tabular}{l}$p \equiv 4r-1\pmod{2r^2}$\end{tabular}\\ \\ \bigskip
  {\bf 5} &\begin{tabular}{l}$P\left(p, \frac{1}{r^2 - 2r - 1}(r^2p - 1)\right)$\end{tabular} &\begin{tabular}{l}$r \neq 1$\\ $p \equiv -2r + 5\pmod{r^2 - 2r - 1}$\end{tabular}\\ \\ \bigskip
  {\bf Sporadic} &\begin{tabular}{l}$P(11,19)$,  $P(11,30)$, \\  $P(13, 34)$\end{tabular}&\\
  \bottomrule
\end{tabular}

\end{table}

We also have 
\begin{align*}
\gamma= &-(J_1J_2+J_2+1)+((J_2+1)h+J_1J_2k)((J_2+1)h'+J_1J_2k')\\
=&n((J_2+1)h+J_1J_2k)^2+(((J_2+1)h+J_1J_2k)\epsilon-1)(J_1J_2+J_2+1).
\end{align*}
Consider the case $J_1=2$, $J_2=-1\pm2$, then
\[
|(J_2+1)h+J_1J_2k|\ge2(h+k)=2(2\widetilde p+\epsilon)\ge6,
\]
so we get when  $n \neq 0$,
\[\mathrm{sign}(\gamma) = \mathrm{sign}(n),\] and when $n = 0$, 
\[\mathrm{sign}(\gamma) = \mathrm{sign}(\epsilon).\]

When $(J_1,J_2)=(2,1)$, we have
\begin{align*}
        \iota\cdot\rho&=8(-1 + (2\widetilde p + \epsilon) (2 n\widetilde p + (n + 2) \epsilon))(2 n\widetilde p + (n + 2) \epsilon),\\
    \rho\cdot\phi&=2\epsilon n(2\widetilde p^2-1)+4\widetilde p-3\epsilon,\\
    \phi\cdot\iota&=8(2\epsilon n\widetilde p+n+2),
\end{align*}
so 
$\iota\cdot\rho>0$, and
\[
\mathrm{sign}( \rho\cdot\phi)=\mathrm{sign}(\phi\cdot\iota)=\mathrm{sign}(\epsilon n)\quad\text{when }n\ne0,
\]
and both $\rho\cdot\phi$ and $\phi\cdot\iota$ are positive when $n=0$.
Thus $\mathcal{I}(\iota,\rho,\phi)>0$.

Similarly, when $(J_1,J_2)=(2,-3)$, we have $\mathcal{I}(\iota,\rho,\phi)>0$.
Now we can use Corollary~\ref{cor:Sign} to determine the orientation of the corresponding prism manifolds.
\end{ex}

We collect similar results in Table~\ref{BK Prism with slope sign}, which lists (possibly all) P/SF knots that admit prism manifold surgeries, together with datum on the sign of $q$. Here we take the mirror image of the knot if the surface slope is negative. We point out that if two (likely isotopic) knots match with a changemaker vector,
we report only one of the two in the table. For example, OPT I knots are not in the table, because they correspond to the family 1A, which is already covered by TKM II knots. It is highly possible that the OPT I knots are isotopic to the corresponding TKM II knots.

The list $\mathcal P^+$ of realizable $P(p,q)$ with $q>0$ is summarized in Table~\ref{table:Types+}. 
Unlike Table~\ref{table:Types}, we do not have the notion of ``changemaker vectors'' here. We divide these prism manifolds into families in such a way that it reflects the ``symmetry" between the two tables, noting that a knot may correspond to more than one family in Table~\ref{table:Types+}. We take $\mathcal P$=$\mathcal P^- \cup \mathcal P^+$.

\begin{table}\centering
\ra{1.3}
\caption{Prism Manifolds arising from multiple changemaker vectors
}\label{Overlap}
   \begin{tabular}{@{}lllll@{}} \toprule
	
	Prism manifold	& Type & Changemaker & P/SF knot  & Braid word \\ \midrule
		& 
		{\small \begin{tabular}{l}$3A$\end{tabular}} & {\small $(1,1,2,5,5)$} &\begin{tabular}{l} 
		   {\small {\bf TKM II}}  \\
		   {\small (1,2,1,2,1)}
		          \end{tabular} & {\small $(\sigma_{1}\cdots \sigma_{11})^{5}(\sigma_1)^{-2}$}  \\  {\small \begin{tabular}{l}$P(3,-14)$\end{tabular}} &&&& \\
	&	 {\small \begin{tabular}{l}$5$\end{tabular}} & {\small $(1,1,3,3,6)$} & \begin{tabular}{l}
	
		    {\small {\bf KIST IV}} \\
		    {\small (2,1,1,1,1)}
		    \end{tabular} & \begin{tabular}{l}
		  {\tiny$(\sigma_3\sigma_4\sigma_5\sigma_2\sigma_3\sigma_4\sigma_1\sigma_2\sigma_3)^3(\sigma_1\sigma_2)^{10}$}\\
		      ({\small $(19,3)$--cable of $T(3,2)$})
		    \end{tabular}  \\          
		          
\bottomrule
         & {\small \begin{tabular}{l}$2$\end{tabular}}& {\small $ (1,1,2,4,5,5)$} 
	&\begin{tabular}{l} {\small {\bf KIST I}} \\ {\small $(1,-3,-8,1,3)$} \end{tabular}& {\small $(\sigma_{1} \cdots             \sigma_{13})^5\sigma_{1}\sigma_{2}$} \\ {\small \begin{tabular}{l}$P(11,-18)$\end{tabular}}&&&& \\
& {\small \begin{tabular}{l}$3B$\end{tabular}} & {\small $(1,1,3,3,4,6)$} 
		&\begin{tabular}{l}{\small {\bf OPT III}}\\{\small $(2,-3,0,1,1)$}\end{tabular} &{\small $(\sigma_1 \cdots \sigma_5)^{10}(\sigma_1 \sigma_2 \sigma_3)^3 $}\\ \bottomrule

		 & 
		{\small \begin{tabular}{l}$3A$\end{tabular}} & {\small $(1,1,2,5,5,8^{[s]})$} & \begin{tabular}{l}
		{\small {\bf OPT II}}\\
		{\small $(2,-5,0,1,s)$} 
		\end{tabular} &  {\small $(\sigma_1 \cdots \sigma_7)^{5+8s}(\sigma_7 \cdots \sigma_1)^{2}$}
 \\  \\	
\begin{tabular}{l} {\small $P(8s+3,-(16s+14))$} \\ {\small $s \ge 1$} \end{tabular} &{\small \begin{tabular}{l}$4$\end{tabular}}& $(1,1,3,3,6,8^{[s]})$ & \begin{tabular}{l}
		{\small {\bf KIST IV}} \\
		{\small $(2,-3,1,0,s)$}
		\end{tabular} & {\small $(\sigma_{1} \cdots \sigma_{13})^{8}(\sigma_1\cdots\sigma_7)^{8s-7}$} 	\\ \\
		&\begin{tabular}{l}
		     {\small Spor}  \\
		     {\small $s=1$} 
		\end{tabular}
		& {\small $(1,1,2,4,7,7)$} & \begin{tabular}{l} 
                        {\small {\bf TKM II}} \\ 
		                {\small $(2,-2,1,3,-3)$} 
		          \end{tabular} &  {\small $(\sigma_{1} \cdots \sigma_{17})^{7} (\sigma_1 \sigma_2)^{-2}$}   \\

\bottomrule 

\end{tabular}
\caption*{\small The parameters beneath the P/SF knot types are ordered as in Table~\ref{BK Prism with slope sign}.}
\end{table}

\subsection{Manifolds corresponding to distinct changemaker vectors}\label{subsect:Overlap}

As we mentioned before, the families in Table~\ref{table:Types} are divided so that every changemaker vector corresponds to a unique family. However, a lattice $\Delta(p,q)$ may be isomorphic to the orthogonal complements of different changemaker vectors. Thus the corresponding prism manifold $P(p,q)$ lies in different families, and not just one. In Table~\ref{Overlap}, we list all such $P(p,q)$. Each of these prism manifolds is contained in two families, except that $P(11,-30)$ is contained in three families. If a P/SF knot admits a surgery to such a $P(p,q)$, we require extra information in order to detect the changemaker vector that corresponds to this knot. The information we will collect is the Alexander polynomial.

Let $\sigma$ be a changemaker vector such that $\Delta(p,q)\cong(\sigma)^{\perp}$. Assume that $\sigma$ corresponds to a knot $K$ admitting a surgery to $P(p,q)$. Using Lemma~\ref{lem:AlexanderComputation}, we can compute the Alexander polynomial $\Delta_K(T)$. 
We will explicitly exhibit a P/SF knot $K_0$ admitting a surgery to $P(p,q)$, and directly
compute $\Delta_{K_0}(T)$ to check that it is equal to the predicted Alexander polynomial $\Delta_K(T)$. So $K_0$ matches with $\sigma$. 

%The P/SF knots will be presented as closed braids. Our convention is that positive generators of the braid group have negative crossings. For simplicity, the braid words are all positive products. In order to get a P/SF knot with positive surgery to prism manifolds, we need to take the mirror image of the corresponding braid.

\subsection{Proofs of the main results}
This subsection is devoted to the proof of Theorems~\ref{thm:Classification}, and~\ref{thm:lattice}.
\begin{proof}[Proof of Theorem~\ref{thm:lattice}] If $\Delta(p,q)$ is isomorphic to a changemaker lattice $L$ then it belongs to the families classified in Sections~\ref{sec:a0=2}--\ref{sec:f=3}. As in Section~\ref{sec:pandq}, we can find a pair $(p',q')$ such that $L$ is isomorphic to $\Delta(p',q')$, and $P(p',q')$ is in $\mathcal P^-$. Using Proposition~\ref{pp}, the result follows.
\end{proof} 
%If $(p,q)$ appears in $\mathcal P^-$, we get that $\Delta(p,q)$ embeds as the orthogonal complement of a changemaker lattice $L$. Then $L$ has a standard basis appearing in one of the families classified in Sections~\ref{sec:a0=2}--\ref{sec:f=3}. That is, $L$ is isomorphic to $\Delta(p',q')$ for some $p',q'$.   

Table~\ref{BigSummary} collects all results, matching each of the changemakers deduced from Propositions~\ref{f=3JR}, \ref{vf-1gappy}, \ref{f!=3JR}, \ref{f!=3tight}, and~\ref{f=3tight} to the Berge--Kang families that admit the corresponding prism manifold surgeries. In all families of changemaker vectors, the parameters $s,t$ are assumed to be non-negative, unless otherwise indicated.

\begin{proof}[Proof of Theorem~\ref{thm:Classification}]
It follows from the results in this section together with Theorem~\ref{thm:lattice} that if $P(p,q)\cong S^3_{4|q|}(K)$, then there exists a Berge--Kang knot $K'$ such that $P(p,q)\cong S^3_{4|q|}(K')$ and $\Delta_K(t)=\Delta_{K'}(t)$. By \cite{OSzLens}, $K$ and $K'$ have isomorphic knot Floer homology groups.
\end{proof}

\begin{table}\centering
%\resizebox{\textwidth}{!} {%
\ra{1.2}
\caption{Matching P/SF knots admitting prism manifold surgeries with elements of $\mathcal P$, Part I}%\label{BK Prism with slope sign}

   \begin{tabular}{@{}llllll@{}} \toprule
P/SF parameters   & Conditions  & $\mathcal P$ type   &&& Prism manifold parameters \\

\midrule
&\multicolumn{4}{c}
		 {\begin{tabular}{c}
		 {\bf KIST I} $(J,h,k,h',k')$\\
		 $J>0$, $2h-k>1$,\\ $k'\ne0$, $h'k-hk'=1$ 
		 \end{tabular}
		 }&\\
\cmidrule(lr){2-5}
	\addlinespace

            \begin{tabular}{ll}
                         & $(J,h,k,h',k')$\\ =&$(1,2a+1,4a,$ \\
                         &$2ab+a+b,$\\
                         &$4ab+2a-1)$\\
		&	$a\ne0,-1$ 
          \end{tabular}
                      &
                        \begin{tabular}{l}
                                 $a(2b+1)>0$ \\ \\ 
				 $a(2b+1)<0$
          \end{tabular} 
                       &
                   \begin{tabular}{l}
				$\mathcal P^-$, $2$ \\ \\ 
                                $\mathcal P^+$, $2$
			\end{tabular}
                        &&&
                     \begin{tabular}{l}
				$p = \abs{16ab+8a+2b-3}$ \\
				$|q|=|16a^2b+8a^2+8ab+b-1|$ \\
				$r=-4a-1$ 
			\end{tabular} \\ \addlinespace
\bottomrule
&\multicolumn{4}{c}
		 {\begin{tabular}{c}
		 {\bf KIST IV $(J_1,J_2,\epsilon,n,\widetilde p)$} \\
			$|J_1|>1, |J_2+1|>1,$\\ $\epsilon=\pm1, \widetilde p+\epsilon>1$
		 \end{tabular}
		 }&\\
\cmidrule(r){2-5}
\addlinespace
	
            \begin{tabular}{ll}
                         & $(J_1,J_2,\epsilon)$\\ =&$(2,1,1)$
          \end{tabular}
                      &
                        \begin{tabular}{c}
                                 $n \geq 0$ \\ \\ 
				 $n<0$
          \end{tabular} 
                       &
                   \begin{tabular}{l}
				$\mathcal P^-$, $5$ \\ \\ 
                                $\mathcal P^+$, $5$
			\end{tabular}
                        &&&
                     \begin{tabular}{l}
				$p = \abs{n(4\widetilde p^2-2)+4\widetilde p-3}$ \\
				$|q|=|n(2\widetilde p+1)^2+4\widetilde p+1|$\\
				$r = 2\widetilde p + 1$
			\end{tabular} \\ 
\addlinespace			
			\hdashline
\addlinespace			
			
\begin{tabular}{ll}
         =&$(2,1,-1)$
          \end{tabular}
                      &
                        \begin{tabular}{c}
                                 $n > 0$ \\ \\ 
				 $n\leq 0$
          \end{tabular} 
                       &
                   \begin{tabular}{l}
				$\mathcal P^-$, $5$ \\ \\ 
                                $\mathcal P^+$, $5$
			\end{tabular}
                        &&&
                     \begin{tabular}{l}
				$p = \abs{-n(4\widetilde p^2-2)+4\widetilde p+3}$ \\
				$|q|=|n(2\widetilde p-1)^2-4\widetilde p+1|$\\
				$r = 1-2\widetilde p$
			\end{tabular} \\ 
			\addlinespace			
			\hdashline
\addlinespace
			
\begin{tabular}{ll}
                         =&$(2,-3,1)$
          \end{tabular}
                      &
                        \begin{tabular}{c}
                                 $n\geq 0$ \\ \\ 
				 $n< 0$
          \end{tabular} 
                       &
                   \begin{tabular}{l}
				 $\mathcal P^-$, $4$ \\ \\ 
                                $\mathcal P^+$, $4$
			\end{tabular}
                        &&&
                     \begin{tabular}{l}
				$p = \abs{8n\widetilde p^2+8n\widetilde p+8\widetilde p+2n+3}$ \\
				$|q|=|n(4\widetilde p+3)^2+16\widetilde p+14|$\\
				$r = 2\widetilde p + 1$
			\end{tabular} \\ 
\addlinespace			
			\hdashline
\addlinespace			
			
\begin{tabular}{ll}
                         =&$(2,-3,-1)$
          \end{tabular}
                      &
                        \begin{tabular}{c}
                                 $n> 0$ \\ \\ 
				 $n\leq 0$
          \end{tabular} 
                       &
                   \begin{tabular}{l}
				 $\mathcal P^-$, $4$ \\ \\ 
                                $\mathcal P^+$, $4$
			\end{tabular}
                        &&&
                     \begin{tabular}{l}
				$p = \abs{8n\widetilde p^2-8n\widetilde p-8\widetilde p+2n+3}$ \\
				$|q|=|n(4\widetilde p-3)^2-16\widetilde p+14|$\\
				$r = 1-2\widetilde p$
			\end{tabular} \\ 
			
\addlinespace
\bottomrule

\end{tabular}

\end{table}

\begin{table} \centering

%\resizebox{\textwidth}{!} {%
\ra{1.2}
\addtocounter{table}{-1}
\caption{Matching P/SF knots admitting prism manifold surgeries with elements of $\mathcal P$, Part II}
   \begin{tabular}{@{}llllll@{}} \toprule
P/SF parameters   & Conditions  & $\mathcal P$ type   &&& Prism manifold parameters \\

\midrule

           &\multicolumn{4}{c}
				{\begin{tabular}{c}
				{\bf TKM II $(J_1,J_2,\epsilon,n,\widetilde p)$}\\
				$|J_2|>1,\epsilon=\pm1,$\\$|\widetilde p+1|>1,n>0$
				\end{tabular}
				}& \\
\cmidrule(lr){2-5}
\addlinespace

              \begin{tabular}{ll}
					&$(J_1,J_2,\epsilon,n)$ \\
                    =&$(1,2,1,2)$
	      \end{tabular}
                      &
                        \begin{tabular}{l}
                                $\widetilde p > 0$ \\ \\ 
				 $\widetilde p <-2$
          \end{tabular} 
                       &
                   \begin{tabular}{l}
				$\mathcal P^-$, $3A$ \\ \\ 
                                $\mathcal P^+$, $3A$
			\end{tabular}
                        &&&
                     \begin{tabular}{l}
				$p = \abs{2\widetilde{p} + 1}$\\
					$|q|=|\frac{1}2(4\widetilde p^2+14\widetilde p+10)|$\\
					$r=1$
			\end{tabular} \\ 
             
			\addlinespace
\hdashline 
           
\addlinespace
              \begin{tabular}{ll}
					=&$(1,2,-1,2)$
	      \end{tabular}
                      &
                        \begin{tabular}{l}
                                $\widetilde p > 0$ \\ \\ 
				 $\widetilde p <-2$
          \end{tabular} 
                       &
                   \begin{tabular}{l}
				$\mathcal P^+$, $1A$ \\ \\ 
                                $\mathcal P^-$, $1A$
			\end{tabular}
                        &&&
                     \begin{tabular}{l}
				$p = \abs{2\widetilde{p} + 1}$\\
			        $|q|=|2\widetilde p^2+5\widetilde p+4|$
			\end{tabular}\\ 
\addlinespace			
			\hdashline
\addlinespace			

              \begin{tabular}{ll}
					 	&$(J_1,J_2,\epsilon,n,\widetilde{p})$\\
						=&$(2,2,-1,1,-3)$
					\end{tabular} & &  $ \mathcal P^+$, Spor &&&
                                          \begin{tabular}{l}
					$p=13$\\ $q=34$
					\end{tabular}\\ 
\addlinespace			
			\hdashline
\addlinespace					
					
					 \begin{tabular}{ll}
						=&$(2,-2,-1,1,-3)$
					\end{tabular} &  & $\mathcal P^+$, Spor &&&
					\begin{tabular}{l}
					$p=11$\\ $q=19$
					\end{tabular}\\ 
\addlinespace			
			\hdashline
\addlinespace					
					 \begin{tabular}{ll}
						=&$(2,2,1,1,-5)$
					\end{tabular} & &  $\mathcal P^-$, Spor &&&
                                        \begin{tabular}{l}
					$p=23$ \\ $q=-64$
					\end{tabular}\\  
\addlinespace			
			\hdashline
\addlinespace					
					 \begin{tabular}{ll}
						=&$(2,-2,1,1,-5)$
					\end{tabular} & &  $\mathcal P^-$, Spor &&&
                                         \begin{tabular}{l}
					$p=17$\\ $q=-31$
					\end{tabular}\\ 
\addlinespace			
			\hdashline
\addlinespace					
					
					 \begin{tabular}{ll}
						=&$(2,2,1,3,-3)$
					\end{tabular} &  &$\mathcal P^-$, Spor &&&
                                         \begin{tabular}{l}
					$p=13$ \\ $q=-47$
					\end{tabular}\\  
\addlinespace			
			\hdashline
\addlinespace					
					 \begin{tabular}{ll}
						=&$(2,-2,1,3,-3)$ 
					\end{tabular} &  & $\mathcal P^-$, Spor &&&
                                         \begin{tabular}{l}
					$p=11$\\ $q=-30$
					\end{tabular}\\
\addlinespace
\bottomrule
                &\multicolumn{4}{c}{\begin{tabular}{c}
                {\bf TKM V $(J_1,J_2,\delta,\epsilon,n,\widetilde p)$} %\\
                %$|J_1|>1, J_2\ne 0,$ \\$\delta=\pm 1,$$\epsilon =\pm 1, |\widetilde p|>1, $\\ $|J_1 (n\widetilde p+\epsilon)+n|>1,$\\
                %$n\ge 0$ $\text{ (if } n=0, \epsilon = 1\text{)}$
                \end{tabular}}&\\
 \cmidrule(lr){2-5}
\addlinespace
					 \begin{tabular}{ll}
						&$(J_1,J_2, \delta,\epsilon, n,\widetilde p)$ \\=&$(-3,4,1,1,0,2)$ 
					\end{tabular} &  & $\mathcal P^+$, Spor &&&
                                        \begin{tabular}{l}
					$p=11$\\ $q=30$
					\end{tabular}\\
\addlinespace
\bottomrule

\end{tabular}

\end{table}
\clearpage

\begin{table} \centering

%\resizebox{\textwidth}{!} {%
\ra{1.2}
\addtocounter{table}{-1}
\caption{Matching P/SF knots admitting prism manifold surgeries with elements of $\mathcal P$, Part III}\label{BK Prism with slope sign}
   \begin{tabular}{@{}llllll@{}} \toprule
P/SF parameters   & Conditions  & $\mathcal P$ type   &&& Prism manifold parameters \\

\midrule

                    &\multicolumn{4}{c}{
				\begin{tabular}{c}
				{\bf OPT II $(m,n,j,\epsilon,\widetilde p)$}\\
				$m\ge0$, $\gcd(m,n)=1$,\\
				$\widetilde p > 0$, $j\ge0$, $\epsilon=\pm1$
				\end{tabular}
				}&\\ 
 \cmidrule(lr){2-5}
\addlinespace
						
						\begin{tabular}{l}
							$(m,j,\epsilon) = (2,0,1)$\\ 
							$n$ odd
						\end{tabular} & 
                                                 \begin{tabular}{l}
							$n > 0$\\ \\
                                                        $n < -2$
						\end{tabular} &
                                                 \begin{tabular}{l}
							$\mathcal P^+$, $3A$\\ \\
                                                        $\mathcal P^-$, $3A$
						\end{tabular} & & & 
                                               \begin{tabular}{l}
						$p = \abs{2(\widetilde p+1) + n(2\widetilde p + 1)}$\\
						$|q|=|\widetilde p(n+1)^2+\frac{(n+1)(n-2)}2|$\\
						$r= 2\widetilde p+1$ 
						\end{tabular}
\\ 
\addlinespace
\bottomrule
                                             &\multicolumn{4}{c}{
			        \begin{tabular}{c}
						{\bf OPT III $(m,n,j,\epsilon,\widetilde p)$} \\
						$m\ge0$, $\gcd(m,n)=1$\\
						$\widetilde p > 0$, $j\ge0$, $\epsilon=\pm1$
					\end{tabular}	}&\\ 
\cmidrule(lr){2-5}
\addlinespace

						 \begin{tabular}{l}
						 
						 $(m,j,\epsilon) = (2,0,1)$ \\ 
						  $n$ odd
						\end{tabular} & 
                                                \begin{tabular}{l}
						           $n>0$\\\\
                                                            $n<-2$
						\end{tabular} & 

                                                 \begin{tabular}{l}
						           $\mathcal P^+$, $3B$\\ \\
                                                            $\mathcal P^-$, $3B$
						\end{tabular} &&&

\begin{tabular}{l}
						$p = \abs{4 + (2\widetilde p +3)n}$\\
						$|q|=|\frac{n^2(2\widetilde p + 3) + 3n}2|$\\
						$r=2\widetilde p +3$
		    			\end{tabular}  \\ 
\addlinespace
\bottomrule

 &\multicolumn{4}{c}{
				\begin{tabular}{c}
				{\bf OPT IV $(m,n)$}\\
				$m\ge0$, $\gcd(m,n)=1$
				\end{tabular}
				}&\\ 
 \cmidrule(lr){2-5}
\addlinespace

						\begin{tabular}{l}
						$m = 2$ \\
						$n$ odd
						
					\end{tabular} &
                                          \begin{tabular}{l}
						$n>1$\\ \\
						$n<0$
					\end{tabular} &
                                          \begin{tabular}{l}
						$\mathcal P^-$, $1B$\\ \\
						$\mathcal P^+$, $1B$
					\end{tabular} &&&

                                 \begin{tabular}{l}
				$p=|11n+6|$\\
				$|q|=|\frac{11n^2+9n+2}2|$
					\end{tabular} \\ 
\addlinespace
\bottomrule

&\multicolumn{4}{c}{
				\begin{tabular}{c}
				{\bf OPT V $(m,n)$}\\
				$m\ge0$, $\gcd(m,n)=1$
				\end{tabular}
				}&\\ 
 \cline{2-5}
\addlinespace

						\begin{tabular}{l}
						$m = 2$ \\
						$n$ odd
						
					\end{tabular} &
                                          \begin{tabular}{l}
						$n>1$\\ \\
						$n<0$
					\end{tabular} &
                                          \begin{tabular}{l}
						$\mathcal P^-$, $1B$\\ \\
						$\mathcal P^+$, $1B$
					\end{tabular} &&&

                                 \begin{tabular}{l}
					$p=|11n+8|$\\
				$|q|=|2+\frac{13n+11n^2}{2}|$
					\end{tabular} \\
\addlinespace
\bottomrule 
%\addlinespace
\end{tabular}

\end{table}

\begin{table}\centering
\caption{$D-$type changemakers vs Prism manifolds, Part I}
\resizebox{\textwidth}{!} {%
%\ra{2.2}

   \begin{tabular}{@{}lll@{}} \toprule

\multicolumn{1}{l}{{\small Prop$^{\text{n}}$.}} &
\multicolumn{1}{l}{{\small Changemaker vector}} & \multicolumn{1}{l}{{\small Vertex basis (with $x_*,x_{**}$ omitted) $\{ x_0,\cdots,x_n \}$}}  \\ 

\midrule

 \\
	&	{\small \begin{tabular}{l}$(1,1,2,4,7,7)$\end{tabular}}&
		  $\{v_4, -v_3, v_2\}$  \\ \\

	&		{\small \begin{tabular}{l}$(1,1,2,3,3,10)$\end{tabular}}
			 & $\{v_3, -v_5, v_2\}$   \\ \\

	 &   	{\small \begin{tabular}{l}$(1,1,2,2,4,9,9)$\end{tabular}}
	     & $\{v_5, -v_4, -v_2, -v_3\}$   \\ \\

 {\small \ref{f!=3JR}}	& 
   	{\small \begin{tabular}{l}$(1,1,2,2,5,5,14)$\end{tabular}}
	     & $\{v_4, -v_6, -v_2, -v_3\}$   \\ \\

&	   	{\small \begin{tabular}{l}$(1,1,2^{[s+1]},(2s+3)^{[2]},4s+6,8s+14)$\end{tabular}}
	     & $\{v_{s+3},v_{s+5},-v_{s+6}, -v_{s+2}, \cdots, -v_2\}$ 
		     \\ \\

&{\small \begin{tabular}{l}$(1,1,2^{[s+1]},(2s+3)^{[2]},4s+8,8s+14)$\end{tabular}}
	     & $\{v_{s+3},v_{s+6}, -v_{s+5}, -v_{s+2}, \cdots, -v_2\}$ 
		 \\ \\

	    &	{\small \begin{tabular}{l}$(1,1,2,3,3,8,8^{[s]},8s+10,(8s+18)^{[t]})$\end{tabular}}
	     & $\{v_3, -v_{s+6}, -v_2, -v_5, \cdots, -v_{s+5}, -v_{s+7}, \cdots, -v_{s+t+6}\}$ 
		    \\ \\ 
\bottomrule
	    
 \\
   &	{\small \begin{tabular}{l}$(1,1,1,1,2^{[s]})$,\\
   $s>0$
   \end{tabular}}
	    	 & $\{v_2, v_4, v_5, \cdots, v_{s+3} \}$ 
	   \\ {\small \ref{f=3JR}} \\
	
	    &	{\small \begin{tabular}{l}$(1,1,1,1,4^{[s]},4s+2,(4s + 6)^{[t]})$\end{tabular}} 
	    	&  $\{v_2, v_{s+4}, -v_4, \cdots, -v_{s+3}, -v_{s+5}, \cdots,  -v_{t+s+4}\}$  \\ \\ \bottomrule
	    
%	   \multicolumn{2}{|c|}{$v_{f-1} \text{ tight}; f > 3$ [Proposition~\ref{f!=3tight}]} % code: f1t>3
%	   \\ \hline
%	  
%
%************
\\
	   	& {\small \begin{tabular}{l}$(1,1,2^{[s+1]},(2s+5)^{[2]}, (4s+8)^{[t]}),$\\ $t>0$ \end{tabular}} 
	    & $\{v_{s+5}, \cdots , v_{t+s+4}, v_{s+3}-v_{[1,s+2]}-v_{[s+5,t+s+4]}, v_{s+2},\cdots, v_2\}$
	    \\ \\
	
	{\small \ref{f!=3tight}} &
  {\small \begin{tabular}{l}$(1,1,2^{[s+1]},(2s+5)^{[2]})$\end{tabular}} 
	    & $\{v_{s+3}-v_{[1,s+2]}, v_{s+2}, \cdots , v_2\}$
	    \\ \\

	  & {\small \begin{tabular}{l} $(1,1,2^{[s+1]},(2s+5)^{[2]},4s+8,(4s+10)^{[t]}),$\\$t>0$\end{tabular}} 
	   	 & {\small $\{v_{s+5}, v_{s+3}-v_{[1,s+2]}-v_{[s+5,s+t+5]}, v_{s+t+5}, \cdots, v_{s+6},v_{s+2}, \cdots , v_2\}$} \\ \\
	   	\bottomrule \\
%	   \multicolumn{2}{|c|}{$v_{f-1} \text{ tight}; f = 3$ [Proposition~\ref{f=3tight}]} % code: f1t=3
%	   \\ \hline
%	  % &\\
%	   
%	   
& {\small \begin{tabular}{l}$(1,1,3,3,4^{[s]},(4s+6)^{[t]}),$\\$s>0$\end{tabular}}
	   	 & $\{v_4, \cdots , v_{s+3}, v_2-v_1-v_{[4,s+3]}, v_{s+4}, \cdots, v_{t+s+3} \}$  \\ \\
	
	  &  \begin{tabular}{l}$(1,1,3,3,6^{[t]}),$\\$t>0$
\end{tabular}  & $\{v_2-v_1, v_4, \cdots , v_{t+3} \}$ \\ {\small \ref{f=3tight}} \\

 & 	{\small \begin{tabular}{l}$(1,1,3,3,4,6^{[s]}),$\\ $s>0$\end{tabular}} 
& $\{v_4, v_2-v_1-v_{[4,s+4]}, v_{s+4}, \cdots, v_5\}$  \\ \\

&   	{\small \begin{tabular}{l}$(1,1,3,3,6,8^{[s]},(8s+6)^{[t]}),$\\$s>0$\end{tabular}}  
& $\{v_2-v_1-v_{[5,s+4]}, v_{s+4}, \cdots, v_4, v_{s+5}, \cdots , v_{t+s+4}\}$\\ \\

   	 \bottomrule \\
	   	
%	   \multicolumn{2}{|c|}{$v_{f-1} \text{ gappy}; f > 3$ [Proposition~\ref{vf-1gappy}]} % code: f1g 
%	   \\ \hline
%	   %&\\
%	   
	&  {\small	\begin{tabular}{l}$(1,1,2,2,2^{[s]},(2s+3)^{[2]})$,\\
	$s>0$\end{tabular}} 
	   	 & $\{v_{s+4},v_3,v_4,\cdots , v_{s+3}, -v_{[2,s+3]}\}$ 
		   	 \\ \\

	 &   {\small	\begin{tabular}{l}$(1,1,2, 4,4^{[s]}, (4s+5)^{[2]},(8s+14)^{[t]})$\end{tabular}}
	    & $\{v_{s+4}, -v_2, v_3,\cdots, v_{s+3},v_{s+6},\cdots,v_{t+s+5}\}$ 
	   	\\ {\small \ref{vf-1gappy}} \\

	&   {\small \begin{tabular}{l} $(1,1,2,2,4^{[s]},(4s+3)^{[2]},(8s+10)^{[t]}),$\\ $s>0$\end{tabular}}
	    & $\{v_{s+4}+v_{3},-v_3,-v_2,-v_4,\cdots,-v_{s+3},-v_{s+6},\cdots, -v_{t+s+5}\}$ 
	   	\\ \\
	   	
&  	{\small \begin{tabular}{l}$(1,1,2,2,3^{[2]},10^{[t]})$\end{tabular}}
	    & $\{v_{4}+v_{3},-v_3,-v_2,-v_6, \cdots, -v_{t+5}\}$ 
	   	\\ 	\\  	
\bottomrule
\end{tabular}%
}

%\caption*{{\small $D-$type changemakers vs Prism manifolds. In this table, $v_{[a,b]}$ means $v_a+v_{a+1}+\cdots+v_{b}$ for $a<b$. All vertex bases are presented in the form $\{ x_0,x_1,\cdots,x_n \}$.}}
\end{table}
%\clearpage
%%%%%%%%%%
%%%%%%%%%%
%%%%%%%%%%
%%%%%%%%%%
%%%%%%%%%%

%%%%%%%%%%%%%%
%%%%%%%%%%%%%%
%%%%%%%%%%%%%%
%%%%%%%%%%%%%%
%%%%%%%%%%%%%%

\begin{table}
\addtocounter{table}{-1}
\caption{$D-$type changemakers vs Prism manifolds, Part II}
{\small
\centering
\begin{adjustbox}{max width=\textwidth}
\ra{1.4}

   \begin{tabular}{@{}llll@{}} \toprule

\multicolumn{1}{l}{Prop$^{\text{n}}$.} &
\multicolumn{1}{l}{Vertex norms $\{a_0,\dots,a_n\}$} & \multicolumn{1}{l}{Prism manifold parameters} &\multicolumn{1}{l}{$\mathcal P^-$ type}  \\
\midrule

	%	\multicolumn{3}{|c|}{$v_{f-1} \text{ just right}; f > 3$ [Proposition~\ref{f!=3JR}]} % code: f1jr>3
	%	\\ \hline
	%	%&&\\
	&	$\{4,4,3\}$ &
		
		\begin{tabular}{l}	$p=11$\\ $-q=30$ \end{tabular}
			 & Spor \\  \cdashline{2-4}

        & $\{3,6,3\}$ & 
	\begin{tabular}{l}		$p=17$\\ $-q=31$ \end{tabular}
			 & Spor \\ \cdashline{2-4}

	    & $\{5,3,3,2\}$ & 
	    \begin{tabular}{l}$p=13$\\ $-q=47$ \end{tabular}
        & Spor \\ \cdashline{2-4}

	\ref{f!=3JR} &    $\{4,5,3,2\}$ & 
	    \begin{tabular}{l} $p=23$\\ $-q=64$\end{tabular} & Spor \\ \cdashline{2-4}
	    
	    & $\{s+3,3,5,2^{[s]},3\}$ &
	    	\begin{tabular}{l} $p=22s+39$\\ $-q=22s^2+75s+64$ \end{tabular}
	     & $1B$  \\ \cdashline{2-4}
	    
	    & $\{s+3,4,4,2^{[s]},3\}$ &
	    
	    \begin{tabular}{l} $p=22s+41$\\ $-q=22s^2+79s+71$ \end{tabular}
	     & $1B$  \\ \cdashline{2-4}
	    
	    & $\{3,s+3,3,4,2^{[s]},3,2^{[t-1]}\}$ &
	   \begin{tabular}{l}
	    $p = 2r^2t +2r^2 +4r -1$ \\
	    $-q= (2r+1)^2(t+1)+8r+6$ \\ $r = -(2s + 5)$  
	   \end{tabular}  & $4$
	     \\ 
	   \bottomrule
	    
%	    
%	   \multicolumn{3}{|c|}{$v_{f-1} \text{ just right}; f = 3$ [Proposition~\ref{f=3JR}]} % code: f1jr=3
%	   \\ \hline
%	   
%	   
	\begin{tabular}{l}
	\\
	\\
	\ref{f=3JR}
	\end{tabular}
	&   $\{2,3,2^{[s-1]}\}$ &
	    
	 \begin{tabular}{l}   	$p=2s+1$\\ $-q=s+1$\\ $r=-1$ \end{tabular}
	    	 & $5$  
	   \\ \cdashline{2-4}
	    %\cdashline{2-4}

	&    $\{2,s+3,5,2^{[s-1]},3,2^{[t-1]}\}$ &
	    \begin{tabular}{l}
	   $p = r^2 - 6 + (r^2 - 2r - 1)t$\\
            $-q= r^2(t+1)+2r-1$\\
             $r = -(2s + 3)$ 
\end{tabular}& $5$
\\
   
         \bottomrule 

	 &  	   $\{s+4,2^{[t-1]},4,2^{[s]},3\}$&
	   
	   \begin{tabular}{l}	
	   $p=(2t+1)(2s+4)-1$\\
	   $-q=(s+2)((2t+1)(2s+4)+3)$ \\
	   $r=2t+1$ 
	   \end{tabular}
     & $3A$ \\ \cdashline{2-4}
	   	
	\ref{f!=3tight}&   	$\{s+6,2^{[s]},3\}$ &
	   
	  \begin{tabular}{l} 	
	  $p=2s+3$\\
	  $-q=2s^2+11s+14$\\
	  $r=1$
	  \end{tabular} 	 & $3A$ \\  \cdashline{2-4}
	  &   	$\{s+4, 3, 2^{[t-1]}, 3, 2^{[s]}, 3\}$ &
	   \begin{tabular}{l}
	   	$p = (2t+3)(2s+5)-4$\\
	    $-q=(2s+5)((2t+3)s+5t+6)$ \\
        $r=2t+3$ 
       \end{tabular}
       & $3B$ \\ \bottomrule

         	\end{tabular}%
\end{adjustbox}
%\caption*{{\small A superscript $^{[-1]}$ at an element means that the sequence is truncated at this and the preceding element.}}
}

\end{table}	

\begin{table}
\addtocounter{table}{-1}
\caption{$D-$type changemakers vs Prism manifolds, Part III}\label{BigSummary}

{\small
\centering
\begin{adjustbox}{max width=\textwidth}
\ra{1.4}

   \begin{tabular}{@{}llll@{}} \toprule

\multicolumn{1}{l}{Prop$^{\text{n}}$.} &
\multicolumn{1}{l}{Vertex norms $\{a_0,\dots,a_n\}$} & \multicolumn{1}{l}{Prism manifold parameters} &\multicolumn{1}{l}{$\mathcal P^-$ type}  \\ 
\midrule 
%	    
%	   \multicolumn{3}{|c|}{$v_{f-1} \text{ tight}; f > 3$ [Proposition~\ref{f!=3tight}]} % code: f1t>3
%	   \\ \hline
%	  
%	   
%	  

%	   	\multicolumn{3}{|c|}{$v_{f-1} \text{ tight}; f = 3$ [Proposition~\ref{f=3tight}]} % code: f1t=3
%	   \\ \hline
%	   
%	      	

	&   $\{3,2^{[s-1]},5, s+3,2^{[t-1]}\}$ &
	 
	\begin{tabular}{l}
	   $p=(r^2-2r-1)t+2r-5$\\
	   $-q=r^2t+2r-1$\\
	   $r=2s+3$
	\end{tabular}   	
	   	 & $5$  \\ \cdashline{2-4}

\begin{tabular}{l}
	\\
	\\
	\ref{f=3tight}
\end{tabular}	
	&   	$\{6, 3, 2^{[t-1]}\}$ &
	 \begin{tabular}{l}
	 $p=2t+1$\\
	 $-q=9t+5$\\
	 $r=3$
	 \end{tabular}
	 & $5$ \\ \cdashline{2-4}

	&   	$\{3,3,2^{[s-1]},4\}$ &
	  \begin{tabular}{l} 
	  $p=6s+5$\\
	  $-q=9s+9$\\
	  $r=2s+3$
	  \end{tabular}
	  & $3B$ \\ \cdashline{2-4}

	 &  	$\{4, 2^{[s-1]},4,3, s+2,2^{[t-1]}\}$ &\begin{tabular}{l}
	        $p=2r^2t+4r-1$\\
	        $-q=(2r+1)^2t+8r+6$\\
	        $r=2s+1$
	 \end{tabular}
	 & $4$ \\
   
         \bottomrule
	&   $\{s+3,2^{[s+1]},3\}$ &
	  \begin{tabular}{l}
	    $p=2s+5$\\
	    $-q=2s^2+7s+7$      
	  \end{tabular} 
	   	
	   	 & $1A$  \\ \cdashline{2-4}
		
\begin{tabular}{l}
	\\
	\\
	\ref{vf-1gappy}
\end{tabular} & 		$\{s+3,3,4,2^{[s]},4,2^{[t-1]}\}$ &	   	
	   \begin{tabular}{l}
	        $p=16ts+30t+8s+11$\\
	        $-q=16ts^2+56ts+8s^2+24s+49t+18$\\
	        $r=4s+7$
	   \end{tabular}
	   & $2$ \\ \cdashline{2-4}
	  % \ref{vf-1gappy} \\
	   	
	&	$\{s+3,2,3,3,2^{[s-1]},4,2^{[t-1]}\}$ &   
	   
\begin{tabular}{l}
  $p=16ts+18t+8s+5$\\
  $-q=16ts^2+40ts+8s^2+25t+16s+7$\\
  $r=-4s-5$
\end{tabular}	   	
	   	& $2$ \\ \cdashline{2-4} 
	   	
	&   	$\{3,2,3,5,2^{[t-1]}\}$ &   
	 \begin{tabular}{l}
	   $p=18t+5$\\
	   $-q=25t+7$\\
	   $r=-5$
	 \end{tabular}  
	  	
	   	& $2$ \\

	   	\bottomrule          
         
         	\end{tabular}%
\end{adjustbox}
}
\caption*{\small In this table, $v_{[a,b]}$ means $v_a+v_{a+1}+\cdots+v_{b}$ for $a<b$. All vertex bases are presented in the form $\{ x_0,x_1,\cdots,x_n \}$.
A superscript $^{[-1]}$ at an element in the sequence of vertex norms means that the sequence is truncated at this element and the element preceding it. For example, the sequence $\{3,s+3,3,4,2^{[s]},3,2^{[t-1]}\}$ becomes $\{3,s+3,3,4,2^{[s]}\}$ when $t=0$.}
\end{table}

\clearpage

%%%%%
%%%%%
%%%%%
%%%%%
%%%%%

\bibliographystyle{abbrv}
\bibliography{bibliography}

\end{document}